\documentclass[a4paper, 11pt]{amsart}
\usepackage[british]{babel}
\usepackage[utf8x]{inputenc}
\usepackage[T1]{fontenc}
\usepackage[
    backend=biber,
    style=alphabetic,
    maxnames=99,
    citestyle=alphabetic,
    maxalphanames=99,
    ]{biblatex}
\renewbibmacro*{volume+number+eid}{
    \printfield{volume}
    \setunit*{\addnbthinspace}
    \printfield{number}
    \setunit{\addcomma\space}
    \printfield{eid}}
  \DeclareFieldFormat[article]{number}{\mkbibparens{#1}}
\renewbibmacro{in:}{}

\usepackage[a4paper,top=3cm,bottom=3cm,left=3cm,right=3cm,marginparwidth=1.75cm]{geometry}

\usepackage{amsmath}
\numberwithin{equation}{section}
\usepackage{amsfonts}
\usepackage{bbm}
\usepackage{amssymb}
\usepackage{graphicx}
\usepackage{dsfont}
\usepackage[colorinlistoftodos]{todonotes}
\usepackage[colorlinks=true, allcolors=blue]{hyperref}
\usepackage{enumitem}
\usepackage{amsthm}
\usepackage{tikz-cd}
\usepackage{quiver}
\usepackage{mathrsfs}
\usepackage[dvipsnames]{xcolor}
\usepackage{hyperref}
\hypersetup{
    linkcolor=purple,
    urlcolor=purple,
    citecolor=purple,
    }

\usepackage{tikz}
\usetikzlibrary{knots}
\usetikzlibrary{matrix}
\usetikzlibrary{decorations.pathreplacing}
\usetikzlibrary{decorations.markings}
\usetikzlibrary{arrows}
\usetikzlibrary{calc}
\usetikzlibrary{shapes.misc}
\usetikzlibrary{fit}
\usepgflibrary{decorations.pathmorphing}
\usepgflibrary{shapes.geometric}
\usepackage{yhmath}
\usepackage{Cobordism}

\newcommand{\pa}[1]{\left( #1 \right)}

\newcommand{\set}[1]{\left\{ #1 \right\}}
\newcommand{\ol}[1]{\overline{#1}}

\newcommand{\Bord}[1]{\mathbf{Bord}_{1,2,3}^{\mathrm{#1}}}
\newcommand{\bord}[1]{\mathbf{Bord}_{2,3}^{\mathrm{#1}}}
\newcommand{\vect}{\mathbf{Vect}_k}
\newcommand{\Vect}{2\mathbf{Vect}_k}
\newcommand{\KV}{2\mathbf{Vect}_k^{\mathrm{KV}}}

\newcommand{\parin}{\partial_{\mathrm{in}}}
\newcommand{\parout}{\partial_{\mathrm{out}}}
\newcommand{\parIn}{\partial_{\mathrm{in}}'}
\newcommand{\parOut}{\partial_{\mathrm{out}}'}

\newcommand{\textcolour}[2]{\textcolor{#1}{#2}}

\newcommand\II{\ensuremath{\mathrm{II}}}

\newcommand\fixboundingbox{\path [use as bounding box, draw=none] (current bounding box.north west) rectangle (current bounding box.south east);}
\newcommand\selectpart[2][\selectcolour]{\fixboundingbox\begin{pgfonlayer}{selectionbox}\node [draw=red, fit=#2, inner sep=0.8*\cobordismlinewidth, #1, line width=\cobordismlinewidth] {};\end{pgfonlayer}}

\include{arrows}

\newenvironment{tz}[1][]{\begin{tikzpicture}[baseline={([yshift=-.8ex]current bounding box.center)},#1]}{\end{tikzpicture}}

\usepackage{etoolbox}

\makeatletter
\def\calign@preamble{%
   &\hfil\strut@
    \setboxz@h{\@lign$\m@th\displaystyle{##}$}%
    \ifmeasuring@\savefieldlength@\fi
    \set@field
    \hfil
    \tabskip\alignsep@
}
\let\cmeasure@\measure@
\patchcmd\cmeasure@{\divide\@tempcntb\tw@}{}{}{}
\patchcmd\cmeasure@{\divide\@tempcntb\tw@}{}{}{}
\patchcmd\cmeasure@{\ifodd\maxfields@
  \global\advance\maxfields@\@ne
  \fi}{}{}{}    
  
\makeatother

\def\smallbordisms{\scalecobordisms{0.5}\setlength\obscurewidth{0pt}}

\DeclareMathOperator{\Hom}{Hom}

\DeclareMathOperator{\id}{id}

\DeclareMathOperator{\tr}{tr}

\newtheorem{thm}{Theorem}[section]
\newtheorem{lem}[thm]{Lemma}
\newtheorem{prop}[thm]{Proposition}

\newtheorem{thmA}{Theorem}

\theoremstyle{definition}
\newtheorem{defn}[thm]{Definition}
\newtheorem{eg}[thm]{Example}
\newtheorem{constr}[thm]{Construction}

\theoremstyle{remark}
\newtheorem*{rk}{Remark}

\newcommand*{\newproofname}{Proof}
\newenvironment{proof*}[1][\newproofname]{\begin{proof}[#1]}{\end{proof}}

\title{All once-extended 3D TQFTs are Reshetikhin--Turaev theories}
\author{Glen Lim}
\address{Mathematical Institute, University of Oxford}
\email{glen.lim@maths.ox.ac.uk}

\begin{document}

\begin{abstract}
    We present a direct geometric construction which extends the Reshetikhin--Turaev TQFT to circles. We prove that this agrees with the generators-and-relations approach of Bartlett--Douglas--Schommer-Pries--Vicary, thus providing an alternative to the Cerf-theoretic part of their classification of once-extended 3-dimensional TQFTs, and deduce as a result that all once-extended 3-dimensional TQFTs arise via our construction.
\end{abstract}

\maketitle

\tableofcontents

\section{Introduction}

Formulated shortly after the axiomatic definition of topological quantum field theories (TQFTs) by Atiyah \cite{Atiyah-TQFT} and Segal \cite{Segal}, one of the earliest examples of TQFTs is the Reshetikhin--Turaev TQFT \cite{RT}. Based on Reshetikhin and Turaev's previous work on ribbon invariants \cite{RT-ribbon}, this is a geometric construction which made rigorous an approach of Witten \cite{Witten}, and may be summarised in modern language as follows: Given the data of a modular tensor category $\mathcal{C}$ and a choice of square root of its global dimension $\sqrt{D}$, it produces a symmetric monoidal functor $$rt_{\mathcal{C},\sqrt{D}}:\bord{sig}\rightarrow\vect.$$ from the \emph{signature bordism category} to the category of vector spaces. Here, the source category has as objects oriented surfaces and as morphisms oriented 3-dimensional bordisms with the extra data of an integer, which is thought of as the signature of a bounding 4-manifold.

A question that has been studied in various places in the literature is how one might extend this construction to circles, thus obtaining a once-extended TQFT in the sense of \cite{Lawrence}. Thus far, this has been done via distinct methods to the original approach of Reshetikhin and Turaev. The first complete construction was given by De Renzi \cite{dr-thesis}, who generalised the universal construction reformulation of $rt_{\mathcal{C},\sqrt{D}}$ given by Blanchet, Habegger, Masbaum and Vogel \cite{BHMV}.

Another approach is the one of Bartlett, Douglas, Schommer-Pries and Vicary \cite{BDSV4} using generators and relations. This involves the Cerf-theoretic construction of a finite presentation of $\Bord{sig}$, the proof of which will be given by combining results from \cite{BDSV3} with upcoming work of Bartlett, Douglas and Sytilidis \cite{pres-bord}. Using this, symmetric monoidal functors from $\Bord{sig}$ may be defined by specifying the images of the generators and checking that the relations are satisfied.

Via both of these, one then obtains from $\mathcal{C},\sqrt{D}$ a symmetric monoidal functor $$\Bord{sig}\rightarrow\Vect$$ from the \emph{signature bordism bicategory} to the bicategory of Cauchy-complete linear categories, linear functors and natural transformations.

In this paper, we present a direct geometric construction of a symmetric monoidal functor $$RT_{\mathcal{C},\sqrt{D}}:\Bord{sig}\rightarrow\Vect$$ which, like the original formulation of Reshetikhin and Turaev, is based on the ribbon invariants of \cite{RT-ribbon}. This may be summarised as:
\begin{thmA}[{Theorem \ref{thm:rt-functor}}]
    There exists a direct geometric construction which extends the Reshetikhin--Turaev TQFT to circles.
\end{thmA}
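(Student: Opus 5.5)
We build $RT_{\mathcal{C},\sqrt{D}}$ level by level, mimicking the original Reshetikhin--Turaev construction one dimension higher, and deduce functoriality from that of $rt_{\mathcal{C},\sqrt{D}}$ together with the Reshetikhin--Turaev ribbon invariants of \cite{RT-ribbon}.

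\emph{Objects and 1-morphisms.} To a closed oriented 1-manifold with $n$ components, choose a parametrisation of each component and assign $\mathcal{C}^{\boxtimes n}$, which is Cauchy complete since $\mathcal{C}$ is semisimple. Changing the parametrisation by an orientation-preserving diffeomorphism gives the identity, via isotopy to the standard circle. To a surface bordism $\Sigma:S\to S'$, assign the linear functor $\mathcal{C}^{\boxtimes n}\to\mathcal{C}^{\boxtimes m}$ given on simples by
\[ (X_1,\dots,X_n)\mapsto \bigoplus_{(Y_j)} rt\big(\hat\Sigma(X;Y)\big)\otimes (Y_1\boxtimes\cdots\boxtimes Y_m). \]
Here $\hat\Sigma(X;Y)$ is the closed surface obtained by capping each boundary circle with a disc marked by the corresponding coloured point, with orientation reversed on the outgoing side. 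This is the standard way of extending the Reshetikhin--Turaev state spaces to marked surfaces, using $rt$ with ribbon graph insertions. Equivalently, the functor is the Hom-space pairing of $\Sigma$ with coloured boundary discs.

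\emph{2-morphisms.} A 3-dimensional bordism $M:\Sigma\Rightarrow\Sigma'$ with corners, together with a signature, is capped off by solid cylinders $D^2\times I$ along the vertical boundary $S\times I$ and $S'\times I$, each carrying a coloured core ribbon strand. The result is a closed-boundary bordism with ribbon graph, and $rt$ applied to it defines the components of a natural transformation. Naturality in the colours holds because morphisms in $\mathcal{C}$ can be inserted as coupons on the core strands and slid through.

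\emph{Functoriality.} Horizontal composition of surfaces along circles is sent to composition of functors. This is the crux, and it follows from the gluing (or splitting) formula of the Reshetikhin--Turaev theory: cutting a surface along a circle gives
\[ rt(\Sigma\cup_S\Sigma')\cong\bigoplus_i rt(\Sigma;i)\otimes rt(\Sigma';i^*). \]
This isomorphism is induced by the handle decomposition and uses the nondegeneracy of the S-matrix, i.e.\ modularity. It must be chosen canonically, independent of the choice of cutting data. Coherence for identities, associators and unitors then follows by showing that the relevant composite 3-manifolds are cylinders, which $rt$ sends to identities. One also needs the correct signature (Maslov index) bookkeeping, so that the gluing isomorphisms commute with the anomaly. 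For vertical composition we use the functoriality of $rt$ directly. The interchange law holds because gluing solid cylinders commutes with gluing along $\Sigma$. Finally, symmetric monoidality follows from multiplicativity of $rt$ and the fact that the Deligne product is compatible with disjoint union.

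\emph{Main obstacle.} I expect the hardest step to be making the gluing isomorphism canonical and natural with respect to 3-dimensional bordisms, i.e.\ showing that $RT$ respects horizontal composition of 2-morphisms. My first approach would be to realise the gluing isomorphism itself as $rt$ of an explicit 3-bordism: a cylinder with a "surgery tube" containing an $\Omega$-coloured (Kirby colour) strand, normalised by $\sqrt{D}$. Naturality then follows from functoriality of $rt$ plus the handle-slide property of the Kirby colour.
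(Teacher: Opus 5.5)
Your route differs from the paper's at the 1-morphism level, and as written it has a gap exactly at the step you flag as the main obstacle.

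\textbf{The missing compositors.} Because you define $RT(\Sigma)$ through Reshetikhin--Turaev state spaces of the capped surfaces $\hat\Sigma(X;Y)$, you only get a pseudofunctor after you supply compositors $RT(\Sigma')\circ RT(\Sigma)\cong RT(\Sigma\cup_S\Sigma')$ and unitors. These are needed for $S$ with arbitrarily many components, so disconnected surfaces may glue to connected ones of higher genus. You must then prove four things:
\begin{enumerate}
\item the compositors are independent of the cutting data;
\item they satisfy the associativity and unit coherence axioms;
\item they are natural with respect to arbitrary 3-bordisms with corners (this naturality \emph{is} compatibility with horizontal composition of 2-morphisms);
\item their normalisations ($\sqrt{D}$-powers, $\dim i$ weights) and integer labels (Wall's invariant) match on both sides.
\end{enumerate}
None of this is carried out. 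Your claim that the relevant composites are cylinders is also not right: they are handle-attachment bordisms, and comparing them needs the same $\Omega$-colour arguments as the main step.

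\textbf{Parametrisations.} There is a smaller issue at the start. $rt$ is defined on parametrised surfaces, and the identification of state spaces for two parametrisations of a boundary circle depends on a choice of isotopy; a full rotation acts through the twist $\theta_X$ of the colour at the marked point. So ``changing the parametrisation gives the identity'' is false as stated. You must either fix parametrisations once and for all, or use Turaev's parametrisation-independent theory on decorated surfaces, with $H_\Sigma$ supplying the Lagrangian.

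\textbf{How the paper avoids this.} The paper never needs compositors, because of its choice of model. In $\Bord{sig,dec}$ every 1-morphism is a formal composite of pants, copants, cups and caps. $RT(\Sigma)$ is the matrix of spaces of labels of the glued multi-vertex ribbon graph $\Gamma_\Sigma\subset H_\Sigma$. Composition of 1-morphisms is then preserved on the nose, since direct sums of tensor products of spaces of labels implement matrix multiplication in $\KV$.

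All the topology is moved to 2-morphisms, which are defined by a normalised $\tau(\ol{M},\Gamma_M)$.
\begin{itemize}
\item Vertical composition is proved by exhibiting a surgery presentation of $\ol{M''}\#(S^1\times S^2)$, obtained by inserting the $\Omega$-loops of Lemma~\ref{lem:tens-hom} (Proposition~\ref{prop:vert-glue}), together with Wall's theorem for the integers.
\item Horizontal composition is the genus-zero case: an $\Omega$-loop is placed around each pair of glued strands and evaluated by Lemma~\ref{lem:BK}.
\end{itemize}

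\textbf{Comparison.} Your proposed fix, a gluing tube carrying a Kirby-coloured strand plus handle slides, is essentially the paper's mechanism. Completing your route would therefore amount to reproducing Construction~\ref{constr:hor-glue} and Lemma~\ref{lem:hor-glue}, and then additionally verifying compositor coherence. What your route does buy is that vertical composition reduces directly to functoriality of $rt$. Your 2-morphism assignment, capping the vertical boundary by solid cylinders with coloured cores, is also the one the paper compares its construction to (cf.\ the end of \S\ref{subsection:rt-compare}).
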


A significant advantage of this construction is the ease with which one may compute the images of 2-morphisms; these are given in terms of surgery presentations of 3-manifolds. One application of this involves the restriction to the \emph{half-signature bordism bicategory} $\Bord{sig/2}$ as presented in \cite{partA}; this is an index $2$ symmetric monoidal subbicategory of $\Bord{sig}$. By showing that $RT_{\mathcal{C},\pm\sqrt{D}}$ agree on this subbicategory, we prove:
\begin{thmA}[{Theorem \ref{thm:rt-functor2}}]
    Given a modular tensor category $\mathcal{C}$, we have a symmetric monoidal functor $RT_\mathcal{C}:\Bord{sig/2}\rightarrow\Vect$.
\end{thmA}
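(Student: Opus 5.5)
The plan is to obtain $RT_\mathcal{C}$ by restricting $RT_{\mathcal{C},\sqrt{D}}$ from Theorem~\ref{thm:rt-functor} to the subbicategory $\Bord{sig/2}$ of \cite{partA}, and then to show that the restriction does not depend on the choice of square root. Concretely, I will prove that $RT_{\mathcal{C},\sqrt{D}}$ and $RT_{\mathcal{C},-\sqrt{D}}$ agree on $\Bord{sig/2}$. The result is then a symmetric monoidal functor that depends only on $\mathcal{C}$. The first step is to recall how $\sqrt{D}$ enters the construction. On objects (circles) and 1-morphisms (surfaces), the linear categories and functors are built from $\mathcal{C}$ and ribbon data alone, with no appearance of $\sqrt{D}$. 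So on these levels the two functors agree on the nose. This includes the symmetric monoidal coherence data, provided those are also defined without $\sqrt{D}$, which I will check against the construction.

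The remaining task is the 2-morphisms, i.e.\ 3-dimensional bordisms $M$ with signature data $n$. Here the image is computed from a surgery presentation along a framed link $L$. The standard Reshetikhin--Turaev normalisation has the form
\[
\mathcal{D}^{-1-|L|}\,\Big(\tfrac{\Delta_-}{\Delta_+}\Big)^{\cdots}\cdot(\text{ribbon invariant}),
\]
with $\mathcal{D}=\sqrt{D}$, $\Delta_\pm$ the Gauss sums, and possibly extra factors depending on boundary components. Replacing $\sqrt{D}$ by $-\sqrt{D}$ therefore multiplies the image by $(-1)^{N(M,L,n)}$, where $N$ is an explicit integer: a linear combination of $|L|$, the signature of the linking matrix, $n$, and numbers of boundary or gluing components. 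The Gauss sums are unchanged, and the anomaly factor $\Delta_+/\mathcal{D}$ also flips sign. So I will write down the exact exponent $N$ from the definition of $RT_{\mathcal{C},\sqrt{D}}$ on generating 2-morphisms, or directly on an arbitrary 2-morphism via its surgery presentation.

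The key step, and the expected main obstacle, is showing that $N$ is even precisely under the defining condition of $\Bord{sig/2}$. In \cite{partA} this condition is a parity constraint linking the signature integer to topological data, e.g.\ $n \equiv$ something like $\chi$ or $b_1$ of the bordism modulo $2$. I will use the following fact: surgery on $L$ produces a 4-manifold $W$ with $\sigma(W)=\sigma(\text{linking matrix})$, and $|L|\equiv \sigma(W)+\dim\ker$ modulo $2$, where $\dim\ker$ is related to $b_1(M)$ and the boundary data. Combining this with the parity condition should give $N$ even. The work consists in matching conventions exactly. If an arbitrary 2-morphism is awkward to handle, I will instead use the presentation of $\Bord{sig/2}$ from \cite{partA}, verify evenness on its generating 2-morphisms, and conclude by compositionality, since both functors respect composition. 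Finally, $\Bord{sig/2}$ is a symmetric monoidal subbicategory, so the restricted functor is symmetric monoidal. This gives $RT_\mathcal{C}$ as claimed.
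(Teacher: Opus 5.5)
Your proposal is correct. Its overall architecture matches the paper's: restrict $RT_{\mathcal{C},\sqrt{D}}$ from Theorem~\ref{thm:rt-functor}, then show that the total exponent of $\sqrt{D}$ in (\ref{eq:def-rt}) is even on 2-morphisms of $\Bord{sig/2}$. This suffices because $p_-$, $F$ and $\dim_{\mathrm{int}}$ do not depend on the sign of $\sqrt{D}$. To pin down your $N$: it is $n+\chi(\ol{\parout M})/2-\sum_i\bigl(\sigma(X_{L_i})+\#L_i+1\bigr)$, with the sum over the components of $\ol{M}$. Substitute $n\equiv m(M)$ and $\chi(\ol{\parout M})=2b_0(\ol{\parout M})-b_1(\ol{\parout M})$. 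The genus terms then cancel exactly, and the $-1$'s cancel against $b_0(\ol{M})$. What remains is $b_1(\ol{M})\equiv\sigma(X_L)+\#L \pmod 2$. So the boundary data enter only through this cancellation, and the kernel dimension you need is exactly $b_1$ of the closed manifold $\ol{M}$, with no boundary correction.

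The genuine difference from the paper is how you prove this congruence. You use the linking matrix $A$ directly. $X_L$ has $b_2=\#L$ with intersection form $A$, and $H_1(\ol{M})\cong\operatorname{coker}A$. Hence $\#L=\operatorname{rank}A+b_1(\ol{M})$, and $\operatorname{rank}A\equiv\sigma(A)\pmod 2$.

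The paper instead realises $\ol{M}$ as the vertical composite of the surgery solid tori $H'$ with the link exterior $M'=S^3\setminus\mathring{H}$. It applies Wall's theorem to $X_L=D^4\cup_H(D^4)^{\sqcup\#L}$ to identify $\sigma(X_L)$ with minus the Wall term of that composite. It then invokes \cite[Proposition 4.3]{partA}, the parity law saying that $m$ is compatible with composition.

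Your argument is shorter and self-contained. The paper's reuses the cocycle machinery that defines $\Bord{sig/2}$, so the congruence appears as nothing more than closure of $\Bord{sig/2}$ under composition, applied to a surgery decomposition. Your fallback through generating 2-morphisms is unnecessary here. It would in any case require the fullness of $|-|_\mathcal{G}$ (Proposition~\ref{prop:modg-full}).
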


Another application involves the classification of once-extended 3-dimensional TQFTs. By giving a way to construct once-extended TQFTs directly from modular tensor categories, this provides an alternate approach to the Cerf-theoretic portion (i.e. \cite{haioun,filippos-thesis} and the upcoming \cite{pres-bord}) in the proof the classification theorems of Bartlett, Douglas, Schommer-Pries and Vicary. The interplay between the various results in this area is quite complex, so we refer the reader to the extended exposition of \textsection\ref{subsection:BDSV}, at the end of which our results regarding this are stated. Notably, our arguments will show in Theorem \ref{thmA:all-rt} that all once-extended 3-dimensional TQFTs arise as extended Reshetikhin--Turaev TQFTs, whence the title of this paper.

\subsection{Extending the Reshetikhin--Turaev TQFT to circles}

This paper is centred around a new direct construction which extends Reshetikhin--Turaev TQFT to circles. This is a problem which has studied in multiple places in the literature, and we provide a survey of the various approaches.

We first begin by discussing the original Reshetikhin--Turaev construction. Building upon their work on invariants of ribbon graphs \cite{RT-ribbon}, Reshetikhin and Turaev \cite{RT,Turaev-book} defined a symmetric monoidal functor $\bord{sig} \rightarrow \vect$. This is done by associating a ribbon graph with a single vertex and $g$ edges to each genus $g$ surface. Then, given the data of a modular tensor category (and a choice of square root of its global dimension), each connected surface is assigned a vector space of possible morphism labels of the single vertex of its associated ribbon graph. To each 3-dimensional bordism, one may then associate an embedded ribbon graph in a closed 3-manifold. The linear map assigned to this morphism can then be defined in terms of the invariant of this embedded ribbon graph.

The question of extending this construction to circles has been explored in various places in the literature. The earliest reference to this is a remark in the PhD thesis of Schommer-Pries \cite{csp-phd} that the Reshetikhin--Turaev construction can be extended to a once-extended TQFT by assigning the circle to the modular tensor category that one started with. The updated 2014 version of the thesis also references then-ongoing joint work with Bartlett, Douglas and Vicary \cite{BDSV1,BDSV2,BDSV3,BDSV4} whose aim was to classify once-extended 3-dimensional TQFTs in terms of modular tensor categories. This approach depends on Cerf-theoretic results which allow one to construct presentations of various bordism bicategories. The main result of this form, which we record as Theorem \ref{conj:pres-or}, regards the oriented bordism bicategory. Much of the proof of this has been completed by recent Cerf-theoretic work of Haïoun \cite{haioun} and Sytilidis \cite{filippos-thesis}; upcoming work of Bartlett, Douglas and Sytilidis \cite{pres-bord} will finish the proof of Theorem \ref{conj:pres-or}, so completing this approach. We refer the reader to \textsection\ref{subsection:BDSV} for more details. The status of Theorem \ref{conj:pres-or} notwithstanding, in the resulting classification theorem, the direction going from modular tensor categories to linear representations provides a construction which takes as input a modular tensor category (with a choice of square root of global dimension) and produces a once-extended TQFT. To the author's knowledge, it has not, however, been verified that this construction restricts to the Reshetikhin--Turaev TQFT. Nonetheless, this is the definition of the extended Reshetikhin--Turaev TQFT taken by Freed, Scheimbauer and Teleman \cite{FST} in their work in extending it to a point.

Another construction extending the Reshetikhin--Turaev TQFT to circles was given in the PhD thesis of Tsumura \cite{Tsumura}. Tsumura constructed a bicategory $\mathbf{Co}$ consisting of oriented 1-manifolds, \emph{connected} 2-dimensional oriented bordisms, and 3-dimensional oriented bordisms with corners. Explicitly, the 1-morphisms of $\mathbf{Co}$ are equipped with an identification to a ``standard'' connected surface with boundary. Each of these standard surfaces is in turn associated with a standard ribbon graph consisting of a single vertex, edges from the vertex to itself indexed by the genus of the surface, and edges with a free endpoint indexed by the boundary components. Then, a functor $\mathbf{Co}\rightarrow \Vect$ may be constructed in a manner analogous to \cite{RT,Turaev-book}. The main additional work that has to be done from here is to check the compatibility with composition of 1-morphisms and horizontal composition of 2-morphisms. As all 1-morphisms of $\mathbf{Co}$ are connected, one only has to consider a certain ``standard'' composition of standard surfaces. For compositions of these form, Tsumura constructed a composition law on the associated ribbon graphs, effectively contracting the vertices of the two graphs along a preferred edge to obtain another standard ribbon graph with a single vertex. Having established this, the compatibility with the horizontal composition of 2-morphisms could be established by generalising methods from \cite{Turaev-book}. Moreover, this is by construction compatible with the approach of Reshetikhin and Turaev. Thus, the functor $\mathbf{Co}\rightarrow \Vect$ extending the Reshetikhin--Turaev TQFT is defined.

However, it should be noted that this approach does not readily generalise to give a symmetric monoidal functor $\Bord{sig}\rightarrow\Vect$. In particular, it is possible for two 1-morphisms in $\Bord{sig}$ which each consist of many connected components to compose to a 1-morphism with a single connected component. Then, there would not be a distinguished way to combine the various ribbon graphs associated to each component into one with a single vertex. Moreover, given three composable 1-morphisms, any such approach could give very different resulting ribbon graphs when the order of composition of the 1-morphisms is changed.

To the author's knowledge, the first complete construction extending the Reshetkhin--Turaev TQFT to circles is in the PhD thesis of De Renzi \cite{dr-thesis}. (De Renzi in turn credits \cite{BDSV4} for providing a complete construction, though at the time, Theorem \ref{conj:pres-or} was far from being proven.) This is done via a bicategorical analogue of the universal construction reformulation of the Reshetikhin--Turaev TQFT given in \cite{BHMV}. Using this approach, the functoriality is automatic by definition, and most of the work is in showing that this construction is also compatible with the monoidal structure (disjoint unions). This approach was then used to construct non-semisimple extended TQFTs, generalising the construction of Kerler and Lyubashenko \cite{KL} which only dealt with connected surfaces. However, the way this construction is defined makes computing the images of 1- and 2-morphisms difficult. The images of the 1-morphisms in De Renzi's construction are, by definition, quotients of large vector spaces by large vector subspaces, and analogously, the images of 2-morphisms are quotients of large linear categories by large ideals. De Renzi provides computations for some simple 1- and 2-morphisms, but even those are rather involved.

Our approach here is motivated by the observation that the approaches of \cite{BDSV4} and \cite{Tsumura} are complementary: In \cite{BDSV4}, 1-morphisms are sent to vector spaces of ``internal string diagrams'', which are effectively the vector spaces of possible vertex labels of certain ribbon graphs (which are allowed to have more than one vertex). When set up in this way, the compatibility with composition of 1-morphisms (in the general case, without any connectivity assumptions) is clear. This being well-defined is however dependent on some difficult Cerf theory (in particular, Theorem \ref{conj:pres-or}). On the other hand, in \cite{Tsumura}, the well-definedness follows directly from the fact that the invariants of embedded ribbon graphs from \cite{RT-ribbon,RT} are well-defined. As such, our construction arises from combining the approach of \cite{BDSV4} at the object and 1-morphism level with the approach of \cite{Tsumura} (which in turn is heavily based on \cite{RT,Turaev-book}) at the 2-morphism level.

In this way, the images of 2-morphisms are easy to compute (requiring only a surgery presentation of the relevant 3-manifold), and those of the generating 2-morphisms are even easier. This allows us to make a direct comparison to the generators-and-relations approach of \cite{BDSV4}. On the other hand, this is also by construction easily comparable to the original formulation of Reshetikhin and Turaev \cite{RT}. Hence, it provides a way to bridge the gap between the generators-and-relations approach and the original Reshetikhin--Turaev formulation, the consequences of which we describe in the next subsection.

\subsection{The classification of once-extended 3-dimensional TQFTs} \label{subsection:BDSV}

The main work in the classification of once-extended 3-dimensional TQFTs is the project of Bartlett, Douglas, Schommer-Pries and Vicary. This was originally envisioned to consist of four papers \cite{BDSV1,BDSV2,BDSV3,BDSV4}, of which the second and the fourth have been published. The intended main results of the first will be proven in upcoming work of Bartlett, Douglas and Sytilidis \cite{pres-bord} by putting together results of Haïoun \cite{haioun} and Sytilidis \cite{filippos-thesis}, and the third paper, which is largely complete, will be forthcoming thereafter. In this subsection, we discuss this approach of classifying once-extended TQFTs by modular tensor categories and the state of the literature surrounding this.

There are several classification results of a similar flavour, for various 3-dimensional bordism bicategories. The exact results vary depend on the exact 3-dimensional bordism bicategory $\mathbf{Bord}$, but they are all statements of the form:
\begin{quote}
    Symmetric monoidal functors $\mathbf{Bord}\rightarrow\Vect$ up to equivalence are in bijection with modular tensor categories (possibly with some conditions or extra data) up to isomorphism.
\end{quote}
Following the methods of Bartlett--Douglas--Schommer-Pries--Vicary, each of these classification results is proven by constructing a finite presentation $\mathcal{X}$ of the relevant bordism bicategory. Then, for the symmetric monoidal bicategory $\mathbf{F}(\mathcal{X})$ generated by this presentation, symmetric monoidal functors $\mathbf{F}(\mathcal{X})\rightarrow\Vect$ and transformations between them may be defined by writing down a finite amount of data and performing some finite number of checks. On the other hand, the presentations are constructed in such a way so that there is a natural ``topological realisation'' functor $|-|:\mathbf{F}(\mathcal{X})\rightarrow\mathbf{Bord}$. By showing that $|-|$ is an equivalence of symmetric monoidal bicategories, the classification of linear representations of $\mathbf{Bord}$ can then be bootstrapped onto the corresponding classification result for $\mathbf{F}(\mathcal{X})$. In short, the approach boils down to the following diagram:
\[\begin{tikzcd}
	{\mathrm{Rep}(\mathbf{Bord})} && {\mathrm{Rep}(\mathbf{F}(\mathcal{X}))} && {\mathbf{MTC}}
	\arrow[tail reversed, from=1-1, to=1-3]
	\arrow[tail reversed, from=1-3, to=1-5]
\end{tikzcd}\]

For the oriented, signature, componentwise signature and $p_1$ bordism bicategories, the bijection on the right is proven purely algebraically in \cite{BDSV4}, using some computational results from \cite{BDSV2}. In separate work of the author \cite{partA}, we extend this approach to also handle the half-signature and componentwise half-signature bordism bicategories.

The other arrow has proven to be more difficult. The originally envisioned approach was to adapt the Cerf-theoretic methods of \cite{csp-phd} to prove that each presentation $\mathcal{X}$ was indeed a presentation of the corresponding bordism bicategory. Specifically, \cite{BDSV1} along with some algebraic reductions from \cite{BDSV2} was intended to prove this for the oriented bordism bicategory; we record this statement as Theorem \ref{conj:pres-or}. Using this, the analogous results for the signature, componentwise signature and $p_1$ bordism bicategories are shown in \cite{BDSV3}. In separate work of the author \cite{partA}, we also deduce the corresponding results for the half-signature and componentwise half-signature bordism bicategories from Theorem \ref{conj:pres-or}. Much of the Cerf-theoretic work towards proving Theorem \ref{conj:pres-or} has been done in recent work of Haïoun \cite{haioun} and Sytilidis \cite{filippos-thesis}. Upcoming work of Bartlett, Douglas and Sytilidis \cite{pres-bord} will finish the proof of Theorem \ref{conj:pres-or}, thus completing the classifications of the linear representations of the bordism bicategories mentioned above.

Our construction of the extended Reshetikhin--Turaev TQFT allows us to replace the left arrow in the diagram above, and so remove the dependence on Theorem \ref{conj:pres-or}. In comparison to showing that $|-|$ is an equivalence, it is far easier to show that it is essentially surjective, essentially full and full. This is a fact that has been evident to the experts in the area; we record a proof of this in the case of the oriented bordism bicategory in Proposition \ref{prop:modo-full} for the sake of completeness. (The analogous results for the other bordism bicategories can then be deduced from this one.) The bulk of Theorem \ref{conj:pres-or}, for which the difficult Cerf-theoretic work is required, would then be to show that $|-|$ is also faithful, i.e. that the set of relations of $\mathcal{X}$ is sufficiently large. Nevertheless, in weakening Theorem \ref{conj:pres-or} to Proposition \ref{prop:modo-full}, it follows formally (as proven in Proposition \ref{prop:pb-inj}) that the left arrow in the diagram above is replaced by an injection. In other words, we now have:
\[\begin{tikzcd}
	{\mathrm{Rep}(\mathbf{Bord})} && {\mathrm{Rep}(\mathbf{F}(\mathcal{X}))} && {\mathbf{MTC}}
	\arrow[hook, from=1-1, to=1-3]
	\arrow[tail reversed, from=1-3, to=1-5]
\end{tikzcd}\]
In order to complete the proof of the classification of linear representations of $\mathbf{Bord}$ from this picture, it then suffices to produce a construction which takes as input a modular tensor category (possibly with extra data or conditions) and outputs a symmetric monoidal functor $\mathbf{Bord}\rightarrow\Vect$, producing an arrow that goes from $\mathbf{MTC}$ to $\mathrm{Rep}(\mathbf{Bord})$ directly. One would then have to check that this construction is compatible with the generators-and-relations machinery of the right arrow.

This is where the extended Reshetikhin--Turaev TQFT fits in. As mentioned in the previous subsection, there is already an existing construction of this that is independent of generators and relations: the universal construction of De Renzi \cite{dr-thesis}. However, this is set up in an abstract way which makes the computation of the images of specific 1-morphisms and 2-morphisms rather involved. Through some non-trivial calculations, De Renzi computed the images of the pants and cup 1-morphisms (and the copants and cap are analogous), as well as 2-morphisms which, in the language of \textsection\ref{subsection:pres}, correspond to the 2-morphisms $\nu^\dag$ and $\id_{\tikztinycap}\star\mu\star\id_{\tikztinycup}$. Effectively, these computations show that applying the machinery of \cite{BDSV4} to De Renzi's construction would yield a modular tensor category which is equivalent as a monoidal linear category to the modular tensor category one started with (plus some computations for two more generating 2-morphisms). One expects that it should be possible to compute the images of the relevant generating 2-morphisms in a similar fashion, but we instead take a different approach.

In this paper, we present a direct construction of the extended Reshetikhin--Turaev TQFT in the same spirit as the original approach of Reshetikhin and Turaev \cite{RT} which produces a symmetric monoidal functor $\mathbf{Bord}\rightarrow\Vect$ from the data of a modular tensor category. From our construction, it is then not hard to compare this functor to the functor $\mathbf{F}(\mathcal{X})\rightarrow\Vect$ arising from the generators-and-relations approach. This is done in Proposition \ref{prop:mtc-rep-mtc}, allowing us to recover the classification results independently of the Cerf-theoretic result Theorem \ref{conj:pres-or}.

Viewing the Cerf-theoretic part of Theorem \ref{conj:pres-or} as a statement that the set of relations of $\mathcal{X}$ is large enough, the effective contribution of this result to the overall programme is that it allows one to show that the symmetric monoidal functor $\mathbf{Bord}\rightarrow\Vect$ constructed via the generators-and-relations approach is indeed well-defined. (It is, of course, also of independent interest as it allows one to construction symmetric monoidal functors valued in a general symmetric monoidal bicategory.) In our approach, this well-definedness comes directly from the well-definedness of the Reshetikhin--Turaev invariants of ribbon graphs \cite{RT-ribbon,RT}, thus allowing one to bypass the difficult Cerf-theoretic portion of the overall programme.

As a result, we obtain in \textsection\ref{subsection:classify} proofs of the following classification results, independently of Theorem \ref{conj:pres-or}.
\begin{thmA}[{\cite{BDSV4,partA,pres-bord}}] \label{thmA:classify}
    The following classification results hold.
    \begin{enumerate}
        \item Linear representations of the half-signature bordism bicategory $\Bord{sig/2}$ are classified by finite direct sums of modular tensor categories whose anomalies are equal.
        \item Linear representations of the oriented bordism bicategory $\Bord{or}$ are classified by finite direct sums of modular tensor categories with trivial anomaly.
        \item Linear representations of the signature bordism bicategory $\Bord{sig}$ are classified by finite direct sums of modular tensor categories whose anomalies are equal, along with a single choice of square root of this anomaly.
        \item Linear representations of the $p_1$ bordism bicategory $\mathbf{Bord}_{1,2,3}^{p_1}$ are classified by finite direct sums of modular tensor categories whose anomalies are equal, along with a single choice of $6^\mathrm{th}$ root of this anomaly.
        \item Linear representations of the componentwise half-signature bordism bicategory \linebreak $\Bord{csig/2}$ are classified by finite direct sums of modular tensor categories.
        \item Linear representations of the componentwise signature bordism bicategory $\Bord{csig}$ are classified by finite direct sums of modular tensor categories, along with a choice of square root of anomaly of each direct summand.
    \end{enumerate}
\end{thmA}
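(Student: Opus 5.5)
The proof follows the diagram of \textsection\ref{subsection:BDSV}. Fix one of the bordism bicategories $\mathbf{Bord}$ in the list, with its finite presentation $\mathcal{X}$ from \cite{BDSV4,partA} and topological realisation functor $|-|:\mathbf{F}(\mathcal{X})\rightarrow\mathbf{Bord}$. First, I invoke Proposition \ref{prop:modo-full}, and its analogues for the other bordism bicategories. These say that $|-|$ is essentially surjective, essentially full and full. Proposition \ref{prop:pb-inj} then shows that pulling back along $|-|$ gives an injection $\mathrm{Rep}(\mathbf{Bord})\hookrightarrow\mathrm{Rep}(\mathbf{F}(\mathcal{X}))$ on equivalence classes. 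Composing with the purely algebraic bijection $\mathrm{Rep}(\mathbf{F}(\mathcal{X}))\leftrightarrow\mathbf{MTC}$ of \cite{BDSV4} (items (2),(3),(4),(6)) and \cite{partA} (items (1),(5)), I obtain an injection from $\mathrm{Rep}(\mathbf{Bord})$ into the stated class of algebraic data. This step does not use Theorem \ref{conj:pres-or}.

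It remains to prove surjectivity: every such datum must arise from some representation of $\mathbf{Bord}$. Since both sides are compatible with direct sums, it suffices to handle a single modular tensor category $\mathcal{C}$ with the appropriate extra data, and then take direct sums of functors. For $\Bord{sig}$ I use $RT_{\mathcal{C},\sqrt{D}}$ from Theorem \ref{thm:rt-functor}. For $\Bord{sig/2}$ I use $RT_\mathcal{C}$ from Theorem \ref{thm:rt-functor2}. The remaining bicategories are handled by the following reductions:
\begin{enumerate}
\item For $\Bord{or}$ with anomaly $1$, the signature integer acts trivially, so $RT_{\mathcal{C},\sqrt{D}}$ factors through the forgetful map $\Bord{sig}\rightarrow\Bord{or}$.
\item For $p_1$-structures, I precompose with the functor $\mathbf{Bord}^{p_1}_{1,2,3}\rightarrow\Bord{sig}$, after rescaling the root of unity used in the anomaly factor.
\item For the componentwise versions, I twist componentwise by the chosen roots, using the same surgery formula as in Theorem \ref{thm:rt-functor}.
\end{enumerate}

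The key step is Proposition \ref{prop:mtc-rep-mtc}. It shows that the pullback of $RT$ along $|-|$ is equivalent, as a symmetric monoidal functor $\mathbf{F}(\mathcal{X})\rightarrow\Vect$, to the functor the generators-and-relations machinery assigns to $(\mathcal{C},\sqrt{D})$. The invariant extracted from it is then isomorphic to the input datum, which gives surjectivity.

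I expect the main obstacle to lie in this comparison. One must evaluate $RT$ on each generating 2-morphism of $\mathcal{X}$, namely the (co)units, associators, $\mu$, $\nu$ and the Dehn-twist and anomaly generators. The images of the circle, pants, cup and cap should be $\mathcal{C}$, $\otimes$, the unit and the internal-hom/trace functors, matching \cite{BDSV4} on the nose. For the 2-morphisms, I would write each generator as an elementary 3-bordism with corners and compute its $RT$ image from a surgery presentation. Typically this is a single handle attachment, giving a sum over simple objects weighted by $\dim(i)/\sqrt{D}$, or a framing change contributing $\theta_i$. I would then check that these coincide with the algebraic formulas of \cite{BDSV2,BDSV4}. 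The anomaly-generator comparison is where the choice of $\sqrt{D}$, and the index-$2$ subtlety for $\Bord{sig/2}$, must be matched carefully. I would check it first by computing the closed $S^3$ and $\mathbb{CP}^2$-signature normalisations.
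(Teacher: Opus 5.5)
Your overall architecture matches the paper's. Injectivity comes from fullness of the realisation functor via Proposition \ref{prop:pb-inj}, and surjectivity comes from $|-|^*\circ RT\simeq Z$ via Proposition \ref{prop:mtc-rep-mtc}. This works for items (1), (2), (5) and (6).

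\textbf{The gap is item (4).} There is no forgetful functor $\mathbf{Bord}^{p_1}_{1,2,3}\to\Bord{sig}$ to precompose with. In the paper's model $\mathbf{Bord}_{1,2,3}^{6\mathrm{sig/2}}$, $\Bord{sig}$ is the index-$3$ subextension: the unit signature shift $\zeta_2$ equals $\zeta_6^3$. So the natural functor is the inclusion $\Bord{sig}\to\mathbf{Bord}^{p_1}_{1,2,3}$, and restriction along it sends a sixth root $r$ of the anomaly to the square root $r^3$.

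No ``rescaling'' can repair precomposition. For a fixed $\mathcal{C}$, $\Bord{sig}$ carries only the two theories $RT_{\mathcal{C},\pm\sqrt{D}}$. Precomposing along any functor over $\Bord{or}$ therefore produces at most two classes $(\mathcal{C},r)$, whereas item (4) needs every sixth root of the anomaly.

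What you need is to extend along the inclusion instead. Every 2-morphism of $\mathbf{Bord}_{1,2,3}^{t\mathrm{sig/2}}$ is a 2-morphism of $\Bord{sig/2}$ composed with a power of the central generator $\zeta_t$. So one extends $RT_\mathcal{C}$ by letting $\zeta_t$ act by the chosen $r$. This is consistent because $r^t$ is the anomaly, which is the scalar by which $RT_\mathcal{C}(|\zeta|_\mathcal{G})$ acts. This is Theorem \ref{thm:rep-bordext}.

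The same device is how the paper handles injectivity for (3) and (4), and it is much lighter than your route. A representation restricts to $\Bord{sig/2}$, where Theorem \ref{thm:rep-mtc} classifies it. It is then determined by the scalar by which $\zeta_t$ acts. This avoids everything your approach needs for those items:
\begin{enumerate}
\item realisation functors for the signature and $p_1$ presentations;
\item fullness of those functors;
\item a new generator-by-generator comparison replacing Proposition \ref{prop:mtc-rep-mtc}, which is proved only for $\mathcal{G}$.
\end{enumerate}

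\textbf{A smaller slip in (2).} Under $RT_{\mathcal{C},\sqrt{D}}$, the 2-morphism $(\emptyset,1)$ acts by $\sqrt{D}/p_-$. When the anomaly is $1$ this equals $\pm 1$, so the signature acts trivially only if you choose $\sqrt{D}=p_-$. Alternatively, as the paper does, use $RT_\mathcal{C}$ on $\Bord{sig/2}$: there $\zeta$ acts by the anomaly itself, and no choice is needed.
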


Implicit in our proofs of all of these is that all such linear representations may be written as an extended Reshetikhin--Turaev TQFT.
\begin{thmA} \label{thmA:all-rt}
    For each of the bordism bicategories in Theorem \ref{thmA:classify}, all of its linear representations arise as Reshetikhin--Turaev theories.
\end{thmA}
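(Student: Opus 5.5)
The plan is to read Theorem \ref{thmA:all-rt} off the commutative picture set up above, using the injectivity $\mathrm{Rep}(\mathbf{Bord})\hookrightarrow\mathrm{Rep}(\mathbf{F}(\mathcal{X}))$ from Proposition \ref{prop:pb-inj} together with the comparison Proposition \ref{prop:mtc-rep-mtc}. I will treat $\Bord{sig}$ first and then reduce the other bicategories to it.

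Fix a symmetric monoidal functor $Z:\Bord{sig}\rightarrow\Vect$. By Proposition \ref{prop:modo-full} and its signature analogue, the realisation $|-|:\mathbf{F}(\mathcal{X})\rightarrow\Bord{sig}$ is essentially surjective, essentially full and full. Proposition \ref{prop:pb-inj} then says that $Z$ is determined, up to equivalence, by $Z\circ|-|$. Applying the algebraic classification of \cite{BDSV4} to $Z\circ|-|$ gives a finite direct sum $\bigoplus_i\mathcal{C}_i$ of modular tensor categories with equal anomalies, together with a choice of $\sqrt{D}$. Now form the extended Reshetikhin--Turaev functor $RT_{\bigoplus_i\mathcal{C}_i,\sqrt{D}}$ of Theorem \ref{thm:rt-functor}. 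This uses the fact that the construction accepts a direct sum of modular categories with equal anomaly; alternatively, one can assemble it summand by summand as a direct sum of functors, which is compatible with symmetric monoidal structure in $\Vect$. Proposition \ref{prop:mtc-rep-mtc} says that $RT_{\bigoplus_i\mathcal{C}_i,\sqrt{D}}\circ|-|$ maps to the same algebraic data as $Z\circ|-|$. Since the right-hand arrow is a bijection, these two pullbacks are equivalent, and injectivity then gives $Z\simeq RT_{\bigoplus_i\mathcal{C}_i,\sqrt{D}}$.

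For the remaining bicategories I would run the same argument with the relevant presentation. For $\Bord{sig/2}$, use $RT_\mathcal{C}$ from Theorem \ref{thm:rt-functor2}. For $\Bord{or}$, note that trivial anomaly lets one trivialise the signature, so $RT$ factors through $\Bord{or}$. The $p_1$ case goes through the standard comparison of $p_1$ structures with signature data, which uses the choice of $6^{\mathrm{th}}$ root. The componentwise variants are handled as direct sums in which each summand carries its own root, or with the choice forgotten in the half-signature case. In each case one needs the analogue of Propositions \ref{prop:modo-full}, \ref{prop:pb-inj} and \ref{prop:mtc-rep-mtc} for that bicategory, deduced from the oriented case as the text indicates.

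The main obstacle is Proposition \ref{prop:mtc-rep-mtc}. One has to compute the images under $RT$ of the generating 1- and 2-morphisms (pants, cups, and the 2-morphisms such as $\nu^\dag$ and $\mu$) from surgery presentations, and match them, up to the equivalence allowed in the bijection, with the algebraic data extracted in \cite{BDSV4}. Once that comparison is in hand, each case of Theorem \ref{thmA:all-rt} is formal. The only further care needed is in checking that the chosen roots of the anomaly correspond correctly under the passage between the different tangential structures.
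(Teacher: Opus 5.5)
Your core mechanism is the same as the paper's. Once the realisation functor from a presentation is essentially surjective, essentially full and full, pullback is injective on equivalence classes (Proposition \ref{prop:pb-inj}). The identity $|-|^*\circ RT_{(-)}=Z_{(-)}$ (Proposition \ref{prop:mtc-rep-mtc}), together with the algebraic bijection, then forces every representation to be equivalent to an RT theory. That is exactly the proof of Theorem \ref{thm:rep-mtc}, and the paper runs it the same way for $\Bord{or}$, $\Bord{csig/2}$ and $\Bord{csig}$.

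You genuinely diverge for $\Bord{sig}$ and the $p_1$ case. You make $\Bord{sig}$ the base case with its own presentation. That costs you four extra inputs:
\begin{enumerate}
\item a realisation functor for that presentation, from the forthcoming \cite{BDSV3};
\item fullness of that functor;
\item the corresponding algebraic classification from \cite{BDSV4};
\item a check on the extra signature-shift generator, which Proposition \ref{prop:mtc-rep-mtc} does not cover because it is stated only for $\mathcal{G}$.
\end{enumerate}

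The paper instead takes $\Bord{sig/2}$ as the base. It regards $\Bord{sig}$ and the $p_1$ bicategory as $\mathbf{Bord}_{1,2,3}^{t\mathrm{sig/2}}$ with $t=2,6$ (Theorem \ref{thm:rep-bordext}). Every 2-morphism there is a $\Bord{sig/2}$ one shifted by a power of $\zeta_t$. So any $Z$ restricts to some $RT_\mathcal{C}$, $Z(\zeta_t)$ is a scalar $t$-th root $r$ of the anomaly, and $Z$ is equivalent to $RT_\mathcal{C}$ extended by $\zeta_t\mapsto r$.

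This route has three advantages:
\begin{itemize}
\item it needs no further presentation or generator computation;
\item it handles every $t$ at once;
\item it supplies what your sketch leaves open for $p_1$. Before any comparison you need an RT functor on the $p_1$ bicategory, and the natural one is exactly this extension with $\zeta_6\mapsto r$. For $t=2$ the extension is $RT_{\mathcal{C},\sqrt{D}}$ with $\sqrt{D}=r\,p_-$.
\end{itemize}
Your route is viable given the four inputs above, but it is heavier, and for $p_1$ it is incomplete until that functor is constructed.

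One correction to your $\Bord{sig}$ paragraph: the classification data is a single square root $r$ of the common anomaly, not a choice of $\sqrt{D}$.
\begin{itemize}
\item $RT_{\mathcal{C},\sqrt{D}}$ sends a unit signature shift to $\sqrt{D}/p_-$, whose square is $D/p_-^2=p_+/p_-$.
\item So you must take $\sqrt{D_i}=r\,p_{-,i}$ on each summand $\mathcal{C}_i$, with the same $r$ for every $i$.
\item Theorem \ref{thm:rt-functor} applies only to a single modular tensor category.
\item The summand-by-summand assembly is compatible with the one global signature integer of $\Bord{sig}$ precisely because the shift acts by this common $r$ on every summand.
\end{itemize}
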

\begin{rk}
    In the case of $\Bord{sig/2}$, this is essentially the content of Proposition \ref{prop:mtc-rep-mtc}. The analogous results for the other bicategories may be deduced from this case.
\end{rk}

\subsection{Overview of this paper} The contents of this paper are as follows. In \textsection\ref{section:background}, we recall from \cite{partA} the definitions of the signature and half-signature bordism bicategories, and present models for these which are better suited to the extended Reshetikhin--Turaev construction. In \textsection\ref{section:rt}, we first go through some preliminaries regarding the invariants of various notions of ribbon graphs; this also serves the function of fixing the notation that will be used in subsequent sections. Then, we write down the construction for $RT_{\mathcal{C},\sqrt{D}}:\Bord{sig}\rightarrow\Vect$. In \textsection\ref{section:rt-functor}, we show that this construction indeed defines a symmetric monoidal functor. Along the way, we also check that this construction is indeed compatible with the Reshetikhin--Turaev TQFT $rt_{\mathcal{C},\sqrt{D}}$. At the end of the section, we check that upon restriction to $\Bord{sig/2}$, $RT_{\mathcal{C},\sqrt{D}}$ and $RT_{\mathcal{C},-\sqrt{D}}$ agree, and hence it makes sense to write $RT_\mathcal{C}:\Bord{sig/2}\rightarrow\Vect$. Finally, in \textsection\ref{section:classify}, we check that this $RT_\mathcal{C}$ is compatible with the generators-and-relations approach of \cite{partA}, and so obtain a proof of Theorem \ref{thmA:classify} that is independent of the Cerf-theoretic Theorem \ref{conj:pres-or}.

\subsection{Notations and conventions} Throughout this paper, $k$ denotes an algebraically closed field of arbitrary characteristic. All notions of linearity are over $k$. This is in particular allows us to use results from \cite{BK} on modular tensor categories over algebraically closed fields.

\subsection{Acknowledgements}

We are grateful to André Henriques for proposing this project, as well as continued guidance and support throughout this process. Thanks are also due to Christopher Douglas for many helpful discussions as well as his suggestions on the structure and framing of this paper. We also thank Gheehyun Nahm for some helpful discussions.

\section{The source and target bicategories} \label{section:background}

The extended Reshetikhin--Turaev TQFT defined here will be a symmetric monoidal functor from a suitable central extension of the oriented bordism bicategory to a suitable bicategorical analogue of the category of vector spaces. In this section, we specify our models for the source and target bicategories.

\subsection{The half-signature bordism bicategory}

The source category will be the half-signature bordism bicategory, as defined in \cite{partA}. We will first recall from there the construction of the signature and half-signature bordism bicategories, then proceed to present an alternate model which is more suited to the extended Reshetikhin--Turaev TQFT. This will be done in some detail, so as to fix the notation used in the rest of the paper. Much of this subsection is summarised from \cite[\textsection 3]{partA}, with the diagrams reproduced for clarity.

\begin{defn}
    Let $M, M'$ be closed oriented $(d-1)$-dimensional manifolds. An \emph{oriented bordism} from $M$ to $M'$ is an oriented $d$-manifold $X$ with an orientation-preserving diffeomorphism $\partial X \cong (-M) \sqcup M'$, where $-M$ is $M$ with the opposite orientation.

    Given such a bordism $X$, write $\parin X = M$ and $\parout X = M'$.
\end{defn}
\begin{defn}\label{defn:bordism-corners}
    Let $X, X'$ be oriented bordisms between closed oriented $(d-2)$-dimensional manifolds $M$ and $M'$. An \emph{oriented bordism with corners} from $X$ to $X'$ is an oriented $d$-dimensional manifold with corners $Y$ with orientation-preserving diffeomorphism $$\partial Y \cong \pa{(-X) \sqcup X'} \cup_{(-M) \sqcup M \sqcup (-M') \sqcup M'} \pa{I \times (-M) \sqcup I \times M'}.$$

    This has a \emph{horizontal boundary} which consists of the manifolds with boundary $\parin Y = X$ and $\parout Y = X'$, as well as a \emph{vertical boundary} which consists of the manifolds with boundary $\parIn Y = I \times M$ and $\parOut Y = I \times M'$.
\end{defn}

In the definition of the signature bordism bicategory given in \cite{BDSV3}, the horizontal and vertical composition of 2-morphisms is defined in terms of gluing $4$-manifolds along $3$-dimensional submanifolds of their boundary. This is omitted here; we will instead present in Definition \ref{defn:comp} an alternative formulation using Wall's invariant (Definition \ref{defn:Wall}).

\begin{defn}\label{defn:bordsig}
    The \emph{signature bordism bicategory} $\Bord{sig}$ is the symmetric monoidal bicategory with:
    \begin{itemize}
        \item Objects: Pairs $(S, D_S)$ where $S$ is a closed oriented 1-dimensional manifold and $D_S$ a 2-manifold with boundary $\partial D_S = S$. $D_S$ is oriented such that the orientation induced on $\partial D_S$ is the same as the orientation on $S$.
        \item 1-morphisms: Pairs $(\Sigma, H_\Sigma)$ where $\Sigma$ is a oriented 2-dimensional bordism, and $H_\Sigma$ is a 3-manifold with boundary $$\partial H_{\Sigma} = \ol{\Sigma}:= D_{\parin \Sigma} \cup_{\parin \Sigma} \Sigma \cup_{\parout \Sigma} \pa{-D_{\parout \Sigma}}.$$ $H_\Sigma$ is oriented such that the orientation induced on $\partial H_\Sigma$ is the same as the orientation on $\ol{\Sigma}$.
        \item 2-morphisms: Equivalence classes of pairs $(M,n)$ where $M$ is an oriented bordism with corners, and $n$ is an integer. Two pairs $(M,n)$ and $(M',n')$ are taken up to equivalent if and only if $M,M'$ are diffeomorphic relative to the boundary and $n=n'$.
    \end{itemize}
    The composition of 1-morphisms $(\Sigma, H_{\Sigma})$ and $(\Sigma', H_{\Sigma'})$ with  $\parout \Sigma = \parin \Sigma' = S$ is given by $$(\Sigma \cup_S \Sigma', H_{\Sigma} \cup_{D_S} H_{\Sigma'}).$$

    The vertical and horizontal composition of 2-morphisms are as given in Definition \ref{defn:comp}.

    The monoidal structure is given by taking disjoint unions.
\end{defn}

Given a 2-morphism $(M,n)$, the extra data associated to the objects and 1-morphisms allows us to construct a closed 3-manifold $\ol{M}$, which will be essential in the definition of the half-signature bordism bicategory as well as our construction of the extended Reshetikhin--Turaev TQFT.

\begin{defn} \label{defn:olm}
    Let $(M,n)$ be a 2-morphism in $\Bord{sig}$. Define $\ol{M}$ to be the closed 3-manifold obtained from $M$ by gluing $H_{\parin M}$ and $-H_{\parout M}$ onto its horizontal boundaries and $V_{\parIn M}:= I\times D_{\parin\parin M}$ and $-V_{\parOut M}:= -(I\times D_{\parout\parin M})$ onto its vertical boundaries. Figure \ref{fig:bordsig-2mor} shows a schematic for $\ol{M}$.\begin{figure}[hbt!]
        \includegraphics[scale=0.5]{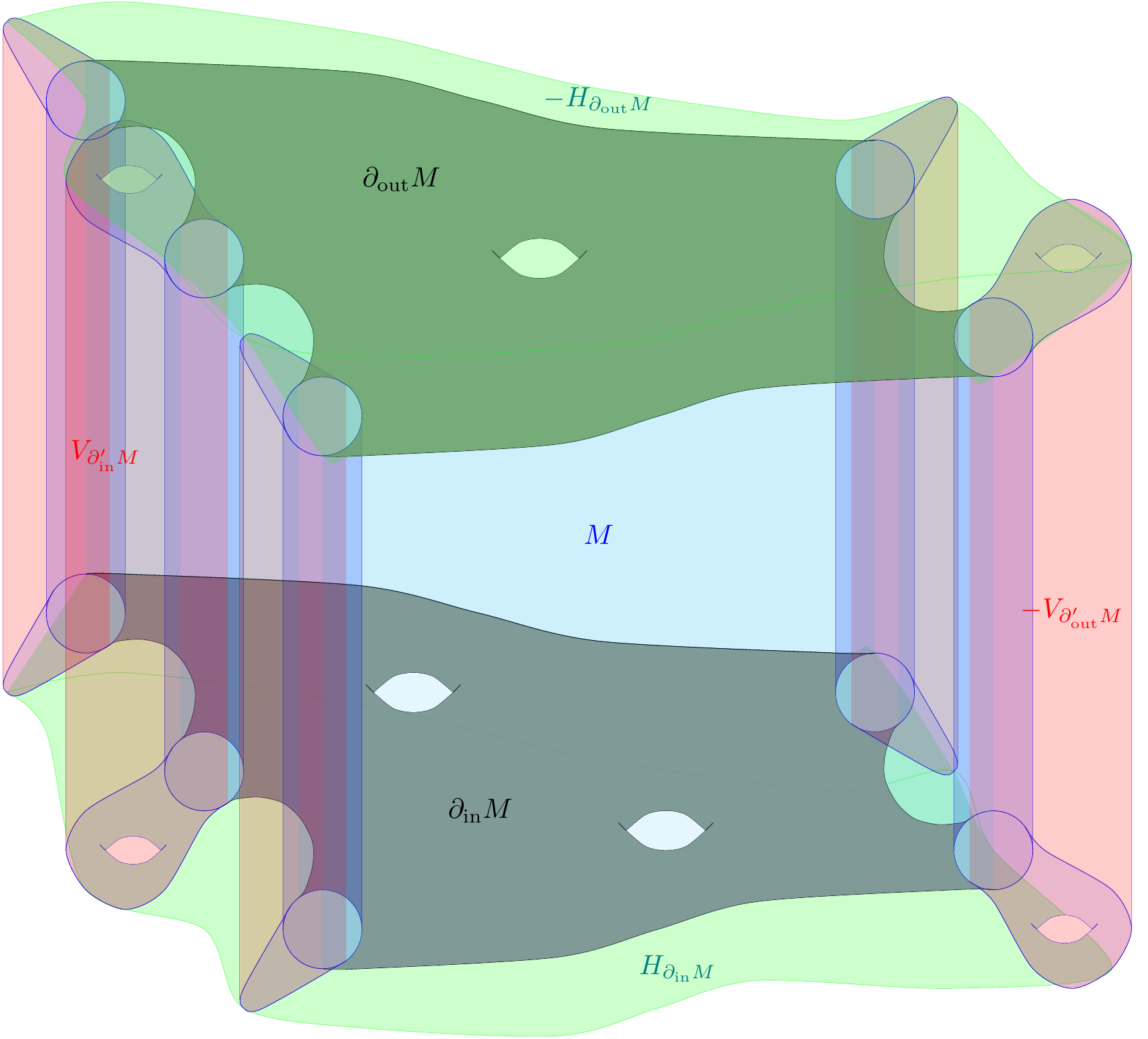}
        \caption{Schematic for $\ol{M}$.} \label{fig:bordsig-2mor}
    \end{figure}
\end{defn}
\begin{rk}
    The integer $n$ in Definition \ref{defn:bordsig} may be thought of as the signature of a 4-manifold $X$ with boundary $\ol{M}$. We refer the reader to \cite[\textsection 3]{partA} for more details.
\end{rk}

Our definition of the signature and half-signature bordism bicategories uses an invariant of Wall \cite{Wall}, which he used to study the signature of the $4k$-manifold created by gluing two $4k$-manifolds along $4k-1$-dimensional submanifolds of their boundary.

\begin{defn}\label{defn:Wall}
    Let $(V,\omega)$ be a finite-dimensional symplectic vector space and $A,B,C$ three Lagrangian subspaces. Then, we may define a bilinear form $\langle\cdot,\cdot\rangle$ on the vector space $\frac{A\cup(B+C)}{(A\cup B) + (A\cup C)}$ as follows: for $a,a'\in A\cup (B+C)$, let $a+b+c=a'+b'+c'=0$, where $b,b'\in B$ and $c,c'\in C$. Then, define $\langle a,a' \rangle := \omega(a,b')$.

    \emph{Wall's invariant}, also referred to in the literature as the \emph{Maslov index}, is the signature of $\langle \cdot,\cdot\rangle$, denoted $\sigma(V;A,B,C)$.
\end{defn}
This invariant has the following topological significance:
\begin{thm}[\cite{Wall}]\label{thm:wall}
    Let $M_-,H,M_+$ be $(4k-1)$-manifolds with common boundary $-\partial M_- = \partial H = \partial M_+ = \Sigma$ and $X_-, X_+$ be $4k$-manifolds with $\partial X_+ = M_+ \cup_\Sigma (-H), \partial X_- = X_0 \cup_\Sigma M_-$, and $X_0 = X_+ \cup_{H} X_-$, as in Figure \ref{fig:wall-diagram}. Then the signatures of $X_0$ and $X_\pm$ are related by $$\sigma(X_0) = \sigma(X_-) + \sigma(X_+) - \sigma(V; A, B, C)$$
    where $V = H_{2k-1}(\Sigma; \mathbb{R})$ and $A, B, C$ are the Lagrangian subspaces of $V$ associated to the inclusions $\Sigma \hookrightarrow M_-$, $\Sigma \hookrightarrow H$, $\Sigma \hookrightarrow M_+$ respectively.
    \begin{figure}[hbt!]
        \includegraphics[scale=0.5]{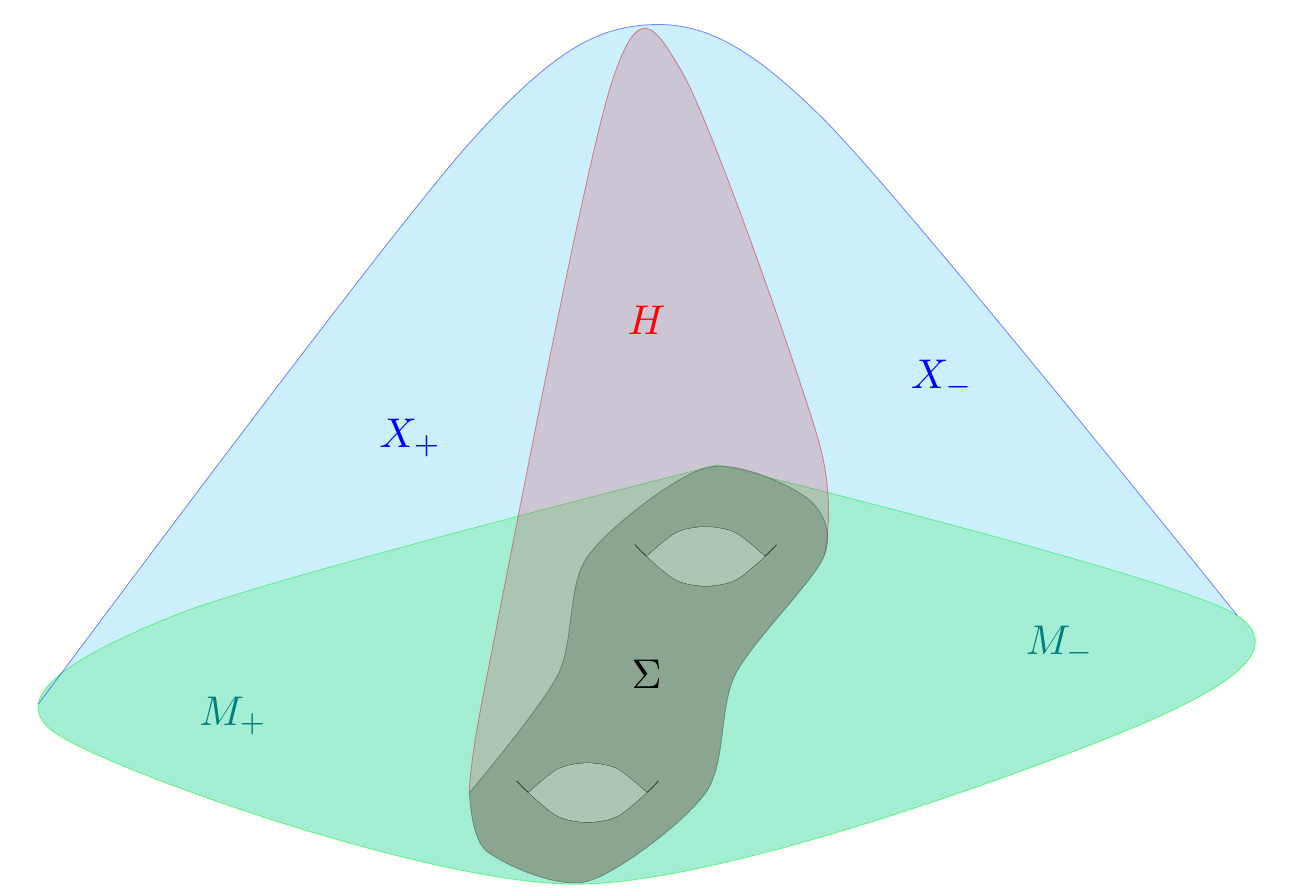}
        \caption{Gluing $X_\pm$ to form $X_0$ in Wall's Theorem.} \label{fig:wall-diagram}
    \end{figure}
\end{thm}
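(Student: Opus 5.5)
The plan follows Wall's original argument. Throughout, take real coefficients and write $\cdot$ for the intersection pairing on $H_{2k}$ of a $4k$-manifold with boundary; its signature is the signature of the (possibly degenerate) form.

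The first step is to identify which middle-dimensional classes of $X_0$ come from the pieces. By Mayer--Vietoris for $X_0 = X_+\cup_H X_-$, there is an exact sequence
\[ H_{2k}(X_+)\oplus H_{2k}(X_-)\to H_{2k}(X_0)\to H_{2k-1}(H)\to H_{2k-1}(X_+)\oplus H_{2k-1}(X_-). \]
Let $U\subseteq H_{2k}(X_0)$ be the image of the first map. Classes supported in $X_+$ and $X_-$ can be pushed off $H$, so the images of $H_{2k}(X_+)$ and $H_{2k}(X_-)$ are orthogonal in $X_0$. The form on each summand is the intersection form of $X_\pm$. Modding out radicals, the form restricted to $U$ therefore has signature $\sigma(X_+)+\sigma(X_-)$. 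To justify this I will use the standard lemma that if a subspace $U$ satisfies $U\cap U^\perp\subseteq \mathrm{rad}$, then $\sigma = \sigma(U)+\sigma(U^\perp)$. More generally I will use the lemma that the signature of a form is unchanged by passing to $U^\perp/U\cap U^\perp$ when $U$ is isotropic. I would set up the decomposition so that the remaining part of the form lives on a quotient complementary to $U$, modulo isotropic pieces.

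The second, and main, step is to identify this complementary quotient with $\frac{A\cap(B+C)}{(A\cap B)+(A\cap C)}$, equipped with Wall's form; here I read the $\cup$ in Definition \ref{defn:Wall} as intersection of subspaces. A class of $X_0$ not coming from $U$ can be represented as follows. Take a $2k$-chain $z_-$ in $X_-$ and a $2k$-chain $z_+$ in $X_+$ whose boundaries are
\[ \partial z_+ = c_+ - \beta, \qquad \partial z_- = \beta' - c_-, \]
with $c_\pm$ chains in $M_\pm$ and $\beta,\beta'$ chains in $H$. The boundaries of $c_-$, $\beta$, $c_+$ are cycles $a,b,c$ in $\Sigma$, and they satisfy $a+b+c=0$ in $H_{2k-1}(\Sigma)$. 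Here $a\in A$ is a class bounding in $M_-$, $b\in B$ bounds in $H$, and $c\in C$ bounds in $M_+$. Conversely, any such relation, with $a\in A\cap(B+C)$, yields (after choosing bounding chains and capping off in collars) a cycle in $X_0$ together with a partner cycle. Changing the choices by elements of $A\cap B$ or $A\cap C$ alters the class only by something in $U$ or in the radical. This gives a well-defined map from Wall's quotient to $H_{2k}(X_0)/U$ modulo radical.

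The main obstacle is the intersection computation: I must show that the induced pairing equals $\pm\omega(a,b')$, which is where Wall's form enters. I would try the following. Push one cycle off the other using an inward normal on $M_-$ and a collar of $\Sigma$. The only intersection points then remain near $\Sigma$, between the chain $c_-$ for $a$ and the chain $\beta'$ for $b'$ inside a collar $\Sigma\times[0,1]^2$. Their count is then the intersection number $\omega(a,b')$ in $\Sigma$. Sign conventions must be tracked carefully, which is also how one fixes the minus sign in the formula.

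Finally, I would show that every isotropic or radical piece left over contributes zero signature. Assembling the orthogonal decomposition then gives $\sigma(X_0)=\sigma(X_-)+\sigma(X_+)-\sigma(V;A,B,C)$.
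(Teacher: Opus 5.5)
The paper gives no proof of this statement; it simply cites Wall. Your outline has the right overall shape. Your first step is fine: the map $H_{2k}(X_+)\oplus H_{2k}(X_-)\to U$ is a surjective form-preserving map from an orthogonal sum, so $\sigma(U)=\sigma(X_+)+\sigma(X_-)$. The second step, however, has a genuine gap.

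\textbf{The linear algebra targets the wrong object.} The ``quotient complementary to $U$'' cannot carry the remaining signature. The intersection form does not descend to $H_{2k}(X_0)/U$, since $U$ is not contained in the radical, and that quotient also has the wrong size. For example, take $\Sigma=\emptyset$, $X_0=S^2\times S^2$, and let $H=S^2\times S^1$ be the boundary of a tubular neighbourhood $X_+$ of $S^2\times\mathrm{pt}$. Then $U=\mathbb{R}[S^2\times\mathrm{pt}]=U^\perp$ and the radical is $0$. So $H_2(X_0)/U\cong\mathbb{R}$, whereas $V=0$ (and $U^\perp$ modulo its radical is $0$, as it should be).

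The same example shows that the hypothesis $U\cap U^\perp\subseteq\mathrm{rad}$ of your first lemma fails. Your isotropic-$U$ lemma is also unavailable, because $U$ carries signature $\sigma(X_+)+\sigma(X_-)$.

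What you need is the unconditional identity $\sigma(W)=\sigma(U)+\sigma(U^\perp)$, valid for every subspace $U$ of a finite-dimensional symmetric bilinear space $W$. To prove it, reduce modulo $\mathrm{rad}(W)$. In the nondegenerate case write $U=U_0\oplus N$ with $U_0$ nondegenerate and $N=U\cap U^\perp$, and split $W=U_0\oplus U_0^\perp$. Then $U^\perp=N^\perp\cap U_0^\perp$, and passing from $U_0^\perp$ to $U^\perp$ along the isotropic subspace $N\subseteq U_0^\perp$ does not change the signature. So the piece to be matched with Wall's form is $U^\perp$, not a complement of $U$.

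\textbf{The cycle description is inconsistent.} From $\partial z_+=c_+-\beta$ you get $0=\partial^2 z_+=\partial c_+-\partial\beta$, i.e.\ $c=b$ rather than $a+b+c=0$. Also $\partial(z_++z_-)=c_+-c_-+\beta'-\beta\neq 0$, so $z_++z_-$ is not a cycle of $X_0$. At best it is a relative cycle, and two relative classes cannot be intersected.

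\textbf{The missing idea is an explicit description of $U^\perp$.} By Lefschetz duality, $x\perp H_{2k}(X_\pm)$ if and only if $x$ maps to $0$ in $H_{2k}(X_\pm,\partial X_\pm)$. Split a representative $z=z_++z_-$ along $H$ and write $z_\pm=\partial w_\pm+e_\pm$ with $e_\pm$ in $\partial X_\pm$. This shows that $U^\perp$ is exactly the image of $H_{2k}(M_-\cup_\Sigma H\cup_\Sigma M_+)$. Concretely, its elements are classes of cycles $m_-+h+m_+$ with $m_-\subset M_-$, $h\subset H$, $m_+\subset M_+$, where $\partial m_-=a$, $\partial h=b$, $\partial m_+=c$ and $a+b+c=0$ in $\Sigma$.

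With this description the argument closes:
\begin{itemize}
\item $x\mapsto[a]$ is a well-defined surjection from $U^\perp$ onto $\frac{A\cap(B+C)}{(A\cap B)+(A\cap C)}$. Two representatives of the same class give values of $a$ differing by an element of $A\cap B$.
\item Your push-off computation now applies to these cycles. Push $x'$ into the interior and across $H$ to one side. The only intersections are then between $h$ and the pushed $M_-$-chain of $x'$, near $\Sigma$.
\item This shows the form on $U^\perp$ is the pullback of $\pm$ Wall's form; symmetry of Wall's form turns $\omega(a',b)$ into $\langle a,a'\rangle$.
\item Pulling back along a surjection preserves signature, so $\sigma(U^\perp)=-\sigma(V;A,B,C)$ once orientations are fixed. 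Combined with $\sigma(X_0)=\sigma(U)+\sigma(U^\perp)$, nothing is left over.
\end{itemize}
This corrected route is essentially Wall's original argument.
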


We are now able to write down the vertical and horizontal composition of 2-morphisms in $\Bord{sig}$.
\begin{defn} \label{defn:comp}
    The vertical and horizontal composition of 2-morphisms in $\Bord{sig}$ is given as follows.

    For two 2-morphisms $(M,n)$ and $(M',n')$ with, respectively, target and source $(\Sigma,H_\Sigma)$, their vertical composition is $$\pa{M \cup_\Sigma M', n + n' - \sigma(V; A, B, C)}$$ where
    \begin{equation} \label{eq:csig}
        \begin{split}
            V &= H_1(\ol{\Sigma}; \mathbb{R}) \\
            A &= \ker(H_1(\ol{\Sigma}; \mathbb{R}) \rightarrow H_1(\ol{M'}\setminus \mathring{H}_\Sigma; \mathbb{R})) \\
            B &= \ker(H_1(\ol{\Sigma}; \mathbb{R}) \rightarrow H_1(H_\Sigma; \mathbb{R})) \\
            C &= \ker(H_1(\ol{\Sigma}; \mathbb{R}) \rightarrow H_1(\ol{M}\setminus \mathring{H}_\Sigma; \mathbb{R}))
        \end{split}
    \end{equation}
    
    For two 2-morphisms $(M,n)$ and $(M',n')$ with $\parOut M \cong \parIn M' \cong \Sigma$, let $$\ol{\Sigma} = D_{\parout\parin M} \cup_{\parout\parin M} \Sigma \cup_{\parout\parout M} (-D_{\parout\parout M}).$$ Then, their horizontal composition is $$\pa{M \cup_\Sigma M', n + n' - \sigma(V; A, B, C)}$$ where
    \begin{equation} \label{eq:ctildesig}
        \begin{split}
            V &= H_1(\ol{\Sigma}; \mathbb{R}) \\
            A &= \ker(H_1(\ol{\Sigma}; \mathbb{R}) \rightarrow H_1(\ol{M'}\setminus \mathring{V}_\Sigma; \mathbb{R})) \\
            B &= \ker(H_1(\ol{\Sigma}; \mathbb{R}) \rightarrow H_1(V_\Sigma; \mathbb{R})) \\
            C &= \ker(H_1(\ol{\Sigma}; \mathbb{R}) \rightarrow H_1(\ol{M}\setminus \mathring{V}_\Sigma; \mathbb{R}))
        \end{split}
    \end{equation}
\end{defn}

Having defined $\Bord{sig}$, we may now define the half-signature bordism bicategory as a symmetric monoidal subbicategory.
\begin{defn}[{\cite{partA}}] \label{defn:sig/2}
    The \emph{half-signature bordism bicategory} $\Bord{sig/2}$ is the symmetric monoidal bicategory with 
    \begin{itemize}
        \item Objects and 1-morphism: as in $\Bord{sig}$ in Definition \ref{defn:bordsig}.
        \item 2-morphisms: Equivalence classes of pairs $(M,n)$ such that $n\in\mathbb{Z}$ has the same parity as 
        \begin{equation*}
            m(M) := b_1(\ol{M}) + b_0(\ol{M}) + \frac12 b_1(\ol{\parout M}) + b_0(\ol{\parout M}),
        \end{equation*}
        where $b_i$ denotes the $i^\text{th}$ Betti number, $b_i(X) := \dim H_i(X;\mathbb{R})$.
    \end{itemize}
    The composition of 1-morphisms, the vertical and horizontal composition of 2-morphisns, and the monoidal structure are as in in $\Bord{sig}$ in Definitions \ref{defn:bordsig} and \ref{defn:comp}.
\end{defn}

Now, we shall construct alternate models $\Bord{sig,dec}$ and $\Bord{sig/2,dec}$ for the signature and half-signature bordism bicategories. In these model, the objects are equipped with extra data which identifies them with a disjoint union of a certain generating object. Correspondingly, 1-morphisms are equipped with extra data identifying them with a composition of disjoint unions of certain generating 1-morphisms.

Specifically, let $(\mathbb{S},\mathbb{D})$ be the standard circle with the standard bounding disc. This is drawn diagrammatically as:
\begin{equation*}
    \begin{tikzpicture}
            \node[Cyl, top, height scale=0]  at (0,0) {};
    \end{tikzpicture}
\end{equation*} 

Moreover, consider the standard pants, copants, cup and cap, which are 1-morphisms between $(\emptyset,\emptyset)$, $(\mathbb{S},\mathbb{D})$ and $(\mathbb{S}\sqcup \mathbb{S}, \mathbb{D} \sqcup \mathbb{D})$:
\begin{equation*}
    \begin{tikzpicture}
        \node[Pants, top, bot] (A) at (0,0) {};
        \node[Cup, top] (C) at (4,0.1) {};
        \node[Copants, top, bot] (B) at (2,0) {};
        \node[Cap, bot] (D) at (6,-0.1) {};
    \end{tikzpicture}
\end{equation*}
These are all given the handlebody $H_\Sigma = D^3$, filling the ``insides'' of the pictures drawn above. Viewing these $D^3$s as being embedded in $\mathbb{R}^3$ in the obvious way, we may orient them according to the right-handed orientation on $\mathbb{R}^3$, which in turn induces an orientation on the respective surfaces.

Now, we consider a model of $\Bord{sig}$ whose objects are exactly disjoint unions of the standard $(\mathbb{S},\mathbb{D})$ and whose 1-morphisms are given by formal compositions of the 1-morphisms drawn above.

\begin{defn}\label{defn:bordsig-new}
    The \emph{signature bordism bicategory with decompositions} $\Bord{sig,dec}$ is the symmetric monoidal bicategory with
    \begin{itemize}
        \item Objects: Disjoint unions of copies of $(\mathbb{S},\mathbb{D})$.
        \item 1-morphisms: Formal compositions of disjoint unions of the standard pants, copants, cup and cap. If the underlying surface arising from such a composition is $\Sigma$, there is a natural choice of handlebody $H_\Sigma$ with $\partial H_\Sigma = \ol{\Sigma}$ that comes from gluing the handlebodies associated to each of the generating components.
        \item 2-morphisms: Equivalence classes of pairs $(M,n)$, where $(M,n)$ and $(M',n')$ are equivalent if and only if $M,M'$ are diffeomorphic relative to the boundary and $n=n'\in\mathbb{Z}$.
    \end{itemize}

    Horizontal and vertical composition of 2-morphisms is as in Definition \ref{defn:comp}. Moreover, note that for horizontal composition, the surface $\ol{\Sigma}$ in (\ref{eq:ctildesig}) always has genus $0$, and so the Wall's invariant term is always $0$.
\end{defn}
\begin{rk}
    For any of the presentations in \cite[\textsection 6]{partA} (most of which are reproduced from \cite{BDSV4}), one may consider the free quasistrict symmetric monoidal bicategory of the presentation, as defined in \cite[\textsection 2.12]{csp-phd} and \cite[Definition 62]{BDSV2}. The objects and 1-morphisms of the resulting symmetric monoidal bicategory will exactly be in bijection with those of $\Bord{sig,dec}$. In particular, the definitions of these presentations contain many examples of what 1-morphisms in $\Bord{sig,dec}$ could look like.
\end{rk}

\begin{defn} \label{defn:bordsigh-new}
    The \emph{half-signature bordism bicategory with decompositions} $\Bord{sig/2,dec}$ is the symmetric monoidal bicategory with the same objects and 1-morphisms as \linebreak $\Bord{sig,dec}$, but with the 2-morphisms being equivalence classes of pairs $(M,n)$ such that $n \equiv m(M) \pmod2$, where $m(M)$ is as in Definition \ref{defn:sig/2}. This is an index two subbicategory of $\Bord{sig}$.
\end{defn}
\begin{defn} \label{defn:bordor-new}
    The \emph{oriented bordism bicategory with decompositions} $\Bord{or,dec}$ is the symmetric monoidal bicategory with the same objects and 1-morphisms as $\Bord{sig,dec}$, but the 2-morphisms being equivalence classes of bordisms with corners $M$.
\end{defn}

By construction, we have equivalences of symmetric monoidal bicategories $\Bord{sig,dec}\rightarrow\Bord{sig}$, $\Bord{sig/2,dec}\rightarrow\Bord{sig/2}$ and $\Bord{or,dec}\rightarrow\Bord{or,exp}$ given by forgetting the extra structure on the objects and 1-morphisms. $\Bord{sig,dec}$ is a symmetric monoidal extension (in the sense of \cite[\textsection 2]{partA}) of $\Bord{or,dec}$ by $\mathbb{Z}$ with index $2$ subextension $\Bord{sig/2,dec}$.

\subsection{The target bicategory}

The target bicategory will be Kapranov--Voevodsky's \cite{KV} bicategory of 2-vector spaces $\KV$. While there are multiple possible bicategorical analogues of $\vect$, $\KV$ naturally admits a fully faithful functor into the others, as illustrated in \cite[Appendix~A]{BDSV4}.

\begin{defn}
    $\KV$ is the symmetric monoidal bicategory with 
    \begin{itemize}
        \item Objects: positive integers.
        \item 1-morphisms: matrices of finite-dimensional $k$-vector spaces.
        \item 2-morphisms: matrices of $k$-linear maps.
    \end{itemize}
    Composition of 1-morphisms and horizontal composition of 2-morphisms is given by matrix multiplication, with $\oplus$ and $\otimes$ playing the role of the usual addition and multiplication. Vertical composition of 2-morphisms is given by the entrywise composition.

    This has a symmetric monoidal structure given by multiplication on objects, and on the 1- and 2-morphisms, the pairwise tensor products of entries.
\end{defn}

This is equivalent to the bicategory of finite semisimple linear categories, with the integer $s$ corresponding to the linear category $\vect^{\oplus s}$. In particular, it admits a fully faithful functor into the other relevant bicategorical analogue of $\vect$:
\begin{defn}[{\cite[Definition 2.7]{BDSV4}}] \label{defn:rep}
    $\Vect$ is the symmetric monoidal bicategory with
    \begin{itemize}
        \item Objects: Cauchy-complete $k$-linear categories
        \item 1-morphisms: $k$-linear functors
        \item 2-morphisms: Natural transformations
    \end{itemize}
    The monoidal structure is given by the Cauchy completion of the enriched tensor product.
\end{defn}

\section{The extended Reshetikhin--Turaev TQFT} \label{section:rt}

In this section, we present our construction of the extended Reshetikhin--Turaev TQFT as a symmetric monoidal functor $RT_{\mathcal{C},\sqrt{D}}: \Bord{sig,dec} \rightarrow \KV$. In the next section, we will then check that this is compatible with vertical and horizontal composition, and that on the subbicategory $\Bord{sig/2,dec}$, the functors $RT_{\mathcal{C},\pm\sqrt{D}}$ agree, so we have a single well-defined functor $RT_\mathcal{C}:\Bord{sig/2,dec}\rightarrow\KV$.

Our construction here is based on combining the approach of Tsumura \cite{Tsumura} with that of Bartlett, Douglas, Schommer-Pries and Vicary \cite{BDSV4}. Specifically, our construction will resemble the ``internal string diagram'' formulation of the latter at the level of objects and 1-morphisms, and the former's approach involving Reshetkhin--Turaev ribbon invariants at the level of 2-morphisms. Before we proceed, we will first provide an extended overview of ribbon graphs and their invariants.

\subsection{Ribbon graphs and invariants}

We define the relevant notions of a ribbon graph, which is an essential ingredient in our definition of the extended Reshetikhin--Turaev TQFT. This comes in two different flavours: an abstract combinatorial one, and a topological one.

\begin{defn}
    An \emph{abstract ribbon graph} $\Gamma = (V,E,L)$ consists of 
    \begin{itemize}
        \item a set of \emph{vertices} $V$
        \item a set of directed \emph{edges} $E$, with \emph{head} and \emph{tail} maps $h,t:E\rightarrow V\sqcup\set{\infty}$
        \item a set of \emph{loops} $L$, which may be thought of as edges with no endpoints
        \item for each $v\in V$, a cyclic ordering of the set $E(v) := \set{E|h(E)=v}\sqcup\set{E|t(E)=v}$ of edges incident at $v$. $\set{E|h(E)=v}$ and $\set{E|t(E)=v}$ make up the \emph{incoming} and \emph{outgoing} edges at $v$, respectively.
        \item for each $v\in V$, a partition of $E(v)$ into two contiguous segments $E_s(v),E_t(v)$ with respect to the cyclic ordering. These form the \emph{source} and \emph{target} of $v$. If $E_s(v) = \emptyset$ (or respectively $E_t(v) = \emptyset$), then we additionally require a choice of breaking the cyclic ordering into a total ordering of $E_t(v)$ (or respectively $E_s(v)$). In all other cases, the cyclic ordering uniquely determines such a total ordering. We choose the total ordering on $E_t(v)$ such that it agrees with the cyclic ordering, and on $E_s(v)$ such that it is the reverse of the cyclic ordering.
    \end{itemize}
    Edges $e$ with $h(e) = \infty$ or $t(e) = \infty$ may be thought of as edges with a loose endpoint.

    Graphically, we may represent this as the disjoint union of a directed graph (possibly with parallel edges and edges whose head and tail are the same) and some loops, except each vertex is as depicted in Figure \ref{fig:ribbon-vertex}.

    \begin{figure}[hbt!]
        \begin{tikzpicture}
            \draw[draw=black, double=white, double distance=5pt] (-0.3,-0.3) to [out=down,in=up] (-1,-1);
            \draw[draw=black, double=white, double distance=5pt] (0.3,-0.3) to [out=down,in=up] (1,-1);
            \draw[draw=black, double=white, double distance=5pt] (0,-0.3) -- (0,-1);
            \draw[draw=black, double=white, double distance=5pt] (-0.2,0.3) to [out=up,in=down] (-0.7,1);
            \draw[draw=black, double=white, double distance=5pt] (0.2,0.3) to [out=up,in=down] (0.7,1);
            \draw[rounded corners] (-0.6,-0.3) rectangle (0.6,0.3);
        \end{tikzpicture}
        \caption{Vertex $v$ of a ribbon graph.} \label{fig:ribbon-vertex}
    \end{figure}

    In this diagram, the source of $v$ is taken to be the bottom side of the rectangle, while the target is the top side. The cyclic ordering is given by the locations of the endpoints of the edges on the perimeter of the rectangle in clockwise order. Thus, the orderings on $E_s(v)$ and $E_t(v)$ are given from left to right.

    When relevant, we will indicate the direction of an edge by drawing an arrow on the corresponding ribbon. In general, the direction of an edge is omitted when it may be arbitrarily chosen.
\end{defn}

We may view an abstract ribbon graph as a surface with boundary, with a partition into \emph{ribbons} (corresponding to the edges and loops) and \emph{coupons} (corresponding to the vertices). Explicitly, we may obtain this surface as the result of gluing some discs and bands: 
$$|\Gamma| := \pa{\bigsqcup_{v\in V}D^2 \sqcup \bigsqcup_{e \in E} [0,1]^2 \sqcup \bigsqcup_{\ell \in L} [0,1]\times S^1} \Big/ \sim$$
where the equivalence relation that encodes the gluing of the endpoints of edges to the discs as drawn in Figure \ref{fig:ribbon-vertex}. We may think of the coupons as 2-dimensional 0-handles, and the ribbons corresponding to the edges as 2-dimensional 1-handles. By smoothing out the corners of the gluing, we may view $|\Gamma|$ as a smooth surface.

The \emph{endpoints} of $|\Gamma|$ are the subspace comprising the loose endpoints of edges, i.e. $$\partial_\mathrm{end}|\Gamma| := \bigsqcup_{e \in h^{-1}(\infty)} [0,1] \times \set{0} \sqcup \bigsqcup_{e \in t^{-1}(\infty)} [0,1] \times \set{1}.$$ Let $E_{\mathrm{ext}}$ denote the \emph{external} edges whose head or tail (possibly both) is an endpoint, and $E_{\mathrm{int}}$ the \emph{internal} edges whose head and tail are both in $V$.

Given a 3-manifold with boundary $M$, we may now define the corresponding topological notion of a ribbon graph:

\begin{defn}
    An \emph{embedded ribbon graph} in a 3-manifold $M$ is an abstract ribbon graph $\Gamma$ with embedding $(|\Gamma|, \partial_\mathrm{end}|\Gamma|) \rightarrow (M,\partial M)$.
\end{defn}
\begin{rk}
    An embedding of the loops $L$ forms a framed link in $M$. We will thus also use $L$ to denote framed links in 3-manifolds.
\end{rk}

In particular, to each 1-morphism in $\Bord{sig,dec}$, we may associate a embedded ribbon graph. Recall that such 1-morphisms are formal compositions of the disjoint unions of the standard pants, copants, cup and cap, and so have a standard choice of associated handlebody. Let $(\Sigma,H_{\Sigma})$ denote the underlying surface and handlebody arising as a result of this composition. We shall construct an embedded ribbon graph in $H_{\Sigma}$.

\begin{eg}\label{eg:internal-graph}
    Each 1-morphism of $\Bord{sig,dec}$ comes with a decomposition into the standard generating 1-morphisms as well as the standard cylinder (which is the identity 1-morphism of the standard disc). To each of these pieces, we may assign an embedded ribbon graph in the associated handlebody, as drawn below. All edges in Figure \ref{fig:assoc-graph} are directed from bottom to top.

    \begin{figure}[hbt!]
        \begin{tikzpicture}
            \begin{scope}[internal ribbon scope]
                \node[Pants, bot, top, scale=2, grey] (A) at (0,0) {};
                \draw[draw=black, double=white, double distance=5pt] (0,0.2) -- (A.belt);
                \draw[draw=black, double=white, double distance=5pt] (-0.2,-0.2) to [out=down,in=up] (A.leftleg);
                \draw[draw=black, double=white, double distance=5pt] (0.2,-0.2) to [out=down,in=up] (A.rightleg);
            \end{scope}
            \draw[rounded corners, draw=black] (-0.4,-0.2) rectangle (0.4,0.2);

            \begin{scope}[internal ribbon scope]
                \node[Copants, bot, top, scale=2, grey] (B) at (3,0) {};
                \draw[draw=black, double=white, double distance=5pt] (3,-0.2) -- (B.belt);
                \draw[draw=black, double=white, double distance=5pt] (2.8,0.2) to [out=up,in=down] (B.leftleg);
                \draw[draw=black, double=white, double distance=5pt] (3.2,0.2) to [out=up,in=down] (B.rightleg);
            \end{scope}
            \draw[rounded corners, draw=black] (2.6,-0.2) rectangle (3.4,0.2);

            \begin{scope}[internal ribbon scope]
                \node[Cup, top, scale=2, grey] (C) at (6,0.5) {}; 
                \draw[draw=black, double=white, double distance=5pt] (6,0.2) -- (C);
            \end{scope}
            \draw[rounded corners, draw=black] (5.8,-0.2) rectangle (6.2,0.2);

            \begin{scope}[internal ribbon scope]
                \node[Cap, bot, scale=2, grey] (D) at (9,-0.5) {}; 
                \draw[draw=black, double=white, double distance=5pt] (9,-0.2) -- (D);
            \end{scope}
            \draw[rounded corners, draw=black] (8.8,-0.2) rectangle (9.2,0.2);

            \begin{scope}[internal ribbon scope]
                \node[Cyl, bot, top, scale=2, grey] (E) at (12,0) {};
                \draw[draw=black, double=white, double distance=5pt] (E.bottom) -- (E.top); 
            \end{scope}
        \end{tikzpicture}
        \caption{Associated ribbon graph to each generating 1-morphism of $\Bord{sig}$.} \label{fig:assoc-graph}
    \end{figure}

    Then, for a general 1-morphism in $\Bord{sig}$, we may glue together the ribbon graphs embedded in each generating component along their endpoints, resulting in an embedded ribbon graph $\Gamma_\Sigma$ in $H_\Sigma$.
\end{eg}

\begin{defn}
    Given a 1-morphism in $\Bord{sig}$ with underlying surface and handlebody $(\Sigma,H_{\Sigma})$, call this embedded ribbon graph $\Gamma_\Sigma$ in $H_\Sigma$ the \emph{associated ribbon graph}.
\end{defn}

Note that the embedding of the ribbon graph $\Gamma$ in Example \ref{eg:internal-graph} naturally extends to an embedding of the \emph{thickening} $|\Gamma| \times [-1,1]$. If $H_\Sigma$ is embedded in a closed 3-manifold $M$, then it is ambiently isotopic to this embedded thickened graph.

In order to define invariants of a ribbon graph, we first need to define the notion of a colouring. These will depend on the choice of a semisimple ribbon category $\mathcal{C}$.

\begin{defn}[{\cite[Definition 8.10.1]{EGNO}}]
    A \emph{ribbon category} $\mathcal{C}$ is a rigid balanced braided monoidal category whose twist is compatible with the duals.
\end{defn}
\begin{defn}\label{defn:ss-ribbon}
    A \emph{semisimple ribbon category} is a ribbon linear category which is linearly equivalent to $\vect^{\oplus s}$ for some $s$ and whose unit $\mathbbm{1}$ is simple.
\end{defn}

Let $s$ be the number of equivalence classes of simple objects of $\mathcal{C}$, and choose a set of representatives $\mathbb{X} := \set{\mathbbm{1}=X_1,X_2,\ldots,X_s}$. These form the set of \emph{colours}. For each $i$, let $X_i^*$ denote the dual of $X_i$. Throughout, we identify $X_i^{**}$ with $X_i$ via the canonical isomorphism in \cite[Proposition 8.10.6]{EGNO}.

\begin{defn}
    A \emph{ribbon-colouring} of an abstract ribbon graph $\Gamma = (V,E,L)$ is a assignment of objects of $\mathbb{X}$ to the edges and loops of $\Gamma$, i.e. a function $K: E\cup L \rightarrow \mathbb{X}$.
\end{defn}

A ribbon-colouring determines the vector space of possible labellings of each vertex in the ribbon graph, namely the space of morphisms in $\mathcal{C}$ which type-check with the colours of the incoming and outgoing strands.

\begin{defn}
    Let $K$ be a ribbon-colouring of $\Gamma = (V,E,L)$, and $v\in V$. We may assign a vector space to this as follows: let $X_1,\ldots,X_a$ be the colours of the source edges of $v$, $Y_1,\ldots,Y_b$ the colours of the target edges, which both ordered according to the orderings on $E_s(v),E_t(v)$. Define $\epsilon_i \in \set{\pm1}$ for each $1\le i\le a$ to be $+1$ if the $i^\text{th}$ source edge is incoming, and $-1$ otherwise. Define $\varepsilon_j \in \set{\pm1}$ for each $1\le j \le b$ to be $+1$ if the $j^\text{th}$ target edge is outgoing and $-1$ otherwise. Then, define the \emph{space of labels of $v$} to be
    $$\mathcal{V}_K(\Gamma,v) := \Hom_\mathcal{C}\pa{\bigotimes_{i=1}^a X_i^{\epsilon_i}, \bigotimes_{j=1}^b Y_j^{\varepsilon_j}}$$ 
    where each $X^{+1}$ is taken to be $X$ and $X^{-1}$ is taken to be the dual $X^*$.

    The \emph{space of labels} of $\Gamma$ is the vector space $$\mathcal{V}_K(\Gamma) := \bigotimes_{v\in V} \mathcal{V}_K(\Gamma,v).$$
\end{defn}

More generally, we may extend this definition to ribbon graphs with partial ribbon-colourings, by taking the direct sum over all possible ribbon-colourings that the partial ribbon-colouring extends to:
\begin{defn}\label{defn:partial-space-labels}
    Let $\mathbf{K}$ be a \emph{partial ribbon-colouring}, i.e. a partial function $E\cup L \dashrightarrow \mathbb{X}$. The \emph{space of labels} is the vector space $$\mathcal{V}_\mathbf{K}(\Gamma) := \bigoplus_{K \in \mathbb{K}} \mathcal{V}_{K}(\Gamma)$$ where $\mathbb{K}$ is the set of ribbon-colourings of $\Gamma$ extending $\mathbf{K}$.
\end{defn}

By starting with a ribbon-colouring $K$ and then choosing a label for each of the coupons (i.e. an element $f_v \in \mathcal{V}_K(\Gamma,v)$ for each vertex $v$), we recover a notion of ribbon graph as described in \cite{RT-ribbon, RT, Turaev-book}:

\begin{defn}
    A \emph{labelled ribbon graph} is an abstract ribbon graph $\Gamma$ with a ribbon-colouring $K$ and a label $f_v \in \mathcal{V}_K(\Gamma,v)$ for each $v\in V$. Given a 3-manifold $M$, A \emph{labelled embedded ribbon graph} is an embedded ribbon graph whose underlying abstract ribbon graph is labelled.
\end{defn}

In particular, the case $M = \mathbb{R}^2 \times [0,1]$ recovers the notion of a coloured ribbon graph as defined in \cite{RT-ribbon}, which are referred to as $v$-coloured ribbon graphs in \cite{Turaev-book}. These are the ribbon graphs on which it makes sense to define the \emph{operator invariant}, which may be obtained as follows: Apply a suitable ambient isotopy such that upon projecting onto $\mathbb{R}\times[0,1]$, all coupons are locally as drawn in Figure \ref{fig:ribbon-vertex} (with the source on the bottom and target on the top), and all crossings of ribbons are generic. Then, evaluating the resulting coloured ribbon diagram yields the operator invariant. Suppose that the endpoints of such a ribbon graph $\Gamma$ in $\mathbb{R}^2\times\set{0}$ are coloured $X_1,\ldots,X_a$, with these ordered from left to right under the projection, the endpoints in $\mathbb{R}^2\times\set{1}$ are likewise $Y_1,\ldots,Y_b$, and let $\epsilon_i,\varepsilon_j$ where $1\le \epsilon_i \le a, 1\le \varepsilon_j \le b$ for each $i,j$ encode the directions of these strands. Then, the operator invariant is a morphism in $\mathcal{C}$ denoted $$F(\Gamma, K, \set{f_v}): \bigotimes_{i=1}^a X_i^{\epsilon_i}\rightarrow \bigotimes_{j=1}^b Y_j^{\varepsilon_j}.$$

\begin{eg}
    Let $X\in\mathbb{X}$. Then, its dimension $\dim X := \tr(\id_X)$ is exactly the operator invariant of the unknot with blackboard framing, coloured $X$. 
\end{eg}

If $\Gamma = \Gamma' \sqcup L$ consists of a labelled ribbon graph $(\Gamma',K)$ along with some uncoloured loops, we may define the operator invariant on this by taking the weighted sum over all possible colours of the uncoloured loops: $$F(\Gamma,K,\set{f_v}) := \sum_{K_L \in \mathbb{K}_L} \dim(K_L) F(\Gamma, K\sqcup K_L, \set{f_v})$$ where the sum is taken over all colourings $K_L$ of the uncoloured loops and $$\dim(K_L) := \prod_{\ell\in L} \dim(K_L(\ell))$$ is the product of the dimensions of the colours of the components of $L$.

Note that by \cite[Lemma II.2.2.1]{Turaev-book}, this operator invariant does not depend on the choice of orientations of the uncoloured loops. Moreover, Reshetikhin and Turaev showed:
\begin{thm}[\cite{RT-ribbon}]
    The operator invariant $F$ is invariant on labelling-preserving isotopies. Moreover, $F$ is functorial, i.e. gluing endpoints of ribbon graphs corresponds to the composition of morphisms in $\mathcal{C}$. $F$ is also monoidal, i.e taking disjoint unions of ribbon graphs corresponds to taking a tensor product in $\mathcal{C}$.
\end{thm}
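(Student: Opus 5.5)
The plan follows the standard Reshetikhin--Turaev argument. First I will reduce to a combinatorial statement, then check invariance under a generating set of isotopy moves, and finally read off functoriality and monoidality from how the evaluation is assembled.

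\emph{Step 1: reduce to diagrams.} By a general position argument, every labelled ribbon graph in $\mathbb{R}^2\times[0,1]$ is isotopic to one in the normal form already described above. In that form the projection to $\mathbb{R}\times[0,1]$ is generic, the coupons are as in Figure \ref{fig:ribbon-vertex}, and the framing is the blackboard framing. Such a diagram can be sliced horizontally into elementary pieces: identity strands, cups and caps (with either orientation), positive and negative crossings, twists, and coupons. Evaluating each piece by the corresponding structure morphism of $\mathcal{C}$ (identity, (co)evaluation, braiding $c^{\pm1}$, twist $\theta^{\pm1}$, or the label $f_v$) and composing the slices gives $F$. I first check that this is independent of the chosen slicing. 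This holds because moving far-apart critical points past each other in height is just the interchange law in a monoidal category.

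\emph{Step 2: isotopy invariance.} I will use the ribbon analogue of Reidemeister's theorem, as in Turaev's book. Two diagrams represent isotopic ribbon graphs exactly when they are related by planar isotopy together with a finite list of moves:
\begin{itemize}
\item framed Reidemeister II and III, plus the modified Reidemeister I in which a kink is traded for a twist;
\item snake (zigzag) moves for each strand orientation;
\item moving a strand over or under a coupon;
\item rotating a coupon by $2\pi$.
\end{itemize}
Each move becomes an identity among the structure morphisms:
\begin{itemize}
\item RII is invertibility of the braiding.
\item RIII is the hexagon axioms, through the Yang--Baxter equation.
\item The snake moves are the rigidity (zigzag) identities.
\item Passing a strand across a coupon is naturality of the braiding in both arguments.
\item The modified RI and the full rotation of a coupon follow from the balancing axiom $\theta_{X\otimes Y}=c_{Y,X}c_{X,Y}(\theta_X\otimes\theta_Y)$, together with $\theta_{X^*}=\theta_X^*$ and naturality of $\theta$.
\end{itemize}
I also need compatibility of the chosen identification $X^{**}\cong X$ with the pivotal structure. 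This is exactly what guarantees that left and right traces agree, so that cups and caps of either orientation are consistent.

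\emph{Step 3: functoriality and monoidality.} These follow essentially from the construction. Stacking two normal-form diagrams vertically, glued along matching endpoints, gives a normal-form diagram whose slicing is the concatenation of the two slicings. Hence $F$ of the glued graph is the composite. Placing two diagrams side by side gives the tensor product slice by slice, using strictification of $\mathcal{C}$ via Mac Lane coherence. Finally, the extension to uncoloured loops is a finite linear combination of coloured evaluations. Each property therefore carries over termwise, and independence of loop orientation is the cited Lemma II.2.2.1.

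\emph{Main obstacle.} I expect the hardest step to be the topological input in Step 2: proving that the listed moves generate all isotopies of ribbon graphs with coupons, especially the coupon rotation and strand-over-coupon moves. I would not reprove this. I would cite the Reshetikhin--Turaev / Turaev (Theorem I.4.1.x) presentation of the category of ribbon graphs as the free ribbon category. With that in hand, the algebraic verifications above are routine.
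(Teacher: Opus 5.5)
Your proposal is correct and is essentially the standard argument of \cite{RT-ribbon} (also the one in Turaev's book): reduce to sliced diagrams, check the generating ribbon-graph moves against the ribbon axioms, and read off functoriality and monoidality from the construction. The paper gives no proof of its own and simply cites that source; the only cosmetic point is that you should use one framing convention throughout, either blackboard framing or explicit twist pieces together with the kink-for-twist form of Reidemeister I, but this does not affect the argument.
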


This machinery allows us to define certain invariants of semisimple ribbon categories which will later be useful:
\begin{defn} \label{defn:cat-consts}
    For a semisimple ribbon category $\mathcal{C}$, let its \emph{global dimension} be the operator invariant of an uncoloured untwisted unknot, $$D := \sum_{i=1}^s \dim(X_s)^2.$$ Additionally, let the \emph{Gauss sums} $$p_\pm := \sum_{i=1}^s \theta_{X_s}^{\pm 1}\dim(X_s)^2$$ be the operator invariants of an uncoloured unknot with a single twist (right-handed for $p_+$, left-handed for $p_-$).

    Let the \emph{anomaly} be the ratio $\frac{p_+}{p_-}$.
\end{defn}
\begin{defn} \label{defn:s-matrix}
    For a semisimple ribbon category $\mathcal{C}$, let its \emph{$S$-matrix} be the $s\times s$ matrix whose $(i,j)$-entry is the operator invariant of a Hopf link (with blackboard framing) with one component coloured by $i$ and the other by $j$, i.e. $$S_{ij} := \tr(\beta_{ij}\beta_{ji}).$$
\end{defn}

Additionally, note that the operator invariant $F(\Gamma,K,\set{f_v})$ is linear in each $f_v$. Thus, by extending linearly, the operator invariant is still well-defined when $\set{f_v}$ is replaced by a general element $\mathbf{f}$ of the space of labels $\mathcal{V}_K(\Gamma)$. In other words, $F(\Gamma,K,-)$ defines a linear map from $\mathcal{V}_K(\Gamma)$ to the relevant hom space of $\mathcal{C}$ (which depends on the colours of the external edges). By taking direct sums, given a partial ribbon-colouring $\mathbf{K}$ which colours all the external edges, $F(\Gamma,\mathbf{K},-)$ defines a linear map from $\mathcal{V}_{\mathbf{K}}(\Gamma)$ a corresponding hom space of $\mathcal{C}$, and it thus makes sense to write $F(\Gamma,\mathbf{K},\mathbf{f})$ where $\mathbf{f} \in \mathcal{V}_{\mathbf{K}}(\Gamma)$.

The construction of $RT_{\mathcal{C},\sqrt{D}}$ will come from additional structure which arises from letting $\mathcal{C}$ be a modular tensor category.
\begin{defn}[{\cite[Definition 3.1.1]{BK}}]
    A \emph{modular tensor category} is a semisimple ribbon category $\mathcal{C}$ whose $S$-matrix is invertible.
\end{defn}

In the case where $\mathcal{C}$ is a modular tensor category, Reshetikhin and Turaev extended the operator invariant to an invariant $\tau$ of labelled embedded ribbon graphs in closed 3-manifolds. This was then used in \cite{RT} to define a (projective) symmetric monoidal functor $\bord{or} \rightarrow \mathbf{Vect}_k$. Our construction of $RT_{\mathcal{C},\sqrt{D}}$ will also use this invariant $\tau$ in a similar way. At the level of closed surfaces and bordisms between closed surfaces, this agrees with the Reshetikhin--Turaev construction, as demonstrated later in \textsection\ref{subsection:rt-compare}.

The 3-manifold invariant depends on an extra choice of square root of the global dimension $D$, denoted $\sqrt{D}$, and is written in terms of $\sqrt{D}$ and $p_-$.
\begin{defn}[\cite{RT}]
    Let $M$ be a closed connected 3-manifold and $\Gamma$ a labelled embedded ribbon graph in $M$ with ribbon-colouring $K$ and vertex labels $\set{f_v}$. The invariant $\tau(M,\Gamma,K,\set{f_v}) \in k$ is defined as follows: 

    Let $L$ be a framed link in $S^3$ (viewed as an embedded $S^1\times I$) with $\# L$ components, and $(\Gamma,K,\set{f_v})$ a labelled embedded ribbon graph in $S^3$ such that performing surgery on $L$ yields the pair $(M,|\Gamma|)$. Then, we have an embedding of $|\Gamma\cup L|$ in $S^3$; call this a \emph{surgery presentation} of the pair $(M,|\Gamma|)$. $\Gamma\cup L$ is now a labelled ribbon graph with some uncoloured loops. Additionally, let $X_L$ be the trace of the framed link $L$ (the 4-manifold with boundary $M$ given by attaching 2-handles to $D^4$ along the framed link $L$). Define $\sigma(X_L)$ to be the signature of $X_L$.

    The invariant on the labelled embedded ribbon graph $\Gamma$ is a scalar multiple of the operator invariant of $\Gamma\cup L$: $$\tau(M,\Gamma,K,\set{f_v}) := p_-^{\sigma(X_L)}\sqrt{D}^{-\sigma(X_L)-\# L-1} F(\Gamma \cup L,K,\set{f_v}).$$

    In general, if $M$ is not connected, say $(M,\Gamma) = \bigsqcup (M_i,\Gamma_i)$ where each $M_i$ is connected, then we define $$\tau(M,\Gamma,K,\set{f_v}) := \prod\tau(M_i,\Gamma_i,K|_{\Gamma_i},\set{f_v})$$ to be the product of $\tau$ over all the components.
\end{defn}

As shown in \cite{RT}, such a surgery link $L$ always exists, and $\tau$ is independent of the choice of $L$, as well as the choice of orientation of each of the link components. As is the case with the operator invariant, we may extend this definition linearly to make sense of $\tau(M,\Gamma,\mathbf{K},\mathbf{f})$ where $\mathbf{K}$ is a partial ribbon-colouring and $\mathbf{f} \in \mathcal{V}_\mathbf{K}(\Gamma)$.

In general, if the choices of ribbon-colouring and vertex labels are clear, we drop the $K$ and $\set{f_v}$ from the notation and simply write $\tau(M,\Gamma)$.

\begin{rk}
    As in \cite{Turaev-book}, this has normalisation $\tau(S^3,\emptyset) = \frac1{\sqrt{D}}$; this is the case where $\Gamma \cup L$ is the empty diagram.
\end{rk}

We record some basic properties of $\tau$ which may be easily verified; see \cite[pp. 81-82]{Turaev-book}.

\begin{lem}\label{lem:tau-properties}
    The following properties of $\tau$ hold.
    \begin{enumerate}
        \item $\tau(S^1\times S^2,\emptyset) = 1$.
        \item Given labelled embedded ribbon graphs $\Gamma$ in $M$ and $\Gamma'$ in $M'$, $$\tau(M\#M',\Gamma\cup\Gamma') = \sqrt{D}\tau(M,\Gamma)\tau(M',\Gamma') = \sqrt{D}\tau(M\sqcup M', \Gamma\cup\Gamma').$$
    \end{enumerate}
\end{lem}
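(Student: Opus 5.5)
Both properties follow by choosing explicit surgery presentations and evaluating the defining formula for $\tau$, using that $\tau$ does not depend on the surgery link chosen.

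For (1), I present $S^1\times S^2$ by $0$-framed surgery on the unknot $U$ in $S^3$. The trace $X_U$ is the $D^2$-bundle over $S^2$ with Euler number $0$, i.e. $S^2\times D^2$. Its intersection form is the $1\times1$ zero matrix, so $\sigma(X_U)=0$, and $\#L=1$. The operator invariant of an uncoloured untwisted unknot is $\sum_i \dim(X_i)^2 = D$ by Definition \ref{defn:cat-consts}. Hence
\[
\tau(S^1\times S^2,\emptyset)=p_-^{0}\sqrt{D}^{-0-1-1}D=1 .
\]

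For (2), first assume $M,M'$ are connected, with surgery presentations $(L,\Gamma_L)$ and $(L',\Gamma_{L'})$ in $S^3$. Place them in two disjoint balls of $S^3$ and perform the connected sum along a ball meeting neither picture. The disjoint union $L\sqcup L'$ with $\Gamma\cup\Gamma'$ is then a surgery presentation of $(M\# M',\Gamma\cup\Gamma')$, since $S^3\# S^3=S^3$ and surgery commutes with connected sum away from the links.

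Each ingredient of the formula then behaves additively or multiplicatively:
\begin{itemize}
\item The trace is a boundary connected sum, $X_{L\sqcup L'}=X_L\natural X_{L'}$, so the linking matrix is block diagonal and $\sigma(X_{L\sqcup L'})=\sigma(X_L)+\sigma(X_{L'})$.
\item The component count adds: $\#(L\sqcup L')=\#L+\#L'$.
\item The operator invariant multiplies, $F\big((\Gamma\cup L)\sqcup(\Gamma'\cup L')\big)=F(\Gamma\cup L)\,F(\Gamma'\cup L')$. This uses monoidality of $F$ (both sides are closed diagrams, so scalars in $\mathrm{End}(\mathbbm 1)=k$), and the sum over colourings of uncoloured loops factorises.
\end{itemize}
Substituting, the powers of $p_-$ and $\sqrt{D}$ combine exactly, except that the "$-1$" appears once on the left but twice in $\tau(M,\Gamma)\tau(M',\Gamma')$. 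This gives $\tau(M\#M')=\sqrt{D}\,\tau(M)\tau(M')$. The second equality is the definition of $\tau$ on disconnected manifolds.

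For disconnected $M$ or $M'$, the connected sum is taken along one chosen component of each. The identity then reduces to the connected case by the multiplicativity in that definition.

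The only real subtlety is the signature additivity, and the block-diagonal linking matrix settles it immediately. The rest is bookkeeping, as in \cite[pp.~81--82]{Turaev-book}.
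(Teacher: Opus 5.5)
Your proposal is correct and is essentially the argument the paper relies on. The paper gives no proof of its own and only cites Turaev's book (pp.~81--82), where both identities are checked in exactly this way: via $0$-framed surgery on the unknot for $S^1\times S^2$, and via a split union of surgery links (additive signature and component count, multiplicative operator invariant) for the connected sum.
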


We now record some ribbon graph transformations which are compatible with the invariants $F$ and $\tau$.

\begin{lem}\label{lem:cyc-shift}
    Let $\Gamma$ be a ribbon graph with partial ribbon-colouring $\mathbf{K}$, and $v\in V$. Let $\Gamma'$ be the same ribbon graph, but with a different choice of partition into source and target for each vertex. Then, $|\Gamma|$ and $|\Gamma'|$ are diffeomorphic by identifying coresponding coupons and ribbons, and $\mathbf{K}$ induces a corresponding partial ribbon-colouring $\mathbf{K}'$ of $\Gamma'$.
    \begin{enumerate}
        \item There is a canonical isomorphism $\phi: \mathcal{V}_\mathbf{K}(\Gamma) \rightarrow \mathcal{V}_{\mathbf{K}' }(\Gamma')$.
        \item Given any embedding of $\Gamma$ into a 3-manifold $M$, the corresponding embedding of $\Gamma'$ satisfies $$\tau(M,\Gamma,\mathbf{K},\mathbf{f}) = (M,\Gamma',\mathbf{K}',\phi(\mathbf{f}))$$ for each $\mathbf{f}\in\mathcal{V}_\mathbf{K}(\Gamma)$.
    \end{enumerate}
\end{lem}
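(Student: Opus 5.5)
It suffices to treat the change of partition at a single vertex $v$ and by a single step. The general case then follows by composing the resulting isomorphisms. We also need to check that the composite is independent of the chosen sequence of steps, which will follow from the explicit formula below.

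A change of partition at $v$ keeps the cyclic order of $E(v)$ fixed and only moves the cut points between $E_s(v)$ and $E_t(v)$. It is therefore generated by two kinds of elementary move. The first moves the edge at one end of $E_t(v)$ (say the leftmost) to the corresponding end of $E_s(v)$, or the reverse. The second changes the choice of total ordering when one side is empty. Since $\mathcal{V}_\mathbf{K}(\Gamma)$ is a direct sum over full colourings $K$ extending $\mathbf{K}$, and $\mathbf{K}$ and $\mathbf{K}'$ extend to the same colourings, it is enough to build $\phi$ for each full colouring $K$ and take the direct sum.

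For the first kind of move, let $Y = Y_1^{\varepsilon_1}$ be the object carried by the moved target edge. We take $\phi$ to be the duality isomorphism
\[\Hom_\mathcal{C}(A, Y\otimes B)\cong \Hom_\mathcal{C}(Y^*\otimes A, B),\qquad f\mapsto (\mathrm{ev}_{Y}\otimes\id_B)\circ(\id_{Y^*}\otimes f).\]
Here $Y^*$ is identified with $X^{-\varepsilon_1}$ using the fixed identification $X^{**}\cong X$. The incoming/outgoing sign flips correctly when the edge switches from target to source, by the definitions of $\epsilon_i$ and $\varepsilon_j$. For the second kind of move (rotating a total ordering when $E_s(v)=\emptyset$), we take $\phi$ to be the composite of the move above with its mirror, using right duality on the other side. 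This amounts to conjugating by a cyclic rotation through evaluation and coevaluation maps. Each such $\phi$ is invertible by the zig-zag identities, so $\phi$ is a canonical isomorphism, proving (1).

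For (2), we choose a surgery presentation $L$ of $(M,|\Gamma|)$ in $S^3$. The same link presents $(M,|\Gamma'|)$, since $|\Gamma|$ and $|\Gamma'|$ are the same embedded surface. The signature $\sigma(X_L)$ and the count $\#L$ therefore agree, and it suffices to show that the operator invariants agree:
\[F(\Gamma\cup L,K,\mathbf f)=F(\Gamma'\cup L,K,\phi(\mathbf f)).\]
We isotope so that the coupon $v$ is in standard position for $\Gamma$. The coupon labelled $\phi(f_v)$ for $\Gamma'$ is then the coupon for $f_v$ with a cap (evaluation) attached on the moved strand. By the graphical calculus, this equals the original coupon with the moved strand bent around the coupon's corner. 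That bent picture is isotopic to $\Gamma$ after rotating the coupon's attaching region, an isotopy of ribbon graphs that preserves the labels.

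The invariance of $F$ under label-preserving isotopy [RT-ribbon] then gives the equality. For framings, bending a ribbon around the corner introduces no twist, because the ribbon stays flat in the projection plane.

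The main obstacle I expect is the bookkeeping in the second move, and the compatibility with the pivotal identification $X^{**}\cong X$. When an edge travels all the way around a coupon, the canonical isomorphism must be used consistently so that the composite of moves returning to the original partition is the identity. This is exactly where the ribbon (twist compatible with duals) axiom enters: a full rotation of a coupon is isotopic to the identity, and the corresponding algebraic composite equals the identity by the spherical/pivotal structure. This verification also settles the independence of $\phi$ from the chosen sequence of moves.
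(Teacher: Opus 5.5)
Your proposal is correct and takes essentially the same approach as the paper. In both, $\phi$ is assembled from the evaluation/coevaluation duality isomorphisms, and (2) follows because each elementary move is a label-preserving isotopy of the coupon, so the operator invariant, and hence $\tau$ computed with the same surgery link, is unchanged. You additionally spell out the decomposition into elementary moves and the check, via the pivotal structure, that $\phi$ does not depend on the sequence of moves; the paper instead defers canonicity to the isotopy-invariance argument in Turaev's Figures I.4.16--17.
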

\begin{proof}
    The isomorphism is given by composing with evaluation and coevaluation maps as in \cite[Exercise I.1.8.1]{Turaev-book}. For example, an isomorphism $\Hom(X_1,X_2\otimes X_3) \rightarrow \Hom(X_2^* \otimes X_1, X_3)$ is given by sending $f$ to $(\id_{X_2^*} \otimes f)\circ (\mathrm{ev}_{X_2} \otimes \id_{X_3})$. That these define a canonical isomorphism $\phi$ is part of the proof of the isotopy invariance of $F$; see in particular \cite[Figures I.4.16-17]{Turaev-book}. By construction, $\phi$ satisfies (2) as it is a composition of isomorphisms which satisfy (2).
\end{proof}

Henceforth, we may thus ignore the choice of source and target of each vertex.

In the definition of $RT_{\mathcal{C},\sqrt{D}}$, we will be considering ribbon graphs $\Gamma$ with partial ribbon-colourings which leave all the internal edges uncoloured. The spaces of labels of these are large direct sums of tensor products of vector spaces. By contracting each $\Gamma$ along an uncoloured subtree, we may obtain a new ribbon graph whose space of labels is canonoically identified with that of $\Gamma$. In particular, if we contract $\Gamma$ along a spanning subtree, the space of labels of the resulting graph is simply the space of labels of a single vertex.

Suppose $\Gamma$ has a ribbon subgraph $T$ whose underlying graph is a tree. Then $|T|$ is a subsurface of $|\Gamma|$ diffeomorphic to a disc. We may construct a ribbon graph $\Gamma/T$ such that $|\Gamma/T|$ is exactly $|\Gamma|$, but with $|T|$ viewed as one disc associated to a single vertex.

\begin{defn}
    Let $\Gamma = (V,E,L)$ be a ribbon graph, and $T = (V_T,E_T,\emptyset)$ be a ribbon subgraph whose underlying graph is a tree. The \emph{contraction} $\Gamma/T = (V/V_T,E\setminus E_T,L)$ is the ribbon graph with the cyclic order at the vertex $V_T/V_T$ induced by the cyclic order of the edges incident at the disc $|T|$. 
\end{defn}

By construction, $|\Gamma|$ and $|\Gamma/T|$ are diffeomorphic by identifying $|T|\subset|\Gamma|$ with the disc corresponding to the vertex $V_T/V_T$.

If all the edges of $T$ are uncoloured, then contracting along $T$ does not change the space of labels:

\begin{lem}\label{lem:tree}
    Let $\Gamma$ be a ribbon graph with partial ribbon-colouring $\mathbf{K}$. Let $T$ be a ribbon subtree whose edges are uncoloured. Then $\mathbf{K}$ induces a (partial) ribbon-colouring $\mathbf{K}_{/T}$ of $\Gamma/T$.

    \begin{enumerate}
        \item There is a canonical ``composition'' isomorphism $\bigcirc:\mathcal{V}_{\mathbf{K}}(\Gamma)\rightarrow\mathcal{V}_{\mathbf{K}_{/T}}(\Gamma/T)$.
        \item If $\Gamma$ is in addition an embedded ribbon graph in a manifold $M$, the corresponding embedded ribbon graph $\Gamma/T$ in $M$ satisfies $$\tau(M,\Gamma,\mathbf{K},\mathbf{f}) = \tau(M,\Gamma/T,\mathbf{K}_{/T},\bigcirc(\mathbf{f}))$$ for each $\mathbf{f}\in\mathcal{V}_\mathbf{K}(\Gamma)$.
    \end{enumerate}
\end{lem}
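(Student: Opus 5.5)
The plan is to induct on the number of edges of $T$. If $T$ has no edges it is a single vertex, $\Gamma/T=\Gamma$, and $\bigcirc$ is the identity. Otherwise $T$ has a leaf edge $e$ (any edge of a tree is fine), joining vertices $u\neq w$ because $T$ is a tree. Contracting $T$ amounts to contracting $e$ first and then the image of $T$, which is a tree $T/e$ in $\Gamma/e$ with one fewer edge. So it suffices to treat a single uncoloured edge $e$ between distinct vertices $u,w$, and then compose the resulting isomorphisms. Canonicity requires that the final map not depend on the order in which edges are contracted. I would verify this by checking that contracting two edges in either order gives the same map. This follows from associativity of composition in $\mathcal{C}$, and the interchange law when the two edges share no vertex.

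For the single-edge case, first use Lemma \ref{lem:cyc-shift} to rechoose the source/target partitions at $u$ and $w$. Arrange that $e$ is the only target edge of $u$ and the only source edge of $w$, oriented from $u$ to $w$; this is possible up to the canonical $\phi$, which is compatible with $\tau$. The remaining edges at $u$ and $w$ keep their cyclic order, and this matches the cyclic order around the disc $|u|\cup|e|\cup|w|$. For a fixed colouring $K$ of the other edges, write $A$ for the tensor product of the colours of the source edges of $u$ and $B$ for that of the target edges of $w$. The summand of $\mathcal{V}_{\mathbf{K}}(\Gamma)$ at these two vertices is then
\[
\bigoplus_{i=1}^s \Hom(A,X_i)\otimes\Hom(X_i,B).
\]
Define $\bigcirc$ on this summand by $f\otimes g\mapsto g\circ f$, landing in $\Hom(A,B)$, the label space of the merged vertex. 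Semisimplicity of $\mathcal{C}$, with the $X_i$ a complete set of simples with $\End(X_i)=k$, makes this an isomorphism. Tensoring with the label spaces of the other vertices gives (1).

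For (2), take a surgery presentation of $(M,|\Gamma|)$ and isotope so that $e$ is a short vertical strand between the two coupons. The diffeomorphism $|\Gamma|\cong|\Gamma/e|$ is realised by isotoping the coupon of $w$ down along $e$ to merge with that of $u$. The link $L$ and the signature term are unchanged, so it suffices to compare the operator invariants $F$. Summing over the colour $X_i$ of $e$, $F(\Gamma\cup L)$ evaluated on $f\otimes g$ equals $F(\Gamma/e\cup L)$ evaluated on $g\circ f$. This holds because $F$ is functorial: stacking coupons labelled $f$ and $g$ along a strand is their composite, and this is exactly the merged coupon.

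I expect the main obstacle to be the bookkeeping in the first step of the single-edge case. There one must check that the cyclic order at the merged vertex produced by the definition of $\Gamma/T$ matches the one obtained after applying $\phi$ at $u$ and $w$. Dually, one must check that the chosen orientation of $e$ is harmless. Different orientations give $\Hom(A,X_i)\otimes\Hom(X_i,B)$ versus the corresponding spaces with $X_i^*$. Since $X_i\mapsto X_i^*$ permutes $\mathbb{X}$ up to the canonical identifications, the direct sums agree, and $\bigcirc$ is compatible with them by \cite[Lemma II.2.2.1]{Turaev-book}-type arguments.
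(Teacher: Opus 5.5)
Your proposal is correct and is essentially the paper's proof. In both, you reduce to a single uncoloured edge and use Lemma \ref{lem:cyc-shift} to make $e$ the only target edge of $t(e)$ and the only source edge of $h(e)$. You then take the composition map $\bigoplus_{Z\in\mathbb{X}}\Hom(A,Z)\otimes\Hom(Z,B)\to\Hom(A,B)$, which is an isomorphism by semisimplicity, read off (2) from functoriality of the operator invariant, and contract $T$ one edge at a time.

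The only difference is how canonicity is justified. The paper simply observes that property (2) determines the map uniquely. You instead check directly that the order of contractions does not matter. For edges sharing a vertex, that check also needs the cyclic-shift isomorphisms $\phi$ to be compatible with composition, not just associativity. Your route is more hands-on, but it establishes the same claim.
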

\begin{proof}
    First consider the case where $T$ is a single edge $e$. Suppose in addition that all edges are coloured but $e$. We may choose the source and target of the two vertices involved such that $h(e)$ has the $e$ as its only source edge, and $t(e)$ has the $e$ as its only target edge. Then, the isomorphism in question is given by the composition map $$\bigoplus_{Z\in\mathbb{X}}\Hom(X_1\otimes\cdots\otimes X_a, Z)\otimes \Hom(Z, Y_1\otimes\cdots\otimes Y_b) \rightarrow  \Hom(X_1\otimes\cdots\otimes X_a, Y_1\otimes\cdots\otimes Y_b).$$ This is an isomorphism as $\mathcal{C}$ is semisimple. Moreover, this satisfies (2), by construction. By taking direct sums, this defines the isomorphism in the case when $T$ has a single edge $e$ but there are uncoloured edges other than $e$.

    In general, we may contract $T$ one edge at a time to produce an isomorphism. By construction, this isomorphism satisfies (2). Now, note that (2) uniquely determines this linear map, and hence the isomorphism is canonical.
\end{proof}

Finally, given an abstract ribbon graph $\Gamma$, it will be useful to construct an abstract ribbon graph $-\Gamma$ whose spaces of labels are dual to that of $\Gamma$.

\begin{defn}
    Let $\Gamma = (V,E,L)$ be a ribbon graph. The \emph{reverse graph} $-\Gamma = (V,E,L)$ is the ribbon graph with $h,t:E\rightarrow V\cup\set{\infty}$ interchanged, and the cyclic ordering at each vertex reversed.
\end{defn}

Note $|\Gamma|$ and $|-\Gamma|$ are diffeomorphic via an orientation-reversing diffeomorphism, so if $\Gamma$ is an embedded ribbon graph in a 3-manifold $M$, then we correspondingly have $-\Gamma$ embedded in $-M$.

Additionally, any partial ribbon-colouring $\mathbf{K}$ of $\Gamma$ induces a corresponding partial ribbon-colouring of $-\Gamma$, given by colouring the relevant ribbons in the exact same way.

\begin{lem}\label{lem:rev-dual}
    Let $\mathbf{K}$ be a partial ribbon-colouring of $\Gamma$. Then, $\mathcal{V}_\mathbf{K}(-\Gamma)$ is canonically isomorphic to the linear dual of $\mathcal{V}_\mathbf{K}(\Gamma)$.
\end{lem}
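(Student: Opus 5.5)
First reduce to a single colouring and a single vertex. By Definition \ref{defn:partial-space-labels}, $\mathcal{V}_\mathbf{K}(\Gamma)=\bigoplus_{K\in\mathbb{K}}\mathcal{V}_K(\Gamma)$. The ribbon-colourings of $-\Gamma$ extending $\mathbf{K}$ are in bijection with those of $\Gamma$, since $-\Gamma$ has the same edges and loops. Finite direct sums commute with linear duals, and $\mathcal{V}_K(\Gamma)=\bigotimes_v\mathcal{V}_K(\Gamma,v)$ is a finite tensor product of finite-dimensional spaces, so its dual is the tensor product of the duals. It therefore suffices to give, for each full colouring $K$ and each vertex $v$, a canonical isomorphism $\mathcal{V}_K(-\Gamma,v)\cong\mathcal{V}_K(\Gamma,v)^*$. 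All spaces involved are finite-dimensional because $\mathcal{C}$ is semisimple with finitely many simples.

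Next compute $\mathcal{V}_K(-\Gamma,v)$. Using Lemma \ref{lem:cyc-shift}, choose the source/target partition at $v$ in $\Gamma$ so that all edges lie in the target; write them in the cyclic order as $Y_1,\dots,Y_b$ with signs $\varepsilon_j$. Then $\mathcal{V}_K(\Gamma,v)=\Hom(\mathbbm{1},\bigotimes_j Y_j^{\varepsilon_j})$. In $-\Gamma$ every edge has its direction reversed, so each sign flips, and the cyclic order is reversed. Put the corresponding edges of $-\Gamma$ entirely in the source. By the convention that the source ordering is the reverse of the cyclic ordering, the reversed cyclic order is read back as $Y_1,\dots,Y_b$, and incoming/outgoing status swaps back. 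This gives $\mathcal{V}_K(-\Gamma,v)\cong\Hom(\bigotimes_j Y_j^{\varepsilon_j},\mathbbm{1})$. I would verify this bookkeeping of signs and orderings carefully against the conventions in the definition of the space of labels; it is the fiddliest point. Geometrically it is just the orientation-reversing diffeomorphism $|\Gamma|\cong|-\Gamma|$, a mirror image of the coupon.

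Now pair $\Hom(\mathbbm{1},W)$ with $\Hom(W,\mathbbm{1})$ by composition, $(f,g)\mapsto g\circ f\in\End(\mathbbm{1})=k$; this uses that $\mathbbm{1}$ is simple and $k$ is algebraically closed. Since $\mathcal{C}$ is semisimple, write $W\cong\bigoplus_i X_i^{n_i}$. Then both hom spaces are identified with $k^{n_1}$, and the pairing becomes the standard dot product, which is nondegenerate. This gives the isomorphism $\mathcal{V}_K(-\Gamma,v)\cong\mathcal{V}_K(\Gamma,v)^*$.

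The main obstacle is canonicity: independence of the chosen source/target partition (i.e.\ of which edge the cyclic order is broken at). For this, show that the isomorphisms $\phi$ of Lemma \ref{lem:cyc-shift} for $\Gamma$ and for $-\Gamma$ are adjoint to each other with respect to the pairing. The pairing is the operator invariant $F$ of the closed graph obtained by gluing the coupon of $v$ to its mirror coupon along all the edges. The maps $\phi$ are implemented by bending strands with (co)evaluations, and isotopy invariance of $F$ (the theorem of \cite{RT-ribbon}) shows this closed evaluation is unchanged when strands are rotated around the pair of coupons. Hence the pairing is compatible with every reparametrisation, and the isomorphism is canonical. Rotating the break point all the way around introduces twists, which I would check cancel between a coupon and its mirror; this uses that the twist is compatible with the duals and the identification $X^{**}\cong X$ fixed earlier.
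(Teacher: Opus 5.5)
Your proposal is correct and is essentially the paper's proof: reduce to a single colouring and a single vertex, put all edges at $v$ in the target for $\Gamma$ and in the source for $-\Gamma$ so the two spaces become $\Hom(\mathbbm{1},W)$ and $\Hom(W,\mathbbm{1})$, and pair them by composition into $\Hom(\mathbbm{1},\mathbbm{1})=k$. Your semisimplicity argument for nondegeneracy and your independence-of-partition check are extra detail the paper omits: it simply fixes this pairing as a normalisation, and relates other partitions to it only in a subsequent remark, via Lemma~\ref{lem:cyc-shift} and a trace.
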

\begin{proof}
    It suffices to prove this for ribbon-colourings, and then take direct sums. Let $v\in V$. Then, it suffices to show that the spaces of labels of $v$ in the respective graphs are dual to each other.

    In $\Gamma$, let all strands at $v$ be target strands, and in $-\Gamma$, let them all be source strands. Then, the two vector spaces are now $$\Hom_{\mathcal{C}}\pa{\mathbbm{1}, \bigotimes_{i=1}^a X_i^{\epsilon_i}} \text{ and } \Hom_{\mathcal{C}}\pa{\bigotimes_{i=1}^a X_i^{\epsilon_i}, \mathbbm{1}}$$ for some $X_i$ and $\epsilon_i$.

    These two vector spaces are dual to each other, with the evaluation map given by composition; here, we are identifying $\Hom_{\mathcal{C}}(\mathbbm{1},\mathbbm{1})$ with the base field $k$.
\end{proof}
\begin{rk}
    This isomorphism is unique up to multiplication by a scalar. The purpose of providing a proof here is to fix a normalisation.
\end{rk}
\begin{rk}
    For a general $v$ with both target and source strands, then under the isomorphism defined in Lemma \ref{lem:cyc-shift}, the evaluation map corresponds to composition (in either order), followed by taking the trace; see also \cite[Lemma II.4.2.3]{Turaev-book}.
\end{rk}

We let $\langle\cdot,\cdot\rangle_\Gamma: \mathcal{V}_\mathbf{K}(\Gamma)\otimes \mathcal{V}_\mathbf{K}(-\Gamma) \rightarrow k$ denote the evaluation map in Lemma \ref{lem:rev-dual}. For an element of the form $\bigotimes_{v\in V} f_v \otimes \bigotimes_{v\in V} g_v \in \mathcal{V}_K(\Gamma)\otimes \mathcal{V}_K(-\Gamma) \subseteq \mathcal{V}_\mathbf{K}(\Gamma)\otimes \mathcal{V}_\mathbf{K}(-\Gamma)$ where $K$ is a ribbon-colouring extending $\mathbf{K}$, we have $$\left\langle\bigotimes_{v\in V} f_v, \bigotimes_{v\in V} g_v\right\rangle_\Gamma := \prod_{v\in V}\tr(f_vg_v) = \prod_{v\in V}\tr(g_vf_v).$$

\subsection{Defining $RT_{\mathcal{C},\sqrt{D}}$} \label{subsection:def-rt}

We now have all the tools that we need in order to define the extended Reshetikhin--Turaev TQFT $RT_{\mathcal{C},\sqrt{D}}:\Bord{sig,dec}\rightarrow \KV$. Let $\mathcal{C}$ be a modular tensor category, and $\sqrt{D}$ be a choice of square root of its global dimension. Recall that we have chosen representatives $\mathbb{X} = \set{\mathbbm{1}=X_1,\ldots,X_s}$ of the equivalence classes of simple objects of $\mathcal{C}$.

We now define $RT_{\mathcal{C},\sqrt{D}}$ levelwise, starting from the objects. This will resemble the functor in \cite[\textsection 4]{BDSV4} at the level of objects and 1-morphisms, but will differ at the 2-morphism level.

To the standard circle, we assign the object of $\KV$ given by the integer $s$. In general, the disjoint union of $n$ circles is assigned $s^n$. This is taken to index the set $\mathbb{X}^n$ via the lexicographical ordering, and can be interpreted as a choice of assignment of some $X\in\mathbb{X}$ to each circle.

In this way, a morphism $s^m \rightarrow s^n$ in $\KV$ may be taken to be a matrix of vector spaces whose indexing set is pairs of the form $\pa{X_{i_1}\boxtimes\cdots\boxtimes X_{i_m},X_{j_1}\boxtimes\cdots\boxtimes X_{j_n}}$. Denote such a matrix as $\pa{V_\mathbf{i}^\mathbf{j}}$ where $\mathbf{i} = (i_1,\ldots,i_m)$ and $\mathbf{j} = (j_1,\ldots,j_n)$.

On each of the generating 1-morphisms, define:
\begin{align*}
    RT_{\mathcal{C},\sqrt{D}}(\tikztinypants) &= \pa{\Hom_\mathcal{C}(X_{i_1}\otimes X_{i_2}, X_j)_{i_1i_2}^j}
    \\
    RT_{\mathcal{C},\sqrt{D}}(\tikztinycopants) &= \pa{\Hom_\mathcal{C}(X_i, X_{j_1}\otimes X_{j_2})_i^{j_1j_2}}
    \\
    RT_{\mathcal{C},\sqrt{D}}(\tikztinycup) &= \pa{\Hom_\mathcal{C}(\mathbbm{1}, X_j)^j}
    \\
    RT_{\mathcal{C},\sqrt{D}}(\tikztinycap) &= \pa{\Hom_\mathcal{C}(X_i, \mathbbm{1})_i}
\end{align*}

Now, a general 1-morphism $\Sigma$ is a formal composition of disjoint unions of these generating 1-morphism. We wish to send $\Sigma$ to the corresponding composition of morphisms in $\KV$. This may be computed as follows:

Let $\Gamma_\Sigma$ be the associated ribbon graph. Then, each choice of indices $\mathbf{i}$ and $\mathbf{j}$ corresponds exactly to a choice of colouring of the endpoints of $\Gamma_\Sigma$. This determines at most one colouring of the external edges: if an external edge $e$ has either endpoint $X$, then colour the whole by $X$. There is a possibility that an external edge has both ends coloured, in which case there could be no possible colouring if the colours are incompatible.

In the general case, this defines a partial ribbon-colouring of $\Gamma$; call it $\mathbf{K}_{\mathbf{i},\mathbf{j}}$. Then, we may set the $(\mathbf{i},\mathbf{j})$-entry of $RT_{\mathcal{C},\sqrt{D}}(\Sigma)$ to be the space of labels of $\mathbf{K}_{\mathbf{i},\mathbf{j}}$. In the case that $\mathbf{K}_{\mathbf{i},\mathbf{j}}$ is not well-defined, set this entry to be the zero vector space.

In other words, let $\mathbb{K}_{\mathbf{i},\mathbf{j}}$ denote the set of ribbon-colourings of $\Gamma$ extending $\mathbf{K}_{\mathbf{i},\mathbf{j}}$ (In the case where $\mathbf{K}_{\mathbf{i},\mathbf{j}}$ does not exist, then set $\mathbb{K}_{\mathbf{i},\mathbf{j}} = \emptyset$.) Then define $$RT_{\mathcal{C}
,\sqrt{D}}(\Sigma) := \pa{\pa{\bigoplus_{K\in \mathbb{K}_{\mathbf{i},\mathbf{j}}} \mathcal{V}_K(\Gamma_\Sigma)}_{\mathbf{i}}^\mathbf{j}}$$

It is clear that this agrees with the definition given above on the generating 1-morphisms. As the direct sum of tensor products implements the composition law of matrices of vector spaces, this is indeed the desired composition.

Finally, we define $RT_{\mathcal{C},\sqrt{D}}$ on 2-morphisms. This will be done in the spirit of the original Reshetikhin--Turaev construction, and is similar to the approach of \cite{Tsumura}, after accounting for changes at the 1-morphism level.

First, given a 2-morphism $(M,n)$, recall from Definition \ref{defn:olm} that we may obtain a closed 3-manifold $\ol{M}$ from $M$ by gluing in the handlebodies $H_{\parin M}$, $-H_{\parout M}$ as well as the solid cylinders $V_{\parIn M} = I \times D_{\parin\parin M}$ and $-V_{\parOut M} = -I \times D_{\parout\parin M}$. The two handlebodies come with associated embedded ribbon graphs $\Gamma_{\parin M}$ and $-\Gamma_{\parout M}$. Now, take $-\Gamma_{\parout M}$ and then join its endpoints without twisting through the central axes of the solid cylinders $V_{\parIn M}$ and $-V_{\parOut M}$ to the endpoints of $\Gamma_{\parin M}$; the reversal ensures that the directions of the edges match up. In this way, we have now constructed an embedded ribbon graph $\Gamma_M$ in the closed 3-manifold $\ol{M}$.

Now, for a 2-morphism $(M,n): \Sigma \rightarrow \Sigma'$ in $\Bord{sig}$, $RT_{\mathcal{C},\sqrt{D}}(\Sigma)$ and $RT_{\mathcal{C},\sqrt{D}}(\Sigma')$ are both matrices of vector spaces with the same indexing set. As with the 1-morphisms, the choice of the index $(\mathbf{i},\mathbf{j})$ determines at most one colouring of the edges of $\Gamma_M$ which go through the solid cylinders, so defining at most one partial ribbon-colouring $\mathbf{K}_M$ of $\Gamma_M$. Let $\mathbb{K}_M$ be the set of ribbon-colourings of $\Gamma_M$ extending $\mathbf{K}_M$.

We want to define a linear map $$RT_{\mathcal{C},\sqrt{D}}(M,n)_\mathbf{i}^\mathbf{j}:\bigoplus_{K\in \mathbb{K}_{\mathbf{i},\mathbf{j}}} \mathcal{V}_K(\Gamma_\Sigma) \rightarrow \bigoplus_{K'\in \mathbb{K}_{\mathbf{i},\mathbf{j}}'} \mathcal{V}_{K'}(\Gamma_{\Sigma'}).$$ By duality and Lemma \ref{lem:rev-dual}, this is equivalent to defining a linear map $$(RT_{\mathcal{C},\sqrt{D}}(M,n)_\mathbf{i}^\mathbf{j})_{K,K'}: \mathcal{V}_K(\Gamma_\Sigma)\otimes\mathcal{V}_{K'}(-\Gamma_{\Sigma'}) \rightarrow \mathbb{C}$$ for each $K,K'$.

Notice that choosing $K$ and $K'$ is exactly the same as choosing a ribbon-colouring $K_M\in\mathbb{K}_M$, and for this $K_M$, we have $\mathcal{V}_{K_M}(\Gamma_M) \cong \mathcal{V}_K(\Gamma_\Sigma)\otimes\mathcal{V}_{K'}(-\Gamma_{\Sigma'})$. In other words, we want to define a linear map $\mathcal{V}_{K_M}(\Gamma_M) \rightarrow \mathbb{C}$ for each $K_M\in\mathbb{K}_M$.

This is done by using a rescaled version of the operator invariant. Let $$\dim_{\mathrm{int}}(K') := \prod_{e \in E_\mathrm{int}(\Gamma_{\Sigma'})}\dim K'(e)$$ be the product of the dimensions of the colours of the internal edges of $\Gamma_{\Sigma'}$. Now, our desired linear map is defined as follows: for each $\mathbf{f}\in\mathcal{V}_{K_M}(\Gamma_M)\cong \mathcal{V}_K(\Gamma_\Sigma)\otimes\mathcal{V}_{K'}(-\Gamma_{\Sigma'})$, set 
\begin{equation}\label{eq:def-rt}
    (RT_{\mathcal{C},\sqrt{D}}(M,n)_\mathbf{i}^\mathbf{j})_{K,K'}(\mathbf{f}):=\pa{\frac{\sqrt{D}}{p_-}}^n \sqrt{D}^{\chi\pa{\ol{\Sigma'}}/2} \dim_{\mathrm{int}}(K')\tau(\ol{M},\Gamma_M,K_M,\mathbf{f}).
\end{equation}
\begin{rk}
    The $\dim_\mathrm{int}(K')$ term is necessary for the linear map constructed in this way to be compatible with the isomorphisms given by contracting $\Gamma_M$ along subtrees; see Lemma \ref{lem:contract-tree-rt}. One may alternatively get rid of this term by choosing a different normalisation in Lemma \ref{lem:rev-dual}.
\end{rk}

Then, $RT_{\mathcal{C},\sqrt{D}}(M,n)_\mathbf{i}^\mathbf{j}$ is the sum of the $(RT_{\mathcal{C},\sqrt{D}}(M,n)_\mathbf{i}^\mathbf{j})_K^{K'}: \mathcal{V}_K(\Gamma_\Sigma)\rightarrow \mathcal{V}_{K'}(\Gamma_{\Sigma'})$ which are the unique linear maps satisfying 
\begin{equation} \label{eq:rt-nodual}
    \left\langle (RT_{\mathcal{C},\sqrt{D}}(M,n)_\mathbf{i}^\mathbf{j})_K^{K'}(\mathbf{f}), \mathbf{g} \right\rangle_{\Gamma_{\Sigma'}} = (RT_{\mathcal{C},\sqrt{D}}(M,n)_\mathbf{i}^\mathbf{j})_{K,K'}(\mathbf{f}\otimes\mathbf{g})
\end{equation}
for all $\mathbf{f} \in \mathcal{V}_K(\Gamma_\Sigma),\mathbf{g}\in \mathcal{V}_{K'}(\Gamma_{\Sigma'})$.

\section{Functoriality of $RT_{\mathcal{C}}$} \label{section:rt-functor}

The aim of this section is to prove that the construction presented in the previous section indeed defines a symmetric monoidal functor $RT_\mathcal{C}:\Bord{sig/2,dec} \rightarrow \KV$. We shall first prove in Theorem \ref{thm:rt-functor} that $RT_{\mathcal{C},\sqrt{D}}$ indeed defines a symmetric monoidal functor $\Bord{sig,dec}\rightarrow \KV$. Then, we will verify in Proposition \ref{prop:rt-well-def} that on the subbicategory $\Bord{sig/2,dec}\subseteq \Bord{sig,dec}$, $RT_{\mathcal{C},\sqrt{D}}$ and $RT_{\mathcal{C},-\sqrt{D}}$ agree, and so we have a symmetric monoidal functor $RT_\mathcal{C}$ that does not depend on the choice of square root. Along the way, we will also check that our construction agrees with the Reshetikhin--Turaev TQFT as defined in \cite{RT}.

By construction, $RT_{\mathcal{C},\sqrt{D}}$ is compatible with the monoidal structure at the object and 1-morphism level, as well as with the composition of 1-morphisms. In order to prove Theorem~\ref{thm:rt-functor}, it thus remains to check that at the 2-morphism level, $RT_{\mathcal{C},\sqrt{D}}$ is compatible with the monoidal structure, horizontal and vertical composition, as well as identities.

The compatibility with the monoidal structure is clear, as each term in (\ref{eq:def-rt}) is multiplicative under disjoint unions. Proving the compatibility with vertical composition (Proposition \ref{prop:vert-comp}) and horizontal composition (Proposition \ref{prop:hor-comp}) will occupy most of this section. This is done in a similar way to \cite{Turaev-book,Tsumura}, but accounting for the more general ribbon graphs associated to the 1-morphisms. Then, we will finally check in Lemma \ref{lem:rt-id} that identity 2-morphisms are sent to identity 2-morphisms.

\subsection{Some string calculus results}

Before we proceed, we first record some useful results of the string calculus of modular tensor categories. Throughout, ribbon graphs are coloured with the objects of a modular tensor category $\mathcal{C}$.

Given a labelled embedded ribbon graph in $\mathbb{R}^2 \times [0,1]$, we obtain the operator invariant $F(\Gamma,K,\set{f_v}): X \rightarrow Y$, a morphism in $\mathcal{C}$.  Post-composition by this morphism yields a linear map:

\begin{defn}
    Let $\Gamma$ be a labelled embedded ribbon graph, possibly with some uncoloured loops, in $\mathbb{R}^2 \times [0,1]$. Then let $$F_*(\Gamma,K,\set{f_v}): \Hom(\mathbbm{1},X) \rightarrow \Hom(\mathbbm{1},Y)$$ be the linear map given by post-composition with its operator invariant.
\end{defn}

Let $Y_1,\ldots,Y_a$ be any $a$ objects in $\mathcal{C}$. Then, as presented in \cite[\textsection IV.2.2]{Turaev-book}, the tensor product of morphisms defines a canonical inclusion $$\varphi: \bigotimes_{p=1}^a \Hom(\mathbbm{1},Y_p) \rightarrow \Hom\pa{\mathbbm{1},\bigotimes_{p=1}^a Y_p}$$ and moreover the dual of the linear map $$\bigotimes_{p=1}^a \Hom(Y_p,\mathbbm{1}) \rightarrow \Hom\pa{\bigotimes_{p=1}^a Y_p,\mathbbm{1}}$$ defines, under the identification in Lemma \ref{lem:rev-dual}, a canonical projection $$\psi: \Hom\pa{\mathbbm{1},\bigotimes_{p=1}^a Y_p} \rightarrow \bigotimes_{p=1}^a \Hom(\mathbbm{1},Y_p)$$
onto this subspace.

\begin{lem}[{\cite[Lemma IV.2.2.1]{Turaev-book}}]\label{lem:tens-hom}
    The composition $\varphi\psi$ is equal to $D^{-a}$ times $F_*$ of the following labelled embedded ribbon graph in $\mathbb{R}^2\times[0,1]$:

    \begin{equation*}
        \begin{tikzpicture}
            \begin{knot}[clip width=2]
                \strand[draw=blue, double=white, double distance=5pt] (0,-1.7) -- (0,1.7);
                \strand[draw=black, double=white, double distance=5pt] (-1,0) to [out=down, in=down] (1,0) to [out=up, in=up] (-1,0);
                \flipcrossings{1}
            \end{knot}
            \draw[draw=blue, ->] (0,1) -- (0,1.4);
            \node[] at (-0.3,1.2) {$\textcolour{blue}{Y_1}$};
            \begin{knot}[clip width=2]
                \strand[draw=red, double=white, double distance=5pt] (3,-1.7) -- (3,1.7);
                \strand[draw=black, double=white, double distance=5pt] (2,0) to [out=down, in=down] (4,0) to [out=up, in=up] (2,0);
                \flipcrossings{1}
            \end{knot}
            \draw[draw=red, ->] (3,1) -- (3,1.4);
            \node[] at (2.7,1.2) {$\textcolour{red}{Y_2}$};
            \node[] at (5,0) {$\cdots$};
            \begin{knot}[clip width=2]
                \strand[draw=Green, double=white, double distance=5pt] (7,-1.7) -- (7,1.7);
                \strand[draw=black, double=white, double distance=5pt] (6,0) to [out=down, in=down] (8,0) to [out=up, in=up] (6,0);
                \flipcrossings{1}
            \end{knot}
            \node[] at (6.3,1.2) {$\textcolour{Green}{Y_a}$};
            \draw[draw=Green, ->] (7,1) -- (7,1.4);
        \end{tikzpicture}
    \end{equation*}
    If some $Y_i$ is the tensor product of multiple simple objects, the central coloured ribbon is taken to represent parallel ribbons, each coloured with the relevant simple object.
\end{lem}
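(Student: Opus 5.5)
The plan is to reduce to a single tensor factor and then use the standard "killing property" of an uncoloured $\Omega$-loop in a modular tensor category. Both $\varphi$ and $\psi$ are built factorwise. Moreover, the diagram is a tensor product of $a$ identical pieces, one for each $Y_p$, and $F_*$ is monoidal by the theorem of \cite{RT-ribbon}. So it suffices to compute, for a single (possibly non-simple) object $Y$, the endomorphism of $Y$ given by the $Y$-strand encircled by an uncoloured loop, with loop colours weighted by $\dim$. We then identify how $\varphi\psi$ acts on $\Hom(\mathbbm{1},\bigotimes_p Y_p)$ after decomposing each $Y_p$.

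\textbf{Step 1 (killing property).} Decompose $Y\cong\bigoplus_j X_j^{\oplus m_j}$ using semisimplicity, with inclusions and projections $\iota,\pi$ summing to $\id_Y$. By naturality of the braiding, the encircling loop commutes with $\iota$ and $\pi$. On a simple $X_j$ it therefore acts by the scalar $\sum_i \dim(X_i) S_{ij}/\dim(X_j)$, since $\End(X_j)=k$ and taking traces gives a Hopf link. Invertibility of the $S$-matrix, together with the standard identity $\sum_i \dim(X_i)S_{ij}=D\,\delta_{j1}$ (Verlinde-type orthogonality; cf.\ \cite{BK}, \cite[\textsection II.3]{Turaev-book}), shows that this scalar is $D$ for $j=1$ and $0$ otherwise. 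Hence the encircled strand equals $D$ times the projection $P_Y:=\sum \iota\pi$ onto the $\mathbbm{1}$-isotypic summand of $Y$. If $Y$ is drawn as parallel simple ribbons, the loop encircles all of them and we apply the same argument to their tensor product.

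\textbf{Step 2 (identify $\varphi\psi$).} Consequently, $D^{-a}F_*$ of the diagram equals post-composition with $P_{Y_1}\otimes\cdots\otimes P_{Y_a}$. Its image is exactly the span of the tensor products of morphisms $\mathbbm{1}\to Y_p$, which is $\varphi\big(\bigotimes_p\Hom(\mathbbm{1},Y_p)\big)$. It remains to check that $\psi$ is the matching projection, i.e.\ that $\psi$ is the left inverse of $\varphi$ killing the complement $\ker(P_{Y_1}\otimes\cdots\otimes P_{Y_a})$. By definition, $\psi$ is dual to the inclusion $\bigotimes\Hom(Y_p,\mathbbm{1})\to\Hom(\bigotimes Y_p,\mathbbm{1})$ under the pairing of Lemma~\ref{lem:rev-dual}, where the pairing is composition. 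For $g=\bigotimes g_p$ and any $f\in\Hom(\mathbbm{1},\bigotimes Y_p)$ we have $g\circ f=g\circ(P_{Y_1}\otimes\cdots\otimes P_{Y_a})f$, because each $g_p$ factors through the trivial summand. This shows that $\psi$ vanishes on the complement. Taking $f=\varphi(\bigotimes h_p)$, the pairing gives $\prod g_ph_p$, so $\psi\varphi=\id$. Hence $\varphi\psi=P_{Y_1}\otimes\cdots\otimes P_{Y_a}$ acting by post-composition, which proves the lemma.

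The main obstacle is Step 1, specifically the identity $\sum_i\dim(X_i)S_{ij}=D\delta_{j1}$. I would derive it from $S$ being invertible as follows. The vector $(\dim X_i)_i$ is, up to the factor $\dim X_j$, a simultaneous eigenvector of the fusion action, and the encircled-strand scalars $\lambda_j$ satisfy $\lambda_j\lambda_k$-type multiplicativity via sliding the loop. Invertibility of $S$ then forces $\lambda_j=0$ for $j\neq1$, while $\lambda_1=D$ directly. Alternatively, one can simply cite this from \cite[Lemma 3.1.4]{BK} or Turaev's book, as the statement itself is quoted from there.
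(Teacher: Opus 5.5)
Your proof is correct. The paper gives no proof of its own here; it simply quotes Turaev's Lemma IV.2.2.1. Your route is the standard argument behind that lemma:
\begin{enumerate}
\item reduce to a single factor using monoidality of $F$;
\item show that the $\dim$-weighted loop around an arbitrary object $Y$ is $D$ times the idempotent $P_Y$ onto the $\mathbbm{1}$-isotypic summand, by naturality of the double braiding and the scalar on simples;
\item use the duality defining $\psi$, together with the fact that every $g_p\colon Y_p\to\mathbbm{1}$ factors through $P_{Y_p}$, to get $\psi\varphi=\id$ and $\psi=0$ on the complement.
\end{enumerate}
Together these give that $\varphi\psi$ is post-composition with $P_{Y_1}\otimes\cdots\otimes P_{Y_a}$.

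The one weak spot is your sketched derivation of $\sum_i\dim(X_i)S_{ij}=D\,\delta_{j1}$. As written it is not an argument. There is no multiplicativity of the kind you suggest: a single weighted loop around $X_j\otimes X_k$ acts by $D$ times the projection onto the $\mathbbm{1}$-summand of $X_j\otimes X_k$, not by $\lambda_j\lambda_k$. For instance, $j=k=1$ gives $D$, not $D^2$.

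You do not need that derivation, though. The identity is exactly the case of Lemma~\ref{lem:BK} in which the second strand is coloured $\mathbbm{1}$. Equivalently, since $S_{1i}=\dim X_i$, it reads $(S^2)_{1j}=D\,\delta_{1j}$, which follows from $S^2$ being $D$ times the charge-conjugation matrix \cite{BK}.
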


In the case where one of the $Y_i$ is the tensor product of two simple objects, this may be simplified even further with the following result:

\begin{lem}[{\cite[Corollary 3.1.10]{BK}}]\label{lem:BK}
    In a modular tensor category, the operator invariants of the following two labelled embedded ribbon graphs in $\mathbb{R}^2\times[0,1]$ are equal:

    \begin{equation*}
        \begin{tikzpicture}
            \begin{knot}[clip width=2]
                \strand[draw=blue, double=white, double distance=5pt] (-0.5,-1.7) -- (-0.5,1.7);
                \strand[draw=Green, double=white, double distance=5pt] (0.5,-1.7) -- (0.5,1.7);
                \strand[draw=black, double=white, double distance=5pt] (-1,0) to [out=down, in=down] (1,0) to [out=up, in=up] (-1,0);
                \flipcrossings{1,3}
            \end{knot}
            \draw[draw=blue, ->] (-0.5,1) -- (-0.5,1.4);
            \draw[draw=Green, <-] (0.5,1) -- (0.5,1.4);
            \node[] at (-0.8,1.2) {$\textcolour{blue}{i}$};
            \node[] at (0.8,1.2) {$\textcolour{Green}{j}$}; 
            \node[] at (2,0) {$=$};
            \node[] at (3,0) {$\delta_{\textcolour{blue}{i}\textcolour{Green}{j}}\frac{D}{\dim \textcolour{blue}{i}}$};
            \draw[draw=blue, double=white, double distance=5pt] (4,-1.7) to [out=up, in=left] (4.5,-0.7) to [out=right, in=up] (5,-1.7);
            \draw[draw=blue, double=white, double distance=5pt] (4,1.7) to [out=down, in=left] (4.5,0.7) to [out=right,in=down] (5,1.7);
            \node[] at (4.5,0.4) {$\textcolour{blue}{i}$};
            \node[] at (4.5,-0.4) {$\textcolour{blue}{i}$};
            \draw[draw=blue, <-] (4.3,0.75) .. controls (4.5,0.65) .. (4.7,0.75);
            \draw[draw=blue, ->] (4.3,-0.75) .. controls (4.5,-0.65) .. (4.7,-0.75);
        \end{tikzpicture}
    \end{equation*}
\end{lem}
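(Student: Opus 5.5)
The plan is to decompose the pair of parallel strands into simple summands, let the uncoloured loop act on each summand by a scalar, and show that only the trivial summand survives.

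\emph{Reduction to one strand.} The two vertical ribbons, coloured $i$ upwards and $j$ downwards, represent the object $X_i\otimes X_j^*$. By semisimplicity of $\mathcal{C}$ we write
\[
\id_{X_i\otimes X_j^*}=\sum_{k}\sum_{\alpha}\iota_{k,\alpha}\circ\pi_{k,\alpha},
\]
where $\iota_{k,\alpha}:X_k\to X_i\otimes X_j^*$ and $\pi_{k,\alpha}:X_i\otimes X_j^*\to X_k$ are dual bases of inclusions and projections, so that $\pi_{k,\alpha}\iota_{k',\beta}=\delta_{kk'}\delta_{\alpha\beta}\id_{X_k}$. We insert this decomposition just below the encircling loop. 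The loop passes consistently (over in front, under behind) around both strands. By isotopy invariance and naturality of the braiding, both of which come from the operator-invariant theorem of \cite{RT-ribbon}, we can slide the coupon $\iota_{k,\alpha}$ up through the loop. This leaves a single strand coloured $X_k$ encircled by the uncoloured loop. Since $X_k$ is simple, a loop coloured $l$ around it acts by the scalar $S_{lk}/\dim X_k$. So the uncoloured loop, which is the weighted sum $\sum_l \dim(X_l)$ over colours $l$, acts by
\[
c_k:=\frac{1}{\dim X_k}\sum_l \dim(X_l)\,S_{lk}.
\]

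\emph{Main step: $c_k=D\,\delta_{k1}$.} For $k=1$ the $S$-matrix entries reduce to $S_{l1}=\dim X_l$, so $c_1=\sum_l\dim(X_l)^2=D$ (Definition \ref{defn:cat-consts}). For $k\neq1$ we use invertibility of $S$. The map $h_k:[X_m]\mapsto S_{mk}/\dim X_k$ is a ring homomorphism from the Grothendieck ring to $k$: a $k$-encircling loop on a strand coloured $X_m\otimes X_{m'}$ can be split over the simple summands. Set $\omega=\sum_l\dim(X_l)[X_l]$. Then $[X_m]\,\omega=\dim(X_m)\,\omega$ for every $m$, by rigidity and $\dim\Hom(X_m\otimes X_l,X_n)=\dim\Hom(X_l,X_m^*\otimes X_n)$. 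Applying $h_k$ gives
\[
h_k(X_m)\,c_k=\dim(X_m)\,c_k \quad\text{for all } m.
\]
If $c_k\neq0$, then $S_{mk}=\dim(X_k)\dim(X_m)=\dim(X_k)\,S_{m1}$ for all $m$. This makes column $k$ of $S$ proportional to column $1$, contradicting invertibility. Hence $c_k=0$ for $k\neq1$.

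\emph{Conclusion.} Only $k=1$ survives, and $\Hom(\mathbbm{1},X_i\otimes X_j^*)$ is nonzero only when $i=j$, in which case it is one-dimensional and spanned by the coevaluation. The dual projection is then the evaluation divided by its composite with the coevaluation, namely $\dim X_i$. So the surviving term $\iota\circ\pi$ is $\frac{1}{\dim X_i}$ times the cap-on-top-of-cup diagram on the right-hand side. Multiplying by $c_1=D$ gives the factor $\delta_{ij}D/\dim X_i$.

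I expect the main obstacle to be the vanishing $c_k=0$ for $k\neq1$, which is the only place modularity enters. The remaining care is orientation bookkeeping, so that the dual-basis normalisation matches the drawn cup and cap and the crossing conventions of the loop match the action by $S_{lk}/\dim X_k$.
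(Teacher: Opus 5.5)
Your proof is correct, and it is essentially the standard argument behind the result, which the paper does not prove but simply cites from Bakalov--Kirillov. That argument has two steps: first the killing property $c_k=D\,\delta_{k1}$, obtained from the character $h_k$ together with invertibility of $S$; then projecting $X_i\otimes X_j^*$ onto its $\mathbbm{1}$-summand. The one justification to tighten is the multiplicativity of $h_k$. It comes from the loop coloured $X_m\otimes X_{m'}$ around the $X_k$-strand, which is both two parallel loops coloured $X_m$ and $X_{m'}$ and a sum over the simple summands of $X_m\otimes X_{m'}$; the loop coloured $X_k$ around a strand coloured $X_m\otimes X_{m'}$ does not give this.
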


\subsection{Reformulating $RT_{\mathcal{C},\sqrt{D}}$ for 2-morphisms}Before proving functoriality, we will first present a slight reformulation of $RT_{\mathcal{C},\sqrt{D}}$ which is more tractable. Let $(M,n)$ be a 2-morphism in $\Bord{sig,dec}$. In this subsection, we further assume that $M$ is connected.

Let $|\Gamma\cup L|$ be a surgery presentation of $(\ol{M},|\Gamma|)$. By applying suitable ambient isotopies, we may obtain a diagram from which it is easier to read off the linear maps defined above:

First, we may replace each $\mathcal{V}_{\mathbf{K}_{\mathbf{i},\mathbf{j}}}(\Gamma_\Sigma)$ with an isomorphic vector space. In each connected component of $\Sigma$, pick a spanning tree of the associated ribbon graph. All of these spanning trees' edges are uncoloured, since the only edges that are coloured by $\mathbb{K}_{\mathbf{i},\mathbf{j}}$ are external ones. Let $\tilde{\Gamma}_\Sigma$ be the result of contracting along these trees, and $\tilde{\mathbf{K}}_{\mathbf{i},\mathbf{j}}$ the induced colouring. Then, as proven in Lemma \ref{lem:tree}, this provides a canonical isomorphism between $\mathcal{V}_{\mathbf{K}_{\mathbf{i},\mathbf{j}}}(\Gamma_\Sigma)$ and $\mathcal{V}_{\tilde{\mathbf{K}}_{\mathbf{i},\mathbf{j}}}(\tilde{\Gamma}_\Sigma)$.

Let $\tilde{K}$ and $\tilde{K}'$ be ribbon-colourings of $\tilde{\Gamma}_\Sigma$ and $\tilde{\Gamma}_{\Sigma'}$ which extend $\tilde{\mathbf{K}}_{\mathbf{i},\mathbf{j}}$ and $\tilde{\mathbf{K}}_{\mathbf{i},\mathbf{j}}'$ respectively. Then, we wish to define a linear map $$(RT_{\mathcal{C},\sqrt{D}}(M,n)_\mathbf{i}^\mathbf{j})_{\tilde{K}}^{\tilde{K}'}: \mathcal{V}_{\tilde{K}}(\tilde{\Gamma}_\Sigma) \rightarrow \mathcal{V}_{\tilde{K}'}(\tilde{\Gamma}_{\Sigma'}).$$

We may do this exactly as before: let $\tilde{\Gamma}_M$ denote the contraction of $\Gamma_M$ by the chosen spanning trees of the components of $\Gamma_\Sigma$ and $\Gamma_{\Sigma'}$. Then, choosing $\tilde{K}$ and $\tilde{K}'$ is exactly the same as choosing a ribbon-colouring $\tilde{K}_M$ of $\tilde{\Gamma}_M$, and we may view $\tilde{\Gamma}_M$ as embedded in $M$ as in Lemma \ref{lem:tree}. A similarly rescaled version of the operator invariant $\tau(M,\tilde{\Gamma}_M,\tilde{K}_M,-)$ as in (\ref{eq:def-rt}) defines the linear map $(RT_{\mathcal{C},\sqrt{D}}(M,n)_\mathbf{i}^\mathbf{j})_{\tilde{K},\tilde{K}'}: \mathcal{V}_{\tilde{K}}(\tilde{\Gamma}_\Sigma)\otimes\mathcal{V}_{\tilde{K}'}(\tilde{\Gamma}_{\Sigma'})\rightarrow k$; note in particular that the $\dim_{\mathrm{int}}(K')$ term is replaced by $\dim_{\mathrm{int}}(\tilde{K}')$. Then, we may define $(RT_{\mathcal{C},\sqrt{D}}(M,n)_\mathbf{i}^\mathbf{j})_{\tilde{K}}^{\tilde{K}'}$ as in (\ref{eq:rt-nodual}), but adding tildes as appropriate.

Now, we want to check that this new definition corresponds to the original definition of $RT_{\mathcal{C},\sqrt{D}}(M)$, upon applying the isomorphism in Lemma \ref{lem:tree}. This will require us to study the interaction between the isomorphisms defined in Lemmas \ref{lem:tree} and \ref{lem:rev-dual}.

Let $\Gamma$ be a ribbon graph with partial ribbon-colouring $\mathbf{K}$, and $T$ a ribbon subtree whose ribbons are uncoloured. Then, one might expect the following diagram to commute:
\[\begin{tikzcd}
	{\mathcal{V}_\mathbf{K}(\Gamma) \otimes \mathcal{V}_\mathbf{K}(-\Gamma)} && \\
	&& {k} \\
    {\mathcal{V}_{\mathbf{K}_{/T}}(\Gamma/T) \otimes \mathcal{V}_{\mathbf{K}_{/T}}(-\Gamma/T)} &&
	\arrow["{\langle\cdot,\cdot\rangle_\Gamma}", from=1-1, to=2-3]
	\arrow["{\bigcirc\otimes\bigcirc}", from=1-1, to=3-1]
	\arrow["{\langle\cdot,\cdot\rangle_{\Gamma/T}}", from=3-1, to=2-3]
\end{tikzcd}\]
Here, the vertical arrow is the isomorphism from Lemma \ref{lem:tree} and the other two arrows are the evaluation maps from Lemma \ref{lem:rev-dual}. This is, however, not exactly right; it turns out to be off by a scalar that depends on the colours of the edges of $T$.

Fix a ribbon-colouring $K$ that extends $\mathbf{K}$. Recall that by Definition \ref{defn:partial-space-labels}, $\mathcal{V}_\mathbf{K}(\Gamma) \otimes \mathcal{V}_\mathbf{K}(-\Gamma)$ is a direct sum of vector spaces. $\mathcal{V}_{K}(\Gamma) \otimes \mathcal{V}_{K}(-\Gamma)$ is one of the summands, and so is a vector subspace. In this subspace, the above diagram fails to commute by a factor of $$\dim(K|_T) = \prod_{e \in E_T} \dim(K(e)),$$ the product of the dimensions of the colours of the edges of $T$. On the other hand, on the subspaces of the form $\mathcal{V}_{K}(\Gamma) \otimes \mathcal{V}_{\ol{K}}(-\Gamma)$ where $K\ne \ol{K}$ extend $\mathbf{K}$, both routes give the zero linear map. In other words, we have the following technical lemma:

\begin{lem}\label{lem:dual-contract}
    Let $K,\ol{K}$ be ribbon-colourings that extend $\mathbf{K}$, so we have an inclusion $\iota: \mathcal{V}_{K}(\Gamma) \otimes \mathcal{V}_{\ol{K}}(-\Gamma) \rightarrow \mathcal{V}_\mathbf{K}(\Gamma) \otimes \mathcal{V}_\mathbf{K}(-\Gamma)$ of vector spaces.
    \begin{enumerate}
        \item If $K=\ol{K}$, the following diagram commutes:
        \begin{equation}\label{eq:dual-contract}
            \begin{tikzcd}
                {\mathcal{V}_{K}(\Gamma) \otimes \mathcal{V}_{K}(-\Gamma)} && {k}\\
                \\
                {\mathcal{V}_{\mathbf{K}_{/T}}(\Gamma/T) \otimes \mathcal{V}_{\mathbf{K}_{/T}}(-\Gamma/T)} && {k}
                \arrow["{\langle\cdot,\cdot\rangle_\Gamma\circ\iota}", from=1-1, to=1-3]
                \arrow["{(\bigcirc\otimes\bigcirc)\circ\iota}", from=1-1, to=3-1]
                \arrow["{\langle\cdot,\cdot\rangle_{\Gamma/T}}", from=3-1, to=3-3]
                \arrow["{\cdot\dim(K|_T)^{-1}}", from=1-3, to=3-3]
            \end{tikzcd}
        \end{equation}
        \item If $K\ne\ol{K}$, then we instead have $$\langle\cdot,\cdot\rangle_{\Gamma/T} \circ (\bigcirc\otimes\bigcirc) \circ \iota = \langle\cdot,\cdot\rangle_\Gamma\circ\iota = 0.$$
    \end{enumerate}
    
\end{lem}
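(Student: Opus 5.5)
We reduce to a single contraction and then induct. Since $\bigcirc$ for a general subtree $T$ is, by the proof of Lemma \ref{lem:tree}, a composite of single-edge contractions, and since $\dim(K|_T)$ is multiplicative over the edges of $T$, it suffices to treat $T$ consisting of one edge $e$ joining distinct vertices $u=t(e)$ and $w=h(e)$. We argue one edge at a time, tracking the colouring induced on $\Gamma/e$. Contracting further edges then multiplies the scalars, and part (2) propagates because the blocks indexed by distinct colourings stay distinct. Vertices other than $u,w$ contribute identical factors $\tr(f_vg_v)$ on both sides, so we may ignore them. By Lemma \ref{lem:cyc-shift} we may also choose source/target partitions freely, since that isomorphism is compatible with the pairing. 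This compatibility follows from the remark after Lemma \ref{lem:rev-dual}: the pairing is composition followed by trace.

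For part (2), suppose $K\neq\ol K$. If they differ on an edge other than $e$, then the images of $\mathcal{V}_K(\Gamma)$ and $\mathcal{V}_{\ol K}(-\Gamma)$ under $\bigcirc$ lie in summands of $\mathcal{V}_{\mathbf K_{/T}}$ indexed by different colourings of $\Gamma/T$. The pairing $\langle\cdot,\cdot\rangle_{\Gamma/T}$, being defined summand-by-summand via Lemma \ref{lem:rev-dual}, vanishes on such pairs, and the same holds for $\langle\cdot,\cdot\rangle_\Gamma$. If they differ only on $e$, with $K(e)=Z\neq Z'=\ol K(e)$, then $\langle\cdot,\cdot\rangle_\Gamma\circ\iota=0$ for the same reason. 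For the contracted side, arrange $u$ to have only target $e$ and $w$ to have only source $e$, so that $\bigcirc(f_u\otimes f_w)=f_w\circ f_u$ factors through $Z$. On $-\Gamma$ the roles reverse, so $\bigcirc(g_u\otimes g_w)=g_u\circ g_w$ factors through $Z'$. The pairing is then $\tr(f_wf_ug_ug_w)$, and the middle factor $f_ug_u\in\Hom(Z',Z)=0$ by Schur's lemma, since $Z$ and $Z'$ are non-isomorphic simples. This settles (2).

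For part (1), with $K(e)=Z$ on both sides, we must show
\[\tr(f_wf_ug_ug_w)=\dim(Z)^{-1}\tr(f_ug_u)\tr(f_wg_w).\]
Here $f_ug_u\in\End(Z)=k\cdot\id_Z$ because $Z$ is simple, so $f_ug_u=\frac{\tr(f_ug_u)}{\dim Z}\id_Z$. Substituting and using cyclicity of the trace gives $\tr(f_wf_ug_ug_w)=\frac{\tr(f_ug_u)}{\dim Z}\tr(f_wg_w)$, which is the claim.

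The main obstacle is bookkeeping rather than mathematics. We must check that the normalisation of the reversed-graph pairing is consistent under the chosen source/target partitions, namely that $-\Gamma$ reverses the order of composition at $u,w$ exactly as stated, and that no extra factors appear from the evaluation and coevaluation maps in Lemma \ref{lem:cyc-shift}. The first thing to try is to fix all partitions so that the evaluation is literally composition followed by trace, then invoke the remark after Lemma \ref{lem:rev-dual}. We also need $\dim Z\neq0$, which holds in a modular tensor category.
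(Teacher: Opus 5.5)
Your proof is correct, but it proves the key single-edge identity by a different mechanism from the paper. Both arguments make the same reductions:
\begin{itemize}
\item reduce to one edge $e$ with all other edges coloured;
\item note that the factors at the other vertices cancel;
\item choose partitions so that contraction is literal composition and the pairing is composition followed by trace.
\end{itemize}
Both then arrive at $\tr\bigl((f_wf_u)(g_ug_w)\bigr)=\delta_{Z,Z'}\frac{1}{\dim Z}\tr(f_ug_u)\tr(f_wg_w)$.

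The paper proves this identity graphically. It rewrites the left-hand side as $\frac1D$ times the same closed diagram with an uncoloured unknot encircling the two strands coloured $Z,Z'$ between the coupons. It then applies the killing property, Lemma~\ref{lem:BK} (from \cite[Corollary 3.1.10]{BK}), which cuts the strands and produces the factor $\delta_{Z,Z'}D/\dim Z$, so both parts are handled at once.

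You instead apply Schur's lemma directly to $f_ug_u\in\Hom(Z',Z)$. This map is $0$ if $Z\neq Z'$, and equals $\frac{\tr(f_ug_u)}{\dim Z}\id_Z$ if $Z=Z'$. Your route is shorter and more general. It uses only semisimplicity and $\dim Z\neq 0$, which holds for any simple object in a semisimple ribbon category with simple unit. It does not need invertibility of the $S$-matrix or $D\neq 0$. The paper's argument is essentially a diagrammatic repackaging of the same projection onto the $Z$-channel, chosen to stay within the string calculus it reuses later, for example in the horizontal-composition argument.

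One minor imprecision in your induction: part (2) does not propagate because ``blocks stay distinct.'' Once the edge on which $K$ and $\ol K$ differ has been contracted, their images land in the same block. Part (2) still propagates, however. The pairing is already zero at that stage, and each further single-edge contraction only rescales it, by the bilinear identity of part (1) applied on that block.
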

\begin{proof}
    It suffices to prove this in the case where $T$ is a single edge $e$ and $\mathbf{K}$ colours all edges except for $e$. Then, we may take direct sums for general partial ribbon-colourings $\mathbf{K}$ and induct over edge contractions for a general tree $T$. Let $K,\ol{K}$ extend $\mathbf{K}$ by colouring the last edge with colours $i,j$. Note that if $i\ne j$, then we automatically have $\langle\cdot,\cdot\rangle_\Gamma = 0$, by definition.

    Let the endpoints of $e$ be $v=h(e),v'=t(e)$. Comparing the clockwise and anticlockwise routes in (\ref{eq:dual-contract}), the terms corresponding to all vertices except for $v,v'$ cancel out. We are thus left with showing that for all $f\in \mathcal{V}_K(\Gamma,v), g\in \mathcal{V}_{\ol{K}}(-\Gamma,v)$ and $f'\in \mathcal{V}_K(\Gamma,v'),g'\in \mathcal{V}_{\ol{K}}(-\Gamma,v')$ we have $$\tr((f'f)(gg')) = \delta_{ij}\frac1{\dim i}\tr(fg)\tr(f'g').$$

    Indeed, consider the following equality of operator invariants of ribbon diagrams:
    \begin{equation*}
        \begin{tikzpicture}
            \begin{knot}[clip width=2]
                \strand[draw=blue, double=white, double distance=5pt] (-1,-1.8) -- (-1,1.8);
                \strand[draw=Green, double=white, double distance=5pt] (1,-1.8) -- (1,1.8);
                \strand[draw=red, double=white, double distance=5pt] (-1.4,2.2) to [out=up, in=left] (0,3.6) to [out=right, in=up] (1.4,2.2);
                \strand[draw=red, double=white, double distance=5pt] (-0.6,2.2) to [out=up, in=left] (0,2.8) to [out=right, in=up] (0.6,2.2);
                \strand[draw=violet, double=white, double distance=5pt] (-0.6,-2.2) to [out=down, in=left] (0,-2.8) to [out=right, in=down] (0.6,-2.2);
                \strand[draw=violet, double=white, double distance=5pt] (-1.4,-2.2) to [out=down, in=left] (0,-3.6) to [out=right, in=down] (1.4,-2.2);
                \strand[draw=black, double=white, double distance=5pt] (-1.5,0) to [out=down, in=down] (1.5,0) to [out=up, in=up] (-1.5,0);
                \flipcrossings{1,3}
            \end{knot}
            \node[] at (0,3.3) {$\vdots$};
            \node[] at (0,-3.1) {$\vdots$};
            \draw[rounded corners] (-1.6,-2.2) rectangle (-0.4,-1.8) node[pos=.5] {$f$};
            \draw[rounded corners] (-1.6,1.8) rectangle (-0.4,2.2) node[pos=.5] {$f'$};
            \draw[rounded corners] (0.4,-2.2) rectangle (1.6,-1.8) node[pos=.5] {$g$};
            \draw[rounded corners] (0.4,1.8) rectangle (1.6,2.2) node[pos=.5] {$g'$};
            \draw[draw=blue, ->] (-1,1.1) -- (-1,1.5);
            \draw[draw=Green, <-] (1,1.1) -- (1,1.5);
            \node[] at (-1.3,1.3) {$\textcolour{blue}{i}$};
            \node[] at (1.3,1.3) {$\textcolour{Green}{j}$};
            \node[] at (-2,0) {$\frac1{D}$};
            \node[] at (-2.5,0) {$=$};
            \node[] at (2.5,0) {$=$};

            \draw[draw=blue, double=white, double distance=5pt] (-6,-1.8) -- (-6,1.8);
            \draw[draw=Green, double=white, double distance=5pt] (-4,-1.8) -- (-4,1.8);
            \draw[draw=red, double=white, double distance=5pt] (-6.4,2.2) to [out=up, in=left] (-5,3.6) to [out=right, in=up] (-3.6,2.2);
            \draw[draw=red, double=white, double distance=5pt] (-5.6,2.2) to [out=up, in=left] (-5,2.8) to [out=right, in=up] (-4.4,2.2);
            \draw[draw=violet, double=white, double distance=5pt] (-5.6,-2.2) to [out=down, in=left] (-5,-2.8) to [out=right, in=down] (-4.4,-2.2);
            \draw[draw=violet, double=white, double distance=5pt] (-6.4,-2.2) to [out=down, in=left] (-5,-3.6) to [out=right, in=down] (-3.6,-2.2);
            \node[] at (-5,3.3) {$\vdots$};
            \node[] at (-5,-3.1) {$\vdots$};
            \draw[rounded corners] (-6.6,-2.2) rectangle (-5.4,-1.8) node[pos=.5] {$f$};
            \draw[rounded corners] (-6.6,1.8) rectangle (-5.4,2.2) node[pos=.5] {$f'$};
            \draw[rounded corners] (-4.6,-2.2) rectangle (-3.4,-1.8) node[pos=.5] {$g$};
            \draw[rounded corners] (-4.6,1.8) rectangle (-3.4,2.2) node[pos=.5] {$g'$};
            \draw[draw=blue, ->] (-6,1.1) -- (-6,1.5);
            \draw[draw=Green, <-] (-4,1.1) -- (-4,1.5);
            \node[] at (-6.3,1.3) {$\textcolour{blue}{i}$};
            \node[] at (-3.7,1.3) {$\textcolour{Green}{j}$};

            \draw[draw=red, double=white, double distance=5pt] (3.6,2.2) to [out=up, in=left] (5,3.6) to [out=right, in=up] (6.4,2.2);
            \draw[draw=red, double=white, double distance=5pt] (4.4,2.2) to [out=up, in=left] (5,2.8) to [out=right, in=up] (5.6,2.2);
            \draw[draw=violet, double=white, double distance=5pt] (4.4,-2.2) to [out=down, in=left] (5,-2.8) to [out=right, in=down] (5.6,-2.2);
            \draw[draw=violet, double=white, double distance=5pt] (3.6,-2.2) to [out=down, in=left] (5,-3.6) to [out=right, in=down] (6.4,-2.2);
            \draw[draw=blue, double=white, double distance=5pt] (4,1.8) to [out=down, in=left] (5,0.8) to [out=right, in=down] (6,1.8);
            \draw[draw=blue, double=white, double distance=5pt] (4,-1.8) to [out=up, in=left] (5,-0.8) to [out=right, in=up] (6,-1.8);
            \node[] at (5,3.3) {$\vdots$};
            \node[] at (5,-3.1) {$\vdots$};
            \draw[rounded corners] (3.4,-2.2) rectangle (4.6,-1.8) node[pos=.5] {$f$};
            \draw[rounded corners] (3.4,1.8) rectangle (4.6,2.2) node[pos=.5] {$f'$};
            \draw[rounded corners] (5.4,-2.2) rectangle (6.6,-1.8) node[pos=.5] {$g$};
            \draw[rounded corners] (5.4,1.8) rectangle (6.6,2.2) node[pos=.5] {$g'$};
            \node[] at (5,0.5) {$\textcolour{blue}{i}$};
            \node[] at (5,-0.5) {$\textcolour{blue}{i}$};
            \draw[draw=blue, <-] (4.7,0.85) .. controls (5,0.75) .. (5.3,0.85);
            \draw[draw=blue, ->] (4.7,-0.85) .. controls (5,-0.75) .. (5.3,-0.85);
            \node[] at (3.1,0) {$\frac{\delta_{\textcolour{blue}{i}\textcolour{Green}{j}}}{\dim\textcolour{blue}{i}}$};
        \end{tikzpicture}
    \end{equation*}
    For the first equality, note that we may apply an isotopy to the middle diagram to obtain the ribbon diagram on the left-hand side along with an uncoloured unknot. The latter has operator invariant $D$. For the second equality, we are using Lemma \ref{lem:BK}.
\end{proof}

Recall that $\tilde{\Gamma}_M$ is the contraction of $\Gamma_M$ by the chosen spanning trees of the components of $\Gamma_\Sigma,\Gamma_{\Sigma'}$. We may now show that the definition of $RT_{\mathcal{C},\sqrt{D}}$ using $\tilde{\Gamma}_M$ agrees with the definition using $\Gamma_M$.

\begin{lem}\label{lem:contract-tree-rt}
    The definitions of $RT_{\mathcal{C},\sqrt{D}}(M)$ using $\Gamma_M$ and $\tilde{\Gamma}_M$ coincide. More precisely, let $\tilde{K}$ and $\tilde{K}'$ be ribbon-colourings of $\tilde{\Gamma}_\Sigma$ and $\tilde{\Gamma}_{\Sigma'}$ which extend $\tilde{\mathbf{K}}_{\mathbf{i},\mathbf{j}}$ and $\tilde{\mathbf{K}}_{\mathbf{i},\mathbf{j}}'$ respectively. These induce partial ribbon-colourings $\mathbf{K},\mathbf{K}'$ of $\Gamma_\Sigma,\Gamma_{\Sigma'}$ that extend $\mathbf{K}_{\mathbf{i},\mathbf{j}},\mathbf{K}_{\mathbf{i},\mathbf{j}}'$ respectively. Let $\mathbb{K},\mathbb{K}'$ denote the sets of ribbon-colourings of $\Gamma_\Sigma,\Gamma_{\Sigma'}$ that extend $\mathbf{K},\mathbf{K}'$ respectively. Then, the following diagram commutes:
    \[\begin{tikzcd}
        {\mathcal{V}_{\mathbf{K}}(\Gamma_\Sigma)} & {\displaystyle\bigoplus_{K \in \mathbb{K}} \mathcal{V}_K(\Gamma_\Sigma)} &&& {\displaystyle\bigoplus_{K' \in \mathbb{K}'} \mathcal{V}_{K'}(\Gamma_{\Sigma'})} & {\mathcal{V}_{\mathbf{K}'}(\Gamma_{\Sigma'})} \\
        \\
        {\mathcal{V}_{\tilde{K}}(\tilde{\Gamma}_\Sigma)} &&&&& {\mathcal{V}_{\tilde{K}'}(\tilde{\Gamma}_{\Sigma'})}
        \arrow["{\sum_{K,K'}(RT_{\mathcal{C},\sqrt{D}}(M)_\mathbf{i}^\mathbf{j})_{K}^{K'}}", from=1-2, to=1-5]
        \arrow["{\bigcirc}", from=1-1, to=3-1]
        \arrow["{\bigcirc}", from=1-6, to=3-6]
        \arrow["{(RT_{\mathcal{C},\sqrt{D}}(M)_\mathbf{i}^\mathbf{j})_{\tilde{K}}^{\tilde{K}'}}", from=3-1, to=3-6]
        \arrow[equals, from=1-1, to=1-2]
        \arrow[equals, from=1-5, to=1-6]
    \end{tikzcd}\]
\end{lem}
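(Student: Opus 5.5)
The strategy is to dualise both horizontal maps via the pairings of Lemma \ref{lem:rev-dual}, so that commutativity becomes an equality of scalar-valued functionals on $\mathcal{V}_\mathbf{K}(\Gamma_\Sigma)\otimes\mathcal{V}_{\tilde K'}(-\tilde\Gamma_{\Sigma'})$. Both sides of such an equality are values of $\tau$, and these are compared via Lemma \ref{lem:tree}(2). The only discrepancy then comes from the scalars in Lemma \ref{lem:dual-contract}, and we will show that these are exactly absorbed by the $\dim_{\mathrm{int}}$ normalisation in (\ref{eq:def-rt}).

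First fix $\mathbf{f}\in\mathcal{V}_K(\Gamma_\Sigma)$ for some $K\in\mathbb{K}$ and $\tilde{\mathbf{g}}\in\mathcal{V}_{\tilde K'}(-\tilde\Gamma_{\Sigma'})$. Since $\bigcirc:\mathcal{V}_{\mathbf{K}'}(-\Gamma_{\Sigma'})\to\mathcal{V}_{\tilde K'}(-\tilde\Gamma_{\Sigma'})$ is an isomorphism by Lemma \ref{lem:tree}(1), we may write $\tilde{\mathbf{g}}=\bigcirc(\mathbf{g})$ with $\mathbf{g}=\sum_{K'\in\mathbb{K}'}\mathbf{g}_{K'}$ and $\mathbf{g}_{K'}\in\mathcal{V}_{K'}(-\Gamma_{\Sigma'})$. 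It then suffices to show
\[
\Big\langle \bigcirc\Big(\sum_{K'}(RT(M)_\mathbf{i}^\mathbf{j})_K^{K'}\mathbf{f}\Big),\bigcirc(\mathbf{g})\Big\rangle_{\tilde\Gamma_{\Sigma'}}
=(RT(M)_\mathbf{i}^\mathbf{j})_{\tilde K,\tilde K'}\big(\bigcirc(\mathbf{f})\otimes\bigcirc(\mathbf{g})\big).
\]
Applying Lemma \ref{lem:dual-contract} once per contracted edge (contracting one spanning tree of each component of $\Gamma_{\Sigma'}$, i.e.\ the tree $T'$), the left-hand side becomes $\sum_{K'}\dim(K'|_{T'})^{-1}\langle(RT)_K^{K'}\mathbf{f},\mathbf{g}_{K'}\rangle_{\Gamma_{\Sigma'}}$. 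The cross terms with $K'\neq\ol{K}'$ vanish by part (2) of that lemma. By (\ref{eq:rt-nodual}) and (\ref{eq:def-rt}), this equals
\[
\sum_{K'}\Big(\tfrac{\sqrt D}{p_-}\Big)^n\sqrt D^{\chi(\ol{\Sigma'})/2}\,\frac{\dim_{\mathrm{int}}(K')}{\dim(K'|_{T'})}\,\tau(\ol M,\Gamma_M,K_M,\mathbf{f}\otimes\mathbf{g}_{K'}).
\]

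Next, $\dim_{\mathrm{int}}(K')/\dim(K'|_{T'})$ is the product of $\dim K'(e)$ over internal edges of $\Gamma_{\Sigma'}$ not in $T'$. These edges are exactly the internal edges of $\tilde\Gamma_{\Sigma'}$, which are coloured by $\tilde K'$; hence this ratio equals $\dim_{\mathrm{int}}(\tilde K')$ and is independent of the summation index $K'$. Pulling it out, we use linearity of $\tau$ in the label, which turns the sum over $K'$ (with $K$ fixed, and then over $K$ as well) into $\tau(\ol M,\Gamma_M,\mathbf{K}_M,\mathbf{f}\otimes\mathbf{g})$. Here the partial colouring leaves the tree edges uncoloured. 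Lemma \ref{lem:tree}(2), applied to $T\sqcup T'$ viewed as subtrees of $\Gamma_M$ (these are disjoint trees inside the two handlebodies), rewrites this as $\tau(\ol M,\tilde\Gamma_M,\tilde K_M,\bigcirc(\mathbf{f})\otimes\bigcirc(\mathbf{g}))$. Note that $\bigcirc$ on $\mathcal{V}(\Gamma_M)\cong\mathcal{V}(\Gamma_\Sigma)\otimes\mathcal{V}(-\Gamma_{\Sigma'})$ is the tensor product of the two contractions. Together with the prefactors, this is exactly the right-hand side.

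The main obstacle is the bookkeeping on the source side. No $\dim$-correction appears there, since $\bigcirc$ is applied to $\mathbf{f}$ directly rather than through a pairing. The point to check is that summing over $K\in\mathbb{K}$ on the top row matches the single colouring $\tilde K$, because $\bigcirc$ is a direct-sum isomorphism. One must also confirm that contracting the tree of $-\Gamma_{\Sigma'}$ inside $\Gamma_M$ agrees with the contraction $-\tilde\Gamma_{\Sigma'}$ used in the pairing, i.e.\ that reversal commutes with contraction, which is immediate from the definitions. Finally, the vanishing of cross terms must hold entrywise in the direct sum, which is precisely what part (2) of Lemma \ref{lem:dual-contract} supplies.
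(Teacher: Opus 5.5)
Your proposal is correct and follows essentially the same route as the paper. Both arguments dualise via the pairings of Lemma \ref{lem:rev-dual} and use Lemma \ref{lem:dual-contract} to pick up the factor $\dim(K'|_{T'})^{-1}$ on the pairing side. On the $\tau$ side, both use Lemma \ref{lem:tree}(2) together with the identity $\dim_{\mathrm{int}}(K') = \dim(K'|_{T'})\dim_{\mathrm{int}}(\tilde K')$.

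The only difference is bookkeeping. You sum over $K'$ first and then apply Lemma \ref{lem:tree}(2) once to the partial colouring. The paper instead applies it to each fully coloured $K_M$ and sums afterwards.
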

\begin{proof}
    Any choice of $K\in\mathbb{K},K'\in\mathbb{K}'$ determines a ribbon-colouring $K_M$ of $\Gamma_M$ which extends the partial ribbon-colouring $\mathbf{K}_M$ given by $\mathbf{K},\mathbf{K}'$. We thus have an inclusion linear map $\iota:\mathcal{V}_{K_M}(\Gamma_M) \rightarrow \mathcal{V}_{\mathbf{K}_M}(\Gamma_M)$.
    
    By Lemma \ref{lem:tree}(2), the following diagram commutes ($T'$ denotes the chosen spanning trees of $\Gamma_{\Sigma'}$):
    \[\begin{tikzcd}
        {\mathcal{V}_{K_M}(\Gamma_M)} &&&& {k} \\
        \\
        {\mathcal{V}_{\tilde{K}_M}(\tilde{\Gamma}_M)} &&&& {k}
        \arrow["{\dim_{\mathrm{int}}(K')\tau(M,\Gamma_M,K_M,-)}", from=1-1, to=1-5]
        \arrow["{\bigcirc\circ\iota}", from=1-1, to=3-1]
        \arrow["{\cdot \dim(K'|_{T'})^{-1}}", from=1-5, to=3-5]
        \arrow["{\dim_{\mathrm{int}}(\tilde{K}')\tau(M,\tilde{\Gamma}_M,\tilde{K}_M,-)}", from=3-1, to=3-5]
    \end{tikzcd}\]
    Thus, we have
    \begin{equation} \label{eq:contract-tree-intermediate}
        (RT_{\mathcal{C},\sqrt{D}}(M,n)_\mathbf{i}^\mathbf{j})_{\tilde{K},\tilde{K}'}((\bigcirc\mathbf{f})\otimes(\bigcirc\mathbf{g})) = \frac1{\dim(K'|_{T'})}(RT_{\mathcal{C},\sqrt{D}}(M,n)_\mathbf{i}^\mathbf{j})_{K,K'}(\mathbf{f}\otimes\mathbf{g})
    \end{equation}
    for each $\mathbf{f}\otimes\mathbf{g}\in\mathcal{V}_K(\Gamma_\Sigma)\otimes\mathcal{V}_{K'}(-\Gamma_{\Sigma'})\cong\mathcal{V}_{K_M}(\Gamma_M)$. Note here that under the isomorphism $\mathcal{V}_{\tilde{K}}(\tilde{\Gamma}_\Sigma)\otimes\mathcal{V}_{\tilde{K}'}(-\tilde{\Gamma}_{\Sigma'})\cong\mathcal{V}_{\tilde{K}_M}(\tilde{\Gamma}_M)$, $(\bigcirc\mathbf{f})\otimes(\bigcirc\mathbf{g})$ is sent to $\bigcirc(\mathbf{f}\otimes\mathbf{g})$.

    On the other hand, by Lemma \ref{lem:dual-contract}, we have $$\left\langle \bigcirc(RT_{\mathcal{C},\sqrt{D}}(M,n)_\mathbf{i}^\mathbf{j})_K^{K'}(\mathbf{f}), \bigcirc\mathbf{g}\right\rangle_{\Gamma/T} = \frac{\delta_{K'\ol{K'}}}{\dim(K'|_{T'})}\left\langle (RT_{\mathcal{C},\sqrt{D}}(M,n)_\mathbf{i}^\mathbf{j})_K^{K'}(\mathbf{f}), \mathbf{g}\right\rangle_{\Gamma}$$ for each $\mathbf{f}\in\mathcal{V}_K(\Gamma_\Sigma),\mathbf{g}\in\mathcal{V}_{\ol{K'}}(\Gamma_{\Sigma'})$ for each $K\in\mathbb{K}$ and $K',\ol{K'}\in\mathbb{K}'$.

    Summing over $K'\in\mathbb{K}'$ and applying (\ref{eq:rt-nodual}) (and the version with tildes) to compare the result to (\ref{eq:contract-tree-intermediate}), we thus obtain that $$\left\langle\bigcirc\sum_{K'\in\mathbb{K}'}(RT_{\mathcal{C},\sqrt{D}}(M,n)_\mathbf{i}^\mathbf{j})_K^{K'}(\mathbf{f}),\bigcirc\mathbf{g}\right\rangle_{\Gamma/T} = \left\langle(RT_{\mathcal{C},\sqrt{D}}(M,n)_\mathbf{i}^\mathbf{j})_{\tilde{K}}^{\tilde{K}'}(\bigcirc\mathbf{f}),\bigcirc\mathbf{g}\right\rangle_{\Gamma/T}$$ for all $\mathbf{f}\in\mathcal{V}_K(\Gamma_\Sigma),\mathbf{g}\in\mathcal{V}_{\ol{K'}}(\Gamma_{\Sigma'})$. By varying $\ol{K'}\in\mathbb{K}'$, we may replace $\bigcirc\mathbf{g}$ with an arbitrary element of $\mathcal{V}_{\tilde{K}'}(\tilde{\Gamma}_{\Sigma'})$, and hence we have $$\bigcirc\sum_{K'\in\mathbb{K}'}(RT_{\mathcal{C},\sqrt{D}}(M,n)_\mathbf{i}^\mathbf{j})_K^{K'}(\mathbf{f}) = (RT_{\mathcal{C},\sqrt{D}}(M,n)_\mathbf{i}^\mathbf{j})_{\tilde{K}}^{\tilde{K}'}(\bigcirc\mathbf{f})$$ for each $\mathbf{f}\in\mathcal{V}_K(\Gamma_\Sigma) \subseteq \mathcal{V}_\mathbf{K}(\Gamma_\Sigma)$ for all $K\in\mathbb{K}$, which is what we wanted.
\end{proof}

So, instead of the surgery presentation $|\Gamma_M\cup L|$, we may contract along spanning trees and consider $|\tilde{\Gamma}_M\cup L|$ where the embedding of $\tilde{\Gamma}_M$ is given by that of $\Gamma_M$. Given the embedded surface $|\tilde{\Gamma}_M\cup L|$, we shall now apply an ambient isotopy to convert it into a form from which $RT_{\mathcal{C},\sqrt{D}}$ can easily be read off.

\begin{constr}\label{constr:emb}
    Let $V = V_\mathrm{in} \sqcup V_\mathrm{out}$ be the vertices of $\tilde{\Gamma}_M$; $V_\mathrm{in}$ corresponds exactly to the connected components of $\parin M$, and likewise for $V_\mathrm{out}$ and $\parout M$. $\Gamma_M\setminus V$ may be viewed as a ribbon subgraph whose edges all have source and target $\infty$.

    Pick an ordering $(v_1,\ldots,v_a)$ of $V_\mathrm{in}$ and $(v_1',\ldots,v_b')$ of $V_\mathrm{out}$. For each $v_i$ and each $v_j'$, fix a way to break the cyclic ordering of its edges into a total ordering (corresponding to letting $s(v_i) = \emptyset, t(v_j'). = \emptyset$ for each $i,j$).

    Now, we may apply an ambient isotopy such that the embedded surface $|\tilde{\Gamma}_M\cup L| \subset \mathbb{R}^3 \subset S^3$ satisfies the following conditions:
    \begin{itemize}
        \item The coupons corresponding to each vertex are embedded in $\mathbb{R}^2\times\set{0}$.
        \item $|(\tilde{\Gamma}_M\setminus V)\cup L|$ is embedded in $\mathbb{R}\times [0,1]\times\mathbb{R}$, with the endpoints corresponding to the edges incident at $V_\mathrm{in}$ in $\mathbb{R}\times\set{0}\times\set{0}$ and those corresponding to edges incident at $V_\mathrm{out}$ in $\mathbb{R}\times\set{1}\times\set{0}$.
        \item The order of the endpoints in $\mathbb{R}\times\set{0}\times\set{0}$ and $\mathbb{R}\times\set{1}\times\set{0}$ is given by the ordering of the $v_i$ and $v_j'$, followed by the ordering of the edges at each vertex.
        \item The projection given by forgetting the last coordinate gives a valid ribbon diagram.
    \end{itemize}
    \begin{figure}[hbt!]
        \begin{tikzpicture}
            \begin{knot}[consider self intersections, end tolerance=2pt, clip radius=5pt, clip width=2]
                \strand[draw=blue, double=white, double distance=5pt] (-0.6,-3) to [out=up, in=down] (-2.2,0) to [out=up,in=left] (-1,0.5) to [out=right,in=up] (-0.3,0) to [out=down,in=up] (0.3,-1) to [out=down,in=up] (-0.3,-2) to [out=down,in=up] (0.2,-3);
                \strand[draw=blue, double=white, double distance=5pt] (0.6,-3) to [out=up, in=down] (1.2,0) to [out=up,in=up] (0.3,0) to [out=down,in=up] (-0.3,-1) to [out=down,in=up] (0.3,-2) to [out=down,in=up] (-0.2,-3);
                \strand[draw=blue, double=white, double distance=5pt] (-3.4,-3) to [out=up,in=left] (-2,1.5) to [out=right,in=up] (-1.5,0) to [out=down,in=up] (-2.6,-3);
                \strand[draw=blue, double=white, double distance=5pt] (2.6,-3) to [out=up,in=down] (2,-2) to [out=up,in=left] (4,0) to [out=right,in=right] (3,-1.5) to [out=left,in=left] (2,0) to [out=right,in=up] (4,-2) to [out=down,in=up] (3.4,-3);
                \strand[draw=red, double=white, double distance=5pt] (-2.1,3) to [out=down,in=left] (-2,1) to [out=right,in=down] (-0.9,3);
                \strand[draw=red, double=white, double distance=5pt] (-1.7,3)  to [out=down,in=left] (-1.7,1.8) to [out=right,in=down] (-1.3,3);
                \strand[draw=red, double=white, double distance=5pt] (1.3,3) to [out=down,in=left] (1.5,1) to [out=right,in=down] (1.7,3);
                \strand[draw=black, double=white, double distance=5pt] (3.2,-2.2) to [out=down,in=left] (4.2,-2.7) to [out=right,in=down] (5.2,-2.2) to [out=up,in=right] (4.2,-1.7) to [out=left,in=up] (3.2,-2.2);
                \strand[draw=black, double=white, double distance=5pt] (0.5,0.5) to [out=down,in=left] (1.5,-0.5) to [out=right,in=down] (2.5,0.9) to [out=up,in=right] (1.5,2.2) to [out=left,in=right] (0,2.5) to [out=left,in=up] (0.5,0.5);
                \strand[draw=black, double=white, double distance=5pt] (-1.55,2.5) to [out=down,in=left] (-0.5,2.1) to [out=right,in=down] (0.5,2.5) to [out=up,in=right] (-0.5,2.8) to [out=left,in=up] (-1.55,2.5);
                \strand[draw=Green, double=white, double distance=5pt] (-3,-3) to [out=up,in=left] (-1.5,-2) to [out=right,in=down] (-0.9,-1) to [out=up,in=down] (-0.9,0.8) to [out=up,in=left] (0.3,1.5) to [out=right,in=down] (0.9,3);
                \strand[draw=Green, double=white, double distance=5pt] (2.1,3) to [out=down,in=up] (3,2.2) to [out=down,in=left] (3.5,1.5) to [out=right,in=down] (5,2) to [out=up,in=up] (3.9,0.4) to [out=down,in=up] (3,-1) to [out=down,in=up] (2.8,-2) to [out=down,in=up] (3,-3);
                \flipcrossings{3,6,9,10,12,14,15,17,18,22,24,26,27,31,32}
            \end{knot}
            \draw[rounded corners] (-0.8,-3.6) rectangle (0.8,-3);
            \draw[rounded corners] (-3.8,-3.6) rectangle (-2.2,-3);
            \draw[rounded corners] (2.2,-3.6) rectangle (3.8,-3);
            \draw[rounded corners] (-2.3,3) rectangle (-0.7,3.6);
            \draw[rounded corners] (0.7,3) rectangle (2.3,3.6);
        \end{tikzpicture}
        \caption{Embedding of $|\tilde{\Gamma}_M\cup L|$.} \label{fig:gamma-m}
    \end{figure}
    An example is shown in Figure \ref{fig:gamma-m}. In this example, the components of $\Sigma=\parin M$ are two genus $1$ surfaces with one boundary component and one closed genus $2$ surface. The bottom coupons correspond to the vertices of $\Gamma_\Sigma$, and the blue ribbons to its internal edges. The green ribbons correspond to its external edges, which are joined up with the corresponding external edges of $\Gamma_{\Sigma'}$. The uncoloured (black) ribbons form the surgery link $L$. Note that the incoming edges to the vertices in $V_\mathrm{out}$ are ordered \textit{oppositely} to the cyclic ordering, as they are taken to be the sources of the respective vertices.
\end{constr}

When presented in this way, this provides an explicit description for $(RT_{\mathcal{C},\sqrt{D}}(M)_\mathbf{i}^\mathbf{j})_{\tilde{K}}^{\tilde{K}'}$, in terms of $F_*$ applied to $(\tilde{\Gamma}_M\setminus V)\cup L$. Indeed, note that the choices of $\mathbf{i},\mathbf{j}$ determine a colouring for the green ribbons, while $\tilde{K},\tilde{K}'$ provide a colouring for the blue and red ribbons respectively, and so these make $(\tilde{\Gamma}_M\setminus V)\cup L$ the union of a labelled ribbon graph and some uncoloured loops.

We seek to define $$(RT_{\mathcal{C},\sqrt{D}}(M)_\mathbf{i}^\mathbf{j})_{\tilde{K}}^{\tilde{K}'}: \bigotimes_{p=1}^a \Hom\pa{\mathbbm{1},\bigotimes_{q=1}^{\mathrm{deg}(v_p)} X_{i_{p,q}}^{\epsilon_{i_{p,q}}}} \rightarrow \bigotimes_{p=1}^b \Hom\pa{\mathbbm{1},\bigotimes_{q=1}^{\mathrm{deg}(v_p')} X_{j_{p,q}}'^{\epsilon_{j_{p,q}}'}}$$ where the $X_{i_{p,q}},{\epsilon_{i_{p,q}}}$ and $X_{j_{p,q}}',{\epsilon_{j_{p,q}}'}$ encode the colours and directions of the ribbons at the top and bottom edges respectively. The source and target of this map are, however, not the same vector spaces as the source and target of $F_*((\tilde{\Gamma}_M\setminus V)\cup L, \tilde{K}_M, \emptyset)$. To make up for this discrepancy, we have to use the canonical inclusion and projection from Lemma \ref{lem:tens-hom} in the case where 
\begin{equation} \label{eq:def-yp}
    Y_p = \bigotimes_{q=1}^{\mathrm{deg}(v_p)} X_{i_{p,q}}^{\epsilon_{i_{p,q}}}, \quad Y_p' = \bigotimes_{q=1}^{\mathrm{deg}(v_p')} X_{j_{p,q}}'^{\epsilon_{j_{p,q}}'}
\end{equation}
respectively.

\begin{lem}\label{lem:rt-formula}
    Consider the linear map $$\Phi := F_*((\tilde{\Gamma}_M\setminus V)\cup L, \tilde{K}_M, \emptyset): \Hom\pa{\mathbbm{1},\bigotimes_{j=1}^a Y_p} \rightarrow \Hom\pa{\mathbbm{1},\bigotimes_{j=1}^b Y_p'}$$ along with the canonical inclusion $$\varphi: \bigotimes_{j=1}^a \Hom\pa{\mathbbm{1},Y_p} \rightarrow \Hom\pa{\mathbbm{1},\bigotimes_{j=1}^a Y_p}$$ and canonical projection $$\psi': \Hom\pa{\mathbbm{1},\bigotimes_{j=1}^b Y_p'} \rightarrow \bigotimes_{j=1}^b\Hom\pa{\mathbbm{1}, Y_p'}$$ given in Lemma \ref{lem:tens-hom}. These satisfy
    \begin{equation}\label{eq:rt-reform}
        (RT_{\mathcal{C},\sqrt{D}}(M,n)_\mathbf{i}^\mathbf{j})_{\tilde{K}}^{\tilde{K}'} = \pa{\frac{p_-}{\sqrt{D}}}^{\sigma(X_L)-n} \sqrt{D}^{\chi\pa{\ol{\Sigma'}}/2-\# L-1}\dim_\mathrm{int}(\tilde{K}')\psi'\Phi\varphi.
    \end{equation}
\end{lem}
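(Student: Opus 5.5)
Both sides of (\ref{eq:rt-reform}) are linear maps $\bigotimes_p \Hom(\mathbbm{1},Y_p)\to\bigotimes_p\Hom(\mathbbm{1},Y_p')$. By the defining property (\ref{eq:rt-nodual}) (with tildes), it suffices to compare them after pairing with an arbitrary $\mathbf{g}=\bigotimes_p g_p\in\mathcal{V}_{\tilde K'}(\tilde\Gamma_{\Sigma'})$. I will therefore show that for all pure tensors $\mathbf{f}=\bigotimes_p f_p$ and $\mathbf{g}$,
\[
\left\langle \psi'\Phi\varphi(\mathbf{f}),\mathbf{g}\right\rangle_{\tilde\Gamma_{\Sigma'}} = \sqrt{D}^{\,\sigma(X_L)+\#L+1}\,p_-^{-\sigma(X_L)}\,\tau(\ol M,\tilde\Gamma_M,\tilde K_M,\mathbf{f}\otimes\mathbf{g}).
\]
Multiplying by the prefactor $(\sqrt D/p_-)^n\sqrt D^{\chi(\ol{\Sigma'})/2}\dim_{\mathrm{int}}(\tilde K')$ from (\ref{eq:def-rt}) then gives exactly the coefficient in (\ref{eq:rt-reform}).

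First, I unwind the definition of $\tau$. Since $|\tilde\Gamma_M\cup L|$ is a surgery presentation of $(\ol M,|\tilde\Gamma_M|)$ (with $M$ connected, so $\ol M$ is connected), we have
\[
\tau(\ol M,\tilde\Gamma_M,\mathbf{f}\otimes\mathbf{g})=p_-^{\sigma(X_L)}\sqrt D^{-\sigma(X_L)-\#L-1}F(\tilde\Gamma_M\cup L,\tilde K_M,\mathbf{f}\otimes\mathbf{g}).
\]
The claim therefore reduces to the purely diagrammatic identity
\[
\left\langle \psi'\Phi\varphi(\mathbf{f}),\mathbf{g}\right\rangle = F(\tilde\Gamma_M\cup L,\tilde K_M,\mathbf{f}\otimes\mathbf{g}).
\]

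Next, I read off $F$ from the embedding of Construction \ref{constr:emb}. The bottom coupons labelled $f_p\in\Hom(\mathbbm{1},Y_p)$, placed side by side, evaluate to $\varphi(\mathbf{f})$. Under the duality of Lemma \ref{lem:rev-dual}, the top coupons of $-\tilde\Gamma_{\Sigma'}$, labelled $g_p\in\Hom(Y_p',\mathbbm{1})$, evaluate to $\bigotimes_p g_p$. The middle region is $\Phi$. By functoriality and monoidality of $F$, the closed diagram evaluates to $(\bigotimes_p g_p)\circ\Phi(\varphi(\mathbf{f}))\in\Hom(\mathbbm{1},\mathbbm{1})=k$, with the uncoloured loops of $L$ summed with weights $\dim$ exactly as in the definition of $F$.

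Finally, I identify this scalar with $\langle\psi'(x),\mathbf{g}\rangle$ for $x=\Phi\varphi(\mathbf{f})$. By construction, $\psi'$ is the dual of the map $\bigotimes_p\Hom(Y_p',\mathbbm{1})\to\Hom(\bigotimes_pY_p',\mathbbm{1})$, $\mathbf{g}\mapsto\bigotimes_p g_p$, taken with respect to the pairings of Lemma \ref{lem:rev-dual}. Hence $\langle\psi'(x),\mathbf{g}\rangle=(\bigotimes_pg_p)\circ x$, which is the required identity.

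The main obstacle I expect is bookkeeping of conventions. I need the total orderings chosen in Construction \ref{constr:emb} (with $s(v_i)=\emptyset$ and $t(v'_j)=\emptyset$, and reversed order at the top) to match the ones used in Lemma \ref{lem:rev-dual} and Lemma \ref{lem:cyc-shift}. I also need to check that the reversal of $\Gamma_{\Sigma'}$ in $\Gamma_M$ makes the top coupons genuinely $\Hom(Y'_p,\mathbbm{1})$, and that no hidden normalisation appears in $\psi'$. If these conventions differ, the maps still agree up to the canonical isomorphisms $\phi$ of Lemma \ref{lem:cyc-shift}, which preserve $\tau$. In that case I would insert those isomorphisms explicitly rather than recheck each convention.
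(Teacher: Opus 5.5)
Your proposal is correct and is essentially the paper's proof. In both, unwinding $\tau$ through the surgery presentation reduces everything to the identity $\langle\psi'\Phi\varphi(\mathbf{f}),\mathbf{g}\rangle = F(\tilde{\Gamma}_M\cup L,\tilde{K}_M,\mathbf{f}\otimes\mathbf{g})$. That identity then follows by moving $\psi'$ across the pairing as the inclusion $\bigotimes_p\Hom(Y_p',\mathbbm{1})\to\Hom(\bigotimes_p Y_p',\mathbbm{1})$ and reading off the closed diagram. The only slip is notational: $\mathbf{g}$ should range over $\mathcal{V}_{\tilde{K}'}(-\tilde{\Gamma}_{\Sigma'})$, i.e. $g_p\in\Hom(Y_p',\mathbbm{1})$, which is what you in fact use later.
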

\begin{proof}
    Let $\mathbf{f} = f_1\otimes\cdots\otimes f_a \in \bigotimes_{p=1}^a \Hom(\mathbbm{1},Y_p)$ and $\mathbf{g} = g_1\otimes\cdots\otimes g_b \in \bigotimes_{p=1}^b \Hom(Y_p',\mathbbm{1})$. Choosing these makes $\tilde{\Gamma}_M$ into a labelled embedded ribbon graph in $\ol{M}$. Then by definition of $\tau$, this exactly satisfies $$\tau(\ol{M},\tilde{\Gamma}_M,\tilde{K}_M,\mathbf{f}\otimes\mathbf{g}) = p_-^{\sigma(X_L)}\sqrt{D}^{-\sigma(X_L)-\# L -1}F(\tilde{\Gamma}_M \cup L,\tilde{K}_M,\mathbf{f}\otimes\mathbf{g})$$ where $L$ is the (uncoloured) surgery link. Then, the version of (\ref{eq:rt-nodual}) with tildes may be rewritten as
    \begin{align*}
        &\left\langle(RT_{\mathcal{C},\sqrt{D}}(M,n)_\mathbf{i}^\mathbf{j})_{\tilde{K}}^{\tilde{K}'}(\mathbf{f}),\mathbf{g}\right\rangle_{\Gamma_{\Sigma'}} \\ &\quad= \pa{\frac{p_-}{\sqrt{D}}}^{\sigma(X_L)-n}\sqrt{D}^{\chi(\ol{\Sigma'})/2-\#L-1} \dim_\mathrm{int}(\tilde{K}') F(\tilde{\Gamma}_M\cup L, \tilde{K}_M,\mathbf{f}\otimes\mathbf{g}).
    \end{align*}
    Hence, in order to prove (\ref{eq:rt-reform}), it suffices to show that 
    \begin{equation} \label{eq:reform-toshow}
        \left\langle\psi'\Phi\varphi(\mathbf{f}),\mathbf{g}\right\rangle_{\Gamma_{\Sigma'}} = F(\tilde{\Gamma}_M\cup L, \tilde{K}_M,\mathbf{f}\otimes\mathbf{g})
    \end{equation}
    for all $\mathbf{f}\in \bigotimes_{p=1}^a \Hom(\mathbbm{1},Y_p),\mathbf{g}\in \bigotimes_{p=1}^b \Hom(Y_p',\mathbbm{1})$.

    By the definition of $\psi$, the left-hand side of (\ref{eq:reform-toshow}) may be rewritten as $ \left\langle\Phi\varphi(\mathbf{f}), \psi'^*(\mathbf{g})\right\rangle_{\Gamma_{\Sigma'}}$ where $\psi'^*$ is the inclusion $\bigotimes_{j=1}^b \Hom(Y_p',\mathbbm{1})\rightarrow \Hom\pa{\bigotimes_{j=1}^b Y_p',\mathbbm{1}}$. When written in this way, this is exactly $F(\tilde{\Gamma}_M\cup L, \tilde{K}_M,\mathbf{f}\otimes\mathbf{g})$, and so (\ref{eq:reform-toshow}) holds for all $\mathbf{f},\mathbf{g}$, which is what we wanted.
\end{proof}

\subsection{Comparison to the Reshetikhin--Turaev TQFT} \label{subsection:rt-compare}

Using the results of the previous subsection, we are now able to compare our constructed $RT_{\mathcal{C},\sqrt{D}}$ to the (projective) TQFT constructed in \cite{RT,Turaev-book}. After replacing $\bord{or}$ with $\bord{sig}$ to resolve the projectivity, the Reshetikhin--Turaev TQFT is a symmetric monoidal functor $rt_{\mathcal{C},\sqrt{D}}: \bord{sig,param}\rightarrow \vect$ whose source category $\bord{sig,param}$ is a model of $\bord{sig}$ whose objects are \emph{parametrised surfaces}, i.e. surfaces with a chosen identification to a certain \emph{standard surface}. Effectively, this equips each object $(\Sigma,H_\Sigma)$ with an embedded ribbon graph $\Gamma(g)$ in each genus $g$ connected component of $H_\Sigma$ which consists of a single vertex $v$ and $g$ edges $e_1,\ldots,e_g$. The cyclic order at $v$ is given by $t(e_1),h(e_1),\ldots,t(e_g),h(e_g)$.

For our model $\Bord{sig,dec}$ as given in Definition \ref{defn:bordsig-new}, the hom category from $(\emptyset,\emptyset)$ to itself is given by $$\bord{sig,dec} := \Hom_{\Bord{sig,dec}}((\emptyset,\emptyset),(\emptyset,\emptyset)).$$ This admits an equivalent symmetric monoidal subcategory $\bord{sig,dec'}$ whose objects are compositions of a cap, $g$ many copants-pants pairs, followed by a cup. (Explicitly, these are the 1-morphisms in $\Bord{sig,dec}$ from $(\emptyset,\emptyset)$ to itself in the normal form as defined in the proof of Lemma \ref{lem:all-diffeos}.) For such a 1-morphism in $\Bord{sig,dec}$, contracting the associated ribbon graph along the spanning tree that contains all the edges going through the left legs of all the pants and copants yields the ribbon graph $\Gamma(g)$. This determines an equivalence of symmetric monoidal categories $G: \bord{sig,dec'} \rightarrow \bord{sig,param}$.

Then, the reformulation of $RT_{\mathcal{C},\sqrt{D}}$ in Lemma \ref{lem:rt-formula} corresponds to the formula at the end of \cite[\textsection IV.2.4]{Turaev-book}, and hence we have $RT_{\mathcal{C},\sqrt{D}} = rt_{\mathcal{C},\sqrt{D}} \circ G$.
\begin{rk}
    There is another definition of $rt_{\mathcal{C},\sqrt{D}}$ given in \cite[\textsection IV.1.8]{Turaev-book} in terms of a dual formulation analogous to (\ref{eq:def-rt}). This differs from (\ref{eq:def-rt}) by a $\dim_\mathrm{int}$ term, likely because of a difference of choice of normalisation in Lemma \ref{lem:rev-dual}.
\end{rk}

More generally, the Reshetikhin--Turaev TQFT is also defined on surfaces decorated by marked arcs and 3-dimensional bordisms with labelled embedded ribbon graphs whose endpoints coincide with the marked arcs. For a genus $g$ 1-morphism $\Sigma$ in $\Bord{sig,dec}$ in normal form (in the sense of the proof of Lemma \ref{lem:all-diffeos}) from $(\emptyset,\emptyset)$ to $(\mathbb{S},\mathbb{D})^{\sqcup n}$, contracting the assoiated ribbon graph along the spanning tree through all edges going through the left legs of all the pants and copants yields the ribbon graph which Reshetikhin and Turaev associate to a standard surface decorated with $n$ disjoint arcs \cite[Figure IV.1.1]{Turaev-book}. Given a choice of index $\mathbf{j} = (j_1,\ldots,j_n)$, $RT_{\mathcal{C},\sqrt{D}}(\Sigma)^\mathbf{j}$ is canonically identified with the vector space that Reshetikhin and Turaev assign to the closed genus $g$ surface with $n$ arcs marked by objects $X_{j_1},\ldots,X_{j_n}$. For a 2-morphism $(M,n)$ between two such 1-morphisms, our definition of $RT_{\mathcal{C},\sqrt{D}}(M,n)^\mathbf{j}$ then agrees with the linear map which Reshetikhin and Turaev assign to the cobordism $(M\cup_{\parOut M} (-V_{\parOut M}),n)$ with embedded ribbon graph given by the ribbons through the central axes of $-V_{\parOut M}$, coloured by $j_1,\ldots,j_n$.

\subsection{Vertical composition of 2-morphisms}

Having reformulated $RT_{\mathcal{C},\sqrt{D}}$ in terms of $F_*$ of a given ribbon graph as in Lemma \ref{lem:rt-formula}, we may proceed along similar lines to the proof of \cite[Lemma 2.1.2]{Turaev-book} to show that $RT_{\mathcal{C},\sqrt{D}}$ is compatible with vertical composition. 

Consider two vertically composable 2-morphisms in $\Bord{sig,dec}$, $(M,n):\Sigma\rightarrow\Sigma'$ and $(M',n'):\Sigma'\rightarrow\Sigma''$. Here, we use $\Sigma$, etc. to refer to the underlying surface for the respective 1-morphisms, though we understand these to also have extra structure as described in Definition \ref{defn:bordsig-new}. Let $M'' = M\cup_{\Sigma'} M'$ be the underlying 3-manifold for their composition.

First, we shall reduce to the case where $M,M'$ are connected. This will be done by replacing the $M,M'$ with the connect sum of their components, which scales both sides of the equation by the same factor:
\begin{lem}\label{lem:add-1handle}
    Let $(M,n)$ be a 2-morphism in $\Bord{sig}$. Let $M_+$ be $M$ with a 1-handle attached, i.e. by taking the connect sum of two components, or by replacing one component with its connect sum with $S^1\times S^2$. Then $$RT_{\mathcal{C},\sqrt{D}}(M_+,n) = \sqrt{D}RT_{\mathcal{C},\sqrt{D}}(M,n).$$
\end{lem}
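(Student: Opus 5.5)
The strategy is to compare the defining formula (\ref{eq:def-rt}) for $(M_+,n)$ and $(M,n)$ term by term. Attaching a 1-handle to $M$ in its interior, away from the boundary, does not change $\parin M$, $\parout M$ or the gluing data. So $\Sigma,\Sigma'$, the associated ribbon graphs $\Gamma_\Sigma,\Gamma_{\Sigma'}$, the indexing sets $\mathbb{K}_{\mathbf{i},\mathbf{j}}$, the spaces of labels, the pairings $\langle\cdot,\cdot\rangle_{\Gamma_{\Sigma'}}$, the factor $\sqrt{D}^{\chi(\ol{\Sigma'})/2}$ and $\dim_\mathrm{int}(K')$ are all unchanged. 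The integer $n$ is also unchanged by hypothesis. By (\ref{eq:rt-nodual}) it therefore suffices to show
\[
\tau(\ol{M_+},\Gamma_{M_+},K_M,\mathbf{f}) = \sqrt{D}\,\tau(\ol{M},\Gamma_M,K_M,\mathbf{f})
\]
for every ribbon-colouring $K_M$ and label $\mathbf{f}$, where $\Gamma_{M_+}$ is $\Gamma_M$ viewed inside $\ol{M_+}$. The ribbon graph may be kept disjoint from the small balls used in the surgery.

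First I identify $\ol{M_+}$ in each case, since the closing-up construction of Definition \ref{defn:olm} happens away from the attaching region. If $M_+$ is obtained by replacing a component $M_1$ of $M$ by $M_1\#(S^1\times S^2)$, then $\ol{M_+}=\ol{M}\#(S^1\times S^2)$, with the connected sum taken in the corresponding component of $\ol{M}$. If $M_+=M_1\#M_2$ for two distinct components $M_1,M_2$, then $\ol{M_+}$ is obtained from $\ol{M}$ by replacing the components $\ol{M}_1,\ol{M}_2$ containing them with $\ol{M}_1\#\ol{M}_2$. A subtle point is whether two components of $M$ might lie in the same component of $\ol{M}$, since gluing handlebodies and cylinders can connect them. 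In that case connecting them by a 1-handle is the same as a self-connected-sum with $S^1\times S^2$ on that component of $\ol{M}$, so this situation falls under the first case. I treat each case according to the component structure of $\ol{M}$, not of $M$.

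Both cases then follow from Lemma \ref{lem:tau-properties}(2) together with the multiplicativity of $\tau$ over components.
\begin{itemize}
\item In the $S^1\times S^2$ case,
\[
\tau(\ol{M}\#(S^1\times S^2),\Gamma_M)=\sqrt{D}\,\tau(\ol{M},\Gamma_M)\,\tau(S^1\times S^2,\emptyset)=\sqrt{D}\,\tau(\ol{M},\Gamma_M),
\]
using Lemma \ref{lem:tau-properties}(1).
\item In the two-component case,
\[
\tau(\ol{M}_1\#\ol{M}_2,\Gamma_1\cup\Gamma_2)=\sqrt{D}\,\tau(\ol{M}_1,\Gamma_1)\,\tau(\ol{M}_2,\Gamma_2),
\]
while the remaining components contribute the same factors on both sides.
\end{itemize}
Since both sides of (\ref{eq:def-rt}) are linear in $\mathbf{f}$, the identity extends to partial colourings and direct sums, giving $RT_{\mathcal{C},\sqrt{D}}(M_+,n)=\sqrt{D}RT_{\mathcal{C},\sqrt{D}}(M,n)$.

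The main obstacle is making sure the 1-handle attachment really interacts with $\ol{M}$ exactly as a connected sum, and that no hidden change sneaks in. Two things need checking. The first is the case analysis on the components of $\ol{M}$ described above. The second is that the pair $(M_+,n)$ is still a valid 2-morphism with the same signature label, so that no extra factor of $(\sqrt{D}/p_-)$ arises. For the latter, the lemma fixes $n$, so only $\tau$ changes, and the properties in Lemma \ref{lem:tau-properties} hold for labelled ribbon graphs disjoint from the connect-sum balls.
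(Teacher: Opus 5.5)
Your proposal is correct and is essentially the paper's proof: every factor in (\ref{eq:def-rt}) except $\tau(\ol{M},\Gamma_M)$ is unchanged, and the two cases follow from Lemma \ref{lem:tau-properties}(2) and from both parts of Lemma \ref{lem:tau-properties} respectively. Your extra case, where two components of $M$ lie in one component of $\ol{M}$, is handled correctly but cannot occur in $\Bord{sig,dec}$, since the handlebodies and solid cylinders glued on in Definition \ref{defn:olm} never join distinct components of $M$.
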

\begin{proof}
    We look at how (\ref{eq:def-rt}) changes when $M$ is replaced with $M_+$. The only term which is affected is the $\tau(\ol{M},\Gamma_M)$ term. If we replace two connected components with their connect sum, then by Lemma \ref{lem:tau-properties}(2), $\tau(\ol{M},\Gamma_M)$ is multiplied by a factor of $\sqrt{D}$. For the other case, by both parts of Lemma \ref{lem:tau-properties}, we have $\tau(\ol{M}\#(S^1\times S^2),\Gamma_M) = \sqrt{D}\tau(\ol{M},\Gamma_M)$.
\end{proof}

Henceforth, we may thus assume that $M,M'$ are connected, so $M''$ is as well.

Choose spanning trees for each of the connected components of $\Gamma_\Sigma,\Gamma_{\Sigma'},\Gamma_{\Sigma''}$, then we may apply the arguments in the previous subsections to compute the composition $$(RT_{\mathcal{C},\sqrt{D}}(M',n')\circ RT_{\mathcal{C},\sqrt{D}}(M,n))_\mathbf{i}^\mathbf{j} = RT_{\mathcal{C},\sqrt{D}}(M',n')_\mathbf{i}^\mathbf{j}\circ RT_{\mathcal{C},\sqrt{D}}(M,n)_\mathbf{i}^\mathbf{j}.$$ Recall here that vertical composition of 2-morphisms in $\KV$ is done entrywise. As before, we may fix ribbon-colourings $\tilde{K},\tilde{K}''$ of $\tilde{\Gamma}_\Sigma,\tilde{\Gamma}_{\Sigma''}$ which extend $\tilde{\mathbf{K}}_{\mathbf{i},\mathbf{j}},\tilde{\mathbf{K}}_{\mathbf{i},\mathbf{j}}''$. Then, it suffices to look at 
\begin{equation}\label{eq:2vect-comp}
    \begin{split}
        ((RT_{\mathcal{C},\sqrt{D}}(M',n')&\circ RT_{\mathcal{C},\sqrt{D}}(M,n))_\mathbf{i}^\mathbf{j})_{\tilde{K}}^{\tilde{K}''} \\ &= \sum_{\tilde{K}'\in\tilde{\mathbb{K}}_{\mathbf{i},\mathbf{j}}'} (RT_{\mathcal{C},\sqrt{D}}(M',n')_\mathbf{i}^\mathbf{j})_{\tilde{K}'}^{\tilde{K}''}\circ (RT_{\mathcal{C},\sqrt{D}}(M,n)_\mathbf{i}^\mathbf{j})_{\tilde{K}}^{\tilde{K}'}
    \end{split}
\end{equation}
where the sum is over all ribbon-colourings $\tilde{K}'$ of $\tilde{\Gamma}_{\Sigma'}$ which extend $\tilde{\mathbf{K}}_{\mathbf{i},\mathbf{j}}'$. We shall now construct a ribbon diagram whose operator invariant realises this composition.

\begin{constr}\label{constr:vert-glue}
    As in Construction \ref{constr:emb}, we may apply an ambient isotopy so that $|\tilde{\Gamma}_M\cup L|$ and $|\tilde{\Gamma}_{M'}\cup L'|$ end up in the form depicted in Figure \ref{fig:gamma-m}. In doing so, we have to pick an ordering of the vertices, as well as an ordering of the edges incident at each vertex. Note that in this case, $V_\mathrm{out}$ is canonically identified with $V_{\mathrm{in}}'$; they both correspond to the vertices of $\pm\tilde{\Gamma}_{\Sigma'}$. At this stage, we insist that the orderings chosen for these two sets are the same. Moreover, we need to pick an ordering of the edges incident at each vertex. We require the orderings for each of the vertices in $V_\mathrm{out}$ and $V_{\mathrm{in}}'$ to agree. (Recall here that the ordering of edges for each vertex in $V_\mathrm{out}$ is opposite to the cyclic ordering coming from $-\Gamma_{\Sigma'}$, which is in turn opposite to the cyclic ordering coming from $\Gamma_{\Sigma'}$. It is thus possible for this to agree with the ordering of edges at the corresponding vertex of $V_\mathrm{in}'$.) Then by construction, the orders of the external edges of $|\tilde{\Gamma}_M\cup L|$ on $\mathbb{R}\times\set{1}\times\set{0}$ as well as the external edges on of $|\tilde{\Gamma}_{M'}\cup L'|$ on $\mathbb{R}\times\set{0}\times\set{0}$ match up, and it would then make sense to concatenate the resulting ribbon diagrams.

    This concatenation is, however, not exactly what we want; we also need to insert something else in the middle. Define $Y_p,Y_p'$ as in (\ref{eq:def-yp}), and $Y_p''$ analogously. (The compatibility conditions we put on the orderings in the previous paragraph ensure that the two possible definitions of $Y_p'$ agree.) Now, consider the ribbon diagram in Lemma \ref{lem:tens-hom}, with the labels of the coloured strands being replaced by $Y_1',\ldots,Y_b'$. This is an embedded ribbon graph in $\mathbb{R}\times[0,1]\times\mathbb{R}$. On the other hand, consider the embedded ribbon graphs $(\tilde{\Gamma}_M\setminus V_\mathrm{out}) \cup L$ in $\mathbb{R}\times(-\infty,1]\times\mathbb{R}$ and $(\tilde{\Gamma}_{M'}\setminus V_\mathrm{in}') \cup L'$ in $\mathbb{R}\times[0,\infty)\times\mathbb{R}$. We may then glue these together to form an embedded ribbon graph in $\mathbb{R}^3$.

    \begin{figure}[hbt!]
        \begin{tikzpicture}
            \begin{knot}[consider self intersections, end tolerance=2pt, clip radius=5pt, clip width=2]
                \strand[draw=blue, double=white, double distance=5pt] (-0.6,-3) to [out=up, in=down] (-2.2,0) to [out=up,in=left] (-1,0.5) to [out=right,in=up] (-0.3,0) to [out=down,in=up] (0.3,-1) to [out=down,in=up] (-0.3,-2) to [out=down,in=up] (0.2,-3);
                \strand[draw=blue, double=white, double distance=5pt] (0.6,-3) to [out=up, in=down] (1.2,0) to [out=up,in=up] (0.3,0) to [out=down,in=up] (-0.3,-1) to [out=down,in=up] (0.3,-2) to [out=down,in=up] (-0.2,-3);
                \strand[draw=blue, double=white, double distance=5pt] (-3.4,-3) to [out=up,in=left] (-2,1.5) to [out=right,in=up] (-1.5,0) to [out=down,in=up] (-2.6,-3);
                \strand[draw=blue, double=white, double distance=5pt] (2.6,-3) to [out=up,in=down] (2,-2) to [out=up,in=left] (4,0) to [out=right,in=right] (3,-1.5) to [out=left,in=left] (2,0) to [out=right,in=up] (4,-2) to [out=down,in=up] (3.4,-3);
                \strand[draw=red, double=white, double distance=5pt] (-2.1,7) to [out=down,in=up] (-2.1,3) to [out=down,in=left] (-2,1) to [out=right,in=down] (-0.9,3) to [out=up,in=down] (-0.9,7) to [out=up,in=down] (0,10) to [out=up,in=right] (-1,11) to [out=left,in=up] (-2.1,7);
                \strand[draw=red, double=white, double distance=5pt] (-1.7,7) to [out=down,in=up] (-1.7,3)  to [out=down,in=left] (-1.7,1.8) to [out=right,in=down] (-1.3,3) to [out=up,in=down] (-1.3,7) to [out=up,in=right] (-1.5,10) to [out=left,in=up] (-2.5,9) to [out=down,in=up] (-1.7,8) to [out=down,in=up] (-1.7,7);
                \strand[draw=red, double=white, double distance=5pt] (1.3,7) to [out=down,in=up] (1.3,3) to [out=down,in=left] (1.5,1) to [out=right,in=down] (1.7,3) to [out=up,in=down] (1.7,7) to [out=up,in=right] (1.5,8) to [out=left,in=up] (1.3,7);
                \strand[draw=black, double=white, double distance=5pt] (3.2,-2.2) to [out=down,in=left] (4.2,-2.7) to [out=right,in=down] (5.2,-2.2) to [out=up,in=right] (4.2,-1.7) to [out=left,in=up] (3.2,-2.2);
                \strand[draw=black, double=white, double distance=5pt] (0.5,0.5) to [out=down,in=left] (1.5,-0.5) to [out=right,in=down] (2.5,0.9) to [out=up,in=right] (1.5,2.2) to [out=left,in=right] (0,2.5) to [out=left,in=up] (0.5,0.5);
                \strand[draw=black, double=white, double distance=5pt] (-1.55,2.5) to [out=down,in=left] (-0.5,2.1) to [out=right,in=down] (0.5,2.5) to [out=up,in=right] (-0.5,2.8) to [out=left,in=up] (-1.55,2.5);
                \strand[draw=Green, double=white, double distance=5pt] (-3,-3) to [out=up,in=left] (-1.5,-2) to [out=right,in=down] (-0.9,-1) to [out=up,in=down] (-0.9,0.8) to [out=up,in=left] (0.3,1.5) to [out=right,in=down] (0.9,3) to [out=up,in=down] (0.9,7) to [out=up,in=down] (-1.5,9.2) to [out=up,in=down] (-0.6,13);
                \strand[draw=Green, double=white, double distance=5pt] (0.2,13) to [out=down,in=up] (3.5,10) to [out=down,in=up] (2.1,7) to [out=down,in=up] (2.1,3) to [out=down,in=up] (3,2.2) to [out=down,in=left] (3.5,1.5) to [out=right,in=down] (5,2) to [out=up,in=up] (3.9,0.4) to [out=down,in=up] (3,-1) to [out=down,in=up] (2.8,-2) to [out=down,in=up] (3,-3);
                \strand[draw=gray, double=white, double distance=5pt] (-2.6,5) to [out=down,in=down] (-0.4,5) to [out=up,in=up] (-2.6,5);
                \strand[draw=gray, double=white, double distance=5pt] (2.6,5) to [out=down,in=down] (0.4,5) to [out=up,in=up] (2.6,5);
                \strand[draw=violet, double=white, double distance=5pt] (-0.2,13) to [out=down,in=up] (-0.6,10.5) to [out=down,in=left] (1.2,9.7) to [out=right,in=down] (0.6,13);
                \strand[draw=black, double=white, double distance=5pt] (0.5,9) to [out=up,in=up] (2.6,10.2) to [out=down,in=left] (3.8,8) to [out=right,in=down] (4.3,9.5) to [out=up,in=right] (1,8.5) to [out=left,in=down] (0.5,9);
                \flipcrossings{3,6,9,10,12,14,15,17,18,22,23,26,27,30,33,35,38,39,42,43,45,47,51,52,53,56,58,61}
            \end{knot}
            \draw[rounded corners] (-0.8,-3.6) rectangle (0.8,-3);
            \draw[rounded corners] (-3.8,-3.6) rectangle (-2.2,-3);
            \draw[rounded corners] (2.2,-3.6) rectangle (3.8,-3);
            \draw[rounded corners] (-0.8,13) rectangle (0.8,13.6);
        \end{tikzpicture}
        \caption{Embedding of $|\tilde{\Gamma}_{M''}\cup L''|$.} \label{fig:gamma-glued}
    \end{figure}
    Figure \ref{fig:gamma-glued} shows an example. Here, $\tilde{\Gamma}_M\cup L$ is as in Figure \ref{fig:gamma-m}. This new graph consists of $\tilde{\Gamma}_{M''}$, along with some loops: these are made out of $L$, $L'$ (in black), the loops from the graph in Lemma \ref{lem:tens-hom} (in grey), as well as some new loops which arise from gluing the internal edges of $\pm\tilde{\Gamma}_{\Sigma'}$ to each other (in red). Let $L_\mathrm{vert}$ denote the set of the red loops and $L_\mathrm{hor}$ denote the set of the grey loops. Let $L'' = L \cup L' \cup L_\mathrm{vert} \cup L_\mathrm{hor}$ denote the set of all of these loops.
\end{constr}

A choice of colourings $\tilde{K},\tilde{K''}$ which extend $\tilde{\mathbf{K}}_{\mathbf{i},\mathbf{j}},\tilde{\mathbf{K}}_{\mathbf{i},\mathbf{j}}''$ defines a colouring $\tilde{K}_{M''}$ of $\tilde{\Gamma}_{M''}$, making $(\tilde{\Gamma}_M\setminus (V_\mathrm{in}\cup V_\mathrm{out}')) \cup L''$ a labelled ribbon graph with uncoloured loops. We now consider $F_*$ of this graph.

\begin{lem}\label{lem:vert-alg}
    The linear map $$\Phi'':= F_*((\tilde{\Gamma}_M\setminus (V_\mathrm{in}\cup V_\mathrm{out}')) \cup L'',\tilde{K}_{M''},\emptyset): \Hom\pa{\mathbbm{1},\bigotimes_{j=1}^a Y_j} \rightarrow \Hom\pa{\mathbbm{1},\bigotimes_{j=1}^b Y_j''}$$ along with the canonical inclusion $\varphi$ and canonical projection $\psi''$ satisfy $$((RT_{\mathcal{C},\sqrt{D}}(M',n')\circ RT_{\mathcal{C},\sqrt{D}}(M,n))_\mathbf{i}^\mathbf{j})_{\tilde{K}}^{\tilde{K}''} =  A\dim_\mathrm{int}(\tilde{K}'')\psi''\Phi''\varphi$$ where $$A := \pa{\frac{p_-}{\sqrt{D}}}^{\sigma(X_L)+\sigma(X_{L'})-n-n'} \sqrt{D}^{\chi(\ol{\Sigma''})/2-\# L''-2}.$$
\end{lem}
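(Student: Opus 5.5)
We expand the left-hand side via (\ref{eq:2vect-comp}) and apply Lemma \ref{lem:rt-formula} to each factor. The composite $(RT(M',n')_\mathbf{i}^\mathbf{j})_{\tilde{K}'}^{\tilde{K}''}\circ(RT(M,n)_\mathbf{i}^\mathbf{j})_{\tilde{K}}^{\tilde{K}'}$ then becomes the product of the scalars
\[\pa{\frac{p_-}{\sqrt{D}}}^{\sigma(X_L)-n}\sqrt{D}^{\chi(\ol{\Sigma'})/2-\#L-1}\dim_\mathrm{int}(\tilde{K}')\cdot\pa{\frac{p_-}{\sqrt{D}}}^{\sigma(X_{L'})-n'}\sqrt{D}^{\chi(\ol{\Sigma''})/2-\#L'-1}\dim_\mathrm{int}(\tilde{K}'')\]
times the linear map $\psi''\Phi'\varphi'\psi'\Phi\varphi$. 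Here the orderings of Construction \ref{constr:vert-glue} ensure that $\psi'$ and $\varphi'$ refer to the same objects $Y_1',\ldots,Y_b'$. The $\dim_\mathrm{int}(\tilde{K}'')$ factor and the $p_-$ powers already match $A$, so it remains to handle the middle of the composite and the sum over $\tilde{K}'$.

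For the middle, Lemma \ref{lem:tens-hom} gives $\varphi'\psi' = D^{-b}F_*(H)$, where $H$ is the diagram with $b$ grey uncoloured loops around the bundles $Y_p'$. By functoriality of the operator invariant, $\Phi'\circ F_*(H)\circ\Phi$ is $F_*$ of the vertical concatenation: the graph $(\tilde{\Gamma}_M\setminus V_\mathrm{out})\cup L$, then $H$, then $(\tilde{\Gamma}_{M'}\setminus V'_\mathrm{in})\cup L'$. In this concatenated graph, the internal edges of $\tilde{\Gamma}_{\Sigma'}$ are coloured by $\tilde{K}'$, and the red strands close up into loops. These red loops are exactly $L_\mathrm{vert}$, one per internal edge of $\tilde{\Gamma}_{\Sigma'}$. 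Summing over $\tilde{K}'$ with the weight $\dim_\mathrm{int}(\tilde{K}')$ is then precisely the definition of the operator invariant with $L_\mathrm{vert}$ uncoloured, because $\dim_\mathrm{int}(\tilde{K}') = \dim(K_{L_\mathrm{vert}})$. The green external edges carry fixed colours from $\mathbf{i},\mathbf{j}$ and are not summed over. The weighted sum therefore equals $D^{-b}\Phi''$, with $L'' = L\cup L'\cup L_\mathrm{vert}\cup L_\mathrm{hor}$.

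The main obstacle is the scalar bookkeeping. We need
\[\sqrt{D}^{\chi(\ol{\Sigma'})/2-\#L-\#L'-2}D^{-b} = \sqrt{D}^{-\#L''}\cdot\sqrt{D}^{\#L_\mathrm{vert}+\#L_\mathrm{hor}}\cdot\sqrt{D}^{\chi(\ol{\Sigma'})/2-2b-2}\cdot\sqrt{D}^{2}.\]
Since $\#L_\mathrm{hor}=b$, this amounts to $\chi(\ol{\Sigma'}) = 2b - 2\#L_\mathrm{vert}$. That holds because $\ol{\Sigma'}$ has $b$ components, one for each vertex of $\tilde{\Gamma}_{\Sigma'}$, and because $\ol{\Sigma'}$ is the boundary of a regular neighbourhood of $\tilde{\Gamma}_{\Sigma'}$ with the external edges capped off. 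After contracting spanning trees, the internal edges number $\sum_p g_p$, and hence $\chi = \sum_p(2-2g_p)$. Combining these with the remaining $\chi(\ol{\Sigma''})/2$ and the two $-1$'s yields exactly the exponent $\chi(\ol{\Sigma''})/2-\#L''-2$ in $A$.

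Finally, we check that $\#L$, $\#L'$ and the exponents are used consistently, and that the orientation conventions on the glued red loops are harmless. The latter follows because the operator invariant with uncoloured loops is independent of loop orientation (\cite[Lemma II.2.2.1]{Turaev-book}).
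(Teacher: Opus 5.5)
Your proposal is correct and follows the paper's proof essentially step for step: expand via (\ref{eq:2vect-comp}) and Lemma \ref{lem:rt-formula}, replace $\varphi'\psi'$ by $D^{-b}$ times $F_*$ of the grey-loop diagram from Lemma \ref{lem:tens-hom}, absorb the $\dim_\mathrm{int}(\tilde{K}')$-weighted sum into the uncoloured loops $L_\mathrm{vert}$, and finish with the count $\# L'' = \# L + \# L' + b + \frac12 b_1(\ol{\Sigma'})$. One small slip: your displayed scalar identity is off by a factor of $\sqrt{D}^{2}$ as written (what you actually need is that its left-hand side equals $\sqrt{D}^{-\# L''-2}$), but the condition $\chi(\ol{\Sigma'}) = 2b - 2\# L_\mathrm{vert}$ that you extract from it is exactly the right one.
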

\begin{proof}
    Applying (\ref{eq:2vect-comp}) and Lemma \ref{lem:rt-formula}, the left-hand side is equal to
    \begin{equation}\label{eq:comp-sum}
        B\dim_\mathrm{int}(\tilde{K}'')\sum_{\tilde{K}'\in\tilde{\mathbb{K}}_{\mathbf{i},\mathbf{j}}'} \dim_\mathrm{int}(\tilde{K}') \psi''\Phi'\varphi'\psi'\Phi\varphi
    \end{equation}
    where $$B := \pa{\frac{p_1}{\sqrt{D}}}^{\sigma(X_L)+\sigma(X_{L'})-n-n'}\sqrt{D}^{\chi(\ol{\Sigma'})/2 + \chi(\ol{\Sigma''})/2-\# L-\# L'-2}.$$

    By Lemma \ref{lem:tens-hom}, $\varphi'\psi'$ is $D^{-b_0(\Sigma')}$ times $F_*$ applied to the central portion of Figure \ref{fig:gamma-glued}. Substituting this into (\ref{eq:comp-sum}) and using the functoriality of $F_*$, we obtain $$BD^{-b_0(\Sigma')}\dim_\mathrm{int}(\tilde{K}'')\sum_{\tilde{K}'\in\tilde{\mathbb{K}}_{\mathbf{i},\mathbf{j}}'} \dim_\mathrm{int}(\tilde{K}') \psi''F_*((\tilde{\Gamma}_M\setminus (V_\mathrm{in}\cup V_\mathrm{out}')), \tilde{\Gamma}_{M''}\sqcup \tilde{K'},\emptyset)\varphi.$$ By the definition of $F_*$ on ribbon graphs with uncoloured loops, this is exactly $$BD^{-b_0(\Sigma')}\dim_\mathrm{int}(\tilde{K}'')\psi''\Phi''\varphi.$$

    Now, we have $$\# L'' = \# L + \# L' + b_0(\ol{\Sigma'}) + \frac12 b_1(\ol{\Sigma'}) = \# L + \# L' - \frac{\chi(\ol{\Sigma'})}2 + 2b_0(\ol{\Sigma'})$$ and so $$A = BD^{-b_0(\ol{\Sigma'})} = \pa{\frac{p_1}{\sqrt{D}}}^{\sigma(X_L)+\sigma(X_{L'})-n-n'}\sqrt{D}^{\chi(\ol{\Sigma''})/2-\# L''-2}$$ as desired.
\end{proof}

We shall now interpret Figure \ref{fig:gamma-glued} geometrically. It turns out that performing surgery along $L''$ doesn't yield $\ol{M''}$ exactly, but instead $\ol{M''}\# (S^1\times S^2)$. Nevertheless, by using Lemma \ref{lem:add-1handle}, we may easily relate the invariants associated to $\ol{M''}$ and $\ol{M''}\# (S^1\times S^2)$.

\begin{prop}\label{prop:vert-glue}
    For the embedding of $|\tilde{\Gamma}_{M''}\cup L''|$ defined in Construction \ref{constr:vert-glue},
    \begin{enumerate}
        \item $|\tilde{\Gamma}_{M''}\cup L''|$ is a surgery presentation for $(\ol{M''}\#(S^1\times S^2), |\tilde{\Gamma}_{M''}|)$.
        \item The link trace $X_{L''}$ has the same signature as to $X_L \cup_{H_{\Sigma'}} X_{L'}$.
    \end{enumerate}
\end{prop}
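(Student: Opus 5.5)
The plan is to follow the strategy of \cite[Lemma IV.2.1.2]{Turaev-book}: identify the three regions of Figure \ref{fig:gamma-glued} with pieces of $\ol{M}$ and $\ol{M'}$, and read off the effect of the new loops $L_\mathrm{vert}\cup L_\mathrm{hor}$ via Kirby calculus.

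\textbf{Step 1: the lower half.} Performing surgery on $L$ in the bottom half $\mathbb{R}\times(-\infty,1]\times\mathbb{R}$ and discarding the coupons of $V_\mathrm{out}$ gives $\ol{M}$ with a regular neighbourhood $N$ of $-\tilde{\Gamma}_{\Sigma'}$ removed (together with the adjoining parts of the cylinders). By the remark after Example \ref{eg:internal-graph}, $N$ is isotopic to $-H_{\Sigma'}$. The analogous statement holds for the top half and $\ol{M'}$.

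\textbf{Step 2: the middle slab.} The middle slab should realise gluing along $\ol{\Sigma'}=\partial H_{\Sigma'}$. I would verify this one component of $\Sigma'$ at a time. Each grey loop of $L_\mathrm{hor}$ is a $0$-framed meridian encircling all strands at one vertex of $\tilde{\Gamma}_{\Sigma'}$. Surgery on it caps off the sphere around the removed coupon, which is the $0$-handle of $H_{\Sigma'}$ being cancelled. Each red loop of $L_\mathrm{vert}$ is formed by joining an internal edge of $-\tilde{\Gamma}_{\Sigma'}$ to the matching edge of $\tilde{\Gamma}_{\Sigma'}$. Surgery on it identifies the meridian discs of the corresponding $1$-handle on both sides, so that the two copies of $H_{\Sigma'}$ get cancelled. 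The external (green) edges pass straight through and join up, giving $\tilde{\Gamma}_{M''}$, with the solid cylinders $V$ glued correctly.

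\textbf{Step 3: the extra summand.} I then need to check that the total result is $\ol{M''}\#(S^1\times S^2)$. The expected reason is that the bottom and top halves are each connected (since $M,M'$ are connected) and are already joined through the ambient $S^3$, while the gluing along $\ol{\Sigma'}$ joins them once more. This produces exactly one surplus $1$-handle, as in Lemma \ref{lem:add-1handle}. Concretely, I would slide one grey loop over the remaining ones and the red loops until it becomes a $0$-framed unknot split from everything else, which by Lemma \ref{lem:tau-properties}(1) is the $S^1\times S^2$ summand. I expect this step to be the main obstacle: the bookkeeping of handle slides across several components of $\Sigma'$, and making sure that framings of the red loops are $0$ (the edges are joined without twisting), is where care is needed.

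\textbf{Step 4: the signature, part (2).} Write $X_{L''}$ as $X_L\natural X_{L'}$ with extra $2$-handles attached along $L_\mathrm{vert}\cup L_\mathrm{hor}$. Step 2 shows these extra handles form a copy of $H_{\Sigma'}\times I$-type filling, so that $X_{L''}$ is $X_L\cup_{H_{\Sigma'}}X_{L'}$ up to a boundary connected sum with a signature-zero piece (the $S^1\times S^2$-producing handle). Alternatively, one can argue directly from the linking matrix, and I would check both and use whichever is cleaner. Each red loop passes through a grey circle once up and once down, so they have linking number zero. The grey loops are $0$-framed and unlinked with $L$ and $L'$. After the slides of Step 3, the red and grey block can then be split off as a form with zero signature (hyperbolic pairs between red loops and cocores), which gives $\sigma(X_{L''})=\sigma(X_L)+\sigma(X_{L'})=\sigma(X_L\cup_{H_{\Sigma'}}X_{L'})$. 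For the last equality I would invoke Theorem \ref{thm:wall}: gluing along $H_{\Sigma'}$ itself, the relevant Lagrangians coincide and so the Maslov term vanishes.
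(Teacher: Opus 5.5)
\textbf{Assessment.} Steps 1--3 give a plausible, if heuristic, three-dimensional picture of part (1). Step 4, however, contains a genuine error, and it rests on a misconception that already appears in Step 3.

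\textbf{The error in Step 4.} The red loops of $L_{\mathrm{vert}}$ are \emph{not} $0$-framed in general. Surgery along them must use the ribbon framing inherited from the embeddings of $\mp\tilde{\Gamma}_{\Sigma'}$ in the two diagrams, and this framing records how $H_{\Sigma'}$ sits inside $\ol{M}$ and inside $\ol{M'}$. Also, that surgery identifies the lateral annuli of the two copies of each $1$-handle, not their meridian discs. The red loops also generally link $L$, $L'$ and each other.

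Nothing pairs the red loops off hyperbolically: the grey loops have zero linking number with every component, and cocores are not part of the link. So your claim $\sigma(X_{L''})=\sigma(X_L)+\sigma(X_{L'})$ is false. So is the claim that the three Lagrangians in Theorem \ref{thm:wall} coincide. Those Lagrangians come from $\ol{M}\setminus\mathring{H}_{\Sigma'}$, $H_{\Sigma'}$ and $\ol{M'}\setminus\mathring{H}_{\Sigma'}$, and they are generally distinct. Their Maslov index is exactly the correction term in the vertical composition law of Definition \ref{defn:comp}. Proposition \ref{prop:vert-comp} relies on $\sigma(X_{L''})=\sigma(X_L)+\sigma(X_{L'})-\sigma(V;A,B,C)$, with this term typically nonzero.

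\textbf{A counterexample.} Let $\Sigma'$ be a torus, and let $M$, $M'$ be solid tori whose meridians, together with that of $H_{\Sigma'}$, are three slopes with pairwise intersection number $\pm1$. Then $\ol{M}=\ol{M'}=\ol{M''}=S^3$ and $L=L'=\emptyset$. But $L''$ is a $\pm1$-framed red unknot together with a split $0$-framed grey unknot, so $\sigma(X_{L''})=\pm1\neq 0$. Each of your two equalities is false, and they only happen to compose to the true statement (2).

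\textbf{What is missing.} You need a four-dimensional identification of $X_L\cup_{H_{\Sigma'}}X_{L'}$ itself; this is how the paper proves both parts at once.
\begin{enumerate}
\item Thicken $H_{\Sigma'}$ to $|\tilde{\Gamma}_{\Sigma'}|\times I$ and glue the two $D^4$s in stages.
\item Gluing along the vertex $0$-handles, joined by auxiliary $1$-handles into a single $D^3$, gives $D^4$ with the concatenated diagram minus the grey loops.
\item Dropping the auxiliary $1$-handles adds dotted circles, which slide to all but one grey loop.
\item Gluing along the edge $1$-handles attaches $2$-handles along $L_{\mathrm{vert}}$ with the ribbon framing.
\item Finally, attach $2$-handles along $L\cup L'$.
\end{enumerate}
Exchanging the dotted circles for $0$-framed unknots preserves the boundary. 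The last grey loop is your split $S^1\times S^2$ summand, and this gives (1).

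For (2), every $2$-handle runs algebraically zero times over each dotted circle, and every grey loop has zero linking with everything. Hence $X_L\cup_{H_{\Sigma'}}X_{L'}$ and $X_{L''}$ both have intersection forms with the same signature as the linking matrix of $L\cup L'\cup L_{\mathrm{vert}}$; the paper phrases this step via a doubling argument instead. The red block of that matrix is where the Maslov term lives.

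This also shows that your first route in Step 4 is inaccurate when $\Sigma'$ is disconnected. There, the extra grey loops correspond to $1$-handles of $X_L\cup_{H_{\Sigma'}}X_{L'}$, not to a boundary-connected summand.
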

\begin{proof}
    Consider the tuple $(X_L,\ol{M},-H_{\Sigma'},|\Gamma_M|)$. These are related in the following ways: $\ol{M}$ is the boundary of $X_L$, $-H_{\Sigma'}$ is a codimension zero submanifold of $\ol{M}$, and $|\Gamma_M|$ is an embedded surface in $\ol{M}$. Moreover, the intersection of $|\Gamma_M|$ with $H_{\Sigma'}$ is exactly an embedded $|\Gamma_{\Sigma'}|$. Equivalently, $|\Gamma_M \setminus \Gamma_{\Sigma'}|$ is embedded in $\ol{M} \setminus H_{\Sigma}$. We want to study the result of gluing $(X_L,\ol{M},H_{\Sigma'},|\Gamma_M\setminus \Gamma_{\Sigma'}|)$ along $H_{\Sigma'}$ to $(X_{L'},\ol{M'},-H_{\Sigma'},|\Gamma_{M'} \setminus \Gamma_{\Sigma'}|)$. The result will be a 4-manifold with boundary $\ol{M''} = \ol{M} \cup_{H_{\Sigma'}} \ol{M'}$ with an embedded $\Gamma_{M''}$.
    
    Recall that the embedded ribbon graph $\Gamma_{\Sigma'}$ in $H_{\Sigma'}$ admits a thickening to $|\Gamma_{\Sigma'}| \times [-1,1] = |\tilde{\Gamma}_{\Sigma'}| \times [-1,1]$ such that both in both $\ol{M}$ and $\ol{M'}$, the embedded $\pm H_{\Sigma'}$ is ambiently isotopic to $|\tilde{\Gamma}_{\Sigma'}| \times I$. We may thus equivalently study the result of gluing $(D^4,S^3,|\tilde{\Gamma}_{\Sigma'}| \times I,|\tilde{\Gamma}_M\setminus\tilde{\Gamma}_{\Sigma'} \cup L|)$ along $|\tilde{\Gamma}_{\Sigma'}| \times I$ to $(D^4,S^3,-|\tilde{\Gamma}_{\Sigma'}| \times I,|\tilde{\Gamma}_{M'}\setminus\tilde{\Gamma}_{\Sigma'} \cup L')|$, followed by attaching 2-handles along the framed links $L$ and $L'$.
    
    We shall perform the first gluing in stages. Recall that the surface $|\tilde{\Gamma}_{\Sigma'}|$ consists of 0-handles indexed by the set $V$ of vertices of $\tilde{\Gamma}_{\Sigma'}$ and 1-handles indexed by the set $E$ of edges of $\tilde{\Gamma}_{\Sigma'}$. Its thickening is correspondingly a disjoint union of handlebodies embedded in $S^3$. We work with the embeddings of $|\tilde{\Gamma}_M \cup L|$ and $|\tilde{\Gamma}_{M'} \cup L'|$ as defined in Construction~\ref{constr:emb}. Then, the 0-handles corresponding to the vertices of $\tilde{\Gamma}_{\Sigma'}$ are in a neighbourhood of $\mathbb{R}^2 \times \set{0}$ with their centres arranged in a line. Join each pair of adjacent 0-handles by attaching a new 1-handle whose core lies on this line, so the union of the 0-handles with the new 1-handles forms an embedded $D^3$ in $S^3$. Upon doing this to both copies of $S^3$, we may now glue $(D^4,S^3,D^3,|\tilde{\Gamma}_M \setminus V \cup L|)$ and $(D^4,S^3,-D^3,|\tilde{\Gamma}_{M'}\setminus V \cup L'|)$ together along these two $D^3$s. Figure \ref{fig:glue-d3s} below shows the result of this gluing locally around the two $D^3$s; the cyan ribbons on the right correspond to the belt spheres of the new 1-handles. Overall, the result is a $D^4$ with the surface $|\tilde{\Gamma}_{M''} \cup L \cup L' \cup L_\mathrm{vert}|$ embedded in the boundary $S^3$ exactly as drawn in Figure \ref{fig:gamma-glued}, but without the grey surgery link $L_{\mathrm{hor}}$.

    \begin{figure}[hbt!]
        \begin{tikzpicture}
            \shade[ball color = gray!40, opacity = 0.4] (0,1) circle (0.7);
            \draw[gray] (0,1) circle (0.7);
            \shade[ball color = gray!40, opacity = 0.4] (-2,1) circle (0.7);
            \draw[gray] (-2,1) circle (0.7);
            \shade[ball color = gray!40, opacity = 0.4] (2,1) circle (0.7);
            \draw[gray] (2,1) circle (0.7);
            \shade[ball color = gray!40, opacity = 0.4] (0,-1) circle (0.7);
            \draw[gray] (0,-1) circle (0.7);
            \shade[ball color = gray!40, opacity = 0.4] (-2,-1) circle (0.7);
            \draw[gray] (-2,-1) circle (0.7);
            \shade[ball color = gray!40, opacity = 0.4] (2,-1) circle (0.7);
            \draw[gray] (2,-1) circle (0.7);

            \draw[cyan, top color=cyan!20,bottom color=cyan, opacity=0.4] (-1.328,0.804) rectangle (-0.672,1.196);
            \draw[cyan, top color=cyan!20,bottom color=cyan, opacity=0.4] (0.672,0.804) rectangle (1.328,1.196);
            \draw[cyan, top color=cyan!20,bottom color=cyan, opacity=0.4] (-1.328,-1.196) rectangle (-0.672,-0.804);
            \draw[cyan, top color=cyan!20,bottom color=cyan, opacity=0.4] (0.672,-1.196) rectangle (1.328,-0.804);

            \draw[draw=red, double=white, double distance=5pt] (-2.4,1.4) .. controls (-2.8,1.7) .. (-3,2.5);
            \draw[draw=Green, double=white, double distance=5pt] (-2,1.6) -- (-2,2.5);
            \draw[draw=red, double=white, double distance=5pt] (-1.6,1.4) .. controls (-1.2,1.7) .. (-1,2.5);
            \draw[draw=red, double=white, double distance=5pt] (-2.4,-1.4) .. controls (-2.8,-1.7) .. (-3,-2.5);
            \draw[draw=Green, double=white, double distance=5pt] (-2,-1.6) -- (-2,-2.5);
            \draw[draw=red, double=white, double distance=5pt] (-1.6,-1.4) .. controls (-1.2,-1.7) .. (-1,-2.5);

            \draw[draw=Green, double=white, double distance=5pt] (0,1.6) -- (0,2.5);
            \draw[draw=Green, double=white, double distance=5pt] (0,-1.6) -- (0,-2.5);

            \draw[draw=red, double=white, double distance=5pt] (2.4,1.4) .. controls (2.8,1.7) .. (3,2.5);
            \draw[draw=red, double=white, double distance=5pt] (1.6,1.4) .. controls (1.2,1.7) .. (1,2.5);
            \draw[draw=red, double=white, double distance=5pt] (2.4,-1.4) .. controls (2.8,-1.7) .. (3,-2.5);
            \draw[draw=red, double=white, double distance=5pt] (1.6,-1.4) .. controls (1.2,-1.7) .. (1,-2.5);
            \draw[draw=red, double=white, double distance=5pt] (2.15,1.55) .. controls (2.3,2) .. (2.3,2.5);
            \draw[draw=red, double=white, double distance=5pt] (1.85,1.55) .. controls (1.7,2) .. (1.7,2.5);
            \draw[draw=red, double=white, double distance=5pt] (2.15,-1.55) .. controls (2.3,-2) .. (2.3,-2.5);
            \draw[draw=red, double=white, double distance=5pt] (1.85,-1.55) .. controls (1.7,-2) .. (1.7,-2.5);

            \node[] at (3.7,0) {$\rightsquigarrow$};

            \begin{knot}[clip radius=5pt, clip width=2]
                \strand[draw=red, double=white, double distance=5pt] (5.2,2) -- (5.2,-2);
                \strand[draw=Green, double=white, double distance=5pt] (5.5,2) -- (5.5,-2);
                \strand[draw=red, double=white, double distance=5pt] (5.8,2) -- (5.8,-2);
                \strand[draw=cyan, double=cyan!20, double distance=3pt] (4.8,0) to [out=up,in=up] (6.2,0) to [out=down,in=down] (4.8,0);
                \strand[draw=Green, double=white, double distance=5pt] (6.7,2) -- (6.7,-2);
                \strand[draw=cyan, double=cyan!20, double distance=3pt] (4.6,0) to [out=up,in=left] (5.85,0.8) to [out=right,in=up] (7.1,0) to [out=down,in=right] (5.85,-0.8) to [out=left,in=down] (4.6,0);
                \flipcrossings{2,4,6,8,10,12,14}
            \end{knot}
            \draw[draw=red, double=white, double distance=5pt] (7.6,2) -- (7.6,-2);
            \draw[draw=red, double=white, double distance=5pt] (7.9,2) -- (7.9,-2);
            \draw[draw=red, double=white, double distance=5pt] (8.2,2) -- (8.2,-2);
            \draw[draw=red, double=white, double distance=5pt] (8.5,2) -- (8.5,-2);

        \end{tikzpicture}
        \caption{Gluing $D^4$s along the embedded $D^3$s} \label{fig:glue-d3s}
    \end{figure}

    Now, suppose we had instead glued the two $D^4$s along all the 0-handles $\bigsqcup_{v\in V} D^3$, without first attaching the extra 1-handles. In the gluing described in the previous paragraph, the belt spheres of the extra 1-handles are as drawn in the right side of Figure \ref{fig:glue-d3s}. More precisely, there are $\# V-1$ of these, with the $i^\mathrm{th}$ going around the first $i$ groups of vertical strands for each $i$. Starting from the previous glued 4-manifold, our new glued 4-manifold may be obtained by removing tubular neighbourhoods of $\# V-1$ boundary-parallel $D^2$s whose boundaries are given by the belt spheres above. In the language of Kirby calculus, this corresponds to adding $\# V-1$ dotted circles. By performing handle slides, this is the same as adding dotted circles along all but one the components of $L_\mathrm{hor}$, i.e. all but one of the grey circles in Figure \ref{fig:gamma-glued}. As before, the resulting 4-manifold has an embedded $|\tilde{\Gamma}_{M''} \cup L \cup L' \cup L_\mathrm{vert}|$ in its boundary.

    Next, we consider what we would have obtained by instead gluing the $D^4$s along the $|\tilde{\Gamma}_{\Sigma'}| \times I$, i.e. the 0-handles from the previous paragraph plus the 1-handles corresponding to the edges of $\tilde{\Gamma}_{\Sigma'}$. Starting from the 4-manifold in the previous paragraph, this exactly corresponds to gluing in 2-handles along tubular neighbourhoods of the $|L_{\mathrm{vert}}|$, with framing exactly as given by the embedding in Construction \ref{constr:vert-glue}. The resulting 4-manifold now has an embedded $|\tilde{\Gamma}_{M''} \cup L \cup L'|$ in its boundary.
    
    Finally, we can glue in 2-handles along $L$ and $L'$. We have thus obtained $X_L \cup_{H_{\Sigma'}} X_{L'}$ with the embedded $|\tilde{\Gamma}_{M''}|$ in its boundary, which is $\ol{M''}$.

    Now, note that in Kirby calculus, replacing dotted circles with trivially framed unknots does not change the boundary of the described 4-manifold, and hence we see that $(\ol{M''},|\tilde{\Gamma}_{M''}|)$ has a surgery presentation as in Figure \ref{fig:gamma-glued}, except with one of the components of $L_{\mathrm{hor}}$ (i.e. one of the grey unknots) deleted. Taking a connect sum with $S^2\times S^1$ is the same as taking the disjoint union of this with one trivially framed unknot. After handle sliding this across the other grey unknots, this results in the final grey unknot. Hence, we have proven (1).

    On the other hand, we have also shown that the Kirby diagram for $X_L \cup_{H_{\Sigma'}} X_{L'}$ has the 1-handle attachments encoded by dotted circles for all but one component of the $L_\mathrm{hor}$, and 2-handle attachments encoded by the framed link $L\cup L' \cup L_{\mathrm{vert}}$ corresponding to 2-handle addition. Note that attaching a 1-handle to a 4-manifold does not change its signature, since the vector spaces in Wall's Theorem (Theorem \ref{thm:wall}) are zero vector spaces, and $\sigma(D^4) = 0$. Applying this to $X_L \cup_{H_{\Sigma'}} X_{L'}$, this corresponds to adding an extra dotted circle to the Kirby diagram, which upon handle sliding gives exactly the final component of $L_\mathrm{hor}$. Now, gluing this to $-X_{L''}$ along their common boundary $\ol{M''}$ exactly yields the double of the link trace of $L\cup L'\cup L_{\mathrm{vert}}$. Computing the signature of this closed 4-manifold in two ways, we thus have $$\sigma(X_L \cup_{H_{\Sigma'}} X_{L'}) - \sigma(X_{L''}) = \sigma(X_{L\cup L'\cup L_{\mathrm{vert}}}) - \sigma(X_{L\cup L'\cup L_{\mathrm{vert}}}) = 0,$$ which gives us (2).
\end{proof}

By comparing the results of Lemma \ref{lem:vert-alg} and Proposition \ref{prop:vert-glue}, we may now conclude that $RT_{\mathcal{C},\sqrt{D}}$ is compatible with vertical composition.

\begin{prop}\label{prop:vert-comp}
    Let $(M,n)$ and $(M',n')$ be vertically composable 2-morphisms in $\Bord{sig}$. Let $(M'',n'')$ be their composition. Then, $$RT_{\mathcal{C},\sqrt{D}}(M',n')\circ RT_{\mathcal{C},\sqrt{D}}(M,n) = RT_{\mathcal{C},\sqrt{D}}(M'',n'').$$
\end{prop}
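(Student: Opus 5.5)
We first reduce to the case where $M$ and $M'$ are connected. Replacing $M$ and $M'$ by the iterated connect sums of their components, Lemma \ref{lem:add-1handle} multiplies $RT_{\mathcal{C},\sqrt{D}}(M,n)$ and $RT_{\mathcal{C},\sqrt{D}}(M',n')$ by powers of $\sqrt{D}$. We check that the corresponding modification of $M''$ is obtained by the same number of $1$-handle attachments, so it scales $RT_{\mathcal{C},\sqrt{D}}(M'',n'')$ by the same total power. The Wall invariant term is unaffected: connect sums do not change the relevant kernels in $H_1$ beyond adding spheres' contributions, which are trivial. Both sides of the claimed identity are then rescaled equally.

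Next, we fix spanning trees and ribbon-colourings $\tilde{K},\tilde{K}''$ and work entrywise. By Lemma \ref{lem:contract-tree-rt} we may compute everything with the contracted graphs. Lemma \ref{lem:vert-alg} expresses the left-hand side as
\[A\dim_\mathrm{int}(\tilde{K}'')\psi''\Phi''\varphi,\]
where $\Phi''$ is $F_*$ of the glued diagram of Construction \ref{constr:vert-glue}. On the other side, Proposition \ref{prop:vert-glue}(1) says that $|\tilde{\Gamma}_{M''}\cup L''|$ is a surgery presentation of $(\ol{M''}\#(S^1\times S^2),|\tilde{\Gamma}_{M''}|)$. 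Running the argument of Lemma \ref{lem:rt-formula} with this presentation, and using Lemma \ref{lem:tau-properties} to remove the $S^1\times S^2$ summand (a factor of $\sqrt{D}$), gives
\[RT_{\mathcal{C},\sqrt{D}}(M'',n'') = \pa{\frac{p_-}{\sqrt{D}}}^{\sigma(X_{L''})-n''}\sqrt{D}^{\chi(\ol{\Sigma''})/2-\#L''-2}\dim_\mathrm{int}(\tilde{K}'')\psi''\Phi''\varphi.\]
Here the extra $-1$ in the exponent accounts for the $S^1\times S^2$ summand.

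It then remains to match the exponents of $p_-/\sqrt{D}$, that is, to show
\[\sigma(X_{L''})-n'' = \sigma(X_L)+\sigma(X_{L'})-n-n'.\]
Since $n'' = n+n'-\sigma(V;A,B,C)$ with $V=H_1(\ol{\Sigma'};\mathbb{R})$, this is equivalent to
\[\sigma(X_{L''}) = \sigma(X_L)+\sigma(X_{L'})-\sigma(V;A,B,C).\]
By Proposition \ref{prop:vert-glue}(2), $\sigma(X_{L''}) = \sigma(X_L\cup_{H_{\Sigma'}}X_{L'})$. We then apply Wall's theorem (Theorem \ref{thm:wall}) with $X_+=X_L$, $X_-=X_{L'}$ glued along $H=H_{\Sigma'}$, whose boundary is $\ol{\Sigma'}$. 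The three Lagrangians are the kernels of $H_1(\ol{\Sigma'})$ into $\ol{M}\setminus\mathring{H}_{\Sigma'}$, into $H_{\Sigma'}$, and into $\ol{M'}\setminus\mathring{H}_{\Sigma'}$.

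The main obstacle is this last identification. The Lagrangians must be matched with $A,B,C$ of (\ref{eq:csig}) in the right order and with the right orientations, taking into account that $\ol{M}$ contains $-H_{\Sigma'}$ while $\ol{M'}$ contains $+H_{\Sigma'}$. Any mismatch would flip the sign of Wall's invariant. I would first check the orientation conventions of Theorem \ref{thm:wall} against Definition \ref{defn:comp}, using the symmetry $\sigma(V;A,B,C) = -\sigma(V;C,B,A)$ together with the sign change under reversing the symplectic form. I would then confirm the overall sign on a simple example, such as a composition of cylinders over a torus where the gluing creates a nonzero Maslov index.
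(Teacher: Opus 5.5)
Your proposal is correct and follows the paper's proof essentially step for step. You reduce to connected $M,M'$ via Lemma \ref{lem:add-1handle}, write the left side using Lemma \ref{lem:vert-alg}, write the right side using the $S^1\times S^2$ surgery presentation of Proposition \ref{prop:vert-glue}(1) together with Lemma \ref{lem:rt-formula}, and match the exponents using Proposition \ref{prop:vert-glue}(2) and Wall's theorem.

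The orientation check you flag as the main obstacle goes through directly; the paper simply asserts it. Since $\ol{M}\supset -H_{\Sigma'}$ matches $\partial X_+ = M_+\cup_\Sigma(-H)$, your choice $X_+=X_L$, $X_-=X_{L'}$ makes the Lagrangians coming from $\ol{M'}\setminus\mathring{H}_{\Sigma'}$, $H_{\Sigma'}$ and $\ol{M}\setminus\mathring{H}_{\Sigma'}$ exactly $A,B,C$ of (\ref{eq:csig}), in that order, with no sign flip.
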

\begin{proof}
    It suffices to show
    \begin{equation}\label{eq:vert-comp-toshow}
        ((RT_{\mathcal{C},\sqrt{D}}(M',n')\circ RT_{\mathcal{C},\sqrt{D}}(M,n))_{\mathbf{i}}^{\mathbf{j}})_{\tilde{K}}^{\tilde{K}''} = ((RT_{\mathcal{C},\sqrt{D}}(M'',n''))_{\mathbf{i}}^{\mathbf{j}})_{\tilde{K}}^{\tilde{K}''}
    \end{equation}
    for all indices $\mathbf{i},\mathbf{j}$ and for all ribbon-colourings $\tilde{K}, \tilde{K}''$ of $\tilde{\Gamma}_\Sigma,\tilde{\Gamma}_{\Sigma''}$ extending $\tilde{\mathbf{K}}_{\mathbf{i},\mathbf{j}},\tilde{\mathbf{K}}_{\mathbf{i},\mathbf{j}}''$. By Lemma \ref{lem:add-1handle} followed by Lemma \ref{lem:rt-formula} and Proposition \ref{prop:vert-glue}(1), the right-hand side of (\ref{eq:vert-comp-toshow}) is given by
    \begin{equation*}
        \begin{split}
            (RT_{\mathcal{C},\sqrt{D}}(M'',n'')_{\mathbf{i}}^{\mathbf{j}})_{\tilde{K}}^{\tilde{K}''} &= \frac1{\sqrt{D}} (RT_{\mathcal{C},\sqrt{D}}(M''\# (S^1\times S^2),n'')_{\mathbf{i}}^{\mathbf{j}})_{\tilde{K}}^{\tilde{K}''} \\ &= \pa{\frac{p_-}{\sqrt{D}}}^{\sigma(X_{L''})-n''} \sqrt{D}^{\chi\pa{\ol{\parout M''}}/2-\# L''-2}\dim_\mathrm{int}(\tilde{K}')\psi''\Phi''\varphi
        \end{split}
    \end{equation*}
    where $L''$ is as defined in Construction \ref{constr:vert-glue}, and $\Phi''$ is as in Lemma \ref{lem:vert-alg}. Comparing this to the result of Lemma \ref{lem:vert-alg}, it remains to show that
    \begin{equation}\label{eq:two-csigs}
        \sigma(X_L)+\sigma(X_{L'})-n-n' = \sigma(X_{L''})-n''.
    \end{equation}

    Indeed, applying Wall's Theorem (Theorem \ref{thm:wall}) to the result of Proposition \ref{prop:vert-glue}(2), we obtain $$\sigma(X_{L''}) = \sigma(X_{L'}) + \sigma(X_L) - \sigma(V;A,B,C)$$ where $V,A,B,C$ are exactly as in (\ref{eq:csig}). Comparing this to the definition of $n''$ in Definition \ref{defn:comp}, we obtain (\ref{eq:two-csigs}), as desired.
\end{proof}

\subsection{Horizontal composition of 2-morphisms}

The approach is similar in this case. Given horizontally composable 2-morphisms $(M,n)$ and $(M',n')$, let their horizontal composition be $(M'',n'')$, so $n''=n+n'$. Again, by Lemma \ref{lem:add-1handle}, we may assume that $M,M'$ (and hence $M''$) are connected. In this subsection we revert to looking at the uncontracted graphs $\Gamma_M,\Gamma_{M'}$. We shall construct surgery presentations for $(\ol{M},|\Gamma_M|)$ and $(\ol{M'},|\Gamma_{M'}|)$, then use these construct a surgery presentation for $(\ol{M''},|\Gamma_{M''}|)$. We can then show that the corresponding linear maps defined in this way agree with the horizontal composition.

\begin{constr}\label{constr:hor-glue}
    Pick a total ordering on the components of $V_{\parOut M}$, and hence $V_{\parIn M'}$; suppose there are $r$ of these components. Recall that these are disjoint unions of $D^2\times[0,1]$ along each of whose cores there is exactly one edge of $\Gamma_M$ and $\Gamma_{M'}$, respectively. Let $E_\mathrm{vert}$ denote the ribbon subgraph of these that consists of these edges (with source and target taken to be $\infty$). In $\Gamma_M$, the tail of each edge in $E_\mathrm{vert}$ is a vertex in $\Gamma_{\parin M}$, while its head is a vertex in $\Gamma_{\parout M}$; for $\Gamma_{M'}$, this is reversed.

    By performing a suitable ambient isotopy, we may make the embeddings of $\Gamma_M\cup L$ and $\Gamma_{M'}\cup L'$ in $S^3$ satisfy the following conditions:
    \begin{itemize}
        \item $|\Gamma_M\cup L|$ is embedded in $(-\infty,0)\times\mathbb{R}\times\mathbb{R} \subset \mathbb{R}^3\subset S^3$, and its intersection with $(-\infty,-1]\times\mathbb{R}\times\mathbb{R}$ is exactly the subsurface given by $|E_\mathrm{vert}|$. Likewise, $|\Gamma_{M'}\cup L'|$ is embedded in $(0,\infty)\times\mathbb{R}\times\mathbb{R}\subset\mathbb{R}^3\subset S^3$ and its intersection with $[1,\infty)\times\mathbb{R}\times\mathbb{R}$ is $|E_\mathrm{vert}|$.
        \item The endpoints of the ribbons of $E_\mathrm{vert}$ which correspond to vertices in $\Gamma_{\parin M}$ and $\Gamma_{\parin M'}$ lie in $\set{\pm1}\times\set{0}\times\mathbb{R}$ while the ones corresponding to vertices of $\Gamma_{\parout M}$ and $\Gamma_{\parout M'}$ lie in $\set{\pm1}\times\set{1}\times\mathbb{R}$.
        \item For both embeddings of $|E_\mathrm{vert}|$, the order in which the ribbons are incident on each of the lines $\set{\pm1}\times\set{0}\times\mathbb{R}$ and $\set{\pm1}\times\set{1}\times\mathbb{R}$ corresponds to the total order chosen.
        \item For both embeddings of $|E_\mathrm{vert}|$, the projection onto $\mathbb{R}^2$ given by forgetting the first coordinate yields a ribbon graph corresponding to the identity morphism.
    \end{itemize}

    Now, take the these embeddings of $|(\Gamma_M\setminus E_{\mathrm{vert}})\cup L|$ and $|(\Gamma_{M'}\setminus E_{\mathrm{vert}})\cup L'|$, and glue them to $r$ copies of a rotated version of the left-hand side of the diagram in Lemma \ref{lem:BK}. This yields a new ribbon graph embedded in $\mathbb{R}^3\subset S^3$.

    \begin{figure}[hbt!]
        \begin{tikzpicture}
            \draw[draw=Green, double=white, double distance=5pt] (-9.7,1.7) .. controls (-9.4,0) .. (-9.7,-1.7);
            \draw[draw=Green, double=white, double distance=5pt] (-9.3,1.3) .. controls (-9,0) .. (-9.3,-1.3);
            \draw[draw=Green, double=white, double distance=5pt] (-8.9,0.9) .. controls (-8.6,0) .. (-8.9,-0.9);
            \draw[draw=Green, double=white, double distance=5pt] (-6.3,1.7) .. controls (-6.6,0) .. (-6.3,-1.7);
            \draw[draw=Green, double=white, double distance=5pt] (-6.7,1.3) .. controls (-7,0) .. (-6.7,-1.3);
            \draw[draw=Green, double=white, double distance=5pt] (-7.1,0.9) .. controls (-7.4,0) .. (-7.1,-0.9);
            \draw[black] (-10,3) -- (-8.2,0.3);
            \draw[black] (-10,-3) -- (-8.2,-0.3);
            \draw[black] (-6,3) -- (-7.8,0.3);
            \draw[black] (-6,-3) -- (-7.8,-0.3);
            \draw[black] (-10,3) -- (-10,-3);
            \draw[black] (-8.2,0.3) -- (-8.2,-0.3);
            \draw[black] (-6,3) -- (-6,-3);
            \draw[black] (-7.8,0.3) -- (-7.8,-0.3);
            \draw[black, dashed] (-10,2) -- (-8.2,0.2);
            \draw[black, dashed] (-10,-2) -- (-8.2,-0.2);
            \draw[black, dashed] (-6,2) -- (-7.8,0.2);
            \draw[black, dashed] (-6,-2) -- (-7.8,-0.2);
            \draw[Green, ->] (-9.5,-0.2) .. controls (-9.45,0) .. (-9.5,0.2);
            \draw[Green, ->] (-9.1,-0.2) .. controls (-9.04,0) .. (-9.1,0.2);
            \draw[Green, ->] (-8.7,-0.2) .. controls (-8.62,0) .. (-8.7,0.2);
            \draw[Green, <-] (-6.5,-0.2) .. controls (-6.55,0) .. (-6.5,0.2);
            \draw[Green, <-] (-6.9,-0.2) .. controls (-6.96,0) .. (-6.9,0.2);
            \draw[Green, <-] (-7.3,-0.2) .. controls (-7.38,0) .. (-7.3,0.2);

            \draw[black] (-2,3) -- (-0.2,0.3);
            \draw[black] (-2,-3) -- (-0.2,-0.3);
            \draw[black] (2,3) -- (0.2,0.3);
            \draw[black] (2,-3) -- (0.2,-0.3);
            \draw[black] (-2,3) -- (-2,-3);
            \draw[black] (-0.2,0.3) -- (-0.2,-0.3);
            \draw[black] (2,3) -- (2,-3);
            \draw[black] (0.2,0.3) -- (0.2,-0.3);
            \draw[black, dashed] (-0.95,0.95) -- (-0.2,0.2);
            \draw[black, dashed] (-0.95,-0.95) -- (-0.2,-0.2);
            \draw[black, dashed] (0.95,0.95) -- (0.2,0.2);
            \draw[black, dashed] (0.95,-0.95) -- (0.2,-0.2);
            \begin{knot}[consider self intersections, end tolerance=2pt, clip radius=5pt, clip width=2]
                \strand[draw=Green, double=white, double distance=5pt] (-1.7,1.7) .. controls (0,1.4) .. (1.7,1.7);
                \strand[draw=Green, double=white, double distance=5pt] (-1.3,1.3) .. controls (0,1) .. (1.3,1.3);
                \strand[draw=Green, double=white, double distance=5pt] (-0.9,0.9) .. controls (0,0.6) .. (0.9,0.9);
                \strand[draw=Green, double=white, double distance=5pt] (-1.7,-1.7) .. controls (0,-1.4) .. (1.7,-1.7);
                \strand[draw=Green, double=white, double distance=5pt] (-1.3,-1.3) .. controls (0,-1) .. (1.3,-1.3);
                \strand[draw=Green, double=white, double distance=5pt] (-0.9,-0.9) .. controls (0,-0.6) .. (0.9,-0.9);
                \strand[draw=gray, double=white, double distance=5pt] (0,0.9) to [out=left,in=left] (0,-0.9) to [out=right,in=right] (0,0.9);
                \strand[draw=gray, double=white, double distance=5pt] (0,1.32) to [out=left,in=left] (0,-1.32) to [out=right,in=right] (0,1.32);
                \strand[draw=gray, double=white, double distance=5pt] (0,1.74) to [out=left,in=left] (0,-1.74) to [out=right,in=right] (0,1.74);
                \flipcrossings{1,3,4,6,7,8,10,11,13,14,16,17,18,20}
            \end{knot}
            \draw[black, dashed] (-2,2) -- (-1,1);
            \draw[black, dashed] (-2,-2) -- (-1,-1);
            \draw[black, dashed] (2,2) -- (1,1);
            \draw[black, dashed] (2,-2) -- (1,-1);
            \draw[Green, ->] (0.05,-0.67) -- (0.45,-0.76);
            \draw[Green, ->] (0.15,-1.08) -- (0.55,-1.15);
            \draw[Green, ->] (0.25,-1.49) -- (0.65,-1.54);
            \draw[Green, <-] (0.05,0.67) -- (0.45,0.76);
            \draw[Green, <-] (0.15,1.08) -- (0.55,1.15);
            \draw[Green, <-] (0.25,1.49) -- (0.65,1.54);

            \node[] at (-4,0) {$\rightsquigarrow$};
        \end{tikzpicture}
        \caption{Replacing the $|E_\mathrm{vert}|$ in $[-1,1]\times\mathbb{R}^2$ to get an embedded $|\Gamma_{M''}\cup L''|$} \label{fig:hor-glue}
    \end{figure}
    Figure \ref{fig:hor-glue} shows a schematic of this process (for $r=3$): the diagram on the left shows the two embeddings of $\Gamma_M\cup L$ and $\Gamma_{M'}\cup {L'}$, with only the ribbons in $E_\mathrm{vert}$ drawn (in green). Then, the diagram on the right shows the result after these green ribbons are removed and replaced with $r$ copies of the left-hand side of the diagram in Lemma \ref{lem:BK}. The result is exactly an embedding of $\Gamma_{M''}$, along with loops corresponding to $L,L'$ as well as the $r$ new loops, which are drawn in grey. Let $L''$ be the union of all of these loops.
\end{constr}
\begin{rk}
    This construction is a generalisation of the ``special ribbon graph'' constructed in \cite{Tsumura}.
\end{rk}

\begin{lem}\label{lem:hor-glue}
    For the embedding of $|\Gamma_{M''}\cup L''|$ defined in Construction \ref{constr:hor-glue},
    \begin{enumerate}
        \item $|\Gamma_{M''}\cup L''|$ is a surgery presentation for $(\ol{M''}\#(S^1\times S^2), |\Gamma_{M''}|)$.
        \item The link trace $X_{L''}$ has the same signature as $X_L \cup_{V_{\parOut M}} X_{L'}$.
    \end{enumerate}
\end{lem}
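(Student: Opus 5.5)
We mirror the proof of Proposition \ref{prop:vert-glue}, now gluing along the solid cylinders $V_{\parOut M}=V_{\parIn M'}$ instead of a handlebody. Start with the two link traces $X_L$ and $X_{L'}$, whose boundaries $\ol{M}$ and $\ol{M'}$ contain embedded $\pm V_{\parOut M}$. Each of these is a disjoint union of $r$ copies of $D^2\times[0,1]$, and the core of each copy carries one ribbon of $E_\mathrm{vert}$. Gluing $(X_L,\ol{M},|\Gamma_M|)$ to $(X_{L'},\ol{M'},|\Gamma_{M'}|)$ along $V_{\parOut M}$ produces a 4-manifold with boundary $\ol{M''}=(\ol{M}\setminus \mathring V)\cup(\ol{M'}\setminus\mathring V)$. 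Inside this boundary, the ribbons of $E_\mathrm{vert}$ are deleted and their ends joined up, which gives exactly $\Gamma_{M''}$.

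Since $V_{\parOut M}$ is isotopic to a regular neighbourhood of $|E_\mathrm{vert}|$, we may carry out the gluing on the two $D^4$'s before attaching the 2-handles along $L$ and $L'$. We do it in two stages.

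\textbf{Stage 1.} Connect the $r$ balls $D^2\times\{0\}\times[\ldots]$ near the endpoints by $r-1$ auxiliary 1-handles, so that we glue along a single $D^3$. Gluing two $D^4$'s along a boundary $D^3$ gives $D^4$. The two half-diagrams of Construction \ref{constr:hor-glue} then sit side by side in one $S^3$, exactly as in the passage across Figure \ref{fig:glue-d3s}.

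\textbf{Stage 2.} Undo the auxiliary 1-handles. As in Proposition \ref{prop:vert-glue}, this amounts to adding $r-1$ dotted circles around the strands. After handle slides, these become dotted circles on all but one of the grey loops around the $r$ pairs of strands in Figure \ref{fig:hor-glue}.

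\textbf{Stage 3.} Glue along the remainder of $V$. The gluing region is $\bigsqcup D^2\times[0,1]$; after the two ends, what remains to identify is the thickened cores $D^2\times[0,1]$ themselves. Identifying them removes the two copies of each ribbon of $E_\mathrm{vert}$. The two ends are then joined by the caps/cups appearing on the right-hand side of Lemma \ref{lem:BK}, with the encircling grey loop recording the corresponding dotted circle. This step needs care: we should check, via a local model in $[-1,1]\times\mathbb{R}^2$, that after the isotopy of Construction \ref{constr:hor-glue} the resulting picture is precisely the rotated left-hand side of Lemma \ref{lem:BK}. The framing and orientation conventions, and the fact that the ribbons are untwisted by the last bullet of the construction, must match.

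\textbf{Stage 4.} Attach the 2-handles along $L$ and $L'$. This gives $X_L\cup_{V_{\parOut M}}X_{L'}$, with boundary $(\ol{M''},|\Gamma_{M''}|)$. Its Kirby diagram is $L\cup L'$ together with $r-1$ dotted grey circles.

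For (1), we replace each dotted circle by a $0$-framed unknot, which does not change the boundary. We then add one further $0$-framed unknot, which realises $\#(S^1\times S^2)$, and slide it over the others so that it becomes the last grey loop. The result is precisely the surgery presentation $|\Gamma_{M''}\cup L''|$.

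For (2), attaching a 1-handle preserves signature; this follows from Theorem \ref{thm:wall} with zero vector spaces and $\sigma(D^4)=0$. So we may add the missing dotted circle. The swap of dotted circles for 0-framed unknots between $X_L\cup X_{L'}$ and $X_{L''}$ then has the same boundary. Gluing one manifold to the negative of the other gives the double of the trace of $L\cup L'$ (with the grey loops acting trivially), which has signature $0$. Hence $\sigma(X_{L''})=\sigma(X_L\cup_{V}X_{L'})$. Alternatively, the 0-framed grey unknots are unlinked from $L\cup L'$ in homology, so the linking matrix of $L''$ is that of $L\sqcup L'$ plus a zero block. This gives $\sigma(X_{L''})=\sigma(X_L)+\sigma(X_{L'})$, consistent with the Wall term vanishing for horizontal composition.

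The main obstacle is the local identification in Stage 3: showing that the strand-removal-and-rejoin picture, together with a single encircling loop per cylinder, genuinely models gluing along $D^2\times[0,1]$, with correct orientations and framings.
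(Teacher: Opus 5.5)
Your outline is the paper's argument. The paper disposes of this lemma in one line, as the genus-zero case of Proposition~\ref{prop:vert-glue} rotated by $90^\circ$. Your Stages 1, 2 and 4, together with your deductions of (1) and (2), are exactly that proof re-run with $V_{\parOut M}$ in place of $H_{\Sigma'}$.

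What needs correcting is Stage 3, which you single out as the main obstacle but which is actually vacuous. Each component $D^2\times[0,1]$ of $V_{\parOut M}$ is a $3$-ball carrying a single ribbon. In the language of Proposition~\ref{prop:vert-glue}, it is a single $0$-handle with no $1$-handles: there are no internal edges, so $L_\mathrm{vert}=\emptyset$ and no $2$-handles arise.
\begin{itemize}
\item If your Stage 1 balls are the $r$ cylinders themselves, then once they are joined by $r-1$ auxiliary $1$-handles and glued, the gluing along $V_{\parOut M}$ is already complete.
\item If you meant to glue first only along smaller sub-balls near one end, then enlarging the gluing region to the whole cylinder is a collar extension. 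Gluing along a $3$-ball in the boundary does not depend on the size of the ball, so this changes the boundary picture only by an isotopy.
\end{itemize}

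The local picture is Figure~\ref{fig:glue-d3s} with only green strands. The two ribbons meeting each cylinder at its end discs pass straight across the gluing sphere. Together with the grey loop from Stage 2, this is already the rotated left-hand side of Lemma~\ref{lem:BK}.

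The cups and caps of the right-hand side are what one sees \emph{before} gluing, namely the U-turns of $E_\mathrm{vert}$ in Construction~\ref{constr:hor-glue}. So your sentence about the ends being ``joined by the caps/cups'' has it backwards, although your stated target is correct. No framing or orientation check is needed beyond the untwistedness already built into the construction.

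Your linking-matrix route to (2) is a small but genuine simplification of the paper's Kirby-calculus comparison. Since $L$ and $L'$ lie in disjoint half-spaces, and each $0$-framed grey loop bounds a disc meeting only its own two green strands, the linking matrix of $L''$ is $\mathrm{lk}(L)\oplus\mathrm{lk}(L')\oplus 0_r$. Hence $\sigma(X_{L''})=\sigma(X_L)+\sigma(X_{L'})$. The gluing surface $\partial V_{\parOut M}$ is a union of spheres, so the Wall term in Theorem~\ref{thm:wall} vanishes, and (2) follows. This is also directly the identity that Proposition~\ref{prop:hor-comp} actually uses.
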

\begin{proof}
    This is a special case of Proposition \ref{prop:vert-glue} (rotated by $90^\circ$) where each component of $\Sigma'$ has genus $0$.
\end{proof}

\begin{prop}\label{prop:hor-comp}
    Let $(M,n)$ and $(M',n')$ be horizontally composable 2-morphisms in $\Bord{sig}$. Let $(M'',n'')$ be their composition. Then, $$RT_{\mathcal{C},\sqrt{D}}(M',n')\star RT_{\mathcal{C},\sqrt{D}}(M,n) = RT_{\mathcal{C},\sqrt{D}}(M'',n'').$$
\end{prop}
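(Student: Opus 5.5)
We follow the proof of Proposition~\ref{prop:vert-comp}, with Construction~\ref{constr:hor-glue} and Lemma~\ref{lem:hor-glue} playing the roles of Construction~\ref{constr:vert-glue} and Proposition~\ref{prop:vert-glue}.

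By Lemma~\ref{lem:add-1handle}, applied to $M$, $M'$ and $M''$ simultaneously, we may assume all three are connected. Indeed, connect-summing components of $M$ or $M'$ induces connect sums in $M''$, and each side of the claimed identity is rescaled by the same power of $\sqrt{D}$.

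Horizontal composition in $\KV$ is matrix multiplication. The $(\mathbf{i},\mathbf{k})$-entry of $RT(M',n')\star RT(M,n)$ is therefore $\bigoplus_{\mathbf{j}} RT(M',n')_{\mathbf{j}}^{\mathbf{k}}\otimes RT(M,n)_{\mathbf{i}}^{\mathbf{j}}$, where $\mathbf{j}$ ranges over colourings of the $r$ circles along which we glue. Correspondingly, the space of labels of $\Gamma_{\parin M''}$ decomposes as a direct sum, over colourings of the $r$ now-internal edges crossing the gluing circles, of tensor products of the spaces of labels for $\Gamma_{\parin M}$ and $\Gamma_{\parin M'}$, and similarly on the out side. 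So it suffices to work in the dual form~(\ref{eq:def-rt}) and compare, for fixed colourings $K_{M''}$ of $\Gamma_{M''}$ and labels $\mathbf{f}\otimes\mathbf{g}$:
\begin{itemize}
\item the value of $RT(M'',n'')$, namely $\pa{\frac{\sqrt D}{p_-}}^{n''}\sqrt{D}^{\chi(\ol{\parout M''})/2}\dim_{\mathrm{int}}(K'')\,\tau(\ol{M''},\Gamma_{M''})$, with
\item the sum over $j$-colourings of the corresponding products of the values for $M$ and $M'$.
\end{itemize}
The colours of the edges in $E_\mathrm{vert}$ on the two sides are forced to agree. Both the $\Gamma_{M}$ copy and the $\Gamma_{M'}$ copy of a vertical edge carry the same colour $j$ in $K_{M''}$, since after gluing the in-half and out-half each become a single internal edge of $\Gamma_{\parin M''}$ and $\Gamma_{\parout M''}$ respectively.

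Now compute $\tau(\ol{M''},\Gamma_{M''})$ using Lemma~\ref{lem:hor-glue}(1) together with Lemma~\ref{lem:add-1handle}, via the surgery presentation $\Gamma_{M''}\cup L''$ of $\ol{M''}\#(S^1\times S^2)$. Each of the $r$ grey uncoloured loops encircles a pair of strands coloured $j$ (from $M$) and $j'$ (from $M'$). Lemma~\ref{lem:BK} replaces it by $\delta_{jj'}\frac{D}{\dim j}$ times the cup--cap diagram. After this replacement the diagram $\Gamma_{M''}\cup L\cup L'$ becomes the disjoint union of $\Gamma_M\cup L$ and $\Gamma_{M'}\cup L'$ with the vertical edges reconnected through the cup--caps. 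Its operator invariant therefore factors as $F(\Gamma_M\cup L)\,F(\Gamma_{M'}\cup L')$, by monoidality of $F$ and isotopy.

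Next we track the scalars.
\begin{itemize}
\item $\#L''=\#L+\#L'+r$.
\item The $\delta$'s yield exactly the diagonal sum over $\mathbf{j}$ required by matrix multiplication.
\item The factor $D^r$ from Lemma~\ref{lem:BK} should cancel against $\sqrt{D}^{-r}$ from the extra loops together with the change in Euler characteristic. The relation is $\chi(\ol{\parout M''})=\chi(\ol{\parout M})+\chi(\ol{\parout M'})-2r$, since we glue $r$ discs and lose $r$ sphere caps. The remaining $\sqrt{D}$'s come from the $-1$ normalisations in $\tau$ and the $\sqrt{D}$ in Lemma~\ref{lem:add-1handle}.
\item The factors $\prod 1/\dim j$ cancel the new internal-edge contributions in $\dim_{\mathrm{int}}(K'')$, which now include the $r$ glued out-edges, each contributing $\dim j$.
\end{itemize}
Finally, Lemma~\ref{lem:hor-glue}(2) and Wall's theorem give $\sigma(X_{L''})=\sigma(X_L)+\sigma(X_{L'})$, because the Wall term vanishes for genus-$0$ $\ol{\Sigma}$. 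This matches $n''=n+n'$, so the $p_-$ exponents agree.

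The main obstacle is the bookkeeping of the $\sqrt D$, $D$ and $\dim j$ factors. A second delicate point is checking that the pairings $\langle\cdot,\cdot\rangle$ from Lemma~\ref{lem:rev-dual} on $\Gamma_{\parout M''}$ factor as products of those on $\Gamma_{\parout M}$ and $\Gamma_{\parout M'}$ under the direct-sum decomposition. This holds because the vertices do not change; only edges are glued, so the pairing, being a product over vertices, splits. To keep this cleanly controlled, I would first verify the scalar identity on the simplest case, $M,M'$ both identity 2-morphisms on a cylinder, and then argue that all other factors are manifestly multiplicative.
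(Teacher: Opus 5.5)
Your proposal is correct and is essentially the paper's own proof. Both reduce to connected $M,M'$ via Lemma~\ref{lem:add-1handle}, compute $\tau(\ol{M''},\Gamma_{M''})$ through the surgery presentation of Construction~\ref{constr:hor-glue} and Lemma~\ref{lem:hor-glue}, and remove the $r$ grey loops with Lemma~\ref{lem:BK} so that the operator invariant factors. The scalars are then matched using $\#L''=\#L+\#L'+r$, $\sigma(X_{L''})=\sigma(X_L)+\sigma(X_{L'})$, $\chi(\ol{\parout M''})=\chi(\ol{\parout M})+\chi(\ol{\parout M'})-2r$ and the extra $\prod_t\dim j_t$ in $\dim_{\mathrm{int}}$. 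Your remark that $\langle\cdot,\cdot\rangle$ splits vertex by vertex is a point the paper uses without stating it.

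One correction to your geometric description. Each grey loop does not encircle one strand from $M$ and one from $M'$. It encircles the glued in-edge of $\Gamma_{\parin M''}$ (coloured by $K$) and the glued out-edge of $\Gamma_{\parout M''}$ (coloured by $K'$), and each of these runs from the $M$ side to the $M'$ side. The resulting $\delta$ forces the in- and out-colours at each gluing circle to agree and picks out the single $\mathbf{j}$. The cup--cap then closes these strands up separately on the $M$ side and the $M'$ side, which is exactly what your factorisation into $F(\Gamma_M\cup L)\,F(\Gamma_{M'}\cup L')$ relies on.
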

\begin{proof}
    Let $\Sigma = \parin M''$ and $\Sigma' = \parout M''$. Throughout, $K,K'$ will denote ribbon-colourings of $\Gamma_\Sigma,\Gamma_{\Sigma'}$, and similar notation will be used for partial ribbon-colourings and the sets of ribbon-colourings which extend these. As in Construction \ref{constr:hor-glue}, let $r$ be the number of connected components of $V_{\parOut M} = V_{\parIn M'}$.

    Let the three matrices of linear maps be indexed by $\mathbf{i},\mathbf{j},\mathbf{k}$. We want to show that 
    \begin{equation} \label{eq:horcomp-toshow}
        RT_{\mathcal{C},\sqrt{D}}(M'',n'')_{\mathbf{i}}^{\mathbf{k}} = \sum_{\mathbf{j}\in\mathbb{X}^r} RT_{\mathcal{C},\sqrt{D}}(M,n)_{\mathbf{i}}^{\mathbf{j}} \otimes RT_{\mathcal{C},\sqrt{D}}(M',n')_{\mathbf{j}}^{\mathbf{k}}
    \end{equation}
    for each $\mathbf{i},\mathbf{k}$. Recall that the domain and codomain of these linear maps are given by direct sums of spaces of labels, indexed over possible ribbon-colourings of $\Gamma_{\Sigma}$ and $\Gamma_{\Sigma'}$ extending the partial ribbon-colourings $\mathbf{K}_{\mathbf{i},\mathbf{k}},\mathbf{K}_{\mathbf{i},\mathbf{k}}'$, which in turn are determined by colouring the external edges according to $\mathbf{i},\mathbf{k}$. In other words, these are linear maps $$\bigoplus_{K \in \mathbb{K}_{\mathbf{i},\mathbf{k}}} \mathcal{V}_{K}(\Gamma_{\Sigma}) \rightarrow \bigoplus_{K' \in \mathbb{K}_{\mathbf{i},\mathbf{k}}'} \mathcal{V}_{K'}(\Gamma_{\Sigma'}).$$

    Pick such ribbon-colourings $K,K'$; these uniquely determine a ribbon-colouring $K_{M''}$ of $\Gamma_{M''}$. In the definition of $(RT_{\mathcal{C},\sqrt{D}}(M'',n'')_{\mathbf{i}}^{\mathbf{k}})_K^{K'}$, we have to compute the invariant $\tau(\ol{M''},\Gamma_{M''},K_{M''},\mathbf{f})$ for each $\mathbf{f} \in \mathcal{V}_{K_{M''}}(\Gamma_{M''})$. In other words, we have to compute the operator invariant $F(\Gamma_{M''} \cup L'',K_{M''},\mathbf{f})$ where $|\Gamma_{M''}\cup L''|$ is a surgery presentation for $(\ol{M''},|\Gamma_{M''}|)$. By Lemma \ref{lem:add-1handle}, we may replace $\ol{M''}$ by its connect sum with $S^1\times S^2$ and divide the result by $\sqrt{D}$. By Lemma \ref{lem:hor-glue}, we may then use the embedding of $\Gamma_{M''}\cup L''$ as depicted on the right-hand side of Figure \ref{fig:hor-glue}. In other words, we have

    \begin{align*}
        \tau(\ol{M''},\Gamma_{M''}, K_{M''}, \mathbf{f}) &= \frac1{\sqrt{D}}\tau(\ol{M''}\#(S^1\times S^2),\Gamma_{M''}, K_{M''}, \mathbf{f}) \\ &= \pa{\frac{p_{-}}{\sqrt{D}}}^{\sigma\pa{X_{L''}}} \sqrt{D}^{-\# L''-2} F(\Gamma_{M''}\cup L'', K_{M''},\mathbf{f}).
    \end{align*}

    Let $E_\mathrm{centr},E_\mathrm{centr}'$ denote the central edges of $\Gamma_{\parin M''},\Gamma_{\parout M''}$ respectively; these are the green edges in the right-hand diagram of Figure \ref{fig:hor-glue}. These two sets are canonically identified with each other, as they are both in bijection with $E_\mathrm{vert}$. Then, by Lemma~\ref{lem:BK}, $(RT_{\mathcal{C},\sqrt{D}}(M'',n'')_{\mathbf{i}}^{\mathbf{k}})_K^{K'}$ gives a zero linear map unless the ribbon-colourings that $K,K'$ induce on $E_\mathrm{centr},E_\mathrm{centr}'$ agree. On the other hand, $\pa{RT_{\mathcal{C},\sqrt{D}}(M,n)_{\mathbf{i}}^{\mathbf{j}} \otimes RT_{\mathcal{C},\sqrt{D}}(M',n')_{\mathbf{j}}^{\mathbf{k}}}_K^{K'}$ is zero unless the ribbon-colourings $K|_{E_\mathrm{centr}},K'|_{E_\mathrm{centr}'}$ and \emph{also} $\mathbf{j}$ agree. Thus, if the ribbon-colourings $K|_{E_\mathrm{centr}},K'|_{E_\mathrm{centr}'}$ do not agree, then there is no possible $\mathbf{j}$ for which this is true, and so we have
    \begin{equation} \label{eq:vert-comp-intermediate}
        (RT_{\mathcal{C},\sqrt{D}}(M'',n'')_{\mathbf{i}}^{\mathbf{k}})_K^{K'} = \pa{\sum_{\mathbf{j}\in\mathbb{X}^r} RT_{\mathcal{C},\sqrt{D}}(M,n)_{\mathbf{i}}^{\mathbf{j}} \otimes RT_{\mathcal{C},\sqrt{D}}(M',n')_{\mathbf{j}}^{\mathbf{k}}}_K^{K'}
    \end{equation}
    as both sides are the zero linear map.
    
    Moreover, if they do agree, then the only nonzero term on the right-hand side of (\ref{eq:vert-comp-intermediate}) comes from the $\mathbf{j} = (j_1,\ldots,j_r)$ which corresponds to $K|_{E_\mathrm{centr}},K'|_{E_\mathrm{centr}'}$. Then, in calculating the operator invariant $F(\Gamma_{M''}\cup L'', K_{M''},\mathbf{f})$, we may apply Lemma \ref{lem:BK} to replace the diagram on the right-hand side of Figure \ref{fig:hor-glue} with the one on the left. This has the effect of multiplying the operator invariant by a factor of $\prod_{t=1}^r \frac{\dim j_t}D$.

    The new labelled embedded ribbon graph obtained in this way is exactly the disjoint union of $\Gamma_M \cup L$ and $\Gamma_{M'} \cup L'$, with $\Gamma_M$ ribbon-coloured according to $K,\mathbf{j}$ and $\Gamma_{M'}$ ribbon-coloured according to $K',\mathbf{j}$, and with coupons of both graphs labelled according to $\mathbf{f}$. Let these ribbon-colourings be $K_M,K_{M'}$ and the coupon labels be $\mathbf{f}_M,\mathbf{f}_{M'}$. Let $K_{\parout M} \in \mathbb{K}_{\mathbf{i},\mathbf{j}}$ be the corresponding ribbon-colouring of $\Gamma_{\parout M}$, and define $K_{\parout M'}$ similarly. We then have
    \begin{align*}
        &\dim_\mathrm{int}(K')F(\Gamma_{M''}\cup L'', K_{M''},\mathbf{f}) \\ &\quad= \dim_\mathrm{int}(K')\pa{\prod_{t=1}^r \frac{D}{\dim j_t}}F(\Gamma_M\cup L, K_M, \mathbf{f}_M)F(\Gamma_{M'}\cup L', K_{M'}, \mathbf{f}_{M'}) \\ &\quad= D^r \pa{\dim_{\mathrm{int}}(K_{\parout M}')F(\Gamma_M\cup L, K_M, \mathbf{f}_M)}\pa{\dim_{\mathrm{int}}(K_{\parout M'}')F(\Gamma_{M'}\cup L', K_{M'}, \mathbf{f}_{M'})}.
    \end{align*}

    Hence, we have
    \begin{align*}
        &\dim_\mathrm{int}(K') \tau(\ol{M''},\Gamma_{M''}, K_{M''}, \mathbf{f}) \\ &\quad= \frac1{\sqrt{D}}\dim_\mathrm{int}(K') \tau(\ol{M''}\#(S^1\times S^2),\Gamma_{M''}, K_{M''}, \mathbf{f}) \\ &\quad= \pa{\frac{p_{-}}{\sqrt{D}}}^{\sigma\pa{X_{L''}}} \sqrt{D}^{-\# L''-2} \dim_\mathrm{int}(K') F(\Gamma_{M''}\cup L'') \\ &\quad= \pa{\frac1{\sqrt{D}}}^r \times D^r \pa{\pa{\frac{p_-}{\sqrt{D}}}^{\sigma(X_L)}\sqrt{D}^{-\#L-1}\dim_{\mathrm{int}}(K_{\parout M}')F(\Gamma_M\cup L, K_M, \mathbf{f}_M)} \\ &\quad\quad\quad\quad\times \pa{\pa{\frac{p_-}{\sqrt{D}}}^{\sigma(X_{L'})}\sqrt{D}^{-\#L'-1}\dim_{\mathrm{int}}(K_{\parout M'}')F(\Gamma_{M'}\cup L', K_{M'}, \mathbf{f}_{M'})} \\ &\quad= \sqrt{D}^r \pa{\dim_{\mathrm{int}}(K_{\parout M}')\tau(\ol{M},\Gamma_M,K_M,\mathbf{f}_M)}\pa{\dim_{\mathrm{int}}(K_{\parout M'}')\tau(\ol{M'},\Gamma_{M'},K_{M'},\mathbf{f}_{M'})}.
    \end{align*}
    In the penultimate step, we used the fact that $\#L'' = \#L + \#L' + r$ as well as $\sigma(X_{L''}) = \sigma(X_{L}) + \sigma(X_{L'})$. The latter follows from Lemma \ref{lem:hor-glue}(2) and noting that $\parOut M = \parIn M$ has genus zero, so the term from Wall's invariant is zero. 

    On the other hand, by \cite[Lemma 4.5]{partA}, we have $$\frac{\chi(\ol{\parout M''})}2 + r = \frac{\chi(\ol{\parout M})}2 + \frac{\chi(\ol{\parout M'})}2$$ and hence, multiplying $\sqrt{D}^{\chi(\ol{\parout M''})/2}$ to both sides, we conclude that $$(RT_{\mathcal{C},\sqrt{D}}(M'',n'')_{\mathbf{i}}^{\mathbf{k}})_K^{K'} = \pa{RT_{\mathcal{C},\sqrt{D}}(M,n)_{\mathbf{i}}^{\mathbf{j}} \otimes RT_{\mathcal{C},\sqrt{D}}(M',n')_{\mathbf{j}}^{\mathbf{k}}}_K^{K'}$$ where $\mathbf{j}$ corresponds to the colouring that $K,K'$ induce on $E_\mathrm{centr},E_\mathrm{centr}'$.
    
    Summing over all $K \in \mathbb{K}_{\mathbf{i},\mathbf{k}},K' \in \mathbb{K}_{\mathbf{i},\mathbf{k}}'$, we have shown that (\ref{eq:horcomp-toshow}) holds, and the conclusion follows.
\end{proof}

\subsection{Compatibility with identity 2-morphisms}

Finally, we verify that $RT_{\mathcal{C},\sqrt{D}}$ sends identity 2-morphisms to identity 2-morphisms. By Propositions \ref{prop:vert-comp} and \ref{prop:hor-comp}, it suffices to check this for the identities of the four generating 1-morphisms. We shall do so for the pants; the rest are analogous.

\begin{lem}\label{lem:rt-id}
    For each generating 1-morphism $\tikztinypants,\tikztinycopants,\tikztinycup,\tikztinycap$, $RT_{\mathcal{C},\sqrt{D}}$ sends its identity 2-morphism to an identity 2-morphism in $\KV$.
\end{lem}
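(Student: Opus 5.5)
We compute directly from the definition (\ref{eq:def-rt}) for the pants; the other generators are analogous. The identity 2-morphism on the pants $P$ is $(I\times P, 0)$, and we first check that $0$ is the correct integer, i.e.\ that this is really the identity in $\Bord{sig,dec}$. This is immediate from Definition \ref{defn:comp}: composing $(I\times P,0)$ with itself gives $(I\times P,-\sigma(V;A,B,C))$, and here $V=H_1(\ol{P};\mathbb{R})=0$ since $\ol{P}=S^2$.

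Next we identify $\ol{M}$ for $M=I\times P$. Gluing $H_P=D^3$, $-H_P$ and the three solid cylinders $I\times D^2$ onto $I\times P$ yields $S^3$. The graph $\Gamma_M$ is the theta graph: the trivalent vertex of $\Gamma_P$, the reversed vertex of $-\Gamma_P$, and three edges running through the solid cylinders. It sits unknotted and with blackboard framing in $S^3$, so we may take the empty surgery link $L$, giving $\sigma(X_L)=0$ and $\#L=0$.

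Hence, for fixed indices $(i_1,i_2;j)$ and labels $f\in\Hom(X_{i_1}\otimes X_{i_2},X_j)$, $g\in\mathcal{V}(-\Gamma_P)\cong\Hom(X_j,X_{i_1}\otimes X_{i_2})$, we get
\[
\tau(S^3,\Gamma_M,f\otimes g)=\sqrt{D}^{-1}F(\Gamma_M)=\sqrt{D}^{-1}\tr(fg).
\]
The prefactor in (\ref{eq:def-rt}) is $\sqrt{D}^{\chi(S^2)/2}\dim_{\mathrm{int}}=\sqrt{D}\cdot 1$, because $\Gamma_P$ has no internal edges, so the bilinear form equals $\tr(fg)=\langle f,g\rangle_{\Gamma_P}$. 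By (\ref{eq:rt-nodual}) the map $RT_{\mathcal{C},\sqrt{D}}(I\times P,0)_{i_1i_2}^j$ is the identity on each entry. There is only one colouring of $\Gamma_P$ extending the index, so no off-diagonal components arise.

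The cup and cap work identically, with $\ol{M}=S^3$, a single coupon pair joined by one edge, and $\chi(\ol{\Sigma'})=2$. For the copants, the same theta graph appears with the roles of source and target swapped. In each case the factor $\sqrt{D}$ from $\chi/2$ cancels the $\sqrt{D}^{-1}$ from $\tau(S^3)$.

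The main obstacle is justifying that $\Gamma_M\subset\ol{M}\cong S^3$ really is the standard planar theta graph with blackboard framing. The edges through the vertical cylinders are joined "without twisting," and $\pm\Gamma_P$ are mirror copies in $\pm D^3$. Checking this requires tracking orientations so that the evaluation is $\tr(fg)$ and not a twisted or braided variant. I would handle this by viewing $\ol{M}$ as the double of $D^3$, with $\Gamma_M$ the double of $\Gamma_P$, which is manifestly planar.
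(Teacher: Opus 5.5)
Your proposal is correct and is essentially the paper's own proof: you identify $\ol{M}=S^3$ with $\Gamma_M$ the planar theta graph and no surgery link, so $\tau=\sqrt{D}^{-1}\tr(fg)$ is $\sqrt{D}^{-1}$ times the evaluation pairing, and the prefactor $\sqrt{D}^{\chi(S^2)/2}$ with $\dim_{\mathrm{int}}=1$ cancels this to give the identity. Your two extra checks only make explicit what the paper asserts by drawing Figure~\ref{fig:id}: that the signature label of the identity 2-morphism is $0$, and that $\Gamma_M$ is planar with blackboard framing because it is the double of $\Gamma_P$ inside the double of $D^3$.
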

\begin{proof}
    Let $M$ be the underlying cobordism of $\id_{\tikztinypants}$, then $\ol{M} = S^3$, and $\Gamma_M$ is embedded as drawn in Figure \ref{fig:id}. Note that this exactly depicts the evaluation map for each $(RT_{\mathcal{C},\sqrt{D}}(\tikztinypants))_{i_1i_2}^j$. Thus, $\tau(\ol{M},\Gamma_M)$ is $\frac1{\sqrt{D}}$ times the evaluation map applied to the two labels. Hence, $RT_{\mathcal{C},\sqrt{D}}(M,0)$ is $\sqrt{D}^{\chi(S^2)/2}\times \frac1{\sqrt{D}} = 1$ times the identity.
    \begin{figure}[hbt!]
        \begin{tikzpicture}
            \draw[draw=blue, double=white, double distance=5pt] (-1,-1.6) to [out=up,in=left] (0,-0.6) to [out=right,in=up] (1,-1.6);
            \draw[draw=red, double=white, double distance=5pt] (-0.7,-2.2) to [out=down, in=left] (0,-2.9) to [out=right, in=down] (0.7,-2.2);
            \draw[draw=Green, double=white, double distance=5pt] (-1.3,-2.2) to [out=down, in=left] (0,-3.5) to [out=right, in=down] (1.3,-2.2);
            \draw[rounded corners] (-1.6,-2.2) rectangle (-0.4,-1.6);
            \draw[rounded corners] (0.4,-2.2) rectangle (1.6,-1.6);
            \draw[draw=blue, ->] (-0.2,-0.62) .. controls (0,-0.58) .. (0.2,-0.62);
            \draw[draw=Green, ->] (0.2,-3.48) .. controls (0,-3.52) .. (-0.2,-3.48);
            \draw[draw=red, ->] (0.2,-2.88) .. controls (0,-2.92) .. (-0.2,-2.88);
        \end{tikzpicture}
        \caption{Embedding of $\Gamma_M$ in $\ol{M} = S^3$ for $\id_{\tikztinypants}$.} \label{fig:id}
    \end{figure}

    The arguments for the remaining generators are identical: we have $\ol{M} = S^3, \ol{\parout M} = S^2$, and the embeddeding of $\Gamma_M$ exactly produces the diagram which computes the evaluation map.
\end{proof}

\subsection{Functoriality of $RT_{\mathcal{C},\sqrt{D}}$ and restriction to $\Bord{sig/2}$}

From the arguments above, we deduce
\begin{thm} \label{thm:rt-functor}
    Let $\mathcal{C}$ be a modular tensor category, and $\sqrt{D}$ a choice of square root of its global dimension. Then, $RT_{\mathcal{C},\sqrt{D}}$ defines a symmetric monoidal functor $\Bord{sig,dec} \rightarrow \KV$.
\end{thm}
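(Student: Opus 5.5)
The proof will be an assembly of the results of this section. We must check that the assignments of \textsection\ref{subsection:def-rt} on objects, 1-morphisms and 2-morphisms give a symmetric monoidal functor $\Bord{sig,dec}\rightarrow\KV$. The plan is to treat the object and 1-morphism levels as essentially strict and then verify the 2-morphism axioms one at a time, using the propositions already proved.

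First, we address the lower levels. Objects are disjoint unions of $(\mathbb{S},\mathbb{D})$, and $n$ circles go to $s^n$, which is multiplicative under disjoint union. 1-morphisms are formal composites of disjoint unions of generators, and $RT_{\mathcal{C},\sqrt{D}}(\Sigma)$ is the matrix of spaces of labels of $\Gamma_\Sigma$. Gluing associated ribbon graphs along endpoints gives a space of labels equal to the direct sum over colourings of the glued edge of tensor products. This is exactly matrix multiplication in $\KV$, so composition of 1-morphisms is preserved up to the canonical associativity and distributivity isomorphisms of $\oplus,\otimes$. These isomorphisms supply the compositor and unitor data, and their coherence is inherited from coherence in $\vect$. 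Disjoint union of 1-morphisms gives disjoint union of ribbon graphs, hence entrywise tensor products, matching the monoidal structure of $\KV$. The symmetry maps are given by the permutations of indices on both sides.

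Next, we handle 2-morphisms. Well-definedness holds because $\tau$ is an invariant of $(\ol{M},\Gamma_M)$ up to diffeomorphism relative to the boundary. It is independent of the surgery presentation by \cite{RT}, and changing the source/target partitions or contracting spanning trees does not affect it (Lemmas \ref{lem:cyc-shift}, \ref{lem:tree}, \ref{lem:contract-tree-rt}). Vertical composition is Proposition \ref{prop:vert-comp}, and horizontal composition is Proposition \ref{prop:hor-comp}. Preservation of identity 2-morphisms reduces, via those two propositions and the decomposition of any 1-morphism into generators, to Lemma \ref{lem:rt-id}. For monoidality on 2-morphisms we use (\ref{eq:def-rt}). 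For a disjoint union $M\sqcup M'$, the manifold $\ol{M\sqcup M'}$ is $\ol{M}\sqcup\ol{M'}$ and $\tau$ is multiplicative on disjoint unions. The terms $\chi(\ol{\Sigma'})$, $\dim_{\mathrm{int}}$ and $n$ are additive or multiplicative, and the Wall term for a disjoint union vanishes. Hence $RT_{\mathcal{C},\sqrt{D}}(M\sqcup M')=RT_{\mathcal{C},\sqrt{D}}(M)\otimes RT_{\mathcal{C},\sqrt{D}}(M')$.

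Finally, we verify naturality of the structure 2-cells (compositors, monoidality constraints, braiding) with respect to 2-morphisms. I expect this to be the main, though mostly bookkeeping, obstacle. The point is that these structure 2-cells are canonical identifications of direct sums of spaces of labels. Naturality therefore amounts to the statement that $RT_{\mathcal{C},\sqrt{D}}(M)$ is defined through $\Gamma_M$ and is independent of how the ribbon graphs were assembled. For the braiding, one also needs the images of the symmetry 2-morphisms (mapping cylinders of swaps) to be the permutation maps. That check is done by the same evaluation-map argument as Lemma \ref{lem:rt-id}, with $\ol{M}$ a disjoint union of copies of $S^3$.
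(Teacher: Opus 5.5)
Your outline is the paper's own argument: the lower levels hold by construction, composition is handled by Propositions \ref{prop:vert-comp} and \ref{prop:hor-comp}, identities by Lemma \ref{lem:rt-id}, and $\sqcup$ by multiplicativity of (\ref{eq:def-rt}). You add some coherence bookkeeping the paper leaves implicit. However, the step ``identities reduce to Lemma \ref{lem:rt-id}'' fails, and the paper's proof makes the same slip.

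A general 1-morphism decomposes into generators \emph{and identity cylinders}. By Example \ref{eg:internal-graph}, the cylinder's associated graph is a single edge with both ends at $\infty$ and no coupon. Take $M=\Sigma\times I$ with $\Sigma$ the cylinder. Then $\ol{M}\cong S^3$, $\Gamma_M$ is an untwisted unknot coloured $X_i$, $\mathcal{V}_K(\Gamma_\Sigma)=k$ with the empty pairing $\langle a,b\rangle=ab$, $\dim_{\mathrm{int}}=1$ and $\chi(\ol{\Sigma'})=2$. So (\ref{eq:def-rt}) gives $RT_{\mathcal{C},\sqrt{D}}(\id_{\mathrm{cyl}})_i^i=\sqrt{D}\cdot\dim X_i/\sqrt{D}=\dim X_i$, not $1$. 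Since $(\Sigma\times I,0)\circ(\Sigma\times I,0)=(\Sigma\times I,0)$ (the Wall term vanishes because $H_1(S^2)=0$), this even contradicts vertical compatibility unless $\dim X_i=1$.

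So Propositions \ref{prop:vert-comp} and \ref{prop:hor-comp} cannot be quoted verbatim once coupon-free strands occur. Their proofs tacitly assume two things:
\begin{itemize}
\item every component of the middle surface carries a coupon (Lemma \ref{lem:vert-alg} uses one grey loop per component of $\ol{\Sigma'}$);
\item every glued edge becomes internal (the proof of Proposition \ref{prop:hor-comp} uses the identity $\dim_{\mathrm{int}}(K')=\prod_t\dim j_t\cdot\dim_{\mathrm{int}}(K'_{\parout M})\dim_{\mathrm{int}}(K'_{\parout M'})$).
\end{itemize}
For instance, $\id_{\mathrm{pants}}\star(\id_{\mathrm{pants}}\sqcup\id_{\mathrm{cyl}})$ is sent to the identity by direct computation. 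By the composition formula it is sent to $\dim X_i$ times the identity, where $X_i$ colours the cylinder strand. Your braiding check fails for the same reason: the ``evaluation'' of a coupon-free strand is a closed loop worth $\dim X_i$, not the entry $1$ of a permutation matrix.

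To close the gap you must treat coupon-free edges separately. One natural repair is an extra factor $\dim K'(e)^{-1}$ in (\ref{eq:def-rt}) for each edge $e$ of $\Gamma_{\Sigma'}$ with both ends at $\infty$. Equivalently, an edge with $m$ coupon-ends is weighted by $\dim^{m-1}$. When two edges are glued, these exponents add and gain $+1$, which is exactly what the factor $D/\dim j_t$ from Lemma \ref{lem:BK} demands in Proposition \ref{prop:hor-comp}. With this, $\id_{\mathrm{cyl}}\mapsto 1$, and $\theta\mapsto\theta_{X_i}$, as the later comparison with $Z_\mathcal{C}(\theta)$ needs. In vertical composition, a coupon-free component of $\Sigma'$ amounts to gluing along a $3$-ball meeting $\Gamma$ in an arc, i.e.\ a connected sum along a strand. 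This multiplies $\tau$ by $\sqrt{D}/\dim X_i$, which is exactly compensated by $\sqrt{D}^{\chi(S^2)/2}$ and the new factor. Once this case is added to Lemma \ref{lem:rt-id} and Propositions \ref{prop:vert-comp} and \ref{prop:hor-comp}, the rest of your outline, including the naturality of the structure cells, is fine.
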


Finally, we shall check that upon restriction to $\Bord{sig/2,dec}$, the choice of $\sqrt{D}$ does not matter, and so the choice of a modular tensor category $\mathcal{C}$ defines a functor $RT_\mathcal{C}: \Bord{sig/2,dec} \rightarrow \KV$.

\begin{prop} \label{prop:rt-well-def}
    On the 2-morphisms of $\Bord{sig/2,dec} \subset \Bord{sig,dec}$, $RT_{\mathcal{C},\sqrt{D}}$ and $RT_{\mathcal{C},-\sqrt{D}}$ define the same matrices of linear maps.
\end{prop}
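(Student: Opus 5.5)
We track how each factor in (\ref{eq:def-rt}) depends on the sign of $\sqrt{D}$. The idea is to show that replacing $\sqrt{D}$ by $-\sqrt{D}$ multiplies $RT_{\mathcal{C},\sqrt{D}}(M,n)$ by $(-1)^{n+m(M)}$; this sign is $+1$ precisely on the 2-morphisms of $\Bord{sig/2,dec}$. The objects and 1-morphisms do not involve $\sqrt{D}$ at all, and the duality pairings of Lemma \ref{lem:rev-dual} and the factor $\dim_{\mathrm{int}}(K')$ are independent of it. So only three things can change: the prefactor $(\sqrt{D}/p_-)^n$, the prefactor $\sqrt{D}^{\chi(\ol{\parout M})/2}$, and the invariant $\tau(\ol{M},\Gamma_M)$.

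First we reduce to connected $M$. The invariant $\tau$ of a disjoint union is the product over components. Alternatively, by Lemma \ref{lem:add-1handle}, joining components by 1-handles multiplies $RT$ by $\sqrt{D}$ and changes $b_0(\ol{M})$ by $-1$, or changes $b_1(\ol M)$ by $+1$ when a component is summed with $S^1\times S^2$. In either case the sign change of $\sqrt{D}$ is matched by a unit change in $m(M)$, which is consistent with the claimed formula. We may therefore work with a surgery presentation $L$ of a connected $\ol{M}$.

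For connected $\ol M$, the invariant is
\[
\tau=p_-^{\sigma(X_L)}\sqrt{D}^{-\sigma(X_L)-\#L-1}F(\Gamma_M\cup L),
\]
and $F$ does not depend on $\sqrt{D}$. Hence the total sign is $(-1)^{n+\chi(\ol{\parout M})/2+\sigma(X_L)+\#L+1}$. The key topological input is the parity identity
\[
\sigma(X_L)+\#L\equiv b_1(\ol{M}) \pmod 2.
\]
To see it, note that the intersection form of $X_L$ is the linking matrix $Q$, which is $\#L\times\#L$, and $H_1(\ol{M};\mathbb{R})\cong\operatorname{coker}Q$. Thus $\#L=b_+ + b_- + \dim\ker Q$, where $\dim\ker Q=b_1(\ol M)$, and $\sigma=b_+-b_-$. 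Adding, $\sigma(X_L)+\#L=2b_++\dim\ker Q\equiv b_1(\ol M)\pmod 2$.

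Next, $\chi(\ol{\parout M})/2=b_0(\ol{\parout M})-\tfrac12 b_1(\ol{\parout M})$. Since $b_0(\ol M)=1$, the exponent is congruent mod $2$ to
\[
n+b_0(\ol{\parout M})+\tfrac12 b_1(\ol{\parout M})+b_1(\ol M)+b_0(\ol M)=n+m(M).
\]
This is even exactly when $n\equiv m(M)$, which is the defining condition of $\Bord{sig/2,dec}$.

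The only delicate points are the parity lemma for the linking matrix and the sign bookkeeping in the disconnected case; the rest is direct substitution into (\ref{eq:def-rt}). For the disconnected case, the cleanest route is probably to compute directly with the product formula for $\tau$ over components. One then checks that each component contributes its own $-1$ factor from the exponent $-\sigma-\#L-1$, and that these factors match the $b_0(\ol M)$ term in $m(M)$.
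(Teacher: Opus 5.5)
Your proposal is correct, and it has the same outer structure as the paper's proof: isolate the exponent of $\sqrt{D}$ in (\ref{eq:def-rt}), reduce its parity to $\sigma(X_L)+\#L\equiv b_1(\ol{M}) \pmod 2$, then match the result with $m(M)$. The difference is in how you prove that parity identity.

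The paper realises $\ol{M}$ as the vertical composite of the surgery solid tori $(H',0)$ and the link exterior $(M',0)$. It then uses Wall's theorem to identify $\sigma(X_L)$ with $-\sigma(V;A,B,C)$ for this gluing, and invokes \cite[Proposition 4.3]{partA}, the mod-$2$ cocycle property of $m$ that makes $\Bord{sig/2}$ closed under composition. It finishes by evaluating $m(\ol{M})$, $m(H')$ and $m(M')$. You instead use two standard Kirby-calculus facts: the intersection form of $X_L$ is the linking matrix $Q$, and $H_1(\ol{M};\mathbb{R})\cong\operatorname{coker}Q$. Sylvester's law of inertia then finishes the argument.

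Your route is self-contained and more elementary. The paper's route has the conceptual merit of showing that the sign behaviour of $RT$ under $\sqrt{D}\mapsto-\sqrt{D}$ is governed by exactly the parity cocycle that defines the index-$2$ subbicategory, and it only uses machinery already set up in \cite{partA}.

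Your version also proves the sharper identity $RT_{\mathcal{C},-\sqrt{D}}(M,n)=(-1)^{n+m(M)}RT_{\mathcal{C},\sqrt{D}}(M,n)$ on all of $\Bord{sig,dec}$. Your direct treatment of disconnected $M$ via the product formula for $\tau$ is more explicit than the paper's bare remark that it suffices to treat connected $M$. There, each component contributes one $-1$ from its exponent $-\sigma-\#L-1$, matching $b_0(\ol{M})$, and $b_1(\ol{M})$ is additive. Your alternative reduction via Lemma \ref{lem:add-1handle} is also sound, precisely because you track the general signed formula. A connect sum changes the parity of $m$, so it leaves $\Bord{sig/2}$ unless one also shifts $n$.
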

\begin{proof}
    We compute the exponent of $\sqrt{D}$ in the definition of $RT_{\mathcal{C},\sqrt{D}}(M,n)$. It suffices to consider the case where $M$ is connected. The constants in (\ref{eq:def-rt}) contribute $\sqrt{D}^{n+\chi(\ol{\parout M})/2}$ while the $\tau(\ol{M},\Gamma_M)$ term contributes $\sqrt{D}^{-\sigma(X_L)-\# L-1}$ where $L$ is a surgery link for $\ol{M}$. So, we want to show that $$n + \frac{\chi(\ol{\parout M})}2 -\sigma(X_L)-\# L-1$$ is even. Substituting $n \equiv m(M) \pmod{2}$ where $m$ is given by the formula in Definition~\ref{defn:sig/2}, this reduces to showing that
    \begin{equation}\label{eq:sig/2-rt}
        b_1(\ol{M}) \equiv \sigma(X_L) + \# L \pmod2.
    \end{equation}

    Our result will now follow from applying \cite[Proposition 4.3]{partA} to a judicious choice of 2-morphisms in $\Bord{sig}$. Let $H$ be the union of disjoint tubular neighbourhoods of the components of $L$ in $S^3$, and let $\Sigma = \partial H$. Then, $M' := S^3\setminus \mathring{H}$ is a bordism from $\Sigma$ to $\emptyset$. Surgery on the link $L$ is given by gluing in some disjoint union of copies of $S^1\times D^2$ onto the boundary components of $M'$. These provide a bordism $H'$ from $\emptyset$ to $\Sigma$. We may consider $(H',0)$ and $(M',0)$ as morphisms from $(\emptyset,\emptyset)$ to $(\Sigma,H)$ and from $(\Sigma,H)$ to $(\emptyset,\emptyset)$, respectively. Then, by construction, we have $\ol{M'} = S^3$, and $\ol{H'}$ is a disjoint union of $\# L$ copies of $S^3$. The vertical composition of these two 2-morphisms is $(\ol{M}, - \sigma(V;A,B,C))$, where $V,A,B,C$ are as in Definition \ref{defn:comp} with $H'$ playing the role of $M$.
    
    On the other hand, by the definition of link trace, we have $X_L = D^4 \cup_H (D^4)^{\# L}$. Applying Wall's theorem (Theorem \ref{thm:wall}) to this union, we get $$\sigma(X_L) = \sigma(D^4) + \sigma((D^4)^{\sqcup \# L}) - \sigma(V;A,B,C) = - \sigma(V;A,B,C)$$ where $V,A,B,C$ are the exact same four vector spaces. We may thus apply \cite[Proposition 4.3]{partA} to the vertical composition of $H'$ and $M'$ to deduce that this Wall's invariant term satisfies $$-\sigma(V;A,B,C) \equiv m(\ol{M}) - m(H') - m(M') \pmod2.$$ By the definition of $m$ (Definition \ref{defn:sig/2}), we have:
    \begin{align*}
        m(\ol{M}) &= b_1(\ol{M}) + 1 + 0 + 0 = b_1(\ol{M}) + 1 \\
        m(H') &= 0 + \# L + 0 + 0 = \# L \\
        m(M') &= 0 + 1 + 1 - 1 = 1.
    \end{align*}
    Adding these up, we obtain (\ref{eq:sig/2-rt}).
\end{proof}

Thus, on the subbicategory $\Bord{sig/2,dec}$, both $RT_{\mathcal{C},\sqrt{D}}$ and $RT_{\mathcal{C},-\sqrt{D}}$ restrict to the same symmetric monoidal functor, which we denote simply as $RT_\mathcal{C}$. This concludes our proof of the following theorem:

\begin{thm} \label{thm:rt-functor2}
    Let $\mathcal{C}$ be a modular tensor category. Then, we have a symmetric monoidal functor $RT_\mathcal{C}: \Bord{sig/2,dec} \rightarrow \KV$.
\end{thm}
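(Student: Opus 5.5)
The plan is to assemble Theorem~\ref{thm:rt-functor2} from Theorem~\ref{thm:rt-functor} and Proposition~\ref{prop:rt-well-def}; no new geometric input should be needed.

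\emph{Step 1: restrict.} By Theorem~\ref{thm:rt-functor}, for a fixed choice of $\sqrt{D}$ we have a symmetric monoidal functor $RT_{\mathcal{C},\sqrt{D}}:\Bord{sig,dec}\rightarrow\KV$. By Definition~\ref{defn:bordsigh-new}, $\Bord{sig/2,dec}$ is a symmetric monoidal subbicategory of $\Bord{sig,dec}$. It has the same objects and 1-morphisms, and its 2-morphisms are the $(M,n)$ with $n\equiv m(M)\pmod 2$. The first thing to check is that it really is closed under vertical and horizontal composition, under disjoint union, and under the identity and coherence 2-morphisms, since otherwise restriction does not make sense. This closure is exactly the content of \cite[Proposition 4.3]{partA} and the surrounding results of that paper, which I will cite. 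Precomposing with the inclusion then gives a symmetric monoidal functor $RT_{\mathcal{C},\sqrt{D}}|_{\Bord{sig/2,dec}}$.

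\emph{Step 2: independence of the square root.} The objects and 1-morphisms assigned by $RT_{\mathcal{C},\sqrt{D}}$ are defined through spaces of labels $\mathcal{V}_K(\Gamma_\Sigma)$, and these do not involve $\sqrt{D}$ at all. By Proposition~\ref{prop:rt-well-def}, on every 2-morphism of $\Bord{sig/2,dec}$ the matrices of linear maps given by $RT_{\mathcal{C},\sqrt{D}}$ and $RT_{\mathcal{C},-\sqrt{D}}$ coincide. Here I should remark that the reduction to connected $M$ made in that proof is legitimate: by monoidality, and by Lemma~\ref{lem:add-1handle}, the factors contributed by different components combine consistently.

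\emph{Step 3: independence of the structure.} The remaining point, and the only mild obstacle, is that the symmetric monoidal structure data (compositors, unitors, monoidal constraints, symmetry) of the two restricted functors also agree. I expect this to be immediate. In the construction these data are identities, or canonical isomorphisms between direct sums of tensor products of Hom spaces. They come from the matrix composition in $\KV$ and from the gluing of associated ribbon graphs, so none of them involves $\sqrt{D}$. The two restricted functors are therefore literally equal, and we may write $RT_\mathcal{C}$ for either one, which proves the theorem.

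If instead the structure data involve images of coherence 2-morphisms (for example bordisms with corners that implement associators), then these are 2-morphisms in $\Bord{sig/2,dec}$. In that case Proposition~\ref{prop:rt-well-def} applies to them as well, and the conclusion is unchanged.
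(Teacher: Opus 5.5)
Your proposal is correct and is essentially the paper's argument: restrict the functor of Theorem \ref{thm:rt-functor} to the index-two subbicategory $\Bord{sig/2,dec}$ and invoke Proposition \ref{prop:rt-well-def}. This works because the object, 1-morphism and coherence data never involve $\sqrt{D}$. Your additional remarks only make explicit what the paper leaves implicit: closure of $\Bord{sig/2,dec}$ via \cite{partA}, and the reduction to connected $M$, which is most cleanly justified by additivity over components of both $m(M)$ and the exponent of $\sqrt{D}$ in (\ref{eq:def-rt}).
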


\section{The classification of representations of $\Bord{sig/2}$} \label{section:classify}

In separate work of the author \cite{partA}, the classification of symmetric monoidal functors $\Bord{sig/2}\rightarrow\Vect$ is proven by adapting the approach of Bartlett, Douglas, Schommer-Pries and Vicary \cite{BDSV4}. In particular, this method is dependent on a certain Cerf-theoretic result regarding the presentation of the oriented bordism bicategory; this was intended as the main result of \cite{BDSV1} and will be proven in upcoming work \cite{pres-bord} by building upon Cerf-theoretic results from \cite{haioun,filippos-thesis}. We record this below as Theorem~\ref{conj:pres-or}.

In this section, we will demonstrate how our construction of the extended Reshetikhin--Turaev TQFT allows us to replace the Cerf-theoretic portion of the Bartlett--Douglas--Schommer-Pries--Vicary approach (i.e. Theorem \ref{conj:pres-or}), giving us an alternate proof of the classification of linear representations of $\Bord{sig/2}$ (and similarly, of $\Bord{or}$ and $\Bord{sig}$).

\subsection{The generators-and-relations approach} \label{subsection:pres}

We first review the generators-and-rela\-tions approach taken in \cite{partA} to prove the classification of linear representation of $\Bord{sig/2}$. This is done via the construction of a certain bicategorical presentation. We reproduce the generators below; the relations are omitted as they are not relevant for our results below, though the ones of note are mentioned in the Remark following the definition.

\begin{defn}[{\cite[Definition 6.12]{partA}}] \smallbordisms
    The \emph{global half-signature presentation} $\mathcal{G}$ is the presentation with 
    \begin{itemize}
        \item Generating object:
            \begin{equation*}
                \begin{tz}
                    \node[Cyl, top, height scale=0]  at (0,0) {};
                \end{tz}
            \end{equation*}
        \item Generating 1-morphisms:
            \begin{equation*}
                \begin{tz}
                    \node[Pants, top, bot] (A) at (0,0) {};
                    \node[Copants, top, bot] (B) at (2,0) {};
                    \node[Cup, top] (C) at (4,0.1) {};
                    \node[Cap, bot] (D) at (6,-0.1) {};
                \end{tz}
            \end{equation*}
        \item Generating 2-morphisms:
            \begin{align*}
                &&
                \begin{tz}
                    \node[Pants, top, bot, wide] (A) at (0,0) {};
                    \node[Pants,  bot, anchor=belt] (B) at (A.leftleg) {};    
                    \node[Cyl, bot, anchor=top] at (A.rightleg) {}; 
                \end{tz}    
                &\rightleftdoublearrow{\alpha}{\alpha^{-1}}
                \begin{tz}
                    \node[Pants, top, bot, wide] (A) at (0,0) {};
                    \node[Pants,  bot, anchor=belt] (B) at (A.rightleg) {};    
                    \node[Cyl, bot, anchor=top] at (A.leftleg) {}; 
                \end{tz} 
                &
                \begin{tz}
                    \node[Pants, top, bot] (A) at (0,0) {};
                    \node[Cyl, bot, anchor=top] at (A.leftleg) {};
                    \node[Cup] at (A.rightleg) {};  
                \end{tz}
                &\rightleftdoublearrow{\rho}{\rho^{-1}}
                \begin{tz}
                    \node[Cyl, bot, top, tall] at (0,0) {};
                \end{tz}
                \rightleftdoublearrow{\lambda^{-1}}{\lambda}
                \begin{tz}
                    \node[Pants, top, bot] (A) at (0,0) {};
                    \node[Cyl, bot, anchor=top] at (A.rightleg) {};
                    \node[Cup] at (A.leftleg) {};   
                \end{tz}
            \\
                \begin{tz}
                    \node[Pants, top, bot] (A) at (0,0) {};
                \end{tz}
                &\rightleftdoublearrow{\beta}{\beta^{-1}}
                \begin{tz}
                    \node[Pants, top, bot] (A) at (0,0) {};
                    \node[BraidB, anchor=topleft, bot] at (A.leftleg) {};
                \end{tz}
                &
                \begin{tz}
                    \node[Cyl, top, bot] (A) at (0,0) {};
                \end{tz}
                &\rightleftdoublearrow{\theta}{\theta^{-1}}
                \begin{tz}
                    \node[Cyl, top, bot] (A) at (0,0) {};
                \end{tz}
                &
                \begin{tz}
                    \node[Copants, top, bot] (A) at (0,0) {};
                    \node[Pants, bot, anchor=belt] (B) at (A.belt) {};
                \end{tz} 
                &\longxdoubleto{\phi_1^{-1}}
                \begin{tz}
                    \node[Pants, top, bot] (A) at (0,0) {};
                    \node[Cyl, bot, anchor=top] (B) at (A.leftleg) {};
                    \node[Copants, bot, anchor=leftleg] (C) at (A.rightleg) {};
                    \node[Cyl, top, bot, anchor=bottom] (D) at (C.rightleg) {}; 
                \end{tz}
                &
                \begin{tz}
                    \node[Copants, top, bot] (A) at (0,0) {};
                    \node[Pants, bot, anchor=belt] (B) at (A.belt) {};
                \end{tz} 
                & \longxdoubleto{\phi_2^{-1}}
                \begin{tz}
                    \node[Pants, top, bot] (A) at (0,0) {};
                    \node[Cyl, bot, anchor=top] (B) at (A.rightleg) {};
                    \node[Copants, bot, anchor=rightleg] (C) at (A.leftleg) {};
                    \node[Cyl, top, bot, anchor=bottom] (D) at (C.leftleg) {}; 
                \end{tz}
            \\
                \begin{tz} 
                    \node[Cyl, tall, top, bot] (A) at (0,0) {};
                    \node[Cyl, tall, top, bot] (B) at (2*\cobwidth, 0) {};
                \end{tz}
                &\longxdoubleto{\eta}
                \begin{tz} 
                    \node[Pants, bot] (A) at (0,0) {};
                    \node[Copants, top, bot, anchor=belt] at (A.belt) {};
                \end{tz}
                &
                \begin{tz} 
                    \node[Pants, top, bot] (A) at (0,0) {};
                    \node[Copants, bot, anchor=leftleg] at (A.leftleg) {};
                \end{tz}
                &\longxdoubleto{\epsilon}
                \begin{tz} 
                    \node[Cyl, top, bot, tall] (A) at (0,0) {};
                \end{tz}
                &
                \begin{tz}
                    \draw[green] (0,0) rectangle (0.6, -0.6);  
                \end{tz}
                &\longxdoubleto{\nu}
                \begin{tz}
                    \node[Cap, bot] (A) at (0,0) {};
                    \node[Cup] at (0,0) {};
                \end{tz}
                &
                \begin{tz}
                    \node[Cup, top] (A) at (0,0) {};
                    \node[Cap, bot] (B) at (0,-2*\cobheight) {};
                \end{tz}
                &\longxdoubleto{\mu}
                \begin{tz}
                    \node[Cyl, top, bot, tall] (A) at (0,0) {};
                \end{tz}
            \\
                \begin{tz} 
                    \node[Pants, bot] (A) at (0,0) {};
                    \node[Copants, top, bot, anchor=belt] at (A.belt) {};
                \end{tz}
                &\longxdoubleto{\eta ^\dag}
                \begin{tz} 
                    \node[Cyl, tall, top, bot] (A) at (0,0) {};
                    \node[Cyl, tall, top, bot] (B) at (2*\cobwidth, 0) {};
                \end{tz}
                &
                \begin{tz} 
                    \node[Cyl, top, bot, tall] (A) at (0,0) {};
                \end{tz}
                &\longxdoubleto{\epsilon ^\dag}
                \begin{tz} 
                    \node[Pants, top, bot] (A) at (0,0) {};
                    \node[Copants, bot, anchor=leftleg] at (A.leftleg) {};
                \end{tz}
            &
                \begin{tz}
                    \node[Cap, bot] (A) at (0,0) {};
                    \node[Cup] at (0,0) {};
                \end{tz}
                &\longxdoubleto{\nu ^\dag}{}
                \begin{tz}
                    \draw[green] (0,0) rectangle (0.6, -0.6);  
                \end{tz}
                &
                \begin{tz}
                    \node[Cyl, top, bot, tall] (A) at (0,0) {};
                \end{tz}
                &\longxdoubleto{\mu ^\dag}
                \begin{tz}
                    \node[Cup, top] (A) at (0,0) {};
                    \node[Cap, bot] (B) at (0,-2*\cobheight) {};
                \end{tz}
            \\ &&
                \begin{tz}
                    \draw[green] (0,0) rectangle (\cobwidth, \cobwidth);
                \end{tz}
                &\rightleftdoublearrow{\zeta}{\zeta^{-1}}
                \begin{tz}
                    \draw[green] (0,0) rectangle (\cobwidth, \cobwidth);
                \end{tz}
            &
                \begin{tz}
                    \node[Cyl, top, bot] (A) at (0,0) {};
                \end{tz}
                &\rightleftdoublearrow{z}{z^{-1}}
                \begin{tz}
                    \node[Cyl, top, bot] (A) at (0,0) {};
                \end{tz}
            \end{align*}
    \end{itemize}
    subject to a number of relations, which we omit.
\end{defn}
\begin{rk}
    Each pair of generators of the form $\gamma,\gamma^{-1}$ has relations declaring them to be inverses of each other. The inverse Frobeniusators $\phi_1^{-1},\phi_2^{-1}$ have relations (the rigidity and additional rigidity relations) which express them as a composition of the other generators, and which declare them to be the inverses of compositions of the other generators which are denoted $\phi_1,\phi_2$. All of these make up the \emph{invertible generators}. The remaining generators are the \emph{non-invertible generators}.
\end{rk}

Following the corresponding construction for the oriented bordism bicategory given in \cite{BDSV2}, it is shown in \cite{partA} that there is a symmetric monoidal functor from the symmetric monoidal bicategory $\mathbf{F}(\mathcal{G})$ generated by the global half-signature presentation to the half-signature bordism bicategory.
\begin{prop}[{\cite[Proposition 6.16]{partA}}] \label{prop:modg}
    There exists a symmetric monoidal functor $|-|_\mathcal{G}: \mathbf{F}(\mathcal{G})\rightarrow \Bord{sig/2}$.
\end{prop}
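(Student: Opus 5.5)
The functor $|-|_\mathcal{G}$ will come from the universal property of the free symmetric monoidal bicategory $\mathbf{F}(\mathcal{G})$. A symmetric monoidal functor out of it is determined by four pieces of data: an object, 1-morphisms and 2-morphisms of $\Bord{sig/2}$ assigned to the generators, together with a check that every relation of $\mathcal{G}$ holds among the assigned 2-morphisms. I will work with the model $\Bord{sig/2,dec}$ and compose with the forgetful equivalence $\Bord{sig/2,dec}\rightarrow\Bord{sig/2}$.

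\textbf{Objects and 1-morphisms.} The generating object goes to $(\mathbb{S},\mathbb{D})$. The generating pants, copants, cup and cap go to the standard 1-morphisms of the same name, each equipped with the handlebody $D^3$. Objects and 1-morphisms of $\mathbf{F}(\mathcal{G})$ are formal composites of disjoint unions of these, and so are those of $\Bord{sig/2,dec}$. The assignment is therefore a bijection at these levels and strictly respects composition and disjoint union.

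\textbf{2-morphisms.} Each generating 2-morphism $\gamma$ goes to a pair $(M_\gamma,n_\gamma)$. Here $M_\gamma$ is the evident bordism with corners: a trace of the isotopy for $\alpha,\lambda,\rho,\beta,\theta$, and a single 3-dimensional handle attachment for $\eta,\epsilon,\nu,\mu$ and their daggers. The central generators $\zeta$ and $z$ become identity bordisms carrying a nonzero integer label, which records the generator of the central extension.

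The integers $n_\gamma$ have to be chosen with the correct parity. For each generator I compute $m(M_\gamma)$ from Definition~\ref{defn:sig/2}. This only needs Betti numbers of $\ol{M_\gamma}$, which is $S^3$ or a union of copies of $S^3$ or $S^1\times S^2$, and of $\ol{\parout M_\gamma}$. I then take $n_\gamma\equiv m(M_\gamma)\pmod 2$, following the normalisations in \cite{partA}. For $\zeta$ I take $n=2$, so that the image generates the index-$2$ central subgroup.

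\textbf{Relations.} Each relation asserts that two composites of generators are equal, so I must check this in $\Bord{sig/2}$. Relations that do not involve $\zeta$ or $z$ are exactly the oriented relations verified in \cite{BDSV2}, so the underlying bordisms agree up to diffeomorphism rel boundary. What remains is the integer, which is computed by the Wall invariant terms of Definition~\ref{defn:comp}. Horizontal compositions contribute nothing, since the relevant surface $\ol{\Sigma}$ has genus $0$. In vertical compositions the Wall term is nonzero only when the glued surface has positive genus, so only a few relations need checking: those involving $\phi_1,\phi_2$, the rigidity relations, and relations such as $\epsilon\circ\eta$-type handle cancellations. For these I compute $\sigma(V;A,B,C)$ on a genus-$\le 2$ surface, where the Lagrangians are explicit.

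\textbf{Main obstacle.} I expect the main difficulty to be bookkeeping for the anomaly-type relations, for instance the one relating $\theta$, $\beta$ and $z$, or the modularity relation involving $\zeta$. There the two sides differ as oriented bordisms only by a correction, and the integers must match exactly, not merely mod $2$. My first approach would be to compute both sides as signatures of explicit 4-manifolds bounding $\ol{M}$, using Theorem~\ref{thm:wall}, and then fix $n_\zeta$ and $n_z$ so that these relations hold. Finally I would check that the resulting integers still satisfy the parity constraint, so that the functor lands in $\Bord{sig/2}$ rather than in $\Bord{sig}$.
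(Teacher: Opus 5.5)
Your outline has the same shape as the construction the paper cites from \cite[Constructions 6.14--6.15]{partA}. The generating object and 1-morphisms go to the depicted manifolds, invertible generators go to mapping cylinders, non-invertible ones go to single handle attachments, and $\zeta,z$ go to signature shifts by $2$. The relations then reduce to the oriented ones plus a check on the integers. The gap is in that integer check.

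The label of a composite is the sum of the labels $n_\gamma$ of the generators it uses \emph{minus} the Wall terms of its vertical gluings. It is not the Wall terms alone, and the labels are not all zero. In the normalisation used here (the table in the proof of Proposition~\ref{prop:mtc-rep-mtc}):
\begin{itemize}
\item $n_\gamma=0$ for $\alpha,\rho,\lambda,\beta,\theta,\eta,\epsilon,\mu,\nu$;
\item $n_\gamma=1$ for $\eta^\dag,\mu^\dag$;
\item $n_\gamma=-1$ for $\epsilon^\dag,\nu^\dag$;
\item $n_\gamma=2$ for $z,\zeta$.
\end{itemize}
So a relation whose intermediate surfaces all have genus $0$ must still be checked whenever these generators occur with different total weight on its two sides. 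Your criterion (``only relations through positive genus'') would skip such relations.

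Fixing only the parity $n_\gamma\equiv m(M_\gamma)\pmod 2$ is also not enough. The choice $n_{\nu^\dag}=+1$ is allowed by parity, but then $|\nu^\dag|\circ|\nu|=(S^3,1)$ rather than $(S^3,-1)$, and $RT_{\mathcal{C}}$ assigns it $1/p_-$ rather than $1/p_+$. By Theorem~\ref{thm:rep-fg} and Proposition~\ref{prop:noninv-dag}, every representation of $\mathbf{F}(\mathcal{G})$ with underlying category $\mathcal{C}$ sends $\nu^\dag\circ\nu$ to $1/p_+$. Hence that assignment cannot satisfy the relations of $\mathcal{G}$ once $p_+\neq p_-$. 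The specific values you defer to ``the normalisations in \cite{partA}'' are therefore part of what has to be proved.

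Your list of relations that need Wall computations is also misdirected. The Frobeniusators go between genus-zero 1-morphisms (pants followed by copants, and its rearrangement). There $\ol{\Sigma}$ is a union of spheres and $V=0$, so they contribute no Wall term. Nonzero Wall terms arise only from composites passing through a positive-genus 1-morphism, typically copants followed by pants. This happens in relations involving $\epsilon$, $\epsilon^\dag$ and Dehn twists on that surface. There $V=H_1(T^2;\mathbb{R})$, and the three handlebody Lagrangians can be pairwise distinct, giving $\sigma(V;A,B,C)=\pm1$. This already happens for vertical composites of mapping cylinders (the Maslov cocycle). These terms, together with the generator labels, are what the powers of $\zeta$ and $z$ in the relations of $\mathcal{G}$ have to match. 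A complete argument must go through every relation of $\mathcal{G}$ and compare (sum of labels) minus (sum of Wall terms) on the two sides. The paper does not do this itself but cites \cite[Proposition 6.16]{partA}.

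Two minor points:
\begin{itemize}
\item $\ol{M_\gamma}=S^3$ for every generator except $\zeta$, whose underlying manifold is empty. So $m(M_\gamma)=1+\tfrac12 b_1(\ol{\parout M_\gamma})+b_0(\ol{\parout M_\gamma})$, and no $S^1\times S^2$ case arises.
\item The bijection you claim on objects and 1-morphisms holds only for the quasistrict model $\mathbf{F}_{\mathrm{qs}}(\mathcal{G})$.
\end{itemize}
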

\begin{rk}
    We refer the reader to \cite[Constructions 6.14-6.15]{partA} for the explicit construction of this functor. Roughly speaking, $\zeta,z$ correspond to shifting the signature by $2$, the remaining invertible 2-morphisms are sent to the obvious mapping cylinders of diffeomorphisms, and the non-invertible 2-morphisms are sent to various handle attachments. For each of the non-invertible generators, the underlying bordism $M$ has $\ol{M} = S^3$.
\end{rk}
\begin{rk}
    One should note that $|-|_\mathcal{G}$ sends the generating object and 1-morphisms to the standard circle and standard pants, copants, cup and cap respectively, and so this construction naturally factors through $\Bord{sig/2,dec}$. Henceforth, we abuse notation and regard $|-|_\mathcal{G}$ as a symmetric monoidal functor $\mathbf{F}(\mathcal{G})\rightarrow \Bord{sig/2,dec}$.
\end{rk}

In \cite[Theorem B]{partA}, it is further deduced from the corresponding Cerf-theoretic result for $\Bord{or}$ (Theorem \ref{conj:pres-or} below, which is the main theorem of the upcoming paper \cite{pres-bord}) that this $|-|_\mathcal{G}$ is an equivalence of symmetric monoidal bicategories. It is, however, far easier to show that $|-|_\mathcal{G}$ is essentially surjective, essentially full and full; we record a proof in Proposition \ref{prop:modg-full} for the sake of completeness. In the course of doing so, we also prove in Proposition \ref{prop:modo-full} that the same holds for the symmetric monoidal functor $|-|_\mathcal{O}$ described below.

There is one other presentation of note, which upcoming work \cite{pres-bord} will show presents the oriented bordism bicategory $\Bord{or}$. The following is equivalent to the definition given in \cite{BDSV4} in view of the remarks following \cite[(6.1)]{partA}.
\begin{defn}[{\cite[Definition 3.11]{BDSV4}}]
    The \emph{anomaly-free modular presentation} $\mathcal{O}$ is a 2-extension of $\mathcal{G}$ with the added relations that $z=\id$ and $\zeta = \id$.
\end{defn}
\begin{rk}
    In particular, there is a natural functor $\mathbf{F}(\mathcal{G}) \rightarrow \mathbf{F}(\mathcal{O})$ sending $z,\zeta$ to identity 2-morphisms and the remaining generators to their counterparts.
\end{rk}
\begin{thm}[{\cite{BDSV2,haioun,filippos-thesis,pres-bord}}]\label{conj:pres-or}
    There exists an equivalence of symmetric monoidal bicategories $|-|_\mathcal{O}: \mathbf{F}(\mathcal{O}) \rightarrow \Bord{or}$ between the symmetric monoidal bicategory generated by the anomaly-free modular presentation and the oriented bordism bicategory.
\end{thm}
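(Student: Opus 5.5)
The plan is to verify the criterion in Schommer-Pries's Whitehead theorem for symmetric monoidal bicategories. Under that criterion, a symmetric monoidal functor is an equivalence as soon as it is essentially surjective on objects, essentially full on 1-morphisms, and fully faithful on 2-morphisms. So the work splits into four checks.

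First I construct $|-|_\mathcal{O}$. I compose $|-|_\mathcal{G}$ of Proposition \ref{prop:modg} with the functor $\Bord{sig/2}\rightarrow\Bord{or}$ that forgets the integer $n$. By the remark following Proposition \ref{prop:modg}, $z$ and $\zeta$ only shift the signature by $2$, so they become identities in $\Bord{or}$. The composite therefore factors through $\mathbf{F}(\mathcal{G})\rightarrow\mathbf{F}(\mathcal{O})$ by the universal property of the $2$-extension.

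\textbf{Essential surjectivity.} Every closed oriented $1$-manifold is a disjoint union of circles.

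\textbf{Essential fullness.} Given a surface bordism $\Sigma$, choose a Morse function on $\Sigma$ that is generic relative to the boundary. Its critical points then decompose $\Sigma$ into cylinders, pants, copants, cups and caps. Up to diffeomorphism rel boundary, this is the image of a 1-morphism of $\mathbf{F}(\mathcal{O})$.

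\textbf{Fullness.} Let $M$ be a 3-bordism with corners between two such decomposed surfaces. Choose a generic Morse function $M\rightarrow[0,1]$ restricting to the given collars, so that $M$ is a composite of elementary pieces:
\begin{itemize}
\item mapping cylinders of diffeomorphisms between decomposed surfaces;
\item index $0,1,2,3$ handle attachments.
\end{itemize}
The handle attachments are matched with $\nu,\eta,\epsilon,\mu$ and their daggers; for each of these generators $\ol{M}=S^3$. For the mapping cylinders I would use that mapping class groups of surfaces with boundary are generated by Dehn twists and half-twists. These are realised by $\theta$, $\beta$, the associators $\alpha$, the unitors $\lambda,\rho$ and the Frobeniusators, after first moving between pants decompositions by $\alpha$ and $\phi_i$ (Hatcher--Thurston moves).

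\textbf{Faithfulness.} This is the main obstacle. Suppose two composites of generators have diffeomorphic realisations. I would connect the two underlying Morse data by a generic one-parameter family, via Cerf theory with corners. The codimension-one events are:
\begin{itemize}
\item births and deaths of critical points;
\item exchanges of critical values;
\item handle slides;
\item changes of the diffeomorphism part, that is, relations in the mapping class groups (Wajnryb, or the Hatcher--Thurston cell complex giving relations among pants decompositions).
\end{itemize}
Each event has to be shown to be a consequence of the relations of $\mathcal{O}$:
\begin{itemize}
\item births and deaths correspond to the zigzag relations for $\eta,\epsilon,\nu,\mu$;
\item exchanges of critical values correspond to interchange and naturality;
\item handle slides, together with the lantern and chain relations, correspond to the modular relations, such as the $S$-matrix invertibility relation and the anomaly-free relation expressed through $z$ and $\zeta$.
\end{itemize}
I expect the hardest part to be handling simultaneous changes of the surface decompositions on the boundary together with the interior Morse data, i.e.\ a Cerf theory for manifolds with corners, as in Haïoun and Sytilidis. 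I would attempt it by first reducing, via Hatcher--Thurston, to the case of fixed boundary decompositions, and then applying relative Cerf theory to the interior.
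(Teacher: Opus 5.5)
\paragraph{The paper does not prove this theorem.} It is quoted from \cite{BDSV2,haioun,filippos-thesis,pres-bord}, and Section \ref{section:classify} is designed to avoid it. The paper proves only the easy half: essential surjectivity, essential fullness and fullness (Lemmas \ref{lem:standardise}, \ref{lem:all-diffeos} and Proposition \ref{prop:modo-full}). It then replaces faithfulness by injectivity of the pullback (Proposition \ref{prop:pb-inj}) together with the explicit construction $RT_\mathcal{C}$. Your construction of $|-|_\mathcal{O}$ (Proposition \ref{prop:go-compat}), essential surjectivity, essential fullness and the handle-attachment part of fullness follow that route.

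\paragraph{The gap is faithfulness.} This is the entire content of the statement beyond Proposition \ref{prop:modo-full}. What you write there is an outline of a research programme, not an argument. You would need:
\begin{enumerate}
\item[(i)] A precise class of geometric data attached to every word in $\mathbf{F}(\mathcal{O})$ and recovered by $|-|_\mathcal{O}$. A Morse function on $M$ is not enough, because $\alpha,\beta,\theta,\phi_i$ are mapping cylinders rather than critical points. You also need decompositions of all intermediate level surfaces into standard pieces, effectively generic maps of $M$ to the plane with prescribed corner behaviour.
\item[(ii)] A proof that the space of such data on a fixed $M$, relative to its boundary, is connected, with a complete list of its codimension-one strata. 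This Cerf theory with corners is exactly what Ha\"ioun and Sytilidis develop and what Bartlett--Douglas--Sytilidis are completing.
\item[(iii)] For each stratum, an actual derivation of the corresponding equation from the relations of $\mathcal{O}$.
\end{enumerate}
None of (i)--(iii) is carried out. Your ``reduction to fixed boundary decompositions'' also misses the real issue. The boundary decompositions are already fixed by the source and target words; it is the interior level surfaces whose decompositions vary.

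\paragraph{Your proposed dictionary for (iii) is also wrong as it stands.} Handle slides are not codimension-one events of a path of functions. They appear only once gradient-like vector fields or attaching data are included, and then further events appear that you do not list.

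More seriously, the mapping-class-group layer cannot be handled separately by Wajnryb or Hatcher--Thurston. Each invertible generator (and each symmetric monoidal structure map) is realised by a diffeomorphism supported on genus-zero pieces bounded by seams. Seams bound discs in $H_\Sigma$, so every composite of such generators extends over the handlebodies $H_\Sigma$ and sends curves bounding discs in $H_\Sigma$ to curves bounding discs. A Dehn twist along a curve meeting a seam once, such as the blue curves in Figure \ref{fig:dehn-gen}, sends a leg circle to a curve that is nonzero in $H_1(H_\Sigma)$. Hence it is not a composite of $\alpha,\beta,\theta,\lambda,\rho,\phi_i$.

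This already breaks the Dehn-twist step of your fullness argument. An $S$-move between pants decompositions is not realised by $\alpha$ and $\phi_i$; the paper instead uses the composites $\II$ and $A$ of \cite{partA}. It also means that mapping-class-group relations involving such twists, for instance $(ST)^3=S^2$ on the torus, must be derived through $\eta,\epsilon,\mu,\nu$ and their daggers. That is where relations like modularity and $z=\id$, $\zeta=\id$ have to be used, and nothing in your event list accounts for it.

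\paragraph{Conclusion.} As written, your argument establishes only that $|-|_\mathcal{O}$ is essentially surjective and essentially full. The fullness part needs the repair above, after which it is Proposition \ref{prop:modo-full}.
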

\begin{rk}
    This is stated as \cite[Theorem 3.12]{BDSV4}, and can be shown by combining the intended main result of \cite{BDSV1} with the arguments of \cite{BDSV2}. Much of the proof of the former result has been done in \cite{haioun,filippos-thesis}; upcoming work of Bartlett, Douglas and Sytilidis \cite{pres-bord} will complete this.
\end{rk}
\begin{rk}
    It is, however, far easier to write down the images of the generators under $|-|_\mathcal{O}$ and check that this indeed defines a symmetric monoidal functor; see \cite[Definition 3]{BDSV2} and \cite[Construction 6.14]{partA}. As with $|-|_\mathcal{G}$, the generating object and 1-morphisms are sent to the manifolds that they depict. This thus factors through $\Bord{or,dec}$; we henceforth $|-|_\mathcal{O}:\mathbf{F}(\mathcal{O})\rightarrow\Bord{or,dec}$ to be the symmetric monoidal functor as described in \cite[Construction 6.14]{partA}.
\end{rk}

By construction, the functor $|-|_\mathcal{G}$ defined in \cite{partA} is compatible with the $|-|_\mathcal{O}$ defined in \cite{BDSV2}.
\begin{prop}[{\cite[Proposition 6.16]{partA}}] \label{prop:go-compat}
    The following diagram commutes:
    \[\begin{tikzcd}
        {\mathbf{F}(\mathcal{G})} && {\Bord{sig/2,dec}} \\
        \\
        {\mathbf{F}(\mathcal{O})} && {\Bord{or,dec}}
        \arrow["{|-|_\mathcal{G}}", from=1-1, to=1-3]
        \arrow[from=1-1, to=3-1]
        \arrow[from=1-3, to=3-3]
        \arrow["{|-|_\mathcal{O}}", from=3-1, to=3-3]
    \end{tikzcd}\]
    Here, the vertical arrows are the natural quotients.
\end{prop}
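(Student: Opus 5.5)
The plan is to use the universal property of the free symmetric monoidal bicategory $\mathbf{F}(\mathcal{G})$. Both composites $\mathbf{F}(\mathcal{G})\rightarrow\Bord{or,dec}$ around the square are symmetric monoidal functors out of a bicategory generated by $\mathcal{G}$. Such a functor is determined by a finite amount of data: the images of the generating object, the generating 1-morphisms and the generating 2-morphisms, together with the coherence data of the functor. It therefore suffices to check that the two composites agree on each generator and that their coherence data agree. This is a finite, generator-by-generator verification.

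First I would check objects and 1-morphisms. By the Remarks following Proposition~\ref{prop:modg} and Theorem~\ref{conj:pres-or}, both $|-|_\mathcal{G}$ and $|-|_\mathcal{O}$ send the generating object to $(\mathbb{S},\mathbb{D})$. They send the generating 1-morphisms to the standard pants, copants, cup and cap. The quotient maps are the identity on objects and 1-morphisms. Hence the two composites agree at these levels, including on formal composites and disjoint unions, since both sides build these in the same way.

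Next I would check the generating 2-morphisms, splitting them into two classes. For every generator $\gamma$ other than $z^{\pm1},\zeta^{\pm1}$, I would use \cite[Constructions 6.14--6.15]{partA}. There $|\gamma|_\mathcal{G}$ is defined as a pair $(M_\gamma,n_\gamma)$, where $M_\gamma$ is exactly the bordism $|\gamma|_\mathcal{O}$ (a mapping cylinder or a handle attachment) and $n_\gamma$ is the integer with the correct parity. The forgetful map $\Bord{sig/2,dec}\rightarrow\Bord{or,dec}$ discards $n_\gamma$, so both routes give $M_\gamma$. For $z^{\pm1},\zeta^{\pm1}$, the functor $|-|_\mathcal{G}$ sends them to identity bordisms with signature shifted by $\pm2$. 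Forgetting the integer yields identity 2-morphisms. Going the other way, these generators are sent to identities in $\mathbf{F}(\mathcal{O})$, hence to identities under $|-|_\mathcal{O}$. This also shows that the relations $z=\id$ and $\zeta=\id$ are compatible with the forgetful map, as they must be.

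The step I expect to be the main obstacle is the coherence data. A symmetric monoidal functor out of $\mathbf{F}(\mathcal{G})$ carries structure 2-cells: compositors, unitors, the monoidal structure and the symmetry. I need these to match as well, not merely the images of the generators. I would argue that in \cite[Construction 6.15]{partA} the coherence data of $|-|_\mathcal{G}$ is chosen to be the coherence data of $|-|_\mathcal{O}$ from \cite[Construction 6.14]{partA}, decorated by the integer $0$. These cells are all mapping cylinders of diffeomorphisms between bordisms with vanishing Wall's invariant contributions. Applying the forgetful map then recovers exactly the structure cells of $|-|_\mathcal{O}$ composed with the quotient. With this established, the uniqueness part of the universal property of $\mathbf{F}(\mathcal{G})$ shows that the square commutes.
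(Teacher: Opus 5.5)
Your proposal is correct and matches the paper, which gives no separate proof beyond noting that $|-|_\mathcal{G}$ is built in \cite[Constructions 6.14--6.15]{partA} as a lift of $|-|_\mathcal{O}$. Both composites therefore agree on the generating object and 1-morphisms, and on every generating 2-morphism; in particular $z^{\pm1},\zeta^{\pm1}$ go to pure signature shifts, which forget to identities, exactly as you check. Your concern about coherence data is less serious than you suggest: by the universal property of the free symmetric monoidal bicategory \cite{csp-phd}, agreement on generators already gives commutativity up to equivalence, and that is all the paper later uses (e.g.\ in Proposition \ref{prop:modg-full}).
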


Following methods of \cite{BDSV4}, it is then shown in \cite{partA} that linear representations of $\mathbf{F}(\mathcal{G})$ are exactly classified by modular tensor categories.
\begin{thm}[{\cite[Theorem 7.16]{partA}}] \label{thm:rep-fg}
    Symmetric monoidal functors $\mathbf{F}(\mathcal{G})\rightarrow\Vect$ are classified by finite direct sums of modular tensor categories whose anomalies are equal.
\end{thm}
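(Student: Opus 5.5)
The plan is to follow the purely algebraic strategy of \cite{BDSV4}, adapted to the extra half-signature generators $z,\zeta$. The first step is a coherence statement for presentations: a symmetric monoidal functor $Z:\mathbf{F}(\mathcal{G})\rightarrow\Vect$ is determined, up to equivalence, by the images of the generating object, 1-morphisms and 2-morphisms subject to the relations of $\mathcal{G}$. Transformations between such functors are likewise determined by finite data. I would take this from \cite[Definition 62]{BDSV2} and the cofibrancy results of \cite{csp-phd}. This reduces the classification to an algebraic problem: classify ``$\mathcal{G}$-algebras'' in $\Vect$ up to the induced notion of equivalence.

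Next I would extract a modular tensor category from a $\mathcal{G}$-algebra. Set $\mathcal{A}:=Z(\mathbb{S})$. The generators play the following roles.
\begin{itemize}
\item The pants give a tensor product $\mathcal{A}\boxtimes\mathcal{A}\rightarrow\mathcal{A}$, and the cup gives a unit.
\item $\alpha,\lambda,\rho$ are the associator and unitors, and the pentagon and triangle relations make $\mathcal{A}$ monoidal.
\item $\beta$ and $\theta$ give a braiding and a twist, with the hexagon and balancing relations.
\item The copants and cap give adjoints of the pants and cup via the Frobeniusators $\phi_i^{\pm1}$. Invertibility of $\phi_i$ (the rigidity relations) yields duals, so $\mathcal{A}$ is rigid.
\item The non-invertible generators $\eta,\epsilon,\nu,\mu$ and their daggers, together with the Cauchy-completeness of $\mathcal{A}$, yield finiteness and semisimplicity, following \cite{BDSV4}.
\end{itemize}
The unit need not be simple, so $\mathcal{A}$ is in general a finite direct sum $\bigoplus_i\mathcal{C}_i$ of ribbon categories with simple units. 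The ``modularity'' relation involving $\nu,\mu$ then forces each $S$-matrix to be invertible, so each $\mathcal{C}_i$ is modular. The scalars $Z(z)$ and $Z(\zeta)$ are global: $\zeta$ is an automorphism of the identity of the empty 1-morphism, hence a single number. The relations tying $\zeta$ to the twisted-unknot composites then identify $p_+/p_-$ in every summand with a fixed expression in $Z(\zeta)$. This forces all anomalies to be equal, and in contrast to the $\Bord{sig}$ case it imposes no choice of square root.

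Conversely, given $\bigoplus_i\mathcal{C}_i$ with equal anomalies, I would assign the BDSV4 internal string diagram data to the generators and choose $Z(\zeta),Z(z)$ compatibly. The relations are then finite string-diagram identities in each $\mathcal{C}_i$; the modularity relation uses Lemma~\ref{lem:BK}. Finally, I would check that equivalences of $\mathcal{G}$-algebras correspond exactly to ribbon equivalences of the underlying categories, so the two constructions are mutually inverse on isomorphism classes. I expect the main obstacle to be the anomaly and modularity step: pinning down precisely which relations, in the half-signature normalisation where $\zeta$ shifts the signature by $2$, force $S$ to be invertible and force equality of $p_+/p_-$ across summands without also fixing a square root. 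My first attempt would be to evaluate the relevant relation on the cup--cap composite in each simple-unit summand separately and compare the resulting scalars.
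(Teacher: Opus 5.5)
Your proposal follows essentially the same route as the paper, which defers to \cite[Theorem 7.16]{partA}. In both, the pants, cup and $\alpha,\lambda,\rho,\beta,\theta$ make $Z(\mathbb{S})$ a balanced braided monoidal category, the arguments of \cite{BDSV4} show it is a finite direct sum of modular tensor categories, the single scalar $Z(\zeta)$ must act by $p_+/p_-$ on every summand (Lemma~\ref{lem:z-anomaly}), giving equal anomalies, and conversely $Z_\mathcal{C}$ is built generator by generator and extended by direct sums. One small correction: the adjunction data making copants and cap adjoints of pants and cup are $\eta,\epsilon,\nu,\mu$ (and their daggers), whereas the Frobeniusators $\phi_1,\phi_2$ are composites of these whose required invertibility encodes rigidity.
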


We now provide a sketch of both directions of this bijection. Starting from a symmetric monoidal functor $Z:\mathbf{F}(\mathcal{G})\rightarrow\Vect$, the images of $\tikztinycup,\tikztinypants$ as well as $\alpha,\rho,\lambda,\beta,\theta$ naturally endow $Z(\tikztinycirc)$ with the structure of a linear balanced braided monoidal category. Following arguments of \cite{BDSV4}, it was then shown in \cite[Corollary 7.10]{partA} that this must in fact be a finite direct sum of modular tensor categories. A simple computation then shows that each direct summand must have the same anomaly.

Conversely, given the data of a modular tensor category $\mathcal{C}$, one may directly construct a symmetric monoidal functor $Z_\mathcal{C}: \mathbf{F}(\mathcal{G}) \rightarrow \Vect$ by writing down the image of every generator and checking that the relations hold. This in fact factors through $\KV$. In the general case, one may then define $Z_\mathcal{C}$ by taking direct sums. The description of this in \cite{BDSV4} is given in terms of ``internal string diagrams''; we shall now present a reformulation of the symmetric monoidal functor $Z_\mathcal{C}: \mathbf{F}(\mathcal{G}) \rightarrow \KV$ in our language of ribbon graphs.

Let $\mathcal{C}$ be a modular tensor category with $s$ isomorphism classes of simple objects. Then, $Z_\mathcal{C}$ sends the generating object to $\tikztinycirc$ to the object in $\KV$ given by the integer $s$. To each 1-morphism of $\mathbf{F}(\mathcal{G})$, we may assign an associated ribbon graph as described in Example \ref{eg:internal-graph}. Then, $Z_\mathcal{C}$ sends the 1-morphism to the matrix of spaces of labels of this graph, exactly as in the definition of $RT_\mathcal{C}$. 

Now, the generating 2-morphisms are sent to matrices of linear maps between the spaces of labels of a source ribbon graph and a target ribbon graph. In most cases, it is simpler to apply the isomorphism $\bigcirc$ from Lemma \ref{lem:tree} to the source and target ribbon graphs, and then specify a matrix of linear maps between the spaces of labels of ribbon graphs which have been contracted along a spanning tree. With this in mind, the images of the generating 2-morphisms are given by the following three results.

\begin{prop}[{\cite[Propositions 4.3-4.7]{BDSV4}}] \label{prop:r-gen}
    The symmetric monoidal functor $Z_\mathcal{C}:\mathbf{F}(\mathcal{P}) \rightarrow \KV$ sends the generating 2-morphisms $\gamma$ of $\mathcal{R}$ to the following matrices of linear maps:
    \begin{itemize}
        \item For each of $\gamma=\alpha,\rho,\lambda,\phi_1,\phi_2$, upon contracting along the unique internal edge, the source and target ribbon graphs may be identified with each other. Then, $Z_\mathcal{C}(\gamma)$ act as matrices of identity linear maps on the spaces of labels.
        \item For $\gamma=\theta$, the only nonzero entries are given by the linear maps $$Z_\mathcal{C}(\theta)_i^i: \Hom(X_i,X_i) \rightarrow \Hom(X_i,X_i)$$ sending $\id_{X_i}$ to the twist $\theta_{X_i}$.
        \item For $\gamma=\beta$, each $$Z_\mathcal{C}(\beta)_{i_1i_2}^j: \Hom(X_{i_1}\otimes X_{i_2}, X_j) \rightarrow \Hom(X_{i_2}\otimes X_{i_1}, X_j)$$ is the linear map given by sending $f \mapsto f\circ\beta_{X_{i_2}\otimes X_{i_1}}^{-1}$.
        \item For $\gamma=\nu$, upon contracting the target ribbon graph along its one edge we are left with defining a linear map $k \rightarrow \Hom(\mathbbm{1},\mathbbm{1})$; $\nu$ is sent to the linear map sending $1$ to $\id_{\mathbbm{1}}$.
        \item For $\gamma=\mu$, the only nonzero entry is $$Z_\mathcal{C}(\mu)_1^1: \Hom(\mathbbm{1},\mathbbm{1})\otimes\Hom(\mathbbm{1},\mathbbm{1}) \rightarrow k$$ which sends $\id_{\mathbbm{1}}\otimes \id_{\mathbbm{1}}$ to $1$.
        \item For $\gamma=\epsilon$, the only nonzero entries are the linear maps $$Z_\mathcal{C}(\epsilon)_i^i: \bigoplus_{k_1,k_2}\Hom(X_i,X_{k_1}\otimes X_{k_2})\otimes\Hom(X_{k_1}\otimes X_{k_2},X_i) \rightarrow k$$ given by composition and identifying $\id_{X_i}$ with $1$.
        \item For $\gamma=\eta$, contract the target ribbon graph along its one internal edge. The only nonzero entries are the linear maps $Z_\mathcal{C}(\eta)_{ij}^{ij}: k \rightarrow \Hom(X_{i}\otimes X_{j}, X_{i}\otimes X_{j})$ sending $1$ to the identity morphism $\id_{X_{i}\otimes X_{j}}$.
    \end{itemize}
    For the unspecified entries, either the source or the target ribbon graph will have the zero vector space as its space of labels, and thus the linear map in this entry has to be the zero linear map.
\end{prop}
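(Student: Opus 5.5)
The plan is to prove this as a dictionary statement: translate the ``internal string diagram'' description of $Z_\mathcal{C}$ in \cite[\textsection 4]{BDSV4}, generator by generator, into the language of spaces of labels of associated ribbon graphs. No new computation should be needed beyond checking that the identifications agree.

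\textbf{Step 1: objects and 1-morphisms.} For each generating 1-morphism, and hence for each composite, I would first check that the vector space \cite{BDSV4} assigns to an entry $(\mathbf{i},\mathbf{j})$ is literally $\mathcal{V}_{\mathbf{K}_{\mathbf{i},\mathbf{j}}}(\Gamma_\Sigma)$. On generators this is immediate from the definitions of $RT_{\mathcal{C},\sqrt{D}}$ on $\tikztinypants$, $\tikztinycopants$, $\tikztinycup$, $\tikztinycap$. On composites, both sides are computed by the matrix composition law in $\KV$, namely a direct sum over the colours of the glued circles of tensor products. In the ribbon graph picture this is exactly the sum over colourings of the internal edges.

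The source and target of each generating 2-morphism are composites of at most two generators, so their associated ribbon graphs have at most one internal edge. Contracting along it, the composition isomorphism $\bigcirc$ of Lemma \ref{lem:tree} becomes the semisimplicity isomorphism $\bigoplus_Z \Hom(A,Z)\otimes\Hom(Z,B)\cong\Hom(A,B)$. This is precisely the identification \cite{BDSV4} uses when evaluating internal string diagrams.

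\textbf{Step 2: the generators.} With these identifications fixed, I would go through the list in turn.
\begin{itemize}
\item For $\alpha,\rho,\lambda,\phi_1,\phi_2$: after contraction, the source and target graphs are the same one-vertex graph. The image in \cite{BDSV4} is given by associators and unitors, which under $\bigcirc$ are identities by coherence.
\item For $\theta$ and $\beta$: I read off the twist and the inverse braiding from \cite[Propositions 4.3--4.4]{BDSV4}. Here I would check the orientation conventions, i.e.\ that the right-handed twist gives $\theta$ rather than $\theta^{-1}$, and the order of $X_{i_2}\otimes X_{i_1}$, against the ribbon-graph orderings of Figure \ref{fig:assoc-graph}.
\item For $\nu,\mu,\epsilon,\eta$: contracting along the single edge reduces each to a map between $k$ and a Hom space. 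The cited formulas then become the unit $1\mapsto\id$ and composition followed by identifying $\Hom(X_i,X_i)\cong k$.
\end{itemize}
Zero entries are forced whenever one side has the zero space of labels, for instance when external colours are incompatible.

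\textbf{Main obstacle.} I expect the difficulty to be the normalisations in the non-invertible generators, especially $\epsilon$ and $\eta$. Scalars such as $\dim X_i$ or $D$ could be hidden in the choice of pairing (Lemma \ref{lem:rev-dual}) and in the discrepancy $\dim(K|_T)$ of Lemma \ref{lem:dual-contract}. I would therefore first verify that the identification of the internal string diagram vector spaces with $\mathcal{V}$ uses plain composition, with no rescaling. I would then confirm, by evaluating the relevant string diagram of \cite{BDSV4} directly, that $\epsilon$ pairs $f\otimes g$ to the scalar $gf\in\Hom(X_i,X_i)=k$ rather than to its trace $\tr(gf)$.
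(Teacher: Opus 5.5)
Your overall strategy matches the paper. The paper gives no independent argument: it imports the statement from \cite[Propositions 4.3--4.7]{BDSV4} and rewrites it in the language of spaces of labels, which is exactly the dictionary you propose. However, a structural claim in your Step 1 is false, and the $\nu,\mu,\epsilon,\eta$ bullet of Step 2 inherits the error. Being a composite of at most two generators does not bound the number of internal edges by one. The source of $\epsilon$ (and the target of $\epsilon^\dag$) is copants followed by pants, glued along \emph{both} legs. This is a genus-one surface with two boundary circles, so its associated ribbon graph has two internal edges forming a cycle. That graph is not a tree, so Lemma~\ref{lem:tree} lets you contract only one of the two edges. What remains is a self-loop whose colour is still summed over, and the space of labels is genuinely $\bigoplus_{k_1,k_2}\Hom(X_i,X_{k_1}\otimes X_{k_2})\otimes\Hom(X_{k_1}\otimes X_{k_2},X_i)$. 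This is why the statement describes $Z_\mathcal{C}(\epsilon)$ on the uncontracted space; compare the treatment of $\epsilon^\dag$ in Proposition~\ref{prop:noninv-dag}. At the other extreme, the source of $\mu$ (cap followed by cup) has no internal edge at all. Its space of labels is the tensor product $\Hom(X_i,\mathbbm{1})\otimes\Hom(\mathbbm{1},X_j)$, nonzero only for $i=j=1$. So the reduction ``contract the single edge and obtain a map between $k$ and one Hom space'' is valid only for $\nu$ and $\eta$. The cases $\mu$ and $\epsilon$ have to be read off directly, as the statement does.

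Your normalisation worry is also aimed at the wrong place. The pairing of Lemma~\ref{lem:rev-dual} and the factor $\dim(K|_T)$ of Lemma~\ref{lem:dual-contract} play no role in this statement. They only enter later, when $RT_\mathcal{C}$, which is defined by duality, is compared with $Z_\mathcal{C}$. The only identification used here is $\bigcirc$. The genuine ambiguity, composition versus trace in $Z_\mathcal{C}(\epsilon)$, can be settled without re-evaluating the string diagrams of \cite{BDSV4}. The presentation contains cusp (triangle) relations making $\eta,\epsilon$ the unit and counit of an adjunction pants $\dashv$ copants, so $Z_\mathcal{C}(\epsilon)$ is forced by $Z_\mathcal{C}(\eta)$. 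Concretely:
\begin{itemize}
\item Before contraction, $Z_\mathcal{C}(\eta)(1)=\id_{X_a\otimes X_b}$ is $\sum_\alpha \pi_\alpha\otimes\iota_\alpha$ for dual bases $\pi_\alpha\in\Hom(X_a\otimes X_b,X_d)$ and $\iota_\alpha\in\Hom(X_d,X_a\otimes X_b)$.
\item The triangle identity on the pants then sends $h\in\Hom(X_a\otimes X_b,X_c)$ to $\sum_\alpha \epsilon(\iota_\alpha\otimes h)\,\pi_\alpha$.
\item If $\epsilon$ is plain composition with $\id_{X_c}\mapsto 1$, this equals $\sum_\alpha (h\circ\iota_\alpha)\pi_\alpha=h$, as required.
\item If $\epsilon$ is the trace version, it equals $(\dim X_c)\,h$, which violates the relation.
\end{itemize}
For the pair $\nu,\mu$ no such ambiguity arises, since $\dim\mathbbm{1}=1$.
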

\begin{prop}[{\cite[Propositions 5.4-5.6]{BDSV4}}] \label{prop:noninv-dag}
    Let $p_+$ be as in Definition \ref{defn:cat-consts}. $Z_\mathcal{C}(\eta^\dag),Z_\mathcal{C}(\epsilon^\dag),Z_\mathcal{C}(\nu^\dag),Z_\mathcal{C}(\mu^\dag)$ act as follows:
    \begin{itemize}
        \item For $\nu^\dag$, contract the source ribbon graph along its one edge. Then, $$Z_\mathcal{C}(\nu^\dag):\Hom(\mathbbm{1},\mathbbm{1}) \rightarrow k$$ is the linear map sending $\id_{\mathbbm{1}}$ to $\frac1{p_+}$.
        \item For $\mu^\dag$, the only nonzero entry is $$Z_\mathcal{C}(\mu^\dag)_1^1: k\rightarrow \Hom(\mathbbm{1},\mathbbm{1})\otimes \Hom(\mathbbm{1},\mathbbm{1})$$ which sends $1$ to $p_+\id_{\mathbbm{1}}\otimes\id_{\mathbbm{1}}$.
        \item For $\epsilon^\dag$, $Z_\mathcal{C}(\epsilon^\dag)_i^j$ is only nonzero when $i=j$. Contract the target ribbon graph along the edge going through the left tube of the torus, and then by applying Lemma \ref{lem:cyc-shift}, we are left with defining linear maps $$Z_\mathcal{C}(\epsilon^\dag)_i^i: k \rightarrow \bigoplus_k \Hom(X_i\otimes X_k^*, X_i\otimes X_k^*).$$ These send $1$ to $\frac1{p_+}\sum_k\dim k\id_{X_i\otimes X_k^*}$.
        \item For $\eta^\dag$, contract the source ribbon graph along its internal edge. The only nonzero entries are the linear maps $$Z_\mathcal{C}(\eta^\dag)_{ij}^{ij}: \Hom(X_{i}\otimes X_{j},X_{i}\otimes X_{j})\rightarrow k$$ sending each $f$ to $p_+\frac{\tr(f)}{\dim i \dim j}$.
    \end{itemize}
    The remaining entries are necessarily the zero linear map as either the source or target space of labels is zero.
\end{prop}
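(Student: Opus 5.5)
The plan is to read off each of $Z_\mathcal{C}(\eta^\dag),Z_\mathcal{C}(\epsilon^\dag),Z_\mathcal{C}(\nu^\dag),Z_\mathcal{C}(\mu^\dag)$ from the relations of $\mathcal{G}$ that involve the non-invertible generators, taking the values of the other generators from Proposition \ref{prop:r-gen} as input. The point is that, once $Z_\mathcal{C}$ is known on $\alpha,\rho,\lambda,\beta,\theta,\phi_i,\eta,\epsilon,\nu,\mu$, the omitted relations leave essentially no freedom for the dagger generators. There are two groups of relations to use. The first are the cusp (or ``swallowtail'') relations. Geometrically these say that a $1$-handle followed by a cancelling $2$-handle is the identity, and similarly for a $0$-handle followed by a $1$-handle. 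Algebraically they are zig-zag identities such as $(\mu\star\id)\circ(\id\star\nu^\dag)$-type composites being identities, together with the analogous composites pairing $\eta$ with $\epsilon^\dag$ and $\epsilon$ with $\eta^\dag$. The second group is the single relation that ties the sphere and torus generators to the anomaly generator $\zeta$ (equivalently $z$). This is the relation that forces the Gauss sum $p_+$ to appear.

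Concretely, I would first treat $\mu^\dag$ and $\nu^\dag$. Only one entry of each is nonzero, namely the one indexed by $X_1=\mathbbm{1}$, since the spaces of labels of a cup and of a cap vanish unless the colour is $\mathbbm{1}$. So each of these maps is a single scalar multiple of the obvious map, say $Z_\mathcal{C}(\mu^\dag)(1)=a\,\id_\mathbbm{1}\otimes\id_\mathbbm{1}$ and $Z_\mathcal{C}(\nu^\dag)(\id_\mathbbm{1})=b$. A cusp relation pairing $\mu^\dag$ with $\nu^\dag$, horizontally composed with identities on the cap and cup, gives $ab=1$. It then remains to pin down one of $a,b$. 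For this I would use the relation expressing that the composite ``create a sphere by $\nu$, then destroy it via $\mu^\dag$ and $\nu^\dag$ through a twisted tube'' equals the $\zeta$-shift. Its evaluation involves $Z_\mathcal{C}(\theta)$ summed over the colours carried by the cylinder, weighted by $\dim X_i^2$. That weighted sum is $p_+$, which yields $a=p_+$ and $b=1/p_+$.

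Next I would treat $\eta^\dag$ and $\epsilon^\dag$. After contracting along a spanning tree as in Lemma \ref{lem:tree}, $Z_\mathcal{C}(\eta^\dag)_{ij}^{ij}$ is a linear functional on $\Hom(X_i\otimes X_j,X_i\otimes X_j)$, and $Z_\mathcal{C}(\epsilon^\dag)_i^i$ is a vector in $\bigoplus_k\Hom(X_i\otimes X_k^*,X_i\otimes X_k^*)$. Naturality with respect to $\beta$ and $\theta$, applied via the relations in which these generators slide past $\eta^\dag,\epsilon^\dag$, forces both to be built from traces and identities; I expect this to give the ansatz $f\mapsto c_{ij}\tr(f)$ and $1\mapsto\sum_k d_{ik}\id$. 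The cusp relations with $\epsilon$ and $\eta$ from Proposition \ref{prop:r-gen} then determine $c_{ij}$ and $d_{ik}$ up to the global scalar already fixed by $p_+$. Here the composition-then-trace normalisation of Lemma \ref{lem:rev-dual} produces the factors $1/(\dim i\dim j)$ and $\dim k$.

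I expect the main obstacle to be the $\epsilon^\dag$ computation, and more specifically the verification that the torus-type relation holds with $\sum_k\dim k\,\id_{X_i\otimes X_k^*}/p_+$ rather than some other weighting. This requires evaluating a ribbon diagram containing an uncoloured loop encircling strands. My first step there would be to apply Lemma \ref{lem:tens-hom} and Lemma \ref{lem:BK} to collapse the encircling loop to $\delta$-functions and factors of $D/\dim i$. I would then check that these combine with the $p_+$ coming from the twist to give exactly the stated coefficients.
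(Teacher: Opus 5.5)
The paper does not prove this statement: it imports the formulas from \cite[Propositions 5.4--5.6]{BDSV4} as part of the description of $Z_\mathcal{C}$. Inside the paper the same numbers reappear only as the output of Lemmas \ref{lem:eps} and \ref{lem:noninv}. There $\ol{M}=S^3$ and $n=\pm1$, so by $p_+p_-=D$ the prefactor in (\ref{eq:rt-gen}) is $p_-/D=1/p_+$ or $D/p_-=p_+$. The $\dim k$ is the $\dim_{\mathrm{int}}$ term, and the traces come from the pairing of Lemma \ref{lem:rev-dual}.

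Your plan instead forces the values from the relations of $\mathcal{G}$, which is a uniqueness-type argument. The easy parts are fine: only the $\mathbbm{1}$-entries of $\mu^\dag,\nu^\dag$ survive, and the $(\mu^\dag,\nu^\dag)$ zig-zag gives $ab=1$. The step where $p_+$ is supposed to enter, however, fails.

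In a $1$-morphism built only from cups, caps and tubes between them, the associated ribbon graph is a tree whose leaves are cups and caps. Since $\Hom(\mathbbm{1},X_j)=0$ for $j\neq1$ and $\Hom(\mathbbm{1}\otimes X_a,X_b)=0$ for $a\neq b$, pruning leaves forces every edge to carry $\mathbbm{1}$. So in your composite (create a sphere with $\nu$, twist a tube, destroy it with $\mu^\dag,\nu^\dag$) the tube carries only $\mathbbm{1}$. Then $Z_\mathcal{C}(\theta)$ acts by $\theta_{\mathbbm{1}}=1$, and no sum $\sum_i\theta_{X_i}\dim(X_i)^2$ can appear.

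A Gauss sum needs a handle through which all colours run, i.e.\ a genus-one configuration. For example: $\epsilon^\dag$ contributes $\sum_k\dim k$, then a twist acts on the new tube, then a closing $2$-handle whose trace gives the second $\dim k$. Such a relation pins down the normalisation of $\epsilon^\dag$, not of $\mu^\dag,\nu^\dag$. Transporting it to the sphere generators needs a further relation coupling the two dagger adjunctions, and you have not identified one.

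The treatment of $\eta^\dag,\epsilon^\dag$ does not close either.
\begin{itemize}
\item The pairings you list are not cusps. $\eta$ and $\epsilon^\dag$ are both $1$-handles, and $\epsilon$ and $\eta^\dag$ are both $2$-handles, so neither pair can cancel. The zig-zags of the dagger adjunction pair $\epsilon^\dag$ (unit) with $\eta^\dag$ (counit), just as they pair $\mu^\dag$ with $\nu^\dag$.
\item Those zig-zags are invariant under $\epsilon^\dag\mapsto(u\star\id)\circ\epsilon^\dag$ and $\eta^\dag\mapsto\eta^\dag\circ(\id\star u^{-1})$, for any automorphism $u$ of $Z_\mathcal{C}$ of the pants. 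Such $u$ form a product of general linear groups of the spaces $\Hom(X_a\otimes X_b,X_c)$, and this freedom is independent of any rescaling of $(\mu^\dag,\nu^\dag)$. So the zig-zags determine $\eta^\dag$ from $\epsilon^\dag$, but not $\epsilon^\dag$ itself. There is no ``global scalar already fixed by $p_+$'' to appeal to.
\item Invoking naturality does not force $Z_\mathcal{C}(\epsilon^\dag)_i^i(1)$ to be a multiple of the identity on each $\Hom(X_i\otimes X_k^*,X_i\otimes X_k^*)$. That space is generally not one-dimensional, and you do not say which relations of $\mathcal{G}$ would supply the naturality.
\end{itemize}

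To make your route rigorous you must write down the remaining relations of $\mathcal{G}$ that involve the dagger generators (which the paper omits) and evaluate them explicitly. That evaluation is where the actual content of the cited propositions lies.
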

\begin{lem}[{\cite[Lemma 7.14]{partA}}] \label{lem:z-anomaly}
    $Z_\mathcal{C}(\zeta)$ and $Z_\mathcal{C}(z)$ act as multiplication by the anomaly $\frac{p_+}{p_-}$.
\end{lem}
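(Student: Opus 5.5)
Since $\zeta$ is an endomorphism of the empty 1-morphism and $z$ an endomorphism of the identity cylinder, $Z_\mathcal{C}(\zeta)$ is a scalar and $Z_\mathcal{C}(z)$ is a matrix of endomorphisms of the $\Hom(X_i,X_i)\cong k$. The plan is to use the relations of $\mathcal{G}$ that tie $\zeta$ and $z$ to the other generators, whose images are already given in Propositions \ref{prop:r-gen} and \ref{prop:noninv-dag}. I am assuming these relations have the following form. One relation should express the signature shift $z$ on the cylinder as the horizontal composite of $\zeta$ with $\id$ of the cylinder, so that $Z_\mathcal{C}(z)=Z_\mathcal{C}(\zeta)\cdot\id$. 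It then suffices to compute $Z_\mathcal{C}(\zeta)$.

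For $\zeta$, I would use the relation in $\mathcal{G}$ that compares two ways of building the same closed $3$-manifold with signatures differing by $2$. These are the $\pm1$-framed unknot surgeries on $S^3$, realised by inserting $\theta$ or $\theta^{-1}$ on a torus. Concretely, I expect a relation of the form
\[
\epsilon\circ(\theta\text{-twist})\circ\epsilon^\dag=\zeta\cdot\big(\epsilon\circ(\theta^{-1}\text{-twist})\circ\epsilon^\dag\big),
\]
or the analogous composite sandwiched between $\nu$ and $\nu^\dag$ after capping. I would then evaluate both sides with the explicit formulas.

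The evaluation goes as follows. $Z_\mathcal{C}(\epsilon^\dag)$ sends $1$ to $\frac1{p_+}\sum_k\dim k\,\id_{X_i\otimes X_k^*}$. The twist inserted on the tube through colour $k$ multiplies the $k$-summand by $\theta_k^{\pm1}$. Then $Z_\mathcal{C}(\epsilon)$, followed by capping with $\nu$ and $\nu^\dag$, produces a trace contributing a further $\dim k$. Summing over $k$ gives $\frac{1}{p_+}\sum_k\theta_k^{\pm1}(\dim k)^2$ up to a common normalisation, namely $p_+/p_+$ and $p_-/p_+$ respectively. The ratio is $p_+/p_-$ (or its inverse, depending on the orientation convention in the relation), which yields the claimed anomaly.

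The main obstacle is pinning down the exact form and sign conventions of the relation involving $\zeta$ in \cite{partA}. Which twist direction corresponds to $\zeta$ rather than $\zeta^{-1}$ decides whether the answer is $p_+/p_-$ or $p_-/p_+$. I would resolve this by comparing with $|-|_\mathcal{G}$: $\zeta$ shifts the signature by $2$, and $+1$-framed surgery on the unknot has trace of signature $+1$. Matching these against the definition of $\tau$ fixes the orientation.
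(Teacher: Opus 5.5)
The paper gives no proof of this lemma. It is imported from \cite{partA}, and the relations of $\mathcal{G}$ it rests on are explicitly omitted here.

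Your strategy is the right kind of argument: evaluate a relation that ties $\zeta$ and $z$ to the other generators, using the values in Propositions \ref{prop:r-gen} and \ref{prop:noninv-dag}. The arithmetic is sound:
\begin{itemize}
\item $Z_\mathcal{C}(\epsilon^\dag)$ produces $\frac1{p_+}\sum_k\dim k\,\id_{X_i\otimes X_k^*}$.
\item A twist $\theta^{\pm1}$ on the tube carrying $k$ scales the $k$-summand by $\theta_k^{\pm1}$.
\item $Z_\mathcal{C}(\epsilon)$ takes the partial trace over $X_k^*$. This, not any capping by $\nu,\nu^\dag$, is where the second factor $\dim k$ comes from.
\end{itemize}
This gives $1$ and $p_-/p_+$. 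Since $p_+p_-=D\neq0$ you may divide, and the ratio is $p_+/p_-$.

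However, as written the argument is conditional, and the condition is the entire content of the lemma. You need the actual relations of \cite[Definition 6.12]{partA}. First, the relation linking $z$ to $\zeta$. This would be a monoidal product $\zeta\otimes\id$ with the identity 2-morphism of the cylinder, not a horizontal composite, since $\zeta$ lives over $\id_\emptyset$. Second, whichever relation connects $\zeta$ or $z$ to $\theta,\epsilon,\epsilon^\dag$ (and possibly $\nu,\nu^\dag$). Until these are quoted, you have shown only that \emph{if} $\mathcal{G}$ contains relations of the shape you guess, then the values are as claimed.

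What you must not do to fill this in is justify the guessed relations by checking that both sides agree as 2-morphisms of $\Bord{sig/2}$. $Z_\mathcal{C}$ is only a functor on $\mathbf{F}(\mathcal{G})$. Pulling an equality in $\Bord{sig/2}$ back to $\mathbf{F}(\mathcal{G})$ requires $|-|_\mathcal{G}$ to be faithful. That is exactly the Cerf-theoretic Theorem \ref{conj:pres-or} this paper is designed to avoid; Proposition \ref{prop:modg-full} gives only fullness.

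Geometry is legitimate only in the limited way you use it at the end. Once the form of a relation $A=\zeta^{\pm1}B$ in $\mathcal{G}$ is known, applying the functor $|-|_\mathcal{G}$ and comparing signatures fixes the exponent.

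So the concrete missing step is to take the relations involving $z$ and $\zeta$ from \cite{partA} and evaluate them. Combined with your computation, that completes the proof.
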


Putting together Proposition \ref{prop:modg} and (one direction of) Theorem \ref{thm:rep-fg}, we have thus produced another way to obtain a linear representation of $\Bord{sig/2}$ from the data of a modular tensor category $\mathcal{C}$. Now, we show that this in fact agrees with our construction of $RT_\mathcal{C}$.

\begin{prop}\label{prop:mtc-rep-mtc}
    Let $\mathcal{C}$ be a modular tensor category. Then we have an equivalence $RT_{\mathcal{C}}\circ |-|_\mathcal{G} \simeq Z_{\mathcal{C}}$ of functors valued in $\KV$.
\end{prop}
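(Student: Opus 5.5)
Both $RT_{\mathcal{C}}\circ|-|_\mathcal{G}$ and $Z_\mathcal{C}$ are symmetric monoidal functors out of the free symmetric monoidal bicategory $\mathbf{F}(\mathcal{G})$. My plan is to show that they agree on the generating object and 1-morphisms on the nose, and that on the generating 2-morphisms they agree up to an explicit correction. That correction can then be absorbed into a symmetric monoidal transformation whose component on each generating 1-morphism is an invertible scalar. Since a functor out of $\mathbf{F}(\mathcal{G})$ is determined up to equivalence by its values on generators, this yields the equivalence.

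\emph{Objects and 1-morphisms.} By the Remark following Proposition~\ref{prop:modg}, $|-|_\mathcal{G}$ sends the generating circle, pants, copants, cup and cap to the standard ones in $\Bord{sig/2,dec}$. Both $RT_\mathcal{C}$ and $Z_\mathcal{C}$ assign $s$ to the circle. To a 1-morphism, both assign the matrix of spaces of labels of its associated ribbon graph from Example~\ref{eg:internal-graph}. So the two functors coincide on the underlying 1-truncation, with the same composition and monoidal coherence data.

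\emph{Generating 2-morphisms.} For each generator $\gamma$ I compute $RT_\mathcal{C}(|\gamma|_\mathcal{G})$ using Lemma~\ref{lem:rt-formula}, after contracting along spanning trees via Lemma~\ref{lem:contract-tree-rt}.

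For the invertible generators $\alpha,\rho,\lambda,\phi_i,\beta,\theta$, the image under $|-|_\mathcal{G}$ is a mapping cylinder with $n=0$. Here $\ol{M}$ is a sphere or a connected sum of spheres, so no surgery link is needed. The graph $\Gamma_M$ is then the source graph glued to the reversed target graph through the diffeomorphism. Evaluating it gives the identity, $f\mapsto f\circ\beta^{-1}$, or the twist, exactly as in Proposition~\ref{prop:r-gen}, once the $\sqrt{D}^{\chi/2}$ prefactor cancels the $1/\sqrt{D}$ normalisation of $\tau$. This is the same computation as in Lemma~\ref{lem:rt-id}.

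For $\zeta$ and $z$, the bordism is an identity with $n$ shifted by $2$. Formula~(\ref{eq:def-rt}) then multiplies by $(\sqrt{D}/p_-)^2=D/p_-^2$. This equals $p_+/p_-$ by the standard identity $p_+p_-=D$ for modular categories, which matches Lemma~\ref{lem:z-anomaly}.

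For the non-invertible generators $\nu,\mu,\epsilon,\eta$ and their daggers, the Remark gives $\ol{M}=S^3$, so $\tau$ is $\frac{1}{\sqrt{D}}$ times the operator invariant of an explicit graph. That graph is a Hopf-type diagram, such as an unknot through the tube for $\epsilon^\dag$ or $\eta^\dag$, or a twisted unknot for $\nu^\dag$ and $\mu^\dag$. I will evaluate these using Lemmas~\ref{lem:tens-hom} and~\ref{lem:BK} together with the definitions of $p_\pm$ and $D$.

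\emph{Main obstacle.} The hard step is normalisation. The answers just computed involve powers of $\sqrt{D}$, of $p_-$, and the integer $n$ that $|-|_\mathcal{G}$ assigns to handle attachments (see \cite[Construction 6.15]{partA}). They need not literally equal the values $1$, $1/p_+$, $p_+$, $p_+\tr(f)/(\dim i\dim j)$ in Propositions~\ref{prop:r-gen} and~\ref{prop:noninv-dag}.

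I would first try to match the four dagger generators directly, using $p_+p_-=D$ and the parity condition for $\Bord{sig/2}$. If a mismatch remains, I would show it has the form $\lambda_{t}/\lambda_{s}$. Here $\lambda$ is a multiplicative assignment of invertible scalars to the generating 1-morphisms (pants, copants, cup, cap), and $s,t$ are the source and target of the generator. For example, rescale the cup and cap by $\sqrt{D}^{\pm1}$ and the pants/copants pair by compatible powers.

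Such scalars define an invertible symmetric monoidal transformation $RT_\mathcal{C}\circ|-|_\mathcal{G}\Rightarrow Z_\mathcal{C}$ that is the identity on objects. Naturality is needed only on generating 2-morphisms, which is exactly what the computations above check. To pin down $\lambda$, I would use the invertible generators, which force $\lambda$ to be compatible with associativity and unitality. Then I would verify that the remaining ratios for $\nu,\mu,\epsilon,\eta$ and their daggers are consistent, using the products $\nu^\dag\circ\nu$ and $\epsilon\circ\epsilon^\dag$, whose values are invariants independent of normalisation.
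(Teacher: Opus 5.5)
Your framework matches the paper's: the functors agree on objects and 1-morphisms by construction, and you then compare generator by generator. Your treatment of the invertible generators and of $\zeta,z$ is fine. However, the step that carries the content, namely the non-invertible generators and especially the daggers, is left as ``I will evaluate these'', and your picture of what is to be evaluated is wrong.

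For all these generators $\ol{M}=S^3$ with no surgery link. So by (\ref{eq:def-rt}) the map is $(\sqrt{D}/p_-)^n\sqrt{D}^{\chi(\ol{\Sigma'})/2-1}\dim_{\mathrm{int}}(\tilde{K}')$ times the operator invariant of $\tilde{\Gamma}_M$. These graphs are not Hopf links or twisted unknots. They are elementary closures computing traces of the coupon labels:
\begin{itemize}
\item $\tr(f_1f_2)$ for $\epsilon,\mu,\mu^\dag$;
\item $\tr(g)$ for $\eta$, and $\tr(f)$ for $\eta^\dag$;
\item $\tr(g_k)$ for $\epsilon^\dag$;
\item a lone coupon for $\nu,\nu^\dag$.
\end{itemize}
Lemmas~\ref{lem:tens-hom} and~\ref{lem:BK} play no role.

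All $p_\pm$-dependence comes from the prefactor. You need the actual integers that $|-|_\mathcal{G}$ assigns; parity alone does not fix them:
\begin{itemize}
\item $n=0$ for $\eta,\epsilon,\mu,\nu$, with $\chi(\ol{\Sigma'})=2$;
\item $n=1$ for $\eta^\dag,\mu^\dag$, with $\chi(\ol{\Sigma'})=4$;
\item $n=-1$ for $\epsilon^\dag,\nu^\dag$, with $\chi(\ol{\Sigma'})=0$.
\end{itemize}
These give prefactors $1$, $D/p_-=p_+$ and $p_-/D=p_+^{-1}$ respectively, using $p_+p_-=D$. For $\epsilon^\dag$ the weights $\dim k$ in Proposition~\ref{prop:noninv-dag} are exactly the factor $\dim_{\mathrm{int}}(\tilde{K}')$. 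So the two functors agree on every generator on the nose, and verifying this is what the proof consists of.

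Your fallback, absorbing a leftover mismatch into scalars $\lambda$ on the generating 1-morphisms, cannot work, so it cannot replace this computation. Such a rescaling multiplies $\gamma$ by $\lambda_t/\lambda_s$ and $\gamma^\dag$ by the inverse factor.
\begin{itemize}
\item Since you find that $\rho$ matches exactly, $\lambda_{\mathrm{pants}}\lambda_{\mathrm{cup}}=1$.
\item Since $\eta$ and $\nu$ (both with $n=0$) match exactly by the trace computation above, $\lambda_{\mathrm{pants}}\lambda_{\mathrm{copants}}=1=\lambda_{\mathrm{cup}}\lambda_{\mathrm{cap}}$.
\end{itemize}
Hence the rescaling acts trivially on every generator. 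Your suggested choice of rescaling cup and cap by $\sqrt{D}^{\pm1}$ either changes nothing or breaks $\nu$ and $\mu$.

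Consequently any discrepancy in a dagger generator would be a genuine obstruction, not a normalisation artefact. It would be visible, for example, in the scalar assigned to $\nu^\dag\circ\nu$, i.e.\ $S^3$ with $n=-1$, which no transformation can change. The exact computation above is therefore unavoidable.
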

\begin{proof}
    It suffices to compare these two functors on the generators of $\mathcal{G}$. By construction, they agree at the object and 1-morphism levels: both send the generating object $\tikztinycirc$ to the object in $\KV$ given by $s$, the number of isomorphism classes of $\mathcal{C}$; and 1-morphisms to spaces of labels of the associated ribbon graphs.
    
    We now check the generating 2-morphisms one by one (with the exception of $\phi_1^{-1},\phi_2^{-1}$, which may be written in terms of the other generators). For each generator $\gamma$, let $(M,n)=|\gamma|_\mathcal{G}$ and $\Sigma = \parin M, \Sigma' = \parout M$. Then, $RT_{\mathcal{C}}(|\gamma|_\mathcal{G})_{\mathbf{i}}^\mathbf{j}$ and $Z_{\mathcal{C}}(\gamma)_{\mathbf{i}}^\mathbf{j}$ are both linear maps $\mathcal{V}_{\mathbf{K}_{\mathbf{i},\mathbf{j}}}(\Gamma_{\Sigma}) \rightarrow \mathcal{V}_{\mathbf{K}_{\mathbf{i},\mathbf{j}}'}(\Gamma_{\Sigma'})$. We seek to show that these are equal.

    The images of the respective generators $\gamma$ under $Z_\mathcal{C}$ are given in Proposition \ref{prop:r-gen}, Proposition \ref{prop:noninv-dag} and Lemma \ref{lem:z-anomaly}. For most of these, $Z_\mathcal{C}(\gamma)_{\mathbf{i}}^\mathbf{j}$ is defined on spaces of labels of ribbon graphs which have been contracted along spanning subtrees, i.e. as a linear map $\mathcal{V}_{\tilde{\mathbf{K}}_{\mathbf{i},\mathbf{j}}}(\tilde{\Gamma}_{\Sigma}) \rightarrow \mathcal{V}_{\tilde{\mathbf{K}}_{\mathbf{i},\mathbf{j}}'}(\tilde{\Gamma}_{\Sigma'})$. For these generators, we compute $RT_\mathcal{C}(|\gamma|_\mathcal{G})_{\mathbf{i}}^\mathbf{j}$ on these contracted ribbon graphs, as in Lemma \ref{lem:contract-tree-rt}.
    
    Now, recall that each $RT_\mathcal{C}(M,n)_\mathbf{i}^\mathbf{j}$ is defined in a dual sense by defining a linear map $(RT_\mathcal{C}(M,n)_\mathbf{i}^\mathbf{j})_{K,K'}: \mathcal{V}_K(\Gamma_\Sigma) \otimes \mathcal{V}_{K'}(-\Gamma_{\Sigma'}) \rightarrow k$ for each ribbon-colourings $K,K'$ of $\Gamma_\Sigma,\Gamma_{\Sigma'}$ extending the partial ribbon-colourings $\mathbf{K}_{\mathbf{i},\mathbf{j}},\mathbf{K}_{\mathbf{i},\mathbf{j}}'$ determined by $\mathbf{i},\mathbf{j}$. This is in turn given in (\ref{eq:def-rt}) by computing the invariant $\tau$ of the labelled embedded ribbon graph $\Gamma_M$ which is the result of gluing $\Gamma_\Sigma,\Gamma_{\Sigma'}$ together, and multiplying it by some scalar terms. Note that for every generator except for $\zeta$ (which we will handle separately), the underlying bordism $M$ has $\ol{M} = S^3$, and so $\Gamma_M$ is already embedded in $S^3$ and there is no surgery link $L$. Thus, the formula in (\ref{eq:def-rt}) may be rewritten as 
    \begin{equation} \label{eq:rt-gen}
        (RT_\mathcal{C}(M,n)_\mathbf{i}^\mathbf{j})_{K,K'}(\mathbf{f}\otimes\mathbf{g}) = \pa{\frac{\sqrt{D}}{p_-}}^n \sqrt{D}^{\chi(\ol{\Sigma'})/2 - 1} \dim_\mathrm{int}(K') F(\Gamma_M,K\cup K',\mathbf{f}\otimes\mathbf{g}).
    \end{equation}
    for each $\mathbf{f}\otimes\mathbf{g} \in \mathcal{V}_K(\Gamma_\Sigma) \otimes \mathcal{V}_{K'}(-\Gamma_{\Sigma'}) \cong \mathcal{V}_{K_M}(\Gamma_M)$. In other words, it suffices to compute the operator invariant of the labelled embedded ribbon graph $\Gamma_M$ in $S^3$. Likewise, if we are instead working with contracted ribbon graphs, by Lemma \ref{lem:contract-tree-rt}, the corresponding formula is given by replacing $K,K',\Gamma_M$ in (\ref{eq:rt-gen}) with $\tilde{K},\tilde{K}',\tilde{\Gamma}_M$ respectively.

    Taking direct sums over all $K,K'$, we thus obtain the linear map $$\left\langle RT_\mathcal{C}(M,n)_\mathbf{i}^\mathbf{j}(-), - \right\rangle_{\Gamma_{\Sigma'}}: \mathcal{V}_{\mathbf{K}_{\mathbf{i},\mathbf{j}}}(\Gamma_\Sigma) \otimes \mathcal{V}_{\mathbf{K}_{\mathbf{i},\mathbf{j}}'}(\Gamma_{\Sigma'}) \rightarrow k.$$ If we are working with contracted ribbon graphs, then replace $\mathbf{K}_{\mathbf{i},\mathbf{j}},\mathbf{K}_{\mathbf{i},\mathbf{j}}',\Gamma_{\Sigma'}$ respectively with $\tilde{\mathbf{K}}_{\mathbf{i},\mathbf{j}},\tilde{\mathbf{K}}_{\mathbf{i},\mathbf{j}}',\tilde{\Gamma}_{\Sigma'}$. It suffices to check that the linear map $RT_\mathcal{C}(M,n)_\mathbf{i}^\mathbf{j}$ defined in this way agrees with $Z_\mathcal{C}(\gamma)_\mathbf{i}^\mathbf{j}$. Equivalently, it suffices to show that for $\mathbf{f} \in \mathcal{V}_{\mathbf{K}_{\mathbf{i},\mathbf{j}}}(\Gamma_\Sigma)$ and $\mathbf{g} \in \mathcal{V}_{\mathbf{K}_{\mathbf{i},\mathbf{j}}'}(-\Gamma_{\Sigma'})$, we have
    \begin{equation} \label{eq:rtz-dual}
        \left\langle Z_\mathcal{C}(\gamma)_\mathbf{i}^\mathbf{j}(\mathbf{f}), \mathbf{g} \right\rangle_{\Gamma_{\Sigma'}} = \left\langle RT_\mathcal{C}(M,n)_\mathbf{i}^\mathbf{j}(\mathbf{f}), \mathbf{g} \right\rangle_{\Gamma_{\Sigma'}}
    \end{equation}
    (or, when relevant, the corresponding version with tildes).

    For most of the generators (all $\gamma\ne\epsilon,\epsilon^\dag$), after contracting the ribbon graphs, $\tilde{\Gamma}_\Sigma,\tilde{\Gamma}_{\Sigma'}$ have no internal edges, and hence the $\tilde{\mathbf{K}}_{\mathbf{i},\mathbf{j}},\tilde{\mathbf{K}}_{\mathbf{i},\mathbf{j}}'$ are in fact ribbon-colourings. In these cases, by (\ref{eq:rt-nodual}), the right-hand side of (\ref{eq:rtz-dual}) may be written as the right-hand side of (\ref{eq:rt-gen}) with $\tilde{K} = \tilde{\mathbf{K}}_{\mathbf{i},\mathbf{j}}, \tilde{K}'= \tilde{\mathbf{K}}_{\mathbf{i},\mathbf{j}}'$.

    Before we proceed, we first simplify the scalar term in (\ref{eq:rt-gen}). We compute the values of $\pa{\frac{\sqrt{D}}{p_-}} \sqrt{D}^{\chi(\ol{\Sigma'})/2 - 1}$ for the generators of $\mathcal{G}$:
    \begin{center}
        \renewcommand{\arraystretch}{1.5}
        \begin{tabular}{c | c c | c}
            Generators & $n$ & $\chi(\Sigma')$ & $\pa{\frac{\sqrt{D}}{p_-}} \sqrt{D}^{\chi(\Sigma')/2 - 1}$ \\
            \hline
            $\alpha,\rho,\lambda,\beta,\theta,\eta,\epsilon,\mu,\nu$ & $0$ & $2$ & $1$ \\
            $\eta^\dag,\mu^\dag$ & $1$ & $4$ & $\frac{D}{p_-}$ \\
            $\epsilon^\dag,\nu^\dag$ & $-1$ & $0$ & $\frac{p_-}D$ \\
            $z$ & $2$ & $2$ & $\frac{D}{p_-^2}$
        \end{tabular}
    \end{center}
    By \cite[Corollary 3.1.10]{BK}, we have $p_+ = \frac{D}{p_-}$, and so the constants for $\eta^\dag,\epsilon^\dag,\mu^\dag,\nu^\dag,z$ here exactly agree with the constants in the definition of $Z_\mathcal{C}$ (Proposition \ref{prop:noninv-dag} and Lemma \ref{lem:z-anomaly}). In particular, $RT_\mathcal{C}(|z|_\mathcal{G})$ is simply multiplication by $\frac{p_+}{p_-}$, which agrees with $Z_\mathcal{C}(z)$. For the remaining generators, we have to compute the operator invariant of the labelled embedded ribbon graph $\Gamma_M$.

    We may now verify that (\ref{eq:rtz-dual}) holds for each generating 2-morphism $\gamma$. This will be done in Lemmas \ref{lem:inv1} and \ref{lem:inv2} for the invertible generators $\gamma = \alpha,\lambda,\rho,\beta,\theta$, in Lemma \ref{lem:eps} for $\gamma=\epsilon,\mu,\mu^\dag$, and in Lemma \ref{lem:noninv} for the remaining non-invertible 2-morphisms.

    Finally, in the case $\gamma=\zeta$, we have $M=\emptyset$ and $n=2$. Then as $\chi(\ol{\Sigma'}) = 0$ and $\tau(\emptyset,\emptyset) = 1$, we simply have that $RT_\mathcal{C}(|\zeta|_\mathcal{G})$ acts as multiplication by $\pa{\frac{\sqrt{D}}{p_-}}^2 = \frac{p_+}{p_-},$ which indeed agrees with $Z_\mathcal{C}(\zeta)$.
\end{proof}

It remains to check that (\ref{eq:rtz-dual}) holds for the remaining generating 2-morphisms. Throughout, $M$ refers to the underlying bordism of the 2-morphism in consideration.

\begin{lem}\label{lem:inv1}
    The equation (\ref{eq:rtz-dual}) holds for $\gamma=\alpha,\lambda,\rho$.
\end{lem}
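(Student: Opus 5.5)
We need to show (\ref{eq:rtz-dual}) for $\gamma=\alpha,\lambda,\rho$. The plan is to use the reduction in the proof of Proposition \ref{prop:mtc-rep-mtc}. For these generators $n=0$ and $\ol{\Sigma'}=S^2$, so the scalar in (\ref{eq:rt-gen}) is $1$. It therefore suffices to compute the operator invariant $F(\tilde\Gamma_M,\tilde K\cup\tilde K',\mathbf f\otimes\mathbf g)$ of the contracted graph in $S^3$. We then compare it with $\langle Z_\mathcal{C}(\gamma)(\mathbf f),\mathbf g\rangle$, which by Proposition \ref{prop:r-gen} is just $\langle \mathbf f,\mathbf g\rangle$ under the identification of source and target obtained by contracting the unique internal edge.

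First, contract. For $\alpha$, the source and target 1-morphisms are both two pants stacked, so each associated ribbon graph has two vertices and one internal edge. Contracting that edge via Lemma \ref{lem:tree} gives a single vertex with three legs and one belt, so the spaces of labels become $\Hom(X_{i_1}\otimes X_{i_2}\otimes X_{i_3},X_j)$. Here $\dim_{\mathrm{int}}(\tilde K')=1$, since no internal edges remain.

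For $\rho,\lambda$, the source graph has a cup vertex joined to a pants vertex by one internal edge. Contracting gives a single vertex with one leg and one belt, with label space $\Hom(X_i,X_j)$. The target is the cylinder, which has no vertex, so its space of labels is $\Hom$-free: it is $k$ when $i=j$ and $0$ otherwise. To treat the target uniformly, I would use Lemma \ref{lem:cyc-shift}, or simply note that the cylinder's edge runs straight through. Then $\Gamma_M$ is the source vertex with its two external ends joined to each other through the solid cylinders.

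Next, identify $\tilde\Gamma_M$ in $S^3$. The key geometric input is that $|\gamma|_\mathcal{G}$ is the mapping cylinder of the obvious diffeomorphism between the two decomposed surfaces. Under this diffeomorphism the contracted graphs $\tilde\Gamma_\Sigma$ and $\tilde\Gamma_{\Sigma'}$ correspond, with the same cyclic ordering of legs. Hence $\tilde\Gamma_M$ is isotopic in $\ol M=S^3$ to the standard planar diagram pairing a coupon labelled $f$ with one labelled $g$, with parallel untwisted strands joining them: exactly the picture of Figure \ref{fig:id}. Its operator invariant is $\tr(fg)=\langle f,g\rangle$. Since the identification in Proposition \ref{prop:r-gen} is the identity on labels, (\ref{eq:rtz-dual}) follows.

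For $\rho,\lambda$, the same argument gives $\tr(f)$ when $i=j$, which equals the pairing with the target label $1$. Then $\tr(f)$ in $\Hom(X_i,X_i)\cong k$ matches the convention of Lemma \ref{lem:rev-dual}. This is consistent with $Z_\mathcal{C}$ acting as the identity.

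The main obstacle is the second step: checking carefully that the contraction along the chosen spanning edges and the diffeomorphism underlying $|\gamma|_\mathcal{G}$ produce no extra twist or crossing in $\tilde\Gamma_M$. This amounts to checking that the blackboard framing and the orderings at the coupons match. I would verify this by drawing $\ol M$ as the union of the two standard $D^3$ handlebodies glued along the mapping cylinder, and tracking the cores of the legs. For $\alpha$ the associator diffeomorphism is isotopic to the identity on a $D^3$ containing the graph, because sliding the internal edge from the left leg to the right leg is a planar isotopy of the trivalent tree. So after contraction the diagram is planar. For $\lambda$ versus $\rho$, the only difference is on which side the cup sits, which again is a planar isotopy.
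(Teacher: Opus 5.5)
Your treatment of $\alpha$ is the paper's argument. The scalar in (\ref{eq:rt-gen}) is $1$. After contracting the internal edges, both graphs carry one four-valent coupon with $\dim_{\mathrm{int}}(\tilde K')=1$. Finally, $\tilde\Gamma_M\subset S^3$ is the planar two-coupon diagram computing $\tr(fg)=\langle f,g\rangle$, so $RT_{\mathcal C}(|\alpha|_{\mathcal G})$ is the identity, as is $Z_{\mathcal C}(\alpha)$.

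The cases $\rho,\lambda$ have a genuine gap. You treat the target cylinder as a bare edge, with label space $k$ and trivial pairing. Under that reading you correctly get $RT_{\mathcal C}(|\rho|_{\mathcal G})_i^i(f)=\tr(f)$, so $\id_{X_i}$ (the unitor after contraction) is sent to $\dim X_i$. But $Z_{\mathcal C}(\rho)$ is the identity, which for a cylinder target means $\id_{X_i}\mapsto 1$. This is the identification the paper uses (see $Z_{\mathcal C}(\epsilon)$: ``identifying $\id_{X_i}$ with $1$'').

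Your remark that $\tr(f)$ ``matches the convention of Lemma \ref{lem:rev-dual}'' does not bridge the factor $\dim X_i$. That lemma gives a pairing $\Hom(X_i,X_i)\otimes\Hom(X_i,X_i)\to k$, not an identification of $\Hom(X_i,X_i)$ with $k$.

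Choosing $f\mapsto\tr(f)$ ad hoc cannot rescue the bare-edge bookkeeping either. The same bookkeeping gives $RT_{\mathcal C}(|\rho^{-1}|_{\mathcal G})_i^i(1)=\id_{X_i}$. Then $RT_{\mathcal C}(|\rho^{-1}|_{\mathcal G})\circ RT_{\mathcal C}(|\rho|_{\mathcal G})$ is multiplication by $\dim X_i$, whereas $Z_{\mathcal C}(\rho^{-1})\circ Z_{\mathcal C}(\rho)$ is the identity. Likewise, the identity 2-morphism of the cylinder, whose $\Gamma_M$ is an unknot coloured $X_i$, would go to $\dim X_i$. So the bare-edge reading is not one under which $RT_{\mathcal C}$ is functorial.

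Lemma \ref{lem:cyc-shift} cannot ``treat the target uniformly'' either. It only repartitions edges at existing vertices and never creates a coupon.

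The missing idea is the convention behind the paper's ``similar''. It is visible in Lemma \ref{lem:eps}, where the cylinder's label is written $\id_{X_i}$ and paired by a trace: the cylinder's ribbon carries a two-valent coupon, with label space $\Hom(X_i,X_j)$ and pairing $\tr(fg)$. With this convention, $\tilde\Gamma_M$ for $\rho$ and $\lambda$ is again two coupons $f,g$ joined by two parallel strands, and $F=\tr(fg)=\langle f,g\rangle$. Hence $RT_{\mathcal C}(|\rho|_{\mathcal G})$ and $RT_{\mathcal C}(|\lambda|_{\mathcal G})$ are the identity on $\Hom(X_i,X_i)$, exactly as for $\alpha$.
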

\begin{proof}
    Upon contracting along the unique internal edges, $Z_\mathcal{C}(\alpha)_{i_1i_2i_3}^j$ is simply the identity linear map on $\Hom(X_{i_1}\otimes X_{i_2} \otimes X_{i_3}, X_j)$. On the other hand, the embedding of the contracted ribbon graph $\tilde{\Gamma}_M$ is exactly the ribbon diagram that encodes the evaluation map of $\Hom(X_{i_1}\otimes X_{i_2} \otimes X_{i_3}, X_j)$ and its dual; this is analogous to the proof of Lemma \ref{lem:rt-id}. Hence, $RT_{\mathcal{C}}(|\alpha|_\mathcal{G}) = Z_{\mathcal{C}}(\alpha)$.
    
    The arguments for $\gamma=\rho,\lambda$ are similar.
\end{proof}
For all subsequent ribbon diagrams depicting embeddings of $\Gamma_M$ or $\tilde{\Gamma}_M$, the left halves of the diagrams correspond to $\Gamma_\Sigma$ (or $\tilde{\Gamma}_\Sigma$) and the right halves correspond to $-\Gamma_{\Sigma'}$ (or $-\tilde{\Gamma}_{\Sigma'}$).
\begin{lem} \label{lem:inv2}
    The equation (\ref{eq:rtz-dual}) holds for $\gamma=\beta,\theta$.
\end{lem}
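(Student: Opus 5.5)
The plan follows the proof of Lemma \ref{lem:inv1}. Both $\beta$ and $\theta$ have $n=0$ and $\ol{M}=S^3$, and the scalar in (\ref{eq:rt-gen}) equals $1$ by the table in the proof of Proposition \ref{prop:mtc-rep-mtc}. Neither the source nor the target graph has internal edges: for $\theta$ there is a single edge through the cylinder, and for $\beta$ the pants graph is a single coupon with three external edges. Hence $\dim_\mathrm{int}=1$, and it suffices to check (\ref{eq:rtz-dual}) as an identity of operator invariants
\[
\left\langle Z_\mathcal{C}(\gamma)_\mathbf{i}^\mathbf{j}(\mathbf{f}),\mathbf{g}\right\rangle_{\Gamma_{\Sigma'}} = F(\Gamma_M, K\cup K', \mathbf{f}\otimes\mathbf{g}).
\]

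\textbf{Case $\gamma=\beta$.} The bordism $|\beta|_\mathcal{G}$ is the mapping cylinder of the half-twist diffeomorphism exchanging the two legs of the pants. The embedding of $\Gamma_M$ in $\ol{M}=S^3$ is then obtained from the evaluation diagram of Lemma \ref{lem:rt-id} in the following way. Take the two coupons $f\in\Hom(X_{i_1}\otimes X_{i_2},X_j)$ and $g$, which lies in the dual of $\Hom(X_{i_2}\otimes X_{i_1},X_j)$. Join their legs through the vertical cylinders, and let the two leg strands cross once, as recorded by the mapping cylinder. The main step is to identify this crossing with the braiding $\beta^{-1}_{X_{i_2},X_{i_1}}$ and not with $\beta_{X_{i_1},X_{i_2}}$. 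I would fix this by the orientation conventions: the handlebodies carry the right-handed orientation of $\mathbb{R}^3$, and $-\Gamma_{\Sigma'}$ is reversed. Once the crossing type is fixed, isotopy invariance of $F$ lets me slide the crossing onto $f$. The diagram then becomes the evaluation diagram applied to $f\circ\beta^{-1}_{X_{i_2},X_{i_1}}$ and $g$, which equals $\langle Z_\mathcal{C}(\beta)(f),g\rangle$ by the Remark after Lemma \ref{lem:rev-dual}.

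\textbf{Case $\gamma=\theta$.} Here $M$ is the mapping cylinder of a Dehn twist of the cylinder, and $\Gamma_M$ is a single closed ribbon coloured $X_i$, possibly with the two coupons $\id_{X_i}$ and $g\in\Hom(X_i,X_i)^*$ inserted. The Dehn twist adds one full twist of framing to this ribbon, so $F(\Gamma_M)=\tr(\theta_{X_i}^{\pm1}g)$. This matches $\langle Z_\mathcal{C}(\theta)(\id),g\rangle=\tr(\theta_{X_i}g)$ provided the sign is $+$.

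\textbf{Main obstacle.} In both cases the real work is the sign bookkeeping: checking that the twist or crossing produced by the chosen diffeomorphisms in \cite[Construction 6.15]{partA} is positive, consistently with the orientation conventions. I would settle this by drawing the thickened handlebody $|\Gamma_\Sigma|\times[-1,1]$ and tracking the image of the framing under the diffeomorphism. Any remaining normalisation discrepancy would be absorbed by the conventions already fixed in Proposition \ref{prop:r-gen}.
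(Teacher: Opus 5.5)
Your proposal follows the paper's proof: with $\ol{M}=S^3$, trivial scalar and $\dim_\mathrm{int}=1$, one observes that $\Gamma_M$ (two coupons joined through a single crossing for $\beta$, one kinked loop for $\theta$) is exactly the diagram computing $\langle Z_\mathcal{C}(\gamma)_\mathbf{i}^\mathbf{j}(\mathbf{f}),\mathbf{g}\rangle_{\Gamma_{\Sigma'}}$, and the paper settles the crossing and twist signs as you propose, by drawing them in Figure~\ref{fig:beta-theta}. Two small cautions: a wrong sign could not be ``absorbed'' by the conventions of Proposition~\ref{prop:r-gen}, because the sign is forced by the diffeomorphisms chosen for $|\beta|_\mathcal{G}$ and $|\theta|_\mathcal{G}$; and for $f\circ(-)$ to type-check, the inverse braiding should be $\beta^{-1}_{X_{i_1},X_{i_2}}\colon X_{i_2}\otimes X_{i_1}\to X_{i_1}\otimes X_{i_2}$ rather than $\beta^{-1}_{X_{i_2},X_{i_1}}$.
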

\begin{proof}
    The embedded ribbon graphs $\Gamma_M$ are as drawn in Figure \ref{fig:beta-theta}. We see that these are exactly the graphs that encode $\left\langle Z_\mathcal{C}(\beta)_{\textcolour{red}{i_1}\textcolour{Green}{i_2}}^{\textcolour{blue}{j}}(f),g\right\rangle_{\Gamma_{\Sigma'}}$ and $\left\langle Z_\mathcal{C}(\theta)_{\textcolour{blue}{i}}^{\textcolour{blue}{i}}(\id_{X_{\textcolour{blue}{i}}}),\id_{X_{\textcolour{blue}{i}}}\right\rangle_{\Gamma_{\Sigma'}}$ respectively and so (\ref{eq:rtz-dual}) holds for $\gamma=\beta,\theta$.
    \begin{figure}[hbt!]
        \begin{tikzpicture}
            \draw[draw=blue, double=white, double distance=5pt] (-1,-1.6) to [out=up,in=left] (0,-0.6) to [out=right,in=up] (1,-1.6);
            \begin{knot}[clip width=2]
                \strand[draw=red, double=white, double distance=5pt] (-0.7,-2) to [out=down, in=up] (-1.3,-3) to [out=down, in=left] (0,-4.3) to [out=right, in=down] (1.3,-2);
                \strand[draw=Green, double=white, double distance=5pt] (-1.3,-2) to [out=down, in=up] (-0.7,-3) to [out=down, in=left] (0,-3.7) to [out=right, in=down] (0.7,-2);
                \flipcrossings{1}
            \end{knot}
            \draw[draw=blue, ->] (-0.2,-0.62) .. controls (0,-0.58) .. (0.2,-0.62);
            \draw[draw=red, ->] (0.2,-4.28) .. controls (0,-4.32) .. (-0.2,-4.28);
            \draw[draw=Green, ->] (0.2,-3.68) .. controls (0,-3.72) .. (-0.2,-3.68);
            \draw[rounded corners] (-1.6,-2) rectangle (-0.4,-1.6) node[pos=.5] {$f$};
            \draw[rounded corners] (0.4,-2) rectangle (1.6,-1.6) node[pos=.5] {$g$};
            \node[] at (0,-0.3) {$\textcolour{blue}{j}$};
            \node[] at (0,-4) {$\textcolour{Green}{i_2}$};
            \node[] at (0,-4.6) {$\textcolour{red}{i_1}$};

            \begin{knot}[clip width=2, consider self intersections, end tolerance=2pt]
                \strand[draw=blue, double=white, double distance=5pt] (5,-0.9) to [out=left, in=up] (4,-1.9) to [out=down, in=left] (5,-2.9) to [out=right, in=right] (5,-1.9) to [out=left, in=up] (4,-2.9) to [out=down, in=left] (5,-3.9) to [out=right, in=down] (6.5,-2.4) to [out=up, in=right] (5,-0.9);
                \flipcrossings{1}
            \end{knot}
            \draw[draw=blue, ->] (4,-1.9) .. controls (4,-1.7) .. (4.1,-1.5);
            \node[] at (3.7,-1.7) {$\textcolour{blue}{j}$};
        \end{tikzpicture}
        \caption{Embeddings of $\Gamma_M$ for the 2-morphisms $\beta$ and $\theta$.} \label{fig:beta-theta}
    \end{figure}
\end{proof}
For the non-invertible generators, we may likewise write both sides of (\ref{eq:rtz-dual}) in terms of the operator invariant of the same ribbon diagram.
\begin{lem}\label{lem:eps}
    The equation (\ref{eq:rtz-dual}) holds for $\gamma=\epsilon,\mu,\mu^\dag$.
\end{lem}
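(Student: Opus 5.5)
For each of $\gamma=\epsilon,\mu,\mu^\dag$ I will draw the labelled embedded ribbon graph $\Gamma_M$ (or its contraction $\tilde{\Gamma}_M$) in $\ol{M}=S^3$. Then I will evaluate its operator invariant and compare it with the pairing $\langle Z_\mathcal{C}(\gamma)_\mathbf{i}^\mathbf{j}(\mathbf{f}),\mathbf{g}\rangle$ from Proposition \ref{prop:r-gen} and Proposition \ref{prop:noninv-dag}. The scalar prefactor in (\ref{eq:rt-gen}) has already been tabulated: it is $1$ for $\epsilon,\mu$ and $\frac{D}{p_-}=p_+$ for $\mu^\dag$. What remains is to check the operator invariants and the $\dim_\mathrm{int}$ factors.

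For $\mu$, the source is cup followed by cap. Its ribbon graph has two coupons joined by one internal edge, and the external index forces $\mathbf{i}=\mathbf{j}=1$. The target is the identity cylinder with a single external edge and no coupons. Contracting the internal edge reduces the source label space to $\Hom(\mathbbm{1},\mathbbm{1})$, and $\Gamma_M$ becomes a $\mathbbm{1}$-coloured strand with one coupon labelled $\id_\mathbbm{1}\otimes\id_\mathbbm{1}$. Its invariant is $1$, which agrees with $Z_\mathcal{C}(\mu)_1^1$.

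For $\mu^\dag$, source and target are interchanged. Now the coloured ribbon graph on the target side carries the internal edge, so $\dim_\mathrm{int}(K')=\dim\mathbbm{1}=1$. The diagram again evaluates to $1$ on $\id_\mathbbm{1}\otimes\id_\mathbbm{1}$. Multiplying by the prefactor $p_+$ gives $p_+\,\id_\mathbbm{1}\otimes\id_\mathbbm{1}$, as required.

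For $\epsilon$, the source is a pants precomposed with a copants glued along the left leg. Its graph has two coupons, joined by two internal edges through the two legs, together with external edges coloured $i$ at the bottom and $j$ at the top. The target is the cylinder. Contracting along the spanning tree consisting of one leg edge leaves a single coupon with one loop edge, coloured $k$. This loop edge is the edge along which Proposition \ref{prop:r-gen} writes $Z_\mathcal{C}(\epsilon)$ as a composition, so I will instead contract only the tree and keep both leg edges coloured, with label $f\otimes f'\in\Hom(X_i,X_{k_1}\otimes X_{k_2})\otimes\Hom(X_{k_1}\otimes X_{k_2},X_j)$. Then $\Gamma_M$ is $f$ followed by $f'$, closed off by the $j$-strand of $-\Gamma_{\Sigma'}$ carrying the label $g\in\Hom(X_j,X_i)$. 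Since $\ol{M}=S^3$ and there is no surgery link, the operator invariant is $\tr(g\,f'f)$. This equals $\langle f'f,g\rangle$, which is exactly $\langle Z_\mathcal{C}(\epsilon)(f\otimes f'),g\rangle$ once $f'f=c\,\id_{X_i}$ is identified with $c$ for $i=j$. For $i\ne j$ both sides vanish. The target has no internal edges, so $\dim_\mathrm{int}=1$.

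The main obstacle is the bookkeeping with contractions and normalisations in the $\epsilon$ case. Lemma \ref{lem:contract-tree-rt} and Lemma \ref{lem:dual-contract} introduce factors $\dim(K|_T)$ when contracting, and the right-hand side of (\ref{eq:rtz-dual}) must be taken in the uncontracted or contracted form consistently on both sides. I would handle this by carrying out the $\epsilon$ computation entirely in the uncontracted graph, where $Z_\mathcal{C}(\epsilon)$ is already stated, so that no tree-contraction scalars arise. I would also check carefully that $\Gamma_M$ is planar, with no twists or crossings, after the isotopy that places the source half on the left and the target half on the right.
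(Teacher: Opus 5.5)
Your route is the paper's. You evaluate the closed labelled graph $\Gamma_M\subset\ol{M}=S^3$ directly (no surgery link), read off its operator invariant as the trace of the composite of the coupon labels, multiply by the tabulated prefactor ($1$ for $\epsilon,\mu$ and $p_+=D/p_-$ for $\mu^\dag$), compare with the pairing of $Z_\mathcal{C}(\gamma)$, and note that the off-diagonal entries vanish. Working uncontracted for $\epsilon$ is also what the paper does, and your $\tr(g\,f'f)$ computation is its $\tr(f_1\circ f_2)$ computation.

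Your $\mu$ and $\mu^\dag$ paragraphs, however, use the wrong graph.
\begin{itemize}
\item The source of $\mu$ is $\mathrm{cup}\circ\mathrm{cap}$, with the cap below and the cup above. This is two disjoint discs, a 1-morphism $\mathbb{S}\to\mathbb{S}$.
\item ``Two coupons joined by one internal edge'' is instead the graph of the sphere $\mathrm{cap}\circ\mathrm{cup}:\emptyset\to\emptyset$, the target of $\nu$. It cannot be the source of a 2-morphism into the cylinder.
\item The correct source graph has two coupons, each with its own external edge, and no internal edge. Its label space is $\Hom(X_i,\mathbbm{1})\otimes\Hom(\mathbbm{1},X_j)$. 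This is what forces $i=j=1$, and it is exactly the domain of $Z_\mathcal{C}(\mu)_1^1$, not the contracted $\Hom(\mathbbm{1},\mathbbm{1})$ you describe.
\item Nothing gets contracted. $\Gamma_M$ is the $\epsilon$ diagram with the two leg strands deleted: a single $\mathbbm{1}$-coloured loop through the cap coupon and the cup coupon, with invariant $1$ on $\id_{\mathbbm{1}}\otimes\id_{\mathbbm{1}}$.
\item For $\mu^\dag$, $\dim_{\mathrm{int}}(K')=1$ because $\mathrm{cup}\circ\mathrm{cap}$ has no internal edges at all, not because of an internal edge coloured $\mathbbm{1}$.
\end{itemize}
Everything here is $\mathbbm{1}$-coloured and one-dimensional, so your numbers happen to survive, but the setup must be redone as above.

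You also call the cylinder coupon-free in the $\mu$ paragraph, yet give it a label $g$ in the $\epsilon$ paragraph; fix one convention. The paper tacitly identifies the cylinder's one-dimensional label space with $\Hom(X_i,X_i)$ under the trace pairing when it pairs against $\id_{X_i}$. With a coupon-free cylinder and the empty-product pairing, the recipe would return $\tr(f'f)=c\dim X_i$ rather than $c$.
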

\begin{proof}
    let $f_1,f_2$ be the labels of the vertices of $\Gamma_\Sigma$. Then, we have
    \begin{equation*}
        \begin{tikzpicture}
            \draw[draw=blue, double=white, double distance=5pt] (3,0.7) to [out=up,in=left] (3.6,1) to [out=right,in=right] (3.6,-1) to [out=left,in=down] (3,-0.7);
            \draw[draw=red, double=white, double distance=5pt] (2.7,-0.3) -- (2.7,0.3);
            \draw[draw=Green, double=white, double distance=5pt] (3.3,-0.3) -- (3.3,0.3);
            \draw[draw=red, ->] (2.7,-0.2) -- (2.7,0.2);
            \draw[draw=Green, ->] (3.3,-0.2) -- (3.3,0.2);
            \draw[draw=blue, ->] (4.18,0.2) .. controls (4.22,0) .. (4.18,-0.2);
            \draw[rounded corners] (2.4,-0.7) rectangle (3.6,-0.3) node[pos=.5] {$f_2$};
            \draw[rounded corners] (2.4,0.3) rectangle (3.6,0.7) node[pos=.5] {$f_1$};
            \node[] at (4.42,0) {$\textcolour{blue}{i}$};

            \node[] at (-1.05,0) {$\left\langle Z_\mathcal{C}(\epsilon)_{\textcolour{blue}{i}}^{\textcolour{blue}{i}} (f_1\otimes f_2), \id_{X_{\textcolour{blue}{i}}} \right\rangle_{\Gamma_{\Sigma'}} = \tr(f_1\circ f_2) = $};
            \node[] at (7.45,0) {$= \left\langle RT_\mathcal{C}(|\epsilon|_\mathcal{G})_{\textcolour{blue}{i}}^{\textcolour{blue}{i}} (f_1\otimes f_2), \id_{X_{\textcolour{blue}{i}}} \right\rangle_{\Gamma_{\Sigma'}}$.};
        \end{tikzpicture}
    \end{equation*}
    When $i\ne j$, $Z_\mathcal{C}(\epsilon)_i^j = RT_\mathcal{C}(|\epsilon|_\mathcal{G})_i^j$ as they are both the zero linear map (since their codomain is the zero vector space). Thus, we have shown (\ref{eq:rtz-dual}) for $\gamma=\epsilon$.
    
    The computations for $\gamma=\mu,\mu^\dag$ are similar (delete the red and green ribbons from the diagram above).
\end{proof}
For the remaining non-invertible generators $\gamma$, we are instead working with the contracted ribbon graph $\tilde{\Gamma}_M$. Other than that, the computations are similar to those in Lemma \ref{lem:eps}
\begin{lem}\label{lem:noninv}
    The equation (\ref{eq:rtz-dual}) holds for $\gamma=\nu,\nu^\dag,\eta,\eta^\dag,\epsilon^\dag$.
\end{lem}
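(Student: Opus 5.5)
I will follow the pattern of Lemma \ref{lem:eps}: for each of $\gamma=\nu,\nu^\dag,\eta,\eta^\dag,\epsilon^\dag$, draw the contracted embedded graph $\tilde{\Gamma}_M$ in $\ol{M}=S^3$. Its left half is $\tilde{\Gamma}_\Sigma$ and its right half is $-\tilde{\Gamma}_{\Sigma'}$, glued through the solid cylinders. I will then evaluate its operator invariant and compare with the formulas of Propositions \ref{prop:r-gen} and \ref{prop:noninv-dag}. The comparison uses (\ref{eq:rt-gen}) with tildes, the scalar table computed in the proof of Proposition \ref{prop:mtc-rep-mtc}, and the identity $p_+=D/p_-$. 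In each case the scalar prefactor already matches $Z_\mathcal{C}$: it is $1$ for $\nu,\eta$, equals $p_+$ for $\eta^\dag$, and equals $1/p_+$ for $\nu^\dag,\epsilon^\dag$. So what remains is to show that the operator invariant, multiplied by $\dim_\mathrm{int}(\tilde K')$, reproduces the pairing $\langle Z_\mathcal{C}(\gamma)(\mathbf f),\mathbf g\rangle$ for the normalised $Z_\mathcal{C}(\gamma)$.

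For $\nu$ and $\nu^\dag$, contraction leaves a single coupon labelled in $\Hom(\mathbbm{1},\mathbbm{1})$ on one side and nothing on the other. The diagram is therefore a coupon with no strands, of invariant $f$ viewed as a scalar. This gives $\langle Z(\nu)(1),\id\rangle=1$ and $Z(\nu^\dag)(\id_{\mathbbm 1})=1/p_+$ once the prefactor is included. For $\eta$, the contracted source graph is empty. The diagram consists of two parallel strands $i,j$ running from the bottom cylinders to the contracted target coupon $g\in\Hom(X_i\otimes X_j,X_i\otimes X_j)$ and closing up, so its invariant is $\tr(g)$. The pairing $\langle\id_{X_i\otimes X_j},g\rangle=\tr(g)$ then matches, with $\dim_\mathrm{int}=1$ since no internal edges remain. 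For $\eta^\dag$, the picture is the mirror image and gives $\tr(f)$. I must check that the normalisation $\tr(f)/(\dim i\dim j)$ in Proposition \ref{prop:noninv-dag} agrees with the pairing against the (uncontracted, two-vertex) target. The target $\Sigma'$ is two cylinders with one coupon each, so the dual pairing produces the factors $\dim i$ and $\dim j$; equivalently, Lemma \ref{lem:dual-contract} accounts for them.

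The main obstacle is $\epsilon^\dag$. Here the target is the torus-with-two-holes, and after contracting along the left-tube edge, an internal edge coloured $k$ survives. The graph $\tilde{\Gamma}_M$ in $S^3$ must be identified carefully. The source is an $X_i$ strand through the cylinder, while the target coupon $g\in\Hom(X_i\otimes X_k^*,X_i\otimes X_k^*)$ has its $k$-strand closed as a loop through the handle. The handle of $H_{\Sigma'}$ is unknotted in $S^3=\ol{M}$, because $\ol{M}=S^3$ for the non-invertible generators, so this loop should be a trivially embedded partial trace. The invariant of the coupon is then $\tr(g)$, and $\dim_\mathrm{int}(\tilde K')=\dim k$. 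I will verify, using Lemma \ref{lem:cyc-shift} to rotate the coupon, that $\frac{1}{p_+}\dim k\,\tr(g)$ equals $\langle \frac1{p_+}\sum_{k'}\dim k'\,\id_{X_i\otimes X_{k'}^*},g\rangle$, which follows since the pairing is the trace. The delicate points are getting the embedding of the loop right (no extra twist or linking with the $i$-strand, which would introduce $\theta$ or $S$-matrix factors) and tracking the orientation conventions. If the embedding turns out to carry a linking, I would instead apply Lemma \ref{lem:BK} to simplify it.
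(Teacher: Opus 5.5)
Your proposal follows the paper's proof. In each case $\tilde\Gamma_M\subset S^3$ is either a lone coupon ($\nu,\nu^\dag$) or the planar, unlinked closure of the single coupon: $\tr(g)$ for $\eta$, $\tr(f)$ for $\eta^\dag$, and $\tr(g_k)$ for $\epsilon^\dag$, with the $k$-loop closed on the opposite side from the $i$-strand. Combining this with the scalar table, $p_+=D/p_-$ and $\dim_\mathrm{int}(\tilde K')=\dim k$ gives (\ref{eq:rtz-dual}).

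One correction concerns your account of the factor $\dim i\dim j$ for $\eta^\dag$. The paper gets it by pairing against $\id_{X_i\otimes X_j}$ and evaluating $\tr(\id_{X_i})\tr(\id_{X_j})$. Lemma \ref{lem:dual-contract} plays no role, because nothing on the target side is contracted. Also, the cylinder graphs of Figure \ref{fig:assoc-graph} carry no coupons. So this factor rests on treating each through-strand as an identity-labelled coupon, a convention the paper adopts only implicitly, rather than on the pairing of Lemma \ref{lem:rev-dual}.
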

\begin{proof}
    The cases $\gamma=\nu,\nu^\dag$ are obvious as $\tilde{\Gamma}_M$ consists of a single vertex and no edges.
    
    For $\gamma=\eta$, the only nonzero case is $\mathbf{i} = \mathbf{j}$ as otherwise, the domain of both linear maps is the zero vector space. For any $g\in\Hom(X_{\textcolour{red}{j}}^* \otimes X_{\textcolour{blue}{i}}^*, X_{\textcolour{red}{j}}^* \otimes X_{\textcolour{blue}{i}}^*)$ we have
    \begin{equation*}
        \begin{tikzpicture}
            \draw[draw=blue, double=white, double distance=5pt] (4.3,0.2) to [out=up,in=right] (3.4,1) to [out=left,in=up] (2.5,0) to [out=down,in=left] (3.4,-1) to [out=right,in=down] (4.3,-0.2);
            \draw[draw=red, double=white, double distance=5pt] (3.7,0.2) to [out=up,in=right] (3.4,0.5) to [out=left,in=left] (3.4,-0.5) to [out=right,in=down] (3.7,-0.2);
            \draw[draw=blue, ->] (2.52,-0.2) .. controls (2.48,0) .. (2.52,0.2);
            \draw[draw=red, ->] (3.12,-0.2) .. controls (3.08,0) .. (3.12,0.2);
            \draw[rounded corners] (3.4,-0.2) rectangle (4.6,0.2) node[pos=.5] {$g$};
            \node[] at (2.28,0) {$\textcolour{blue}{i}$};
            \node[] at (2.88,0) {$\textcolour{red}{j}$};
            
            \node[] at (-0.9,0) {$\left\langle Z_\mathcal{C}(\eta)_{\textcolour{blue}{i}\textcolour{red}{j}}^{\textcolour{blue}{i}\textcolour{red}{j}}(\id_{X_{\textcolour{blue}{i}}\otimes X_{\textcolour{red}{j}}}), g\right\rangle_{\tilde{\Gamma}_{\Sigma'}} = \tr(g) =$};
            \node[] at (7.35,0) {$= \left\langle RT_\mathcal{C}(|\eta|_\mathcal{G})_{\textcolour{blue}{i}\textcolour{red}{j}}^{\textcolour{blue}{i}\textcolour{red}{j}}(\id_{X_{\textcolour{blue}{i}}\otimes X_{\textcolour{red}{j}}}), g\right\rangle_{\tilde{\Gamma}_{\Sigma'}}$.};
        \end{tikzpicture}
    \end{equation*}
    Hence, we have shown (\ref{eq:rtz-dual}) for $\gamma=\eta$.

    For $\gamma=\eta^\dag$, the only nonzero case is when $\mathbf{i} = \mathbf{j}$, as otherwise both codomains are the zero vector space. Now for any $f \in \Hom(X_{\textcolour{blue}{i}}\otimes X_{\textcolour{red}{j}}, X_{\textcolour{blue}{i}}\otimes X_{\textcolour{red}{j}})$ we have
    \begin{equation*}
        \begin{tikzpicture}
            \draw[draw=blue, double=white, double distance=5pt] (-4.3,0.2) to [out=up,in=left] (-3.4,1) to [out=right,in=up] (-2.5,0) to [out=down,in=right] (-3.4,-1) to [out=left,in=down] (-4.3,-0.2);
            \draw[draw=red, double=white, double distance=5pt] (-3.7,0.2) to [out=up,in=left] (-3.4,0.5) to [out=right,in=right] (-3.4,-0.5) to [out=left,in=down] (-3.7,-0.2);
            \draw[draw=blue, ->] (-2.52,0.2) .. controls (-2.48,0) .. (-2.52,-0.2);
            \draw[draw=red, ->] (-3.12,0.2) .. controls (-3.08,0) .. (-3.12,-0.2);
            \draw[rounded corners] (-4.6,-0.2) rectangle (-3.4,0.2) node[pos=.5] {$f$};
            \node[] at (-2.28,0) {$\textcolour{blue}{i}$};
            \node[] at (-2.88,0) {$\textcolour{red}{j}$};

            \node[] at (-3.9,1.7) {$\frac1{p_+}\left\langle Z_\mathcal{C}(\eta^\dag)_{\textcolour{blue}{i}\textcolour{red}{j}}^{\textcolour{blue}{i}\textcolour{red}{j}}(f), \id_{X_{\textcolour{blue}{i}}\otimes X_{\textcolour{red}{j}}} \right\rangle_{\tilde{\Gamma}_{\Sigma'}} = \frac1{\dim\textcolour{blue}{i}\dim\textcolour{red}{j}}\tr(f)\tr(\id_{X_{\textcolour{blue}{i}}})\tr(\id_{X_{\textcolour{red}{j}}}) = \tr(f)$};
            \node[] at (-4.8,0) {$=$};
            \node[] at (1,0) {$= \frac1{p_+}\left\langle RT_\mathcal{C}(|\eta^\dag|_\mathcal{G})_{\textcolour{blue}{i}\textcolour{red}{j}}^{\textcolour{blue}{i}\textcolour{red}{j}}(f), \id_{X_{\textcolour{blue}{i}}\otimes X_{\textcolour{red}{j}}} \right\rangle_{\tilde{\Gamma}_{\Sigma'}}$.};
        \end{tikzpicture}
    \end{equation*}
    Thus, we have shown (\ref{eq:rtz-dual}) for $\gamma=\eta^\dag$.

    Lastly, for $\epsilon^\dag$, recall that the definition of $Z_\mathcal{C}(\epsilon^\dag)$ involved contracting along the edge going through the left leg of the pants and copants. Explicitly, this corresponds to applying Lemmas \ref{lem:cyc-shift} and \ref{lem:tree} to get the isomorphism
    \begin{align*}
        &\bigoplus_{j,k}\Hom(X_i,X_j\otimes X_k) \otimes \Hom(X_j\otimes X_k, X_i) \\ &\quad\cong \bigoplus_{j,k}\Hom(X_i \otimes X_k^*,X_j) \otimes \Hom(X_j, X_i \otimes X_k^*) \cong \bigoplus_{k} \Hom(X_i \otimes X_k^*, X_i \otimes X_k^*).
    \end{align*}
    Now, for any $g\in \bigoplus_{\textcolour{Green}{k}}\Hom(X_{\textcolour{Green}{k}} \otimes X_{\textcolour{blue}{i}}^*, X_{\textcolour{Green}{k}} \otimes X_{\textcolour{blue}{i}}^*)$, say $g=\sum_{\textcolour{Green}{k}} g_{\textcolour{Green}{k}}$, we have
    \begin{equation*}
        \begin{tikzpicture}
            \draw[draw=blue, double=white, double distance=5pt] (-4.3,0.2) to [out=up,in=right] (-5.2,1) to [out=left,in=up] (-6.1,0) to [out=down,in=left] (-5.2,-1) to [out=right,in=down] (-4.3,-0.2);
            \draw[draw=Green, double=white, double distance=5pt] (-3.7,0.2) to [out=up,in=left] (-3.4,0.5) to [out=right,in=right] (-3.4,-0.5) to [out=left,in=down] (-3.7,-0.2);
            \draw[draw=blue, ->] (-6.08,-0.2) .. controls (-6.12,0) .. (-6.08,0.2);
            \draw[draw=Green, ->] (-3.12,0.2) .. controls (-3.08,0) .. (-3.12,-0.2);
            \draw[rounded corners] (-4.6,-0.2) rectangle (-3.4,0.2) node[pos=.5] {$g_{\textcolour{Green}{k}}$};
            \node[] at (-5.92,0) {$\textcolour{blue}{i}$};
            \node[] at (-2.88,0) {$\textcolour{Green}{k}$};

            \node[] at (-9,1.5) {$p_+\left\langle Z_\mathcal{C}(\epsilon^\dag)_{\textcolour{blue}{i}}^{\textcolour{blue}{i}}(\id_{X_{\textcolour{blue}{i}}}), g\right\rangle_{\tilde{\Gamma}_{\Sigma'}} = \displaystyle\sum_{\textcolour{Green}{k}} \dim {\textcolour{Green}{k}} \tr(g)$};
            \node[] at (-7.45,-0.2) {$= \displaystyle\sum_{\textcolour{Green}{k}} \dim {\textcolour{Green}{k}}$};
            \node[] at (-0.15,0) {$=p_+\left\langle RT_\mathcal{C}(|\epsilon^\dag|_\mathcal{G})_{\textcolour{blue}{i}}^{\textcolour{blue}{i}}(\id_{X_{\textcolour{blue}{i}}}), g\right\rangle_{\tilde{\Gamma}_{\Sigma'}}$.};
        \end{tikzpicture}
    \end{equation*}
    In the last step, the $\dim {\textcolour{Green}{k}}$ is exactly the $\dim_\mathrm{int} \tilde{K}'$ term in (\ref{eq:rt-gen}). Hence, we have shown (\ref{eq:rtz-dual}) for $\gamma=\epsilon^\dag$.
\end{proof}

This concludes our proof of Proposition \ref{prop:mtc-rep-mtc}.

\subsection{Fullness of $|-|_\mathcal{G}$}

The proof of the classification of linear representations of $\Bord{sig/2}$ given in \cite{partA} relies on the fact that the symmetric monoidal functor $|-|_\mathcal{G}:\mathbf{F}(\mathcal{G})\rightarrow\Bord{sig/2}$ is an equivalence. This in turn (along with the classification results of \cite{BDSV4}) depends on the corresponding result for $|-|_\mathcal{O}$ (Theorem \ref{conj:pres-or}), the proof of which will be completed in \cite{pres-bord}. In our alternate approach, we are replacing this with the statement that $|-|_\mathcal{O}$ (and hence $|-|_\mathcal{G}$) is essentially surjective, essentially full and full. This is substantially easier to prove; in this subsection, we provide direct proofs of these. In the next subsection, we will then be able to formally deduce in Proposition \ref{prop:pb-inj} that precomposition with $|-|_\mathcal{G}$ induces an injection on isomorphism classes of linear representations.

Firstly, it is clear that $|-|_\mathcal{O}:\mathbf{F}(\mathcal{O})\rightarrow\Bord{or,dec}$ and $|-|_\mathcal{G}:\mathbf{F}(\mathcal{G})\rightarrow\Bord{sig/2,dec}$ are essentially surjective and essentially full. (If we replaced $\mathbf{F}(\mathcal{O})$ and $\mathbf{F}(\mathcal{G})$ with the free quasistrict symmetric monoidal bicategories $\mathbf{F}_{\mathrm{qs}}(\mathcal{O})$ and $\mathbf{F}_{\mathrm{qs}}(\mathcal{G})$ in the sense of \cite[\textsection 2.12]{csp-phd} and \cite[Definition 62]{BDSV2}, then these would in fact induce bijections at the level of objects and 1-morphisms.) The rest of this subsection is devoted to proving that $|-|_\mathcal{O}$ is full, which is the content of Proposition \ref{prop:modo-full}. By applying Proposition \ref{prop:go-compat}, we can then deduce in Proposition \ref{prop:modg-full} that $|-|_\mathcal{G}$ is full as well.

\begin{lem} \label{lem:standardise}
    For any two isomorphic 1-morphisms in $\Bord{or,dec}$, there exists an invertible 2-morphism between them which lies in the image of $|-|_\mathcal{O}$.
\end{lem}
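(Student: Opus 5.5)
We will reduce every 1-morphism of $\Bord{or,dec}$ to a normal form by invertible 2-morphisms lying in the image of $|-|_\mathcal{O}$, and then connect isomorphic normal forms by such 2-morphisms. Concretely, fix for each object $(\mathbb{S},\mathbb{D})^{\sqcup m}\to(\mathbb{S},\mathbb{D})^{\sqcup n}$ and each topological type (partition of the boundary circles among components together with the genus of each component) a \emph{normal form} 1-morphism. Each connected component consists of a caps-and-copants tree merging its incoming circles into one circle, then $g$ copants-pants pairs, then a pants-and-cups tree splitting into its outgoing circles. A disjoint union of these, together with a fixed braiding (symmetric monoidal structure 1-morphisms) permuting the circles into place, is the normal form. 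Two 1-morphisms of $\Bord{or,dec}$ are isomorphic exactly when their underlying surfaces are diffeomorphic relative to the boundary, hence exactly when they have the same topological type. So it suffices to show that every 1-morphism admits an invertible 2-morphism in the image of $|-|_\mathcal{O}$ to the normal form of its type. Composing one such 2-morphism with the inverse of another then gives the claim, since the image is closed under vertical composition and inverses of the invertible generators.

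The reduction goes by induction on the number of generating pieces in the formal composition, using the images of the invertible generators $\alpha,\lambda,\rho,\beta,\theta,\phi_1,\phi_2$ and their inverses, together with the coherence 2-morphisms of the symmetric monoidal structure (interchangers, associators, syzygies), which are in the image since $|-|_\mathcal{O}$ is symmetric monoidal.
\begin{enumerate}
\item Use the interchangers and braiding naturality to slide pieces of different components past each other, so that each connected component becomes its own composite, up to a permutation of circles at the ends.
\item Within a component, use the Frobeniusators $\phi_1^{\pm1},\phi_2^{\pm1}$ to move every copants below every pants whenever a pants is immediately followed by a copants. This is the standard Frobenius normal form argument, which shows that any connected composite of pants, copants, cups and caps is Morse-ordered as ``merges, then handles, then splits''.
\item Use $\lambda,\rho$ to cancel any cup adjacent to a pants (and dually any cap adjacent to a copants via the dual unitors in the image). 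Then use $\alpha$ and $\beta$ to rebracket the merge and split trees into the fixed shape.
\item Convert the handle part into $g$ standard copants-pants pairs using $\phi_i$ together with $\alpha$, as in the commutative Frobenius algebra normal form.
\end{enumerate}

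The main obstacle is step (2) together with the genus bookkeeping. One must check that the rewriting terminates, for instance by an ordering that counts pants lying below copants. One must also check that, in the absence of non-invertible generators, the resulting count of copants-pants pairs is exactly the genus. The latter follows from the Euler characteristic: each pants or copants contributes $-1$ and each cup or cap contributes $+1$, and the invertible moves preserve these counts. The cancellations in step (3) likewise preserve the Euler characteristic, so the normal form reached has the correct genus.

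A further subtlety is that the invertible 2-morphism obtained need not be any prescribed one, only some invertible 2-morphism in the image. This is all that Lemma~\ref{lem:standardise} asks for. Remaining mapping class group discrepancies are deferred to the later fullness argument (e.g.\ Lemma~\ref{lem:all-diffeos}).
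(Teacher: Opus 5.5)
Your plan of reducing every 1-morphism to a normal form by invertible 2-morphisms in the image is viable in principle. However, the moves you list cannot carry it out, and the step you pass over quickly is exactly the non-trivial content of the lemma.

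\textbf{The counitors are out of reach.} None of the invertible generators $\alpha^{\pm1},\lambda^{\pm1},\rho^{\pm1},\beta^{\pm1},\theta^{\pm1},\phi_1^{\pm1},\phi_2^{\pm1}$ has a cap in its source or target. The symmetric monoidal coherence cells only rearrange generating pieces. So every composite of whiskered copies of these preserves the number of caps. Hence they can never relate $(\mathrm{cap}\otimes\id)\circ\mathrm{copants}$ to the identity cylinder, although these two 1-morphisms are isomorphic. Your parenthetical ``via the dual unitors in the image'' therefore assumes precisely what has to be shown.

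\textbf{Step (3) also fails on split trees.} $\alpha$ and $\beta$ only act on pants, so they cannot rebracket or reorder a tree of copants; for that you need a coassociator and a cocommutator. The cocommutator cannot be obtained from your moves even at the level of isomorphism classes. Take $A=k[x,y]/(x,y)^2$ with $\Delta(a)=(a\otimes 1)(x\otimes y)$ and, say, zero counit. This satisfies associativity, unitality, commutativity and both Frobenius relations, but $\tau\circ\Delta\neq\Delta$. So no chain of the relations realised by your moves turns $\tau\circ\mathrm{copants}$ into $\mathrm{copants}$.

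\textbf{What is missing.} The comonoid half of the Frobenius structure has to be supplied by further composites of generators whose images under $|-|_\mathcal{O}$ are mapping cylinders. By the cap count, the counitors must involve $\mu$ or $\nu^\dag$, the only generators that remove a cap. The paper takes the coassociator and counitors to be the composites $\check\alpha,\check\lambda,\check\rho$ of \cite[Definition 12]{BDSV2}.

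\textbf{A simpler route.} With those composites in hand, the paper's argument is also simpler than yours. By Kock's presentation of $\mathbf{Bord}_{1,2}^{\mathrm{or}}$, two isomorphic 1-morphisms are related by a finite chain of the commutative Frobenius relations applied in context. So it suffices to lift each relation, and the termination and genus bookkeeping of your steps (2)--(4) never arise.

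\textbf{Conventions.} Your normal form swaps the paper's conventions. The pants is the merge $2\to1$ and the cup is $\emptyset\to1$, so merge trees consist of pants (and possibly a cup). Accordingly, in step (2) you want to move pants below copants, not the reverse.
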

\begin{proof}
    As shown in \cite[\textsection 1.4]{Kock}, $\mathbf{Bord}_{1,2}^\mathrm{or}$ is the symmetric monoidal category generated by a single object $\tikztinycirc$ as well as generating morphisms $\tikztinypants,\tikztinycopants,\tikztinycup,\tikztinycap$, subject to a certain set of relations. Given 1-morphisms in $\Bord{or,dec}$ (which are formal compositions of disjoint unions of the same four generating 1-morphisms), they are isomorphic if and only if their counterparts in $\mathbf{Bord}_{1,2}^\mathrm{or}$ may be related by the relations in \cite[\textsection 1.4]{Kock}. It then suffices to show that all these relations may arise from invertible 2-morphisms in the image of $|-|_\mathcal{O}$.

    Indeed, most of these relations correspond to the images of the invertible generators $\alpha^\pm,\rho^\pm,\lambda^\pm,\phi_1^\pm,\phi_2^\pm,\beta^\pm$ of $\mathcal{O}$. The only remaining relations are those which encode the comonoid structure of $\tikztinycopants,\tikztinycap$. These in turn exactly correspond to the compositions $\check{\alpha},\check{\rho},\check{\lambda}$ in \cite[Definition 12]{BDSV2}, which are written in terms of generators in $\mathcal{R}$.
\end{proof}

\begin{lem} \label{lem:all-diffeos}
    All 2-morphisms in $\Bord{or}$ which are given by mapping cylinders of diffeomorphisms lie in the image of $|-|_\mathcal{O}$.
\end{lem}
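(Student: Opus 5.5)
The plan is to reduce an arbitrary mapping cylinder to mapping cylinders of Dehn twists about curves that appear as cut circles of some decomposition, and then to realise those using $\theta$. First, let $f:\Sigma\to\Sigma'$ be a diffeomorphism between the underlying surfaces of two 1-morphisms of $\Bord{or,dec}$. We may assume $f$ restricts to the identity on the (standard) boundary circles, up to the permutations and rotations that the object structure allows. Since $\Sigma\cong\Sigma'$, the two 1-morphisms are isomorphic. By Lemma \ref{lem:standardise} we may therefore compose with invertible 2-morphisms in the image of $|-|_\mathcal{O}$, which are themselves mapping cylinders of diffeomorphisms, and reduce to the case $\Sigma=\Sigma'$ with both sides in a fixed \emph{normal form}. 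Concretely, each connected component is written as a composition of disjoint unions of generators of the following shape: copants splitting the incoming circles into one, then $g$ copants--pants pairs (each pair contributing a handle), then copants producing the outgoing circles. Closed components instead start with a cap and end with a cup.

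Mapping cylinders of isotopic diffeomorphisms (rel boundary) are diffeomorphic rel boundary, and mapping cylinders compose under vertical composition. It therefore suffices to show that a set of generators of the mapping class group of each normal-form component, rel boundary, lies in the image, together with the permutations of components and boundary circles. Permutations of boundary circles and of components are produced from $\beta^{\pm1}$ and the symmetric monoidal structure (which $|-|_\mathcal{O}$ preserves). For the pure mapping class group we use the Lickorish--Humphries generators: the Dehn twists about the meridians $a_i$, the longitudes $b_i$, the curves $c_i$ separating consecutive handles, and the twists about curves parallel to the boundary circles.

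The key observation is that a Dehn twist about any curve that occurs as a cut circle between two generating pieces of the decomposition is realised directly. It is the horizontal composite of $\theta^{\pm1}$ on that cylinder with identities on the remaining pieces. Boundary-parallel curves and the separating curves $c_i$ are cut circles in normal form: they are the belt circles between consecutive copants--pants pairs. The meridian $a_i$ is a cut circle as well: it is the circle along the left leg between the copants and pants of the $i^{\text{th}}$ pair. For an arbitrary curve $\gamma$, suppose there is an invertible 2-morphism $\Psi$ in the image of $|-|_\mathcal{O}$ (a mapping cylinder of some diffeomorphism $h$) carrying the normal form to another decomposition in which $h(\gamma)$ is a cut circle. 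Then $T_\gamma=h^{-1}T_{h(\gamma)}h$ is realised as $\Psi^{-1}\circ(\id\star\theta\star\id)\circ\Psi$.

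The main obstacle is the longitudes $b_i$, which are not cut circles of the normal form. Here I would first try to use the Frobeniusators $\phi_1^{\pm1},\phi_2^{\pm1}$ together with $\alpha^{\pm1}$ and $\beta^{\pm1}$. The aim is to re-decompose a neighbourhood of the $i^{\text{th}}$ handle (a one-holed torus, copants followed by pants) so that, after the corresponding diffeomorphism $h$, the image of $b_i$ becomes the left-leg circle of the new pair. One can check this on the genus-one, one-boundary piece by tracking curves through the pictures of the invertible generators: the composite of $\phi_1^{-1}$, a braiding $\beta$ on the legs, and $\phi_1$ should act on the one-holed torus as an "$S$-type" map exchanging meridian and longitude up to twists already realised. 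If this explicit curve-tracking fails, the fallback is to use the Lickorish chain relation to express $T_{b_i}$ through $T_{a_i}$, $T_{c_i}$ and a braid-like element coming from $\beta$. The final step is to assemble all of this with the standardising isomorphisms of Lemma \ref{lem:standardise}.
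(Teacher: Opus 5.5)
There is a genuine gap at the longitudes $b_i$, and the moves you propose cannot close it. Every 2-morphism you intend to use there ($\phi_1^{\pm1}$, $\phi_2^{\pm1}$, $\alpha^{\pm1}$, $\beta^{\pm1}$, $\theta^{\pm1}$, the standardising isomorphisms of Lemma~\ref{lem:standardise}, and their conjugates and composites) is the mapping cylinder of a diffeomorphism that extends over the standard handlebodies $H_\Sigma$. Indeed, each is supported in a genus-zero piece, whose handlebody is a $D^3$ glued to the rest along discs. Likewise $\theta$ twists along a cut circle, which bounds one of these gluing discs. So any such composite, viewed as a self-map of a normal-form surface, lies in the handlebody group of $H_\Sigma$. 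In particular it preserves $\ker\big(H_1(\ol{\Sigma})\to H_1(H_\Sigma)\big)$, the span of the meridians. But $T_{b_i}$ sends $a_i$ to a curve of class $a_i\pm b_i$, which does not bound in $H_\Sigma$. Hence $T_{b_i}$ is not of this form, and no chain or lantern relation can extract it from a subgroup that does not contain it. Your specific candidate does not even apply: the Frobeniusators act on a pants-then-copants junction, not on the copants-then-pants handle, and enlarging the region does not evade the obstruction above. The missing idea is that $T_{b_i}$ must come from an invertible composite that uses the non-invertible generators $\eta,\epsilon,\dots$ essentially, i.e.\ a cancelling pair of $3$-dimensional handles. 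The paper supplies this by citing the composite $A$ of \cite[Definition 7.7]{partA} for the blue curves of Figure~\ref{fig:dehn-gen}.

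Your generating set also needs correcting. In the Lickorish--Humphries set, the curves $c_i$ between consecutive handles are non-separating; each meets $b_i$ and $b_{i+1}$ once. If you replace them by the separating belt circles, the resulting twists preserve the splitting of $H_1$ into the summands $\langle a_i,b_i\rangle$, so they do not generate for $g\ge 2$. The true $c_i$ are the green curves of Figure~\ref{fig:dehn-gen}, running vertically through the pants--copants junction between two handles. They are not cut circles of the normal form, but each becomes one after a Frobeniusator turns that junction into a Z-shape. So your conjugation principle does cover them (this is the paper's $\II$); you just have to apply it there. Similarly, when there are several boundary circles, these twists together with boundary-parallel twists do not generate the mapping class group rel boundary. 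You also need twists about curves enclosing groups of boundary circles, which the paper obtains by conjugating $\II$ with associators (using $\check\alpha$ at the copants end).
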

\begin{proof}
    It suffices to show that for each 1-morphism and for any element of its mapping class group, the corresponding mapping cylinder lies in the image of $|-|_\mathcal{O}$. By composing with the invertible 2-morphisms in Lemma \ref{lem:standardise}, it suffices to show this for (at least) one 1-morphism in each isomorphism class. Note that each 1-morphism is isomorphic to a disjoint union of 1-morphisms of the following form: a composition of some number of pants, followed by alternating pairs of copants and pants, and then a composition of some number of copants. In the case where the source or target is the empty 1-manifold, then we may instead allow a single cup at the bottom or a single cap at the top. As in \cite[\textsection 1.4.16]{Kock}, call this a \emph{normal form}. An example is depicted in Figure \ref{fig:dehn-gen}.

    \begin{figure}[hbt!]
        \begin{tikzpicture}
            \node [Pants, verywide, bot] (A) at (0,0) {};
            \node [Pants, anchor=belt, bot] (B) at (A.leftleg) {};
            \node [Cyl, anchor=top, bot] (C) at (A.rightleg) {};
            \node[Pants, anchor=belt, bot, red, bothred] (D) at (C.bottom) {};
            \node[Cyl, anchor=top, bot, red] (E) at (B.leftleg) {};
            \node[Cyl, anchor=top, bot, red] (F) at (B.rightleg) {};
            \node[Copants, anchor=belt, bot, red] (G) at (A.belt) {};
            \node[Pants, anchor=leftleg, bot, red] (H) at (G.leftleg) {};
            \node[Copants, anchor=belt, bot, red] (I) at (H.belt) {};
            \node[Pants, anchor=leftleg, bot, red] (J) at (I.leftleg) {};
            \node[Copants, anchor=belt, bot, red] (K) at (J.belt) {};
            \node[Pants, anchor=leftleg, bot, red] (L) at (K.leftleg) {};
            \node[Cap, bot] (M) at (L.belt) {};
            \node[BotGreen, rotate=270, xscale=2.4] at (A.belt) {};
            \node[BotGreen, rotate=270, xscale=2.4] at (J.belt) {};
            \node[BotGreen, rotate=270, xscale=2.4] at (H.belt) {};
            \draw[draw=blue, on layer=foreground] (H.leftleg) to [out=up,in=left] (H.center) to [out=right,in=up] (H.rightleg) to [out=down,in=right] (G.center) to [out=left,in=down] (H.leftleg);
            \draw[draw=blue, on layer=foreground] (J.leftleg) to [out=up,in=left] (J.center) to [out=right,in=up] (J.rightleg) to [out=down,in=right] (I.center) to [out=left,in=down] (J.leftleg);
            \draw[draw=blue, on layer=foreground] (L.leftleg) to [out=up,in=left] (L.center) to [out=right,in=up] (L.rightleg) to [out=down,in=right] (K.center) to [out=left,in=down] (L.leftleg);
        \end{tikzpicture}
        \caption{Almost-generating set of Dehn twists for normal form.} \label{fig:dehn-gen}
    \end{figure}
    Now, the mapping class group for 1-morphisms of this form is generated by certain Dehn twists \cite{mcg-gen}, almost all of which can be easily drawn. The Dehn twists around the red curves are given by $|\theta|_\mathcal{O}$. The Dehn twists around the green and blue curves correspond respectively to the images of the compositions $\II,A$ in Definition \cite[Definition 7.7]{partA}. The remaining generators, which are not labelled in Figure \ref{fig:dehn-gen}, correspond to conjugating an application of $\II$ by associators. For example, one such Dehn twist is given by the following composition:

    \begin{equation*}
        \smallbordisms
        \begin{tz}
            \node[Pants, bot, wide] (A) at (0,0) {};
            \node[Pants,  bot, anchor=belt] (B) at (A.leftleg) {};    
            \node[Cyl, bot, anchor=top] (C) at (A.rightleg) {}; 
            \node[Copants, top, bot, anchor=belt] (D) at (A.belt) {};
            \selectpart[green]{(A-belt) (B-leftleg) (C-bottom)};
        \end{tz}
        \longxdoubleto{\alpha}
        \begin{tz}
            \node[Pants, top, bot, wide] (A) at (0,0) {};
            \node[Pants,  bot, anchor=belt] (B) at (A.rightleg) {};    
            \node[Cyl, bot, anchor=top] (C) at (A.leftleg) {}; 
            \node[Copants, top, bot, anchor=belt] (D) at (A.belt) {};
            \selectpart[green]{(D-leftleg) (A-leftleg) (A-rightleg)};
        \end{tz}
        \longxdoubleto{\II}
        \begin{tz}
            \node[Pants, top, bot, wide] (A) at (0,0) {};
            \node[Pants,  bot, anchor=belt] (B) at (A.rightleg) {};    
            \node[Cyl, bot, anchor=top] (C) at (A.leftleg) {}; 
            \node[Copants, top, bot, anchor=belt] (D) at (A.belt) {};
            \selectpart[green]{(A-belt) (C-bottom) (B-rightleg)};
        \end{tz}
        \longxdoubleto{\alpha^{-1}}
        \begin{tz}
            \node[Pants, bot, wide] (A) at (0,0) {};
            \node[Pants,  bot, anchor=belt] (B) at (A.leftleg) {};    
            \node[Cyl, bot, anchor=top] (C) at (A.rightleg) {}; 
            \node[Copants, top, bot, anchor=belt] (D) at (A.belt) {};
        \end{tz}
    \end{equation*}
    If there are copants instead of a single cap at the top, then we may similarly conjugate the topmost $\II$ by the composition $\check{\alpha}$ in \cite[Definition 12]{BDSV2}.
    
    Hence, all such mapping cylinders indeed lie in the image of $|-|_\mathcal{O}$.
\end{proof}

\begin{prop} \label{prop:modo-full}
    $|-|_\mathcal{O}$ is full on 2-morphisms.
\end{prop}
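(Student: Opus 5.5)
The idea is to show that every 2-morphism $M:\Sigma\Rightarrow\Sigma'$ in $\Bord{or,dec}$, with $\Sigma,\Sigma'$ fixed 1-morphisms of $\Bord{or,dec}$, factors as a vertical and horizontal composite of images of generators. We use a relative handle decomposition of $M$, rel its vertical boundary, starting from $\parin M=\Sigma$. By Morse theory (a generic Morse function on $M$ that is constant on the horizontal boundaries and is the projection on the vertical boundary), $M$ decomposes as a vertical composite
\[
M\cong C_0\cup_{\Sigma_1} M_1\cup_{\Sigma_1'} C_1\cup\cdots\cup M_N\cup_{\Sigma_N'} C_N .
\]
Here each $M_k$ is an elementary bordism containing a single critical point of index $0,1,2,3$. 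Each $C_k$ is a mapping cylinder of a diffeomorphism between underlying surfaces, once we choose decompositions of the intermediate surfaces $\Sigma_k,\Sigma_k'$ into pants, copants, cups and caps. Such decompositions exist because every compact oriented surface admits a pants decomposition, and the resulting 1-morphisms lie in $\Bord{or,dec}$. By Lemma \ref{lem:all-diffeos}, every $C_k$ lies in the image of $|-|_\mathcal{O}$. Since $|-|_\mathcal{O}$ is a functor, the image is closed under vertical and horizontal composition. It therefore suffices to show that each elementary handle attachment, relative to some convenient decomposition of its incoming and outgoing surfaces, lies in the image.

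Next, we standardise each elementary piece. A 3-dimensional index-$k$ handle attachment is determined up to diffeomorphism by its attaching sphere in the incoming surface.
\begin{itemize}
\item An index-0 handle creates a new $S^2$ component, which is the image of $\nu$ (or $\mu^\dag$ composed with cups and caps).
\item An index-3 handle kills an $S^2$ component, via $\nu^\dag$ and $\mu$.
\item An index-1 handle attaches along two points. If both points lie in the same component, this adds genus, realised by $\epsilon^\dag$ horizontally composed with identities. If they lie in different components, it joins them, realised by $\eta$.
\item An index-2 handle attaches along a simple closed curve. If the curve is nonseparating, the handle lowers genus, realised by $\epsilon$. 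If it is separating, it splits a component (or caps a circle), realised by $\eta^\dag$, $\mu$, or $\mu^\dag$.
\end{itemize}
Whatever decomposition the intermediate surface carries, the mapping class group acts transitively on nonseparating curves. It also acts transitively on separating curves of a given splitting type and on pairs of points. So we can precompose and postcompose with mapping cylinders, which lie in the image by Lemma \ref{lem:all-diffeos}, together with the invertible reparenthesisations from Lemma \ref{lem:standardise}. This moves the attaching sphere to the standard position appearing in the image of the relevant generator, horizontally composed with identity 2-morphisms on the other pieces.

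Finally, we handle the vertical boundary and disconnected components. Since horizontal composition is available and the vertical boundary is a product $I\times(\text{circles})$, the Morse function can be chosen so that no critical point lies near the vertical boundary. So all handle attachments occur in the interior of the surfaces, and the argument above applies with circle boundary components kept fixed. Disjoint unions are handled by the monoidal structure.

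The main obstacle is this standardisation step. We must check that the model bordisms $|\epsilon|_\mathcal{O}$, $|\eta|_\mathcal{O}$, $|\epsilon^\dag|_\mathcal{O}$ and $|\eta^\dag|_\mathcal{O}$, composed with identities on the rest of a normal-form surface, really realise a handle attached along a curve of each isotopy and separating type. We must also check that the diffeomorphism relating an arbitrary attaching sphere to the standard one fixes the boundary circles, so that it is a genuine 2-morphism covered by Lemma \ref{lem:all-diffeos}. The plan is to first use Lemma \ref{lem:standardise} to put the surface in normal form. We then apply the classification of curves on surfaces up to the mapping class group relative to the boundary. One subtlety is separating curves that partition the boundary circles in a prescribed way; these may additionally require the braiding $\beta$ to permute legs.
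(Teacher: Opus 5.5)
Your proposal takes the same route as the paper. A Morse function extending the boundary function ($0$ on $\parin M$, $1$ on $\parout M$, projection on the vertical boundary) cuts $M$ into handle attachments separated by mapping cylinders; the cylinders are handled by Lemma~\ref{lem:all-diffeos}, the handle attachments are identified up to diffeomorphism with images of the non-invertible generators, and your standardisation paragraph spells out what the paper asserts in one line.

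There is one bookkeeping slip. In the paper's conventions $\mu$ (two discs $\Rightarrow$ cylinder) is a $1$-handle and $\mu^\dag$ is a $2$-handle. So $\mu$ does not realise index-$2$ or index-$3$ attachments, and $\mu^\dag$ composed with cups and caps does not give a $0$-handle. Nothing breaks, because $\nu$, $\nu^\dag$, $\eta$, $\eta^\dag$, $\epsilon$, $\epsilon^\dag$ and $\mu^\dag$ already cover every case you list.
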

\begin{proof}
    This is equivalent to showing that the images of the generators of $\mathcal{O}$ under $|-|_\mathcal{O}$ generate all 2-morphisms of $\Bord{sig/2}$ under vertical and horizontal composition, as well as disjoint unions. Note that the non-invertible generators of $\mathcal{O}$ account for all possible types of handle addition, and hence, by composing with diffeomorphisms, all 2-morphisms in $\Bord{or}$ which arise from handle additions are in the image of $|-|_\mathcal{O}$.

    Hence, it remains to show that each 2-morphism $M$ in $\Bord{or}$ is the vertical composition of some number of handle additions. Indeed, consider the function $f: \partial M \rightarrow [0,1]$ that is $0$ on $\parin M$, $1$ on $\parout M$, and on $\parIn M = \parin\parin M \times [0,1]$ and $\parOut M = \parout\parin M \times [0,1]$ is projection onto the second coordinate. By \cite[Lemma 2.1]{morse-cobord}, this extends to a Morse function $F: M \rightarrow [0,1]$. By construction, this does not have any critical points on the vertical boundary of $M$, and so all the critical points lie in the interior. This provides a decomposition of $M$ into handle additions, as desired.
\end{proof}

\begin{prop} \label{prop:modg-full}
    $|-|_\mathcal{G}$ is full on 2-morphisms.
\end{prop}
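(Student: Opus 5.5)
We deduce fullness of $|-|_\mathcal{G}$ from Proposition \ref{prop:modo-full} via the commutative square of Proposition \ref{prop:go-compat}, handling the signature integer separately using the generators $z,\zeta$.

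Fix a 2-morphism $(M,n)$ in $\Bord{sig/2,dec}$ between 1-morphisms $|x|_\mathcal{G}$ and $|y|_\mathcal{G}$, where $x,y$ are 1-morphisms of $\mathbf{F}(\mathcal{G})$. The plan has four steps.

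\textbf{Step 1: lift the underlying bordism.} Forgetting $n$ gives a 2-morphism $M$ in $\Bord{or,dec}$ between the images of $x,y$ under the quotient $\mathbf{F}(\mathcal{G})\to\mathbf{F}(\mathcal{O})$. Proposition \ref{prop:modo-full} provides a 2-morphism $\omega$ in $\mathbf{F}(\mathcal{O})$ with $|\omega|_\mathcal{O}=M$.

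\textbf{Step 2: lift $\omega$ to $\mathbf{F}(\mathcal{G})$.} The 2-morphism $\omega$ is a composite, under vertical and horizontal composition and disjoint union, of generators of $\mathcal{O}$. Every generator of $\mathcal{O}$ is the image of a generator of $\mathcal{G}$ with the same name. So the same formal expression defines a 2-morphism $\tilde\omega$ in $\mathbf{F}(\mathcal{G})$ lifting $\omega$. Relations of $\mathcal{O}$ used in rewriting $\omega$ are harmless, since we only need some lift of one representative expression. By Proposition \ref{prop:go-compat}, $|\tilde\omega|_\mathcal{G}=(M,n')$ for some integer $n'$. Both $n$ and $n'$ are congruent to $m(M)$ modulo $2$, because both pairs are 2-morphisms of $\Bord{sig/2,dec}$. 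Hence $n-n'=2k$ for some $k\in\mathbb{Z}$.

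\textbf{Step 3: correct the signature.} By the remark following Proposition \ref{prop:modg}, $|z|_\mathcal{G}$ (equivalently $|\zeta|_\mathcal{G}$) is the identity bordism with signature shifted by $2$. Take $\tilde\omega$ and horizontally compose it with $z^{k}$, or equivalently tensor it with $\zeta^k$ acting on the empty 1-morphism. Here $z^{-1},\zeta^{-1}$ handle $k<0$. The underlying bordism is unchanged. Its integer becomes $n'+2k=n$, because horizontal composition along a genus-zero $\ol{\Sigma}$ adds signatures with no Wall correction (Definition \ref{defn:bordsig-new}), and disjoint union with $\zeta^k$ adds the integers.

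\textbf{Step 4: the main obstacle.} The key check is that the integer is genuinely additive in Step 3, i.e.\ that no Wall term interferes. For disjoint union this is clear. So I would use $\zeta$, tensoring with the identity 2-morphism of the empty 1-morphism, to avoid any composition subtleties. It also needs confirming that $|-|_\mathcal{G}$ lands in $\Bord{sig/2,dec}$ compatibly with the parity condition. That holds by Proposition \ref{prop:modg}. Together, Steps 1–3 exhibit a preimage of $(M,n)$, so $|-|_\mathcal{G}$ is full.
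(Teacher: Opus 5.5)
Your proposal is correct and follows the paper's proof. Both lift the underlying oriented bordism through $\mathbf{F}(\mathcal{G})\to\mathbf{F}(\mathcal{O})\to\Bord{or}$, using fullness of both functors together with Proposition \ref{prop:go-compat}, and both then fix the integer by a disjoint union with $|\zeta^k|_\mathcal{G}$; that correction is possible because the integer agrees with $n$ modulo $2$, which you make explicit and the paper leaves implicit. Your preference for $\zeta$ over $z$ is the right one, since horizontal composition with $z^k$ needs a boundary circle to act on, whereas the disjoint union works uniformly.
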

\begin{proof}
    Consider the composition $\mathbf{F}(\mathcal{G})\rightarrow\Bord{sig/2}\rightarrow\Bord{or}$. By \cite[Proposition 6.16]{partA}, this factors through $\mathbf{F}(\mathcal{O})$. The natural functor from $\mathbf{F}(\mathcal{G})$ to $\mathbf{F}(\mathcal{O})$ is full, and by Proposition \ref{prop:modo-full}, so is $|-|_\mathcal{O}$. Thus, their composition is full. Hence, for each 2-morphism in $\Bord{or}$, the corresponding $\mathbb{Z}$-family of 2-morphisms in $\Bord{sig/2}$ has at least one 2-morphism in the image of $|-|_\mathcal{G}$.

    On the other hand, note that taking the disjoint union with $|\zeta^n|_\mathcal{G}$ shifts the signature by $2n$ for each $n\in\mathbb{Z}$, and thus all 2-morphisms of $\Bord{sig/2}$ lie in the image of $|-|_\mathcal{G}$.
\end{proof}

\subsection{The classification of once-extended 3D field theories} \label{subsection:classify}

We may now combine the results of the previous two subsections to obtain a proof of the classification of $\Bord{sig/2}$ that is independent of Theorem \ref{conj:pres-or}.

\begin{defn}
    Let $\mathbf{MTC}$ denote the set of equivalence classes of finite direct sums of modular tensor categories whose anomalies are equal.
\end{defn}
\begin{defn}
    For a symmetric monoidal bicategory $\mathbf{C}$, let $\mathrm{Rep}(\mathbf{C})$ denote the set of equivalence classes of symmetric monoidal functors $\mathbf{C}\rightarrow \Vect$, where $\Vect$ is as defined in Definition \ref{defn:rep}. 
\end{defn}

There is a function $|-|_\mathcal{G}^*: \mathrm{Rep}(\Bord{sig/2})\rightarrow\mathrm{Rep}(\mathbf{F}(\mathcal{H}))$ given by precomposition with $|-|_\mathcal{G}$. We may deduce directly from Proposition \ref{prop:modg-full} that this function is an injection.
\begin{prop}\label{prop:pb-inj}
    The function $|-|_\mathcal{G}^*: \mathrm{Rep}(\Bord{sig/2})\rightarrow\mathrm{Rep}(\mathbf{F}(\mathcal{H}))$ is injective.
\end{prop}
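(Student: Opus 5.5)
The plan is to show directly that if $Z,Z':\Bord{sig/2}\rightarrow\Vect$ are symmetric monoidal functors with $Z\circ|-|_\mathcal{G}\simeq Z'\circ|-|_\mathcal{G}$, then $Z\simeq Z'$. We work throughout with the model $\Bord{sig/2,dec}$, which is equivalent to $\Bord{sig/2}$, so that $|-|_\mathcal{G}$ may be regarded as a functor into it. Recall the two facts established above. First, $|-|_\mathcal{G}$ is essentially surjective and essentially full; indeed, it is bijective on objects and 1-morphisms if one replaces $\mathbf{F}(\mathcal{G})$ by the quasistrict model $\mathbf{F}_{\mathrm{qs}}(\mathcal{G})$. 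Second, $|-|_\mathcal{G}$ is full on 2-morphisms by Proposition \ref{prop:modg-full}. The general principle we want is that precomposition with a symmetric monoidal functor that is surjective on objects, surjective on 1-morphisms, and full on 2-morphisms reflects equivalence of representations.

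First I would replace $\mathbf{F}(\mathcal{G})$ by $\mathbf{F}_{\mathrm{qs}}(\mathcal{G})$, which is equivalent as a symmetric monoidal bicategory. Then $Q:=|-|_\mathcal{G}:\mathbf{F}_{\mathrm{qs}}(\mathcal{G})\rightarrow\Bord{sig/2,dec}$ is an identity on objects and on 1-morphisms. It strictly preserves composition and tensor of 1-morphisms, and its coherence 2-cells are the obvious structural diffeomorphism cylinders. It is also surjective on each hom-set of 2-morphisms.

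Next, let $\Theta:Z\circ Q\Rightarrow Z'\circ Q$ be a symmetric monoidal pseudonatural equivalence, with its modifications. Its data consist of a component 1-morphism $\Theta_x$ for each object $x$ and a 2-cell $\Theta_f$ for each 1-morphism $f$, together with monoidal structure 2-cells. Since $Q$ is a bijection on objects and 1-morphisms, all of this data can be transported verbatim to give candidate data $\Theta'$ for a transformation $Z\Rightarrow Z'$.

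The axioms then need checking. The compatibility of $\Theta'_f$ with composition of 1-morphisms, with the unitors, and with the monoidal coherence cells are all equations among 2-cells whose bordism constituents are images under $Q$; they therefore hold because they hold for $\Theta$. The remaining axiom is naturality of $\Theta'$ with respect to an arbitrary 2-morphism $\gamma:f\Rightarrow g$ in $\Bord{sig/2,dec}$. By fullness (Proposition \ref{prop:modg-full}) there is some $\tilde\gamma$ with $Q(\tilde\gamma)=\gamma$. The required naturality square for $\gamma$ is then literally the naturality square of $\Theta$ at $\tilde\gamma$, since $Z(Q\tilde\gamma)=Z(\gamma)$. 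Note that faithfulness is never needed: different lifts $\tilde\gamma$ yield the same square. The symmetric monoidal modifications transfer in the same way. Thus $\Theta'$ is a symmetric monoidal equivalence $Z\simeq Z'$, which establishes injectivity.

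I expect the main obstacle to be the bookkeeping involved in passing to the quasistrict model. One must confirm that the functor $\mathbf{F}_{\mathrm{qs}}(\mathcal{G})\rightarrow\Bord{sig/2,dec}$ is genuinely strict on objects and 1-morphisms, so that the coherence data of $Z\circ Q$ coincide with those of $Z$ rather than differing by nontrivial diffeomorphism 2-cells. If the functor is only essentially surjective, the transport step instead requires choosing adjoint equivalences and conjugating by them. I would first try to avoid that step by building the decorated model $\Bord{sig/2,dec}$ so that 1-morphisms are exactly formal words in the generators, which is precisely what Definition \ref{defn:bordsigh-new} provides.
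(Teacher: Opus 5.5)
Your argument is correct, but it takes a different route from the paper.

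\textbf{Your route.} You transport a symmetric monoidal equivalence $\Theta\colon Z\circ Q\Rightarrow Z'\circ Q$ directly to a transformation $Z\Rightarrow Z'$. This uses two facts:
\begin{enumerate}
\item $Q$ is a bijection on objects and 1-morphisms in the quasistrict model;
\item fullness (Proposition \ref{prop:modg-full}), needed only so that the naturality square at a 2-morphism $\gamma$ becomes the naturality square of $\Theta$ at a lift $\tilde\gamma$.
\end{enumerate}

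\textbf{The paper's route.} The paper never handles the transformation data. It forms a 2-extension $\mathcal{G}'$ of $\mathcal{G}$ by imposing every relation that holds in the image of $|-|_\mathcal{G}$. Then $|-|_\mathcal{G}$ factors as $|-|_{\mathcal{G}'}\circ q$, where $|-|_{\mathcal{G}'}$ is essentially surjective, essentially full and fully faithful. It is faithful by construction and full by Proposition \ref{prop:modg-full}, hence an equivalence. Injectivity of $q^*$ then follows from Schommer-Pries's result that representations of $\mathbf{F}(\mathcal{G}')$ form the full sub-bicategory of representations of $\mathbf{F}(\mathcal{G})$ satisfying the added relations.

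\textbf{Comparison.} The paper's route hands all coherence bookkeeping to the general theory of presentations. It does not need $|-|_\mathcal{G}$ to be strict, nor a passage to $\mathbf{F}_{\mathrm{qs}}(\mathcal{G})$. Your route is more elementary and self-contained. It also shows plainly that only fullness, not faithfulness, is used, which is the real content of the paper's step too.

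\textbf{What you still need to say.} The cost is exactly the point you flag. Your claim that the composition, unit and monoidal axioms for $\Theta'$ ``hold because they hold for $\Theta$'' is literally true only if $Q$ is strict. If $Q$ has non-identity compositor or monoidal structure 2-cells, the argument needs one more step. Lift those 2-cells along $Q$ using fullness. Then use naturality of $\Theta$ at the lifts to compare the two sets of axioms. This works, but it is not automatic and should be stated.
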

\begin{proof}
    Consider the 2-extension $\mathcal{G}'$ of $\mathcal{G}$ constructed as follows: for every two 2-morphisms which $|-|_\mathcal{G}$ sends to the same 2-morphism in $\Bord{sig/2}$, we add a relation declaring these to be equal. Then, we have a symmetric monoidal functor $q:\mathbf{F}(\mathcal{G})\rightarrow\mathbf{F}(\mathcal{G'})$. By construction, $|-|_\mathcal{G}$ respects each new relation, so it factors as $|-|_{\mathcal{G}'}\circ q$ for some symmetric monoidal functor $|-|_{\mathcal{G}'}: \mathbf{F}(\mathcal{G}')\rightarrow \Bord{sig/2}$. On the other hand, $|-|_{\mathcal{G}'}$ is by construction an equivalence of symmetric monoidal bicategories as it is essentially surjective, essentially full and fully faithful. It thus suffices to show that $q^*: \mathrm{Rep}(\mathbf{F}(\mathcal{G}')) \rightarrow \mathrm{Rep}(\mathbf{F}(\mathcal{G}))$ is injective.
    
    Indeed, by \cite[Proposition 2.74]{csp-phd}, the bicategory of symmetric monoidal functors $\mathbf{F}(\mathcal{G}')\rightarrow \Vect$ is the full subbicategory of the bicategory of symmetric monoidal functors $\mathbf{F}(\mathcal{G})\rightarrow \Vect$ consisting of the functors which satisfy the added relations. This then induces an inclusion at the level of equivalence classes of representations.
\end{proof}

Finally, we obtain a Cerf theory-independent proof of the classification of once-extended 3-dimensional TQFTs.
\begin{thm}[{cf. \cite[Theorem C]{partA}}]\label{thm:rep-mtc}
    Symmetric monoidal functors $\Bord{sig/2}\rightarrow \Vect$ are classified by finite direct sums of modular tensor categories whose anomalies are equal.
\end{thm}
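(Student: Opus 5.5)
We will show that precomposition with $|-|_\mathcal{G}$ identifies $\mathrm{Rep}(\Bord{sig/2})$ with $\mathrm{Rep}(\mathbf{F}(\mathcal{G}))$. Theorem~\ref{thm:rep-fg} then finishes the argument, since it identifies $\mathrm{Rep}(\mathbf{F}(\mathcal{G}))$ with $\mathbf{MTC}$. Throughout we replace $\Bord{sig/2}$ by the equivalent model $\Bord{sig/2,dec}$, which does not change $\mathrm{Rep}$. Proposition~\ref{prop:pb-inj} already gives injectivity of $|-|_\mathcal{G}^*$, and this rests only on the fullness results Propositions~\ref{prop:modo-full} and~\ref{prop:modg-full}, not on Theorem~\ref{conj:pres-or}. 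So it remains to prove surjectivity: every class in $\mathrm{Rep}(\mathbf{F}(\mathcal{G}))$ is pulled back from some representation of $\Bord{sig/2}$.

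For surjectivity, let $Z\colon \mathbf{F}(\mathcal{G})\to\Vect$ be any symmetric monoidal functor. By Theorem~\ref{thm:rep-fg}, $Z$ is equivalent to $Z_{\mathcal{C}}$, where $\mathcal{C}=\bigoplus_{a}\mathcal{C}_a$ is a finite direct sum of modular tensor categories with equal anomalies and $Z_\mathcal{C}$ is built summandwise (the direct-sum construction recalled in \S\ref{subsection:pres}). For each summand, Theorem~\ref{thm:rt-functor2} gives $RT_{\mathcal{C}_a}\colon\Bord{sig/2,dec}\to\KV$, and we post-compose with the fully faithful inclusion $\KV\to\Vect$. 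We then form $RT_\mathcal{C}:=\bigoplus_a RT_{\mathcal{C}_a}$ in the same way $Z_\mathcal{C}$ is formed from the $Z_{\mathcal{C}_a}$. Proposition~\ref{prop:mtc-rep-mtc} gives $RT_{\mathcal{C}_a}\circ|-|_\mathcal{G}\simeq Z_{\mathcal{C}_a}$ for each $a$. Taking direct sums then gives $RT_\mathcal{C}\circ|-|_\mathcal{G}\simeq Z_\mathcal{C}\simeq Z$, which proves surjectivity.

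The main obstacle is making the direct-sum step precise. We must check that the recipe for assembling $Z_\mathcal{C}$ from its summands (how monoidal products of objects, 1-morphisms and 2-morphisms combine across summands, and in particular how the scalars $p_\pm$ enter) can be mirrored on the $RT$ side. It also has to commute with precomposition by $|-|_\mathcal{G}$. The equal-anomaly hypothesis is what should make this work. The generators $\zeta$ and $z$ act on every summand by the same scalar $p_+/p_-$ (Lemma~\ref{lem:z-anomaly}), so a single integer $n$ in a 2-morphism $(M,n)$ is interpreted consistently across all summands, and the sum of the $RT_{\mathcal{C}_a}$ is again compatible with the signature data. We would first verify this on the generators $\zeta$ and $z$, then use the generator-by-generator comparison in Proposition~\ref{prop:mtc-rep-mtc} for everything else.

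Combining injectivity and surjectivity, $|-|_\mathcal{G}^*$ is a bijection, and composing with Theorem~\ref{thm:rep-fg} gives $\mathrm{Rep}(\Bord{sig/2})\cong\mathbf{MTC}$. As a byproduct, every representation of $\Bord{sig/2}$ is equivalent to some $RT_\mathcal{C}$. This is because two representations with the same pullback to $\mathbf{F}(\mathcal{G})$ coincide by injectivity, and the pullback of any representation is some $Z_\mathcal{C}$, which is also the pullback of $RT_\mathcal{C}$.
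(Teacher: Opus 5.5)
Your proposal is correct and follows the paper's proof: injectivity of $|-|_\mathcal{G}^*$ comes from Proposition \ref{prop:pb-inj}, and surjectivity comes from $|-|_\mathcal{G}^*\circ RT_{(-)} \simeq Z_{(-)}$ (Proposition \ref{prop:mtc-rep-mtc}) together with the bijection of Theorem \ref{thm:rep-fg}. The direct-sum bookkeeping you flag as the main obstacle, where equal anomalies ensure the integer $n$ acts by a single scalar across summands, is handled only implicitly in the paper by ``taking direct sums'', so your extra care there refines the paper's argument rather than departing from it.
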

\begin{proof}
    Thus far, we have the following maps of sets:
    \begin{equation} \label{eq:rep-mtc-diagram}
        \begin{tikzcd}
            {\mathrm{Rep}(\Bord{sig/2})} && \\
            && {\mathbf{MTC}} \\
            {\mathrm{Rep}(\mathbf{F}(\mathcal{G}))}
            \arrow["{|-|_\mathcal{G}^*}"', from=1-1, to=3-1]
            \arrow["{RT_{(-)}}"', from=2-3, to=1-1]
            \arrow["{Z_{(-)}}"{pos=0.4}, shift left, from=2-3, to=3-1]
            \arrow["{(-)(\tikztinycirc)}"{pos=0.4}, shift left, from=3-1, to=2-3]
        \end{tikzcd}
    \end{equation}
    By Theorem~\ref{thm:rep-fg}, the two functions between $\mathrm{Rep}(\mathbf{F}(\mathcal{G}))$ and $\mathbf{MTC}$ are inverses to each other, hence providing a bijection. We wish to show that the remaining two maps are also (compatible) bijections.

    By Proposition \ref{prop:mtc-rep-mtc}, $|-|_\mathcal{G}^* \circ RT_{(-)} = Z_{(-)}$ is a bijection, and so $|-|_\mathcal{G}^*$ is surjective. On the other hand, by Proposition \ref{prop:pb-inj}, $|-|_\mathcal{G}^*$ is injective, and so it provides a bijection, as needed.
\end{proof}
\begin{rk}
    The sets in (\ref{eq:rep-mtc-diagram}) are equivalence classes of objects in certain bicategories. Namely, these are the bicategories of representations of $\Bord{sig/2}$ and $\mathbf{F}(\mathcal{G})$, and the bicategory of finite direct sums of modular tensor categories with equal anomalies. One would expect the set-theoretic maps to arise as equivalences of bicategories. Proposition \ref{prop:pb-inj} in fact holds in a bicategorical setting, but the other maps in (\ref{eq:rep-mtc-diagram}) would also have to be upgraded to bicategorical versions, which is beyond the scope of this paper.
\end{rk}

Using the methods in the proof of Theorem \ref{thm:rep-mtc}, we may also prove an analogous result on the classification of representations of $\Bord{or}$. Thus, we may recover proofs of the main results of \cite{BDSV4} which are independent of Theorem \ref{conj:pres-or}.

\begin{thm}[{cf. \cite[Theorem 2]{BDSV4}}] \label{thm:rep-bordor}
    Symmetric monoidal functors of $\Bord{or}\rightarrow \Vect$ are classified by finite direct sums of modular tensor categories with anomaly $1$.
\end{thm}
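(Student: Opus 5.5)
The plan is to run the same triangle argument as in the proof of Theorem \ref{thm:rep-mtc}, with $\mathcal{O}$, $\Bord{or}$ and anomaly-$1$ categories in place of $\mathcal{G}$, $\Bord{sig/2}$ and equal-anomaly sums. I would carry out three steps.

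\emph{Algebraic side.} Rather than re-deriving the classification of $\mathrm{Rep}(\mathbf{F}(\mathcal{O}))$, I would obtain it from Theorem \ref{thm:rep-fg}. Since $\mathcal{O}$ is the 2-extension of $\mathcal{G}$ by the relations $z=\id$ and $\zeta=\id$, \cite[Proposition 2.74]{csp-phd} identifies representations of $\mathbf{F}(\mathcal{O})$ with those representations of $\mathbf{F}(\mathcal{G})$ that satisfy these two relations. By Lemma \ref{lem:z-anomaly}, $Z_\mathcal{C}(z)$ and $Z_\mathcal{C}(\zeta)$ act by the anomaly $p_+/p_-$ on each summand. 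So the relations hold exactly when every summand has anomaly $1$, and $\mathrm{Rep}(\mathbf{F}(\mathcal{O}))$ is in bijection with direct sums of anomaly-$1$ modular tensor categories.

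\emph{Injectivity.} Proposition \ref{prop:modo-full}, together with the essential surjectivity and essential fullness of $|-|_\mathcal{O}$, lets me repeat the proof of Proposition \ref{prop:pb-inj} verbatim. I quotient $\mathcal{O}$ by all relations identified under $|-|_\mathcal{O}$; the induced functor to $\Bord{or}$ is then an equivalence. It follows that $|-|_\mathcal{O}^*:\mathrm{Rep}(\Bord{or})\to\mathrm{Rep}(\mathbf{F}(\mathcal{O}))$ is injective.

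\emph{Surjectivity.} This is the step I expect to be the main obstacle, because it needs an actual construction: an oriented theory $RT'_\mathcal{C}:\Bord{or}\to\KV$ for each anomaly-$1$ sum $\mathcal{C}$. I would build it by descending $RT_{\mathcal{C},\sqrt{D}}$ along the forgetful functor $\Bord{sig,dec}\to\Bord{or,dec}$, and there are two ways to see the descent works.
\begin{itemize}
\item Directly: formula (\ref{eq:def-rt}) depends on $n$ only through the factor $(\sqrt{D}/p_-)^n$. When the anomaly is $1$ we have $p_+=p_-$, and Corollary 3.1.10 of \cite{BK} gives $p_+p_-=D$, so $p_-^2=D$. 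Choosing $\sqrt{D}=p_-$ makes $(\sqrt{D}/p_-)^n=1$. Thus $RT_{\mathcal{C},p_-}$ is independent of $n$ and factors through $\Bord{or,dec}\simeq\Bord{or}$.
\item Alternatively: restrict to $\Bord{sig/2}$ and observe that $RT_\mathcal{C}(\zeta)$ is the anomaly, which is $1$ here.
\end{itemize}
For a direct sum of anomaly-$1$ categories, I take the direct sum of these theories. In either version, one should check that the choice $\sqrt{D}=p_-$ is legitimate for each summand and that the resulting functor is compatible with the decomposition into summands.

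\emph{Conclusion.} By Proposition \ref{prop:mtc-rep-mtc} and Proposition \ref{prop:go-compat}, $RT'_\mathcal{C}\circ|-|_\mathcal{O}$ agrees with the $Z_\mathcal{C}$ factored through $\mathbf{F}(\mathcal{O})$. Hence $|-|_\mathcal{O}^*$ is surjective, and therefore bijective. Composing with the algebraic bijection from the first step gives the classification.
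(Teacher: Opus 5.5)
Your proposal is correct and follows the paper's proof essentially step for step. The algebraic identification of $\mathrm{Rep}(\mathbf{F}(\mathcal{O}))$ via \cite[Proposition 2.74]{csp-phd} and Lemma \ref{lem:z-anomaly} matches the paper, as does injectivity via Proposition \ref{prop:modo-full} and the argument of Proposition \ref{prop:pb-inj}, and surjectivity by descending $RT_\mathcal{C}$ to $\Bord{or}$ and then invoking Proposition \ref{prop:mtc-rep-mtc}; your second bullet is exactly the paper's descent argument. Your first bullet is an equally valid shortcut: taking $\sqrt{D}=p_-$ is legitimate because $p_-^2=p_+p_-=D$ when the anomaly is $1$, so $RT_{\mathcal{C},p_-}$ is already independent of $n$ on $\Bord{sig}$, and this route avoids Proposition \ref{prop:rt-well-def}.
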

\begin{proof}
    Given a modular tensor category $\mathcal{C}$ with anomaly $1$, $RT_\mathcal{C}(|\zeta|_\mathcal{G})$ acts as the identity, and thus $RT_\mathcal{C}: \Bord{sig/2} \rightarrow \Vect$ factors through $\Bord{or}$. By taking direct sums, we may extend this to the general case.

    Now, we may follow the proof of Theorem \ref{thm:rep-mtc}, but replacing $\mathcal{G}$ with $\mathcal{O}$. The representations of $\mathbf{F}(\mathcal{O})$ are exactly the representations $Z_\mathcal{C}$ of $\mathbf{F}(\mathcal{G})$ which send the generators $z$ and $\zeta$ to the identity. By \cite[Lemma 7.14]{partA}, this happens exactly when $\mathcal{C}$ is a finite direct sum of modular tensor categories with anomaly $1$. By \cite[Proposition 2.74]{csp-phd}, the bicategory of representations of $\mathbf{F}(\mathcal{O})$ is the full subbicategory of the representations of $\mathbf{F}(\mathcal{G})$ consisting of these $Z_\mathcal{C}$. Thus, passing to equivalence classes of representations, we see that representations of $\mathbf{F}(\mathcal{O})$ are exactly classified by finite direct sums of modular tensor categories with anomaly $1$.
    
    Next, note that Proposition \ref{prop:mtc-rep-mtc} implies the version for $\mathcal{O}$, i.e. that the constructions $RT_{(-)}$ and $Z_{(-)}$ are compatible with $|-|_\mathcal{O}$. Additionally, by Proposition \ref{prop:modo-full}, $|-|_\mathcal{O}$ is full, and so the same argument as Proposition \ref{prop:pb-inj} shows that pulling back by $|-|_\mathcal{O}$ induces an injection on sets of equivalence classes of representations. Then, following the proof of Theorem \ref{thm:rep-mtc}, the desired result follows.
\end{proof}

We may also obtain a similar classification result for the central extensions of $\Bord{or}$ that arise as multiples of $\Bord{sig/2}$. Let $t$ be a positive integer. Then we may construct a symmetric monoidal extension of $\Bord{or}$ that is ``$t$ times'' of $\Bord{sig/2}$. At the level of cocycles as described in \cite[\textsection 2]{partA}, this corresponds to multiplying each term by $t$. However, we may also construct such an extension explicitly.

\begin{defn}
    For a positive integer $t$, let $\mathbf{Bord}_{1,2,3}^{t\mathrm{sig/2}}$ denote the bicategory with the same objects and 1-morphisms as $\Bord{sig}$, but with 2-morphisms being equivalence classes of pairs $(M,n)$ where $n$ is of the form $m(M) + \frac2tu$ for any $u\in\mathbb{Z}$. (Here, $m(M)$ is as in Definition \ref{defn:sig/2}.) The composition of 1-morphisms, vertical and horizontal composition of 2-morphisms, and monoidal structure are all as in $\Bord{sig}$.
\end{defn}
\begin{rk}
    This is a symmetric monoidal extension of $\Bord{or}$ by $\mathbb{Z}$ in the sense of \cite[\textsection 2]{partA}. By restricting $u$ to be a multiple of $t$, this contains $\Bord{sig/2}$ as an index $t$ symmetric monoidal subbicategory.
\end{rk}

\begin{thm} \label{thm:rep-bordext}
    For any positive integer $t$, symmetric monoidal functors $\mathbf{Bord}_{1,2,3}^{t\mathrm{sig/2}}\rightarrow\Vect$ are classified by finite direct sums of modular tensor categories whose anomalies are equal, along with a single choice of $t^\mathrm{th}$ root of this anomaly.
\end{thm}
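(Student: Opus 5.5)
We follow the template of the proofs of Theorems \ref{thm:rep-mtc} and \ref{thm:rep-bordor}: build a presentation for $\mathbf{Bord}_{1,2,3}^{t\mathrm{sig/2}}$ from $\mathcal{G}$, classify its representations algebraically, and then use $RT$ for surjectivity and fullness for injectivity.

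\emph{The presentation.} Let $\mathcal{G}_t$ be the presentation obtained from $\mathcal{G}$ by adjoining a new invertible generator $\zeta_t,\zeta_t^{-1}$ on the empty 1-morphism, subject to the relation $\zeta_t^t=\zeta$. In $\mathbf{F}(\mathcal{G})$ the generator $\zeta$ is central, being an endomorphism of the identity of the unit, so $\zeta_t$ may be taken central as well. Extend $|-|_\mathcal{G}$ to a functor $|-|_{\mathcal{G}_t}:\mathbf{F}(\mathcal{G}_t)\rightarrow\mathbf{Bord}_{1,2,3}^{t\mathrm{sig/2},\mathrm{dec}}$ by sending $\zeta_t$ to $(\emptyset,2/t)$. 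The signature label is additive under composition of empty bordisms, so the relation $\zeta_t^t=\zeta$ holds.

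\emph{Classification of representations of $\mathbf{F}(\mathcal{G}_t)$.} By \cite[Proposition 2.74]{csp-phd}, giving a representation of $\mathbf{F}(\mathcal{G}_t)$ amounts to giving a representation $Z$ of $\mathbf{F}(\mathcal{G})$ together with a scalar $Z(\zeta_t)\in k^\times$. For a direct sum of summands this scalar must be a single global scalar, since $\zeta_t$ acts on the unit. The relation forces $Z(\zeta_t)^t=Z(\zeta)$, and by Lemma \ref{lem:z-anomaly} $Z(\zeta)=\frac{p_+}{p_-}$. Combining this with Theorem \ref{thm:rep-fg}, representations of $\mathbf{F}(\mathcal{G}_t)$ are classified by finite direct sums of modular tensor categories with equal anomalies, together with a $t^{\mathrm{th}}$ root of that anomaly.

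\emph{Surjectivity via $RT$.} Fix $\mathcal{C}$ and a $t^{\mathrm{th}}$ root $\omega$ of $\frac{p_+}{p_-}=(\sqrt{D}/p_-)^2$. The main obstacle is that formula (\ref{eq:def-rt}) involves $(\sqrt{D}/p_-)^n$ with $n\in m(M)+\frac2t\mathbb{Z}$, which is not obviously well-defined. We define the extended functor by
\[
RT_{\mathcal{C},\omega}(M,m(M)+\tfrac2tu):=\omega^{u}\,RT_\mathcal{C}(M,m(M)).
\]
Compatibility with horizontal and vertical composition reduces to the corresponding statements for $RT_\mathcal{C}$ (Propositions \ref{prop:vert-comp} and \ref{prop:hor-comp}) together with the additivity of the $u$-shifts. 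That additivity holds because the correction terms in Definition \ref{defn:comp}, namely the Wall invariants, are the same for both representatives. We must also check that the formula is independent of how $n$ is written. If $\frac2tu\in2\mathbb{Z}$, say $\frac2tu=2v$, then $t\mid u$, so $\omega^u=(\omega^t)^{v}=(p_+/p_-)^{v}$. This matches the scaling of $RT_\mathcal{C}$ under $n\mapsto n+2v$ coming from (\ref{eq:def-rt}), since $(\sqrt{D}/p_-)^2=p_+/p_-$. Then $RT_{\mathcal{C},\omega}\circ|-|_{\mathcal{G}_t}$ agrees with the representation classified by $(\mathcal{C},\omega)$: on the generators of $\mathcal{G}$ this is Proposition \ref{prop:mtc-rep-mtc}, and $\zeta_t$ is sent to $\omega$. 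Taking direct sums extends this to the general case.

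\emph{Injectivity.} Repeat Propositions \ref{prop:modg-full} and \ref{prop:pb-inj}. The composite $\mathbf{F}(\mathcal{G}_t)\rightarrow\mathbf{Bord}_{1,2,3}^{t\mathrm{sig/2}}\rightarrow\Bord{or}$ is full by Proposition \ref{prop:modo-full}. Taking disjoint unions with $|\zeta_t^u|$ then realises every shift $\frac2tu$, so $|-|_{\mathcal{G}_t}$ is full, and it is clearly essentially surjective and essentially full. By the argument of Proposition \ref{prop:pb-inj}, precomposition with $|-|_{\mathcal{G}_t}$ is therefore injective on equivalence classes of representations. Combined with the surjectivity above, this gives a bijection, exactly as in the proof of Theorem \ref{thm:rep-mtc}.
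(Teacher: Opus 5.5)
Your argument is correct, but it is organised differently from the paper's. The paper does not introduce a new presentation. It restricts a representation $Z$ of $\mathbf{Bord}_{1,2,3}^{t\mathrm{sig/2}}$ to the index-$t$ subbicategory $\Bord{sig/2}$ and applies the already-established Theorem~\ref{thm:rep-mtc} there. It then reads off the $t^{\mathrm{th}}$ root as the scalar by which the generator $\zeta_t$ of the $\mathbb{Z}$-action acts, using $\zeta_t^t=|\zeta|_\mathcal{G}$. Conversely, it extends $RT_\mathcal{C}$ by letting $\zeta_t$ act by $r$. It matches up equivalences by hand, observing that an equivalence on $\Bord{sig/2}$ extends once the $\zeta_t$-scalars agree.

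You instead rerun the full template with an enlarged presentation $\mathcal{G}_t$. You classify representations of $\mathbf{F}(\mathcal{G}_t)$ via \cite[Proposition 2.74]{csp-phd} and Theorem~\ref{thm:rep-fg}. You prove $|-|_{\mathcal{G}_t}$ is full, which gives injectivity as in Proposition~\ref{prop:pb-inj}. Surjectivity comes from the explicit extension $RT_{\mathcal{C},\omega}$.

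The paper's route is shorter: the fullness and injectivity work has already been done once for $\Bord{sig/2}$, and only one extra generator needs tracking. Yours is more self-contained and makes explicit what the paper dismisses as \emph{clear}. Your formula $\omega^u RT_\mathcal{C}(M,m(M))$ is exactly the paper's extension, since $(M,m(M)+\tfrac2tu)=(M,m(M))\sqcup(\emptyset,\tfrac2tu)$. Your composition check, which absorbs the even integer $m(M)+m(M')-\sigma(V;A,B,C)-m(M'')$ via $\omega^t=p_+/p_-$, is precisely the verification the paper omits. You also obtain the matching of equivalences formally from the presentation machinery rather than by hand.

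One small wording point: the decomposition $n=m(M)+\tfrac2tu$ is unique, so you are not checking independence of how $n$ is written. What you are really checking is that $RT_{\mathcal{C},\omega}$ restricts to $RT_\mathcal{C}$ on $\Bord{sig/2}$. That is exactly what you need, both for the composition check and for agreement with $Z_{\mathcal{C}}$ on the generators of $\mathcal{G}$.
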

\begin{proof}
    By Theorem \ref{thm:rep-mtc}, any $Z:\mathbf{Bord}_{1,2,3}^{t\mathrm{sig/2}}\rightarrow \Vect$ must restrict to a direct sum of $RT_{\mathcal{C}_i}$ on $\Bord{sig/2}$ where $\mathcal{C} = Z(\tikztinycirc) = \bigoplus_i \mathcal{C}_i$ is a finite direct sum of modular tensor categories whose anomalies are equal. Let $\zeta_t$ be the 2-morphism in $\mathbf{Bord}_{1,2,3}^{t\mathrm{sig/2}}$ that encodes the $\mathbb{Z}$-action. Then, we have $\zeta_t^t = |\zeta|_\mathcal{G}$, and so $\zeta_t$ acts as some $t^\mathrm{th}$ root $r$ of the common anomaly. By considering $Z(\tikztinycirc)$ and $\zeta_t$, we have obtained a finite direct sum of modular tensor categories with equal anomalies $\mathcal{C}=\bigoplus_i \mathcal{C}_i$ and a $t^\mathrm{th}$ root $r$ of this anomaly. Given an equivalence of representations $Z\simeq Z'$, restricting to $\Bord{sig/2}$ gives an equivalence of ribbon linear categories $\mathcal{C}\simeq \mathcal{C}'$ by Theorem \ref{thm:rep-mtc}. Additionally, $Z(\zeta)$ and $Z'(\zeta)$ must both act as multiplication by the same scalar, and so we also have $r=r'$.

    Conversely, given a such a $\mathcal{C}$ and a $t^\mathrm{th}$ root $r$ of the anomaly, we may extend $RT_\mathcal{C}$ from $\Bord{sig/2}$ to $\mathbf{Bord}_{1,2,3}^{t\mathrm{sig/2}}$ by letting $\zeta_t$ act as multiplication by $r$. This clearly defines a symmetric monoidal functor $\mathbf{Bord}_{1,2,3}^{t\mathrm{sig/2}}\rightarrow \Vect$, by the compatibility of the $\mathbb{Z}$-action with composition in both directions and the monoidal structure. Moreover, if we had some equivalent ribbon linear category $\mathcal{C}'\simeq \mathcal{C}$ (which would necessarily be a finite direct sum of modular tensor categories with the same anomaly) and chose the same root $r$ of the anomaly, then the two representations of $\mathbf{Bord}_{1,2,3}^{t\mathrm{sig/2}}$ constructed in this way would be equivalent when restricted to $\Bord{sig/2}$. As $\zeta_t$ acts as multiplication by the same scalar in both representations, this extends to an equivalence of representations of $\mathbf{Bord}_{1,2,3}^{t\mathrm{sig/2}}$.
\end{proof}
\begin{rk}
    In particular, the cases $t=2,6$ are exactly \cite[Theorem 3]{BDSV4} and \cite[Theorem 4]{BDSV4} respectively, but proven independently of Theorem \ref{conj:pres-or}.
\end{rk}
\begin{rk}
    With some care for the minus signs, one may also prove an analogous statement for negative $t$.
\end{rk}

Finally, we may also classify the linear representations of the componentwise versions of $\Bord{sig/2}$ and $\Bord{sig}$.
\begin{defn}[{\cite{BDSV3,partA}}]\label{defn:bordcsig}
    The \emph{componentwise signature bordism bicategory} $\Bord{csig}$ is the symmetric monoidal bicategory with 
    \begin{itemize}
        \item Objects and 1-morphisms: same as $\Bord{or}$
        \item 2-morphisms: equivalence classes of $(M,\mathbf{n})$, where $M = \sqcup_{i=1}^c M_i$, each $M_i$ connected, and $\mathbf{n} = (n_1,\ldots,n_c)$ is an $c$-tuple of integers.
    \end{itemize}
    When a vertical or horizontal composition involves multiple connected components, the signature term assigned to the combined connected component is given by the sum of all the signature terms, plus the error term given by Wall's invariant.

    The \emph{componentwise half-signature bordism bicategory} $\Bord{csig/2}$ is the symmetric monoidal subbicategory of $\Bord{csig}$ consisting only of the 2-morphisms $(M,\mathbf{n})$ such that $n_i \equiv m(M_i) \pmod{2}$ for each $i$.
\end{defn}

The classification of linear representations of $\Bord{csig/2}$ as given in \cite{partA} was dependent on Theorem \ref{conj:pres-or}. Following the arguments above, we can now give a proof that is independent of this result.
\begin{thm}[{cf. \cite[Theorem E]{partA}}] \label{thm:rep-bordcsig/2}
    Symmetric monoidal functors $\Bord{csig/2}\rightarrow\Vect$ are classified by finite direct sums of modular tensor categories.
\end{thm}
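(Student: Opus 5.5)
The plan is to reduce to the classification for $\Bord{sig/2}$ (Theorem \ref{thm:rep-mtc}), using the fact that $\Bord{csig/2}$ differs from $\Bord{sig/2}$ only in how the $\mathbb{Z}$-labels are distributed over connected components. For each connected component $M_i$ of a 2-morphism there is now an independent central element. Concretely, $\Bord{csig/2}$ is generated (in the sense of Proposition \ref{prop:modg-full}) by the image of $\mathbf{F}(\mathcal{G})$ together with the ``local'' signature shift $z$ acting on the identity 2-morphism of a single cylinder. This $z$ is now distinct from the closed shift $\zeta$ on the empty 1-morphism and from other components' shifts. I would first write down a presentation-level analogue: the componentwise presentation $\mathcal{G}^{c}$ of \cite{partA}, which is $\mathcal{G}$ with the relation tying $z$ to $\zeta$ dropped. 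Following \cite{partA}/\cite{BDSV4}, its representations are classified by arbitrary finite direct sums of modular tensor categories, since each summand's anomaly is absorbed by its own $z$, while $\zeta$ acts by $1$ on closed components. I would take that algebraic classification as the analogue of Theorem \ref{thm:rep-fg}.

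Next, I would build the map $\mathbf{MTC}^{c}\to\mathrm{Rep}(\Bord{csig/2})$ directly. Given $\mathcal{C}=\bigoplus_i\mathcal{C}_i$ with possibly different anomalies, I define $RT^{c}_{\mathcal{C}_i}$ on $\Bord{csig/2,dec}$ by the formula (\ref{eq:def-rt}), applied componentwise, with $n$ replaced by the tuple $\mathbf{n}$. On each connected component $M_j$, the factor becomes $(\sqrt{D}/p_-)^{n_j}$, together with a normalisation making closed-shift and component-merging consistent. The key check is that when two components merge under vertical or horizontal composition, the signature term is the sum of the $n_j$ plus the Wall term. This matches exactly the computation in Propositions \ref{prop:vert-comp} and \ref{prop:hor-comp}, since those proofs already reduced via Lemma \ref{lem:add-1handle} to connected pieces and the scalar $(\sqrt{D}/p_-)^{n}$ was multiplicative. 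Independence of the sign of $\sqrt{D}$ follows from Proposition \ref{prop:rt-well-def} applied componentwise, because the parity condition in $\Bord{csig/2}$ is imposed per component. Monoidality handles direct sums, so mixed anomalies are allowed because different summands never share a component's scalar. I expect this step to be the main obstacle. Removing components by connect sum now changes the labelling (two components with $n_1,n_2$ become one with $n_1+n_2$), and I must confirm that Lemma \ref{lem:add-1handle} still holds with the componentwise normalisation. If it fails by an anomaly factor, I would rescale by a per-component factor $p_-^{\cdot}$ chosen so that $\zeta$ (the empty component with label $2$) acts trivially while $z$ acts by the anomaly.

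Then I verify, as in Proposition \ref{prop:mtc-rep-mtc}, that $RT^{c}_{\mathcal{C}}\circ|-|_{\mathcal{G}^c}\simeq Z_{\mathcal{C}}$. All generators other than $z,\zeta$ are unchanged, and the two shifts are computed directly from the table of scalars. Fullness of $|-|_{\mathcal{G}^c}$ follows from Proposition \ref{prop:modo-full} together with taking horizontal composites with $z^{n}$ on individual components to reach every tuple $\mathbf{n}$, as in Proposition \ref{prop:modg-full}. The argument of Proposition \ref{prop:pb-inj} then gives injectivity of the pullback. Finally, I conclude exactly as in Theorem \ref{thm:rep-mtc}: the pullback is injective and, composed with $RT^{c}_{(-)}$, gives the bijection $Z_{(-)}$, so it is a bijection.
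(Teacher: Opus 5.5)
Your proposal follows essentially the same route as the paper: cite \cite{partA} for the classification of representations of the componentwise presentation (the paper's $\mathcal{H}$), deduce fullness of $|-|_\mathcal{H}$ from Proposition \ref{prop:modo-full} plus componentwise signature shifts and hence injectivity of the pullback as in Proposition \ref{prop:pb-inj}, extend $RT_\mathcal{C}$ componentwise, check compatibility with $Z_{(-)}$ as in Proposition \ref{prop:mtc-rep-mtc}, and conclude as in Theorem \ref{thm:rep-mtc}. The step you flag as the main obstacle is not one: $(\sqrt{D}/p_-)^{n}$ is multiplicative in $n$, so merging components labelled $n_1,n_2$ into one labelled $n_1+n_2$ leaves the scalar unchanged, and Lemma \ref{lem:add-1handle} goes through verbatim. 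You do have to apply it on $\Bord{csig}$ with a fixed $\sqrt{D}$ before restricting, since the merged 2-morphism violates the per-component parity condition. No rescaling is needed, and there is no shift on the empty 2-morphism in $\Bord{csig/2}$ to normalise against.
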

\begin{proof}
    The proof is analogous to that of Theorem \ref{thm:rep-mtc}, but with $\mathcal{G}$ replaced by $\mathcal{H}$. By \cite[Theorem 7.15]{partA}, linear representations of $\mathbf{F}(\mathcal{H})$ are classified by finite direct sums of modular tensor categories.
    
    As noted at the end of \cite[\textsection 6]{partA}, an analogous construction to \cite[Construction 6.15]{partA} yields a symmetric monoidal functor $|-|_{\mathcal{H}}: \mathbf{F}(\mathcal{H}) \rightarrow \Bord{csig/2}$. This is essentially surjective and essentially full by construction, and the same arguments as Propositions \ref{prop:modg-full} and \ref{prop:pb-inj} let us deduce that from Proposition \ref{prop:modo-full} that $|-|_\mathcal{H}$ is full and so pulling back by $|-|_\mathcal{H}$ induces an injection on the sets of equivalence classes of representations.

    On the other hand, note that the definition of $RT_\mathcal{C}: \Bord{sig/2} \rightarrow \Vect$ is given componentwise, and so readily extends to a symmetric monoidal functor $\Bord{csig/2} \rightarrow \Vect$. By the same argument as in Proposition \ref{prop:mtc-rep-mtc}, this construction is compatible with $|-|_\mathcal{H}$ and the $Z_{(-)}$ constructed in \cite[Theorem 7.15]{partA}.
    
    Thus, following the proof of Theorem \ref{thm:rep-mtc}, our desired result follows.
\end{proof}

Replacing $\mathcal{H}$ with the corresponding presentation $\mathcal{M}$ for $\Bord{csig}$ (see \cite[Definition 3.9]{BDSV4}) also gives a proof of the classification of linear representations of componentwise signature bordism bicategory, again independently of Theorem \ref{conj:pres-or}.
\begin{thm}[{cf. \cite[Theorem 1]{BDSV4}}]
    Symmetric monoidal functors $\Bord{csig}\rightarrow\Vect$ are classified by finite direct sums of modular tensor categories, along with a choice of square root of anomaly of each direct summand.
\end{thm}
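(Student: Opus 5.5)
The argument copies the proof of Theorem \ref{thm:rep-bordcsig/2}, with the presentation $\mathcal{H}$ replaced by the componentwise signature presentation $\mathcal{M}$ of \cite[Definition 3.9]{BDSV4}. By \cite[Theorem 1]{BDSV4}, whose algebraic part is independent of Theorem \ref{conj:pres-or}, symmetric monoidal functors $\mathbf{F}(\mathcal{M})\rightarrow\Vect$ are classified by finite direct sums $\bigoplus_i\mathcal{C}_i$ of modular tensor categories together with a square root $\sqrt{D_i}$ of the global dimension of each summand. As in Lemma \ref{lem:z-anomaly}, this is equivalent to choosing a square root of each anomaly, since $(\sqrt{D_i}/p_-)^2=p_+/p_-$. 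There is a realisation functor $|-|_\mathcal{M}:\mathbf{F}(\mathcal{M})\rightarrow\Bord{csig}$, built as in \cite{BDSV3}. It is essentially surjective and essentially full by construction. It is also full: its composite to $\Bord{or}$ factors through $\mathbf{F}(\mathcal{O})$, which is full by Proposition \ref{prop:modo-full}, and the componentwise generators shifting the signature of a single component by one cover each $\mathbb{Z}^c$-fibre of 2-morphisms. This is the argument of Proposition \ref{prop:modg-full}. The argument of Proposition \ref{prop:pb-inj} then shows that pulling back along $|-|_\mathcal{M}$ is injective on equivalence classes of representations.

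For surjectivity, we need to realise each $Z_{\mathcal{C},(\sqrt{D_i})}$ as the pullback of a representation of $\Bord{csig}$. Since everything decomposes over summands, the natural candidate is built componentwise. For a single modular $\mathcal{C}$ with chosen $\sqrt{D}$, define $RT_{\mathcal{C},\sqrt{D}}$ on $\Bord{csig}$ by formula (\ref{eq:def-rt}), using on each connected component $M_j$ its own integer $n_j$; in other words, multiply the connected-component formula across components. A general $\bigoplus_i\mathcal{C}_i$ is then handled by direct sums. The checks from Section \ref{section:rt-functor} go through verbatim. When components merge in a vertical composition, Proposition \ref{prop:vert-comp} produces exactly the Wall correction of Definition \ref{defn:bordcsig}, with the signatures summed. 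Horizontal composition and identities are componentwise already. Compatibility with $Z$ on the generators of $\mathcal{M}$ follows from the computations in Proposition \ref{prop:mtc-rep-mtc} and Lemmas \ref{lem:inv1}--\ref{lem:noninv}. The one new generator is the componentwise unit shift, which acts by $\sqrt{D}/p_-$, the chosen square root of the anomaly. Hence $|-|_\mathcal{M}^*\circ RT=Z$, so pullback is surjective, and we conclude as in Theorem \ref{thm:rep-mtc}.

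The main obstacle is the odd-signature bookkeeping in the direct-sum case. For a summand $\mathcal{C}_i$, only the restriction to $\Bord{csig/2}$ is independent of $\sqrt{D_i}$, by Proposition \ref{prop:rt-well-def}. The componentwise shift $\zeta$ acts diagonally on $\bigoplus_i\mathcal{C}_i$, with the value determined separately on each summand. I would check this by observing that a connected 2-morphism only sees one summand at each connected component of its bounding circles, so the scalar $(\sqrt{D_i}/p_{-,i})^{n_j}$ is well defined entrywise. This is the step where the componentwise (as opposed to global) signature is essential, and where the decomposition must be checked to respect mixed colourings across components of the 1-morphisms.
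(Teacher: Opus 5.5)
Your proposal is correct and follows the paper's route. Fullness of $|-|_\mathcal{M}$, obtained via $\mathbf{F}(\mathcal{O})$ and the signature-shift generators, gives injectivity of the pullback, and the componentwise extension of $RT_{\mathcal{C},\sqrt{D}}$, checked against $Z$ on generators, gives surjectivity. Your discussion of per-summand scalars for direct sums with different anomalies spells out a point the paper leaves implicit; note, though, that the per-component shift there is the analogue of $z$ on the cylinder, not of $\zeta$, which has no componentwise counterpart.
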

\begin{proof}
    As constructed in \cite{BDSV3}, there is a symmetric monoidal functor $|-|_\mathcal{M}: \mathbf{F}(\mathcal{M})\rightarrow \Bord{csig}$ (see also the Remark following \cite[Construction 6.15]{partA}). As in the proof of Theorem \ref{thm:rep-bordcsig/2}, this is essentially surjective and essentially full by construction, and follwoing the proofs of Propositions \ref{prop:modg-full} and \ref{prop:pb-inj}, we can show that it is full and so pulling back by $|-|_\mathcal{M}$ induces an injection on the sets of equivalence classes of representations.

    On the other hand, it is shown in \cite{BDSV4} that linear representations of $\mathbf{F}(\mathcal{M})$ are classified by finite direct sums of modular tensor categories with a choice of square root of anomaly of each direct summand. Following the arguments of Theorem \ref{thm:rep-bordcsig/2}, we obtain our desired result.
\end{proof}

Finally, we note that in all the proofs of the various classification results given in this section, all symmetric monoidal functors constructed from modular tensor categories (and possibly extra data) arise via the extended Reshetikhin--Turaev construction. In essence, we have proven Theorem \ref{thmA:all-rt}.

\printbibliography
\end{document}